\newcommand{\e}{\varepsilon}
\newcommand{\NN}{\mathbb{N}}
\newcommand{\ZZ}{\mathbb{Z}}
\newcommand{\RR}{\mathbb{R}}
\newcommand{\CC}{\mathbb{C}}
\newcommand{\N}{\mathcal{N}}
\newtheorem{thm}{Theorem}[section]
\newtheorem{cor}[thm]{Corollary}
\newtheorem{lem}[thm]{Lemma}
\newtheorem{prop}[thm]{Proposition}
\theoremstyle{definition}
\newtheorem{defn}[thm]{Definition}
\newtheorem{conv}[thm]{Convention}
\theoremstyle{remark}
\newtheorem{rem}[thm]{Remark}
\newtheorem{claim}[thm]{Claim}
\newcommand{\stab}{\mathrm{Stab}}
\newcommand{\lk}{\mathrm{Lk}}
\newcommand{\st}{\mathrm{St}}
\newcommand{\elk}{\mathrm{Elk}}
\newcommand{\supp}{\mathrm{supp}}
\newcommand{\diam}{\mathrm{diam}}
\newcommand{\girth}{\mathrm{girth}}
\newcommand{\leaf}{\mathrm{leaf}}
\newcommand{\geo}{\mathrm{Geo}}
\newcommand{\Prob}{\mathrm{Prob}}
\newcommand{\fix}{\mathrm{Fix}}
\renewcommand{\N}{\mathcal{N}}
\newcommand{\C}{\mathscr{C}}
\newcommand{\la}{\langle}
\newcommand{\ra}{\rangle}
\renewcommand{\L}{\mathbf{Lab}}
\newcommand{\F}{\mathcal{F}}
\newcommand{\G}{\mathcal{G}}
\newcommand{\act}{\curvearrowright}
\newcommand{\tensor}{\overline{\otimes}}
\renewcommand{\L}{\mathbf{Lab}}
\newcommand\blfootnote[1]{%
  \begingroup
  \renewcommand\thefootnote{}\footnote{#1}%
  \addtocounter{footnote}{-1}%
  \endgroup
}
\begin{document}

\title{Infinite graph product of groups II: Analytic properties}

\author{Koichi Oyakawa}
\date{}

\maketitle

\vspace{-3mm}

\begin{abstract}
We study analytic properties of graph product of finite groups with a hyperbolic defining graph. This is done by studying dynamics on the Bowditch compactification of the extension graph, or the crossing graph, of graph product. In particular, we provide a new class of convergence groups and identify the if and only if condition for this convergence action to be geometrically finite. We also provide a new class of properly proximal groups, relatively bi-exact groups, and groups with strongly solid group von Neumann algebras.
\end{abstract}

\section{Introduction}
\label{sec:Introduction}
\blfootnote{\textbf{MSC 2020} Primary: 20F65, 22D25. Secondary: 20F67, 37D40.}
\blfootnote{\textbf{Key words and phrases}: graph product, hyperbolic spaces, convergence groups, bi-exact groups.}
This paper is a continuation of \cite{Oya24b}, where geometry of the extension graph of graph product of groups was studied. The extension graph of graph product was already (accepted) introduced by Casals-Ruiz, Kazachkov, and de la Nuez Gonz\'alez in \cite[Definition 3.1]{MIJ24} (see also \cite{EH24}). Recall that given a simplicial graph $\Gamma$ and a collection of groups $\G = \{G_v\}_{v \in V(\Gamma)}$ assigned to each vertex of $\Gamma$, the graph product $\Gamma\G$ is a group obtained by taking quotient of the free product $\ast_{v \in V(\Gamma)} G_v$ by setting that group elements of two adjacent vertices commute (see Definition \ref{def:graph product of groups}). In this paper, we will present more applications of the extension graph by studying analytic properties of graph product of groups that are of particular interest in operator algebras. In the study of operator algebras, it has long been people's interest to investigate structural properties of group von Neumann algebras such as primeness, solidity, and rigidity. To tackle these problems, analytic properties of groups such as bi-exactness and its variants were introduced and turned out to be particularly useful (see \cite{Oza06b, BO08, BIP21}).

While the study of operator algebras associated to graph product is currently very active (see \cite{CF17, CRW18, CDD24b, CDD24a, DK24, CK24, Bor24, IRBDSB24a, IRBDSB24b}), only finite defining graphs have been considered in most research so far and the finiteness condition of a defining graph is used in an essential way in the proofs. In this paper, we present a novel approach to study operator algebras of graph product of groups by exploiting the large scale geometry of a defining graph. This approach allows us to study a much wider class of defining graphs, beyond finite graphs. Among other geometric properties, we will focus on the case where a defining graph is hyperbolic in this paper.

More specifically, we use the dynamics of graph product on a certain compactification of the extension graph (or the crossing graph). It turns out that this dynamics has an intensively studied property called convergence property, which is of independent interest. The notion of a convergence action was introduced by Gehring and Martin in \cite{GM87} to axiomatise dynamical properties of a Kleinian group acting on the ideal sphere of real hyperbolic space and was studied by many people for an action on a compact Hausdorff space in general (see \cite{Tuk94, Tuk98, Bow98, Bow99a, Bow99b, Bow02, Dah03c, Yam04, Ger09}). Remarkable feature of convergence actions is that we can recover group theoretic property from this dynamical property. In fact, if a subgroup of the homeomorphism group $\mathrm{Homeo}(S^1)$ of the circle acts on $S^1$ as a convergence action, then this action is topologically conjugate to the induced action of a Fuchsian group (see \cite{Tuk88, Gab92, CJ94}). Also, a group is hyperbolic if and only if it admits a uniform convergence action on a perfect compact metric space (see \cite{Bow98}). There is also a similar dynamical characterization of relatively hyperbolic groups by a geometrically finite convergence action (see \cite{Yam04}). Yet another result is that an almost finitely presented group acting as a minimal convergence group on a Cantor set without parabolic points is virtually free (see \cite{Bow02}). To summarize, the class of groups that admit a convergence action tends to be restrictive. Therefore, it is interesting to find a new class of convergence groups, which we do in Theorem \ref{thm:intro convergence action} below.
\begin{thm}\label{thm:intro convergence action}
    Suppose that $\Gamma$ is a fine hyperbolic graph with $\diam_\Gamma(\Gamma) > 2$ and $\girth(\Gamma) > 20$ and that $\G=\{G_v\}_{v\in V(\Gamma)}$ is a collection of non-trivial finite groups. Then, the action of the graph product $\Gamma\G$ on the Bowditch compactification $\Delta\Gamma^e$ of the extension graph is a non-elementary non-uniform convergence action. Moreover, the action $\Gamma\G \act \Delta\Gamma^e$ is geometrically finite if and only if the set $V(\Gamma)\setminus \leaf(\Gamma)$ is finite, where $\leaf(\Gamma)$ is the set of all vertices in $V(\Gamma)$ whose valency is at most $1$.
\end{thm}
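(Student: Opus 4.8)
The plan is to deduce every assertion from the dynamics of $\Gamma\G$ on the extension graph $\Gamma^e$, which under the standing hypotheses on $\Gamma$ — and by \cite{Oya24b} together with the earlier sections — is a connected fine hyperbolic graph carrying a $\Gamma\G$-action. Granting this, the convergence property of $\Gamma\G\act\Delta\Gamma^e$ is Bowditch's general principle that a group acting on a connected fine hyperbolic graph acts as a convergence group on its Bowditch compactification, which here is $\Delta\Gamma^e=\partial\Gamma^e\sqcup\{\text{infinite-valence vertices of }\Gamma^e\}$. In this compactification the conical limit points all lie in $\partial\Gamma^e$, the finite-valence vertices of $\Gamma^e$ — in particular all conjugates of leaf vertex groups — do not appear, and the $\Gamma\G$-stabiliser of an infinite-valence vertex $x$ is infinite and contains no loxodromic element (a loxodromic fixes exactly two points, both in $\partial\Gamma^e$, so it cannot fix $x$ as well).

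I next record the two soft dynamical features. For non-elementarity, $\diam_\Gamma(\Gamma)>2$ lets me pick $u,v\in V(\Gamma)$ with $d_\Gamma(u,v)\ge 3$, so that $\langle G_u,G_v\rangle=G_u\ast G_v$; by \cite{Oya24b} the action on $\Gamma^e$ then contains two independent loxodromic isometries, whence the limit set is infinite and carries no finite invariant subset. For non-uniformity, $\diam_\Gamma(\Gamma)>2$ forces a path of length $3$ in $\Gamma$, so $V(\Gamma)\setminus\leaf(\Gamma)\ne\emptyset$; if $v$ is a non-leaf then $|\lk_\Gamma(v)|\ge 2$, and since the girth hypothesis rules out triangles two neighbours $w_1,w_2$ of $v$ are non-adjacent, so $\stab_{\Gamma\G}(v)\supseteq G_{w_1}\ast G_{w_2}$ is infinite; the infinite-valence vertex of $\Gamma^e$ attached to $v$ is therefore a parabolic point, hence not conical, so the action is not uniform.

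The crux is the correspondence between $V(\Gamma)\setminus\leaf(\Gamma)$ and the $\Gamma\G$-orbits of parabolic points. Since $\Gamma\G$ acts on the conjugates of each vertex group transitively, every infinite-valence vertex of $\Gamma^e$ is a translate of a standard vertex $v$, $v\in V(\Gamma)$, whose valence is governed by the subgroup $G_v\times\langle G_w:w\in\lk_\Gamma(v)\rangle$ — the normaliser of $G_v$: this is finite exactly when $v\in\leaf(\Gamma)$, and, because the girth hypothesis makes $\langle G_w:w\in\lk_\Gamma(v)\rangle$ a nontrivial free product once $|\lk_\Gamma(v)|\ge 2$, it is infinite otherwise. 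Distinct $v$ give distinct orbits, so $V(\Gamma)\setminus\leaf(\Gamma)$ is in bijection with the set of $\Gamma\G$-orbits of infinite-valence vertices, equivalently of parabolic points. Consequently, if $\Gamma\G\act\Delta\Gamma^e$ is geometrically finite then, by Bowditch's finiteness theorem for geometrically finite convergence actions, it has only finitely many orbits of bounded parabolic points, and therefore $V(\Gamma)\setminus\leaf(\Gamma)$ is finite.

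Conversely, assume $V(\Gamma)\setminus\leaf(\Gamma)=\{v_1,\dots,v_n\}$ is finite; the task is to check the three conditions for geometric finiteness. Finitely many orbits of parabolic points is immediate from the correspondence above. Each infinite-valence vertex $x=g\cdot v_i$ is a \emph{bounded} parabolic point: $\stab_{\Gamma\G}(x)$ acts with finitely many orbits on the infinite-valence vertices adjacent to $x$, since these are the conjugates of $G_u$ for $u$ a \emph{non-leaf} neighbour of $v_i$ in $\Gamma$ (finitely many orbit types, as the non-leaf neighbours lie in $\{v_1,\dots,v_n\}$), while the conjugates of \emph{leaf} vertex groups adjacent to $x$ — possibly infinitely many when $v_i$ has infinitely many leaf neighbours — are finite-valence and hence absent from $\Delta\Gamma^e$. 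Finally, every point of $\partial\Gamma^e$ is a conical limit point, because the ``leaf decorations'' of $\Gamma^e$ are uniformly bounded, so a geodesic ray not converging to an infinite-valence vertex repeatedly re-enters the subgraph spanned by the infinite-valence vertices, on which $\Gamma\G$ acts with finitely many orbits of edges; equivalently, one shows that $\Gamma\G$ is hyperbolic relative to $\{\stab_{\Gamma\G}(v_i)\}_{i=1}^{n}$, with $\Delta\Gamma^e$ its Bowditch boundary, cf.\ \cite{Yam04}. The step I expect to be the main obstacle is precisely this converse direction when $\Gamma$ is infinite (or disconnected): pinning down a connected fine hyperbolic model that still has finitely many orbits of edges and finite edge stabilisers once the infinitely many leaf-edges have been absorbed — the full subgraph on the infinite-valence vertices need not even be connected, for instance when no two non-leaves of $\Gamma$ are adjacent, so one must splice the pieces along the free-product / Bass--Serre structure of $\Gamma\G$ — and then matching the Bowditch boundary of that model with $\Delta\Gamma^e$ equivariantly.
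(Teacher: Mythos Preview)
Your identification of $\Delta\Gamma^e$ is incorrect: in this paper (following \cite[Section~8]{Bow12}) the Bowditch compactification is $\Delta X=V(X)\cup\partial X$, i.e.\ \emph{all} vertices are included, not only the infinite-valence ones. The conjugates of leaf vertex groups do appear in $\Delta\Gamma^e$; they are isolated points and lie outside the limit set $\Lambda(\Gamma\G)=\Delta\Gamma^e\setminus\Gamma\G.\leaf(\Gamma)$, but they are not removed from the space. This affects several of your arguments downstream, most notably the bounded-parabolic step.

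Your argument that a non-leaf vertex $x=g.v_i$ is bounded parabolic is the main gap. Cocompactness of $\stab_{\Gamma\G}(x)$ on the non-leaf vertices \emph{adjacent} to $x$ is much weaker than cocompactness on $\Lambda(\Gamma\G)\setminus\{x\}$, which is what bounded parabolicity requires. The link argument would suffice if $\Gamma^e$ had finitely many $\Gamma\G$-orbits of edges, but it does not when $\Gamma$ is infinite. The paper bridges this directly (its item~(5)): for each non-leaf neighbour $w\in\lk_\Gamma(v)$ one takes the compact set $K_w=\Lambda(\Gamma\G)\cap(\Delta\Gamma^e\setminus U_w)$ for an open $U_w\ni v$ inside $P(v,\{(v,w)\})$, and checks $\Lambda(\Gamma\G)\setminus\{v\}=\bigcup_w\stab_{\Gamma\G}(v).K_w$; finiteness of $\lk_\Gamma(v)\setminus\leaf(\Gamma)$ then makes $\bigcup_w K_w$ compact. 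Your intuition about orbit types on the link is the right ingredient, but the passage to $\Lambda(\Gamma\G)\setminus\{v\}$ needs this explicit argument.

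For conicality of points in $\partial\Gamma^e$ you correctly flag the difficulty and propose going through relative hyperbolicity, but then identify exactly the obstruction (no cofinite-edge model in general). The paper sidesteps this entirely: it proves conicality by hand, tracking a geodesic ray $(x_n)$ from a base vertex to $\xi\in\partial\Gamma^e$, writing $x_n=g_n.v(x_n)$ with $v(x_n)\in V(\Gamma)\setminus\leaf(\Gamma)$, and using finiteness of $V(\Gamma)\setminus\leaf(\Gamma)$ to extract a subnet along which $(g_{n_j}^{-1}.\xi,\,g_{n_j}^{-1}.v_0)$ converges to a pair of distinct points --- splitting into the case where the ``increments'' $h_n=g_{n-1}^{-1}g_n$ are unbounded and the case where they take finitely many values (a compactness argument in $((V(\Gamma)\setminus\leaf(\Gamma))\times F)^{\mathbb Z}$). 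On the other hand, your ``only if'' direction via the Tukia/Bowditch finiteness of orbits of bounded parabolic points is cleaner than the paper's, which instead exhibits an explicit limit point that is neither conical nor bounded parabolic (either a non-leaf vertex with infinitely many non-leaf neighbours, or the endpoint of a geodesic ray lying in $\Gamma\subset\Gamma^e$, using Lemma~\ref{lem:geodesic to the same boundary point go to the same plane} to show its stabiliser is trivial).
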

Although the context is different, the above condition for geometric finiteness is reminiscent of the fact that a Kleinian group is called geometrically finite if it has a fundamental polyhedron in hyperbolic 3-space with finitely many sides. In Theorem \ref{thm:intro convergence action}, the condition that $\{G_v\}_{v\in V(\Gamma)}$ is a collection of finite groups is essential by Proposition \ref{prop: infinite case doesn't admit non-elementary convergence action}. Also, note that fine graphs don't need to be locally finite (see Definition \ref{def:fine graph}).

As a corollary of Theorem \ref{thm:intro convergence action}, we get one analytic property of graph product called proper proximality. Proper proximality of groups was introduced by Boutonnet, Ioana and Peterson in \cite{BIP21} as one generalization of bi-exactness to study rigidity properties of von Neumann algebras associated to groups or ergodic group actions. Proper proximality of graph product of groups was studied in \cite{CKE24} for a finite defining graph, where the finiteness condition was essentially used. We extend proper proximality to a wider class of a defining graph in Corollary \ref{cor:properly proximal} below.
\begin{cor}\label{cor:properly proximal}
    Suppose that $\Gamma$ is a fine hyperbolic graph with $\diam_\Gamma(\Gamma) > 2$ and $\girth(\Gamma) > 20$ and that $\G=\{G_v\}_{v\in V(\Gamma)}$ is a collection of non-trivial finite groups. Then, $\Gamma\G$ is properly proximal.
\end{cor}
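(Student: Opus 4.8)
The plan is to deduce proper proximality of $\Gamma\G$ from the convergence action supplied by Theorem~\ref{thm:intro convergence action}, via the dynamical (boundary-action) criterion for proper proximality of \cite{BIP21}. Concretely, I would use the compact $\Gamma\G$-space $\Delta\Gamma^e$ (and its limit set) as the relevant boundary and verify the two ingredients the criterion asks for: that the action admits no invariant probability measure, and that the associated function system sits, equivariantly, inside the (relative) ``small at infinity'' algebra of $\Gamma\G$.

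The first ingredient is immediate from the convergence property. Non-elementarity of $\Gamma\G\act\Delta\Gamma^e$ means the limit set $\Lambda\subseteq\Delta\Gamma^e$ is an infinite perfect compact space (metrizable, since $\Gamma^e$ is a countable graph) and $\Gamma\G\act\Lambda$ is minimal. A non-elementary convergence group contains loxodromic elements with north--south dynamics on $\Lambda$; pushing an arbitrary probability measure on $\Lambda$ by high powers of such an element---chosen so that its repelling fixed point avoids the countably many atoms---drives the measure to a Dirac mass, so $\Gamma\G\act\Lambda$ is strongly proximal and in particular has no invariant probability measure (nor does any compact $\Gamma\G$-space mapping equivariantly onto $\Lambda$).

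The substantive ingredient is that $\Delta\Gamma^e$ genuinely serves as a $\Gamma\G$-boundary piece. The obvious attempt---pulling back $C(\Delta\Gamma^e)$ along an orbit map $\Gamma\G\to V(\Gamma^e)$ and checking asymptotic right-invariance modulo $c_0(\Gamma\G)$---fails outright: $\Gamma\G$ need not be finitely generated, and, more to the point, the $\Gamma\G$-stabilizers of vertices of $\Gamma^e$ are typically infinite, so the orbit map is far from proper. Instead I would argue \emph{relative} to the peripheral family $\P$ of vertex stabilizers of $\Gamma^e$ (equivalently, stabilizers of the bounded-parabolic points of $\Delta\Gamma^e$): using the convergence property---proper discontinuity on the space of distinct triples---together with the fineness of $\Gamma^e$ and the explicit description of Bowditch's topology near $\Lambda$, one shows that $C(\Delta\Gamma^e)$ embeds $\Gamma\G$-equivariantly into the small-at-infinity algebra of $\Gamma\G$ taken \emph{modulo the ideal generated by $\P$}; combined with the previous paragraph this says that $\Gamma\G$ is properly proximal relative to $\P$. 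To remove the word ``relative'', I would invoke the permanence of proper proximality \cite{BIP21}: if $G$ is properly proximal relative to a family each of whose members is amenable or properly proximal, then $G$ is itself properly proximal. This applies here because each $P\in\P$ is a conjugate of the star subgroup $\langle\st(v)\rangle=G_v\times\langle G_w:w\in\lk(v)\rangle$, and since $\girth(\Gamma)>20$ rules out triangles, $\lk(v)$ is edgeless, so $\langle G_w:w\in\lk(v)\rangle=\ast_{w\in\lk(v)}G_w$ is a free product of finite groups---hence finite, infinite dihedral (amenable), or non-elementary virtually free (properly proximal). In every case $P$ is amenable or properly proximal, so the permanence result yields that $\Gamma\G$ is properly proximal.

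The main obstacle is the relative ``small at infinity'' statement of the third paragraph: making rigorous the sense in which the Bowditch compactification of the fine hyperbolic graph $\Gamma^e$ is negligible at infinity after quotienting out the peripheral subgroups, with the convergence dynamics standing in for the coarse-geometric estimates available in the finite-defining-graph treatment of \cite{CKE24}, and with neither uniformity nor geometric finiteness of the action at one's disposal. A secondary, more routine point is confirming that the permanence theorem of \cite{BIP21} applies to the direct-product peripherals $G_v\times(\ast_{w\in\lk(v)}G_w)$---which is precisely the role played by the girth hypothesis, taming the free-product factor.
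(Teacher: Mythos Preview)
Your proposal takes a far more circuitous route than necessary. The paper's proof is a single sentence: Theorem~\ref{thm:intro convergence action} gives a non-elementary convergence action of $\Gamma\G$ on $\Delta\Gamma^e$, and \cite[Proposition~1.6]{BIP21} states directly that any group admitting a non-elementary convergence action on a compact Hausdorff space is properly proximal. No relative version, no analysis of peripherals, no small-at-infinity estimate is required.

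You appear to be invoking a different, more general boundary-action criterion from \cite{BIP21} and then attempting to verify its hypotheses by hand---establishing strong proximality, embedding $C(\Delta\Gamma^e)$ into a relative small-at-infinity algebra, and then bootstrapping from relative to absolute proper proximality via a permanence theorem applied to the vertex stabilizers. This is not wrong in spirit, but (i) the ``relative small-at-infinity'' step is, as you yourself flag, not actually carried out and would need real work (essentially a reproof of what \cite[Proposition~1.6]{BIP21} already packages), and (ii) the entire detour is unnecessary: the convergence property alone is the input to the cited proposition. In short, you have overlooked that \cite{BIP21} contains the precise black box you need, and are instead trying to rebuild it from adjacent tools in the same paper.
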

\begin{proof}
    This follows from Theorem \ref{thm:intro convergence action} since non-elementary convergence groups are properly proximal by \cite[Proposition 1.6]{BIP21}.
\end{proof}
Another generalization of bi-exactness is relative bi-exactness. This is useful to study group von Neumann algebras, considered as a promising analytic property to solve the open problem on primeness of relatively hyperbolic groups (see \cite[Conjecture 1.2]{Oya23a} and Proposition \ref{prop:relatively biexact groups are prime}). However, groups having this analytic property is not known much except bi-exact groups (see \cite{DK24b}). Importantly, Theorem \ref{thm:intro graph product becomes relatively bi-exact} provides a new class of groups bi-exact relative to almost malnormal subgroups (see the proof of Corollary \ref{cor:graph product becomes prime}), which was studied in \cite{DK24b}.
\begin{thm}\label{thm:intro graph product becomes relatively bi-exact}
    Suppose that $\Gamma$ is a uniformly fine hyperbolic countable graph with $\girth(\Gamma) > 20$ and that $\G=\{G_v\}_{v \in V(\Gamma)}$ is a collection of non-trivial finite groups. Then, $\Gamma\G$ is bi-exact relative to the collection of subgroups $\{\, \la G_w \mid w\in \lk_{\Gamma}(v) \ra \,\}_{v \in V(\Gamma)}$.
\end{thm}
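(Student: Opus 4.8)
The plan is to deduce the theorem from the standard dynamical criterion for relative bi-exactness (see \cite{Oza06b,BO08,DK24b}): a countable exact group $G$ is bi-exact relative to a family $\mathcal{H}$ of subgroups once it admits a convergence action on a compact metrizable space $X$ which is ``small at infinity'' and which is topologically amenable relative to $\mathcal{H}$ --- the last meaning, roughly, that the transformation groupoid $G\ltimes X$ is amenable after accounting for isotropy lying in conjugates of members of $\mathcal{H}$. The input will be the convergence action $\Gamma\G\act\Delta\Gamma^e$ produced by (the technical form of) Theorem~\ref{thm:intro convergence action}; metrizability of $\Delta\Gamma^e$ uses that $\Gamma$, hence the extension graph $\Gamma^e$, is countable. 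Exactness of $\Gamma\G$ is not an issue: it is a directed union of graph products over finite induced subgraphs, each of which acts properly and cocompactly on a finite-dimensional $\mathrm{CAT}(0)$ cube complex and is therefore exact, and exactness passes to directed unions.

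I would then pin down the relevant isotropy. Vertices of $\Gamma^e$ are the conjugates $gG_vg^{-1}$ of the vertex groups, on which $\Gamma\G$ acts by conjugation, so
\[
\stab_{\Gamma\G}(gG_vg^{-1}) \;=\; g\,N_{\Gamma\G}(G_v)\,g^{-1} \;=\; g\,\la G_w \mid w\in\st_\Gamma(v)\ra\, g^{-1} \;=\; g\,\bigl(G_v\times\la G_w\mid w\in\lk_\Gamma(v)\ra\bigr)\, g^{-1},
\]
which contains the conjugate $g\la G_w\mid w\in\lk_\Gamma(v)\ra g^{-1}$ of a member of our family with finite index $|G_v|$. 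Since relative bi-exactness is unchanged under replacing the subgroups by commensurable ones, it suffices to treat the family $\{\,g\la\st_\Gamma(v)\ra g^{-1}\,\}$. As $\girth(\Gamma)>20>3$, every link $\lk_\Gamma(v)$ is edgeless, so $\la G_w\mid w\in\lk_\Gamma(v)\ra=\ast_{w\in\lk_\Gamma(v)}G_w$ is a free product of finite groups, in particular virtually free and exact; thus the relative notion here is the expected one. Moreover the edge stabilizers of $\Gamma\G\act\Gamma^e$ are finite (a conjugate of $G_v\times G_w$ up to index $2$, by the same link computation), so $\Gamma^e$ is a fine hyperbolic graph of the type underlying relative hyperbolicity, merely with infinitely many edge orbits.

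The substance of the proof is to show that the convergence action $\Gamma\G\act\Delta\Gamma^e$ is topologically amenable relative to the vertex stabilizers $\{\,g\la\st_\Gamma(v)\ra g^{-1}\,\}$. This is the counterpart, in our non-geometrically-finite situation, of Ozawa's theorem that a relatively hyperbolic group acts amenably on its Bowditch boundary relative to the peripheral subgroups \cite{Oza06b}; since $\Gamma$ may be infinite there are infinitely many orbits of edges, $\Gamma\G$ need not be relatively hyperbolic, and his argument does not apply verbatim. The approach I would take is to run the combinatorial geodesic-flow construction directly on $\Gamma^e$: for each $\xi\in\Delta\Gamma^e$ one builds a continuous, approximately $\Gamma\G$-equivariant assignment of probability measures supported on combinatorial geodesics (or horoball neighbourhoods) in $\Gamma^e$ aimed at $\xi$, arranged so that the only defect of equivariance is absorbed by the isotropy at the infinite-valence vertices --- precisely the $\la\st_\Gamma(v)\ra$. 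Here the hypotheses are used essentially: \emph{uniform} fineness of $\Gamma$ (hence of $\Gamma^e$, by \cite{Oya24b}) furnishes the uniform bounds on the number of bounded-length geodesics between pairs of vertices that make these measures genuinely approximately invariant rather than only so on compacta, and countability again secures metrizability of $\Delta\Gamma^e$. I expect this construction --- gluing a hyperbolic-group-type amenability argument on the conical part to the infinitely many $\Gamma\G$-families of parabolic points --- to be the main obstacle.

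Finally, the passage from relative amenability to relative bi-exactness is routine: the convergence property of $\Gamma\G\act\Delta\Gamma^e$ supplies the ``smallness at infinity'' of the induced left-right action of $\Gamma\G\times\Gamma\G$ on $\Delta\Gamma^e$, and combining this with exactness of $\Gamma\G$ and of the peripheral subgroups yields, via \cite{Oza06b,BO08,DK24b}, that $\Gamma\G$ is bi-exact relative to $\{\,\la\st_\Gamma(v)\ra\,\}_{v\in V(\Gamma)}$, hence relative to the commensurable family $\{\,\la G_w\mid w\in\lk_\Gamma(v)\ra\,\}_{v\in V(\Gamma)}$. The handful of defining graphs allowed here but excluded by the diameter hypothesis of Theorem~\ref{thm:intro convergence action}, such as stars, can be handled directly, since for those $\Gamma\G\cong G_v\times\ast_{w\in\lk_\Gamma(v)}G_w$ for the central vertex $v$ and is already bi-exact.
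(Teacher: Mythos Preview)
Your proposal has a genuine gap and also diverges structurally from the paper's proof in an instructive way.

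The gap is your assertion that uniform fineness of $\Gamma$ implies uniform fineness of $\Gamma^e$. This is false: the paper explicitly notes that $\Gamma^e$ is \emph{not} uniformly fine when $\sup_{v\in V(\Gamma)}|G_v|=\infty$ and $\girth(\Gamma)<\infty$, and Theorem~\ref{thm:intro graph product becomes relatively bi-exact} makes no bound on $|G_v|$. Consequently Ozawa's \cite[Lemma 8]{Oza06} does not apply to $\Gamma^e$, and the ``uniform bounds on the number of bounded-length geodesics'' you invoke are unavailable. The paper repairs this with a genuinely new ingredient: a bespoke version of the Kaimanovich--Ozawa averaging (Lemma~\ref{lem:tight fine graph admits a sequence for amenable action}) that replaces uniform fineness by the weaker quantitative property \cite[Proposition 5.5 (2)]{Oya24b}, yielding the bound in Claim~\ref{claim: cardinality of S(x,z,ell,m)}. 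Without this substitute your geodesic-flow construction has no reason to produce approximately invariant measures.

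Beyond this, the paper's architecture is different from yours. Rather than formulating ``amenability of $\Gamma\G\curvearrowright\Delta\Gamma^e$ relative to vertex stabilizers'' and then invoking a black-box criterion, the paper works directly with the boundary $\overline{\Gamma\G}^{\F}$ and verifies Proposition~\ref{prop:equivalent condition of biexact group}. The argument runs through Ozawa's gluing Proposition~\ref{prop:Ozawa Prop 11} with $X=\Delta\Gamma^e$, $Y=\overline{\Gamma\G}^{\F}$, $K=V(\Gamma^e)$: the Borel maps $\zeta_n$ come from the repaired averaging lemma above, and the second hypothesis --- that each $\stab_{\Gamma\G}(v)\curvearrowright\overline{\Gamma\G}^{\F}$ is topologically amenable --- is the heart of the proof. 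That step is handled by a construction you do not anticipate: an explicit right-coset-representative map $p_{\st_\Gamma(v)}\colon\Gamma\G\to\stab_{\Gamma\G}(v)$ (Definition~\ref{def:p_F, r_F}, with the key behaviour under right multiplication recorded in Lemma~\ref{lem:properties of p_F, r_F}) which induces a $\stab_{\Gamma\G}(v)$-equivariant $*$-homomorphism $A(\stab_{\Gamma\G}(v);\{\langle 1\rangle\})\to A(\Gamma\G;\F)$, hence a continuous equivariant map $\overline{\Gamma\G}^{\F}\to\overline{\stab_{\Gamma\G}(v)}^{\{\langle 1\rangle\}}$. Bi-exactness of $\stab_{\Gamma\G}(v)\cong G_v\times\ast_{w\in\lk_\Gamma(v)}G_w$ (via Lemma~\ref{lem:finite quotient of biexact-group is biexact}) then transports back. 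This coset machinery is precisely why the paper lands directly on the link groups $\langle G_w\mid w\in\lk_\Gamma(v)\rangle$ without your commensurability reduction to star groups: Lemma~\ref{lem:properties of p_F, r_F} (2) shows the defect in right-invariance of $p_{\st_\Gamma(v)}$ is already small relative to link groups. Finally, the paper does not invoke Theorem~\ref{thm:intro convergence action} at all; the only dynamical input is the elementary absorbing-dynamics Lemma~\ref{lem:extension graph has absorbing dynamics}, which needs no diameter hypothesis, so your separate treatment of stars is unnecessary.
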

Finally, we will prove strong solidity by restricting to a locally finite defining graph. Instead, we remove the hyperbolicity condition on a defining graph. Strong solidity of von Neumann algebras was introduced by Ozawa and Popa in \cite{OP10} and attracted particular attention as it is known as the strongest indecomposability property that encompasses primeness, solidity, and absence of Cartan subalgebras (see \cite{Cyr10, CS13, PV14, Iso15, BCV18, Cas20, BC24}). Strong solidity of right angled Coxeter groups was studied by Borst and Caspers in \cite{BC24}. The finiteness condition of a defining graph was essentially used in \cite{BC24} because their argument involved induction on the number of vertices of a defining graph. This was later generalized to graph product of von Neumann algebras with a finite defining graph in \cite[Theorem F]{BCC24}. In Theorem \ref{thm:intro graph product becomes bi-exact and strongly solid} below, we significantly broaden the class of graph product of groups whose group von Neumann algebra is strongly solid, because we allow infinite defining graphs. 
\begin{thm}\label{thm:intro graph product becomes bi-exact and strongly solid}
    Suppose that $\Gamma$ is a uniformly locally finite countable graph with $\girth(\Gamma)>4$ and $\diam_\Gamma(\Gamma) > 2$, and that $\G=\{G_v\}_{v \in \Gamma}$ is a collection of finite groups with $\sup_{v \in V(\Gamma)}|G_v| < \infty$, then the group von Neumann algebra $L(\Gamma\G)$ is strongly solid.
\end{thm}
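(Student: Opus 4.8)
The plan is to run the Ozawa--Popa program for strong solidity on $M=L(\Gamma\G)$: produce (i) the complete metric approximation property for $M$, and (ii) enough smallness at infinity of $\Gamma\G$ — a bi-exactness-type property, relative to a controllable family of subgroups — to feed Popa's deformation/rigidity together with the Ozawa--Popa weak-compactness argument. The novelty forced by allowing $\Gamma$ infinite is that every step must be carried out \emph{at once}, replacing the induction on $|V(\Gamma)|$ of \cite{BC24, BCC24} by large-scale geometry of the extension graph $\Gamma^e$ (or crossing graph) of $\Gamma\G$ from \cite{Oya24b} together with uniform local finiteness of $\Gamma$; the latter also plays the role that hyperbolicity of $\Gamma$ plays elsewhere in this paper.

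\textbf{Step 1: CMAP.} First I would show $\Gamma\G$ is weakly amenable with Cowling--Haagerup constant $1$, equivalently that $L(\Gamma\G)$ has the complete metric approximation property. The girth and local finiteness hypotheses are exactly what is needed here: $\girth(\Gamma)>4$ forces every clique of $\Gamma$ to have at most two vertices, so the Davis/Niblo--Reeves $\mathrm{CAT}(0)$ cube complex associated to $\Gamma\G$ is (at most) $2$-dimensional, while uniform local finiteness of $\Gamma$ together with $\sup_v|G_v|<\infty$ makes that complex uniformly locally finite with $\Gamma\G$ acting properly. A proper action on a finite-dimensional, uniformly locally finite $\mathrm{CAT}(0)$ cube complex yields weak amenability with constant $1$, hence the complete metric approximation property of $L(\Gamma\G)$. (Permanence of constant-$1$ weak amenability under graph products gives the same conclusion, but the cube-complex route makes the infinite-graph case transparent and avoids hyperbolicity.)

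\textbf{Step 2: relative bi-exactness.} Next I would establish that $\Gamma\G$ is bi-exact relative to $\mathcal F=\{\,\la G_w\mid w\in\lk_\Gamma(v)\ra\,\}_{v\in V(\Gamma)}$ — the analogue for uniformly locally finite $\Gamma$ of Theorem~\ref{thm:intro graph product becomes relatively bi-exact}. Since $\Gamma$ need not be hyperbolic one cannot route this through the convergence action; instead one works directly with $\Gamma\G\act\Gamma^e$ and its combinatorics, the point being that coning off the peripheral/star structure turns $\Gamma^e$ into a fine hyperbolic, quasi-tree-like graph on which $\Gamma\G$ acts with the boundary smallness needed for relative bi-exactness. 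Here $\diam_\Gamma(\Gamma)>2$ makes this coned-off graph unbounded (so the action is non-elementary and the dichotomy below non-trivial), $\girth(\Gamma)>4$ gives connectivity and almost malnormality of $\mathcal F$, and uniform local finiteness of $\Gamma$ yields topological amenability of the boundary action relative to $\mathcal F$. A useful reduction: since $\Gamma$ is locally finite and triangle-free, each $\lk_\Gamma(v)$ is a finite independent set, so $\la G_w\mid w\in\lk_\Gamma(v)\ra=\ast_{w\in\lk_\Gamma(v)}G_w$ is a finite free product of finite groups, hence virtually free (possibly finite); in particular each member of $\mathcal F$ is weakly amenable with constant $1$ and has strongly solid group von Neumann algebra.

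\textbf{Step 3: gluing, and the main obstacle.} Finally one combines Steps 1--2. Given a diffuse amenable $A\subseteq L(\Gamma\G)$, the complete metric approximation property makes $\mathcal N_{L(\Gamma\G)}(A)''$ act weakly compactly on $A$, and relative bi-exactness together with the deformation of $\Gamma\G$ over its peripheral structure yields the Ozawa--Popa dichotomy: either $\mathcal N_{L(\Gamma\G)}(A)''$ is amenable, as desired, or $A\preceq_{L(\Gamma\G)}L(\la G_w\mid w\in\lk_\Gamma(v)\ra)$ for some $v$. In the latter case one invokes the strong solidity of the (virtually free) peripheral algebra from Step 2, together with a control of normalizers of subalgebras of peripheral algebras inside $L(\Gamma\G)$ (where the square-free/almost-malnormality input from $\girth(\Gamma)>4$ is used), to again conclude $\mathcal N_{L(\Gamma\G)}(A)''$ is amenable; hence $L(\Gamma\G)$ is strongly solid. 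I expect the main obstacle to be Step 2 and the induction-free form of Step 3: extracting relative bi-exactness of $\Gamma\G$ from uniform local finiteness and the girth/diameter bounds alone, and then running the deformation/rigidity and normalizer-control arguments uniformly over the possibly infinite family $\mathcal F$. By contrast, Step 1 packages known permanence properties and the strong solidity of the virtually free peripheral algebras is classical.
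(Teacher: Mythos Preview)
Your proposal takes a substantially harder route than the paper, and the key simplification you miss is this: under the hypotheses, the paper proves \emph{full} bi-exactness of $\Gamma\G$, not merely relative bi-exactness. The crucial observation (Lemma~\ref{lem:Cayley graph is hyperbolic}) is that when $\girth(\Gamma)>4$, the \emph{Cayley graph} of $\Gamma\G$ with respect to $\bigcup_v(G_v\setminus\{1\})$ is already hyperbolic --- no hypothesis on $\Gamma$ beyond the girth bound is needed, because the quasi-median structure forces thin bigons. Uniform local finiteness of $\Gamma$ together with $\sup_v|G_v|<\infty$ then makes this Cayley graph uniformly fine (Lemma~\ref{lem:Cayley graph becomes uniformly fine}), and Ozawa's boundary-amenability machinery applied directly to $\Delta X$ yields bi-exactness of $\Gamma\G$ outright (Theorem~\ref{thm:graph product becomes bi-exact and strongly solid}~(1) with $G=\{1\}$). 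Once one has full bi-exactness and CMAP, strong solidity is a one-line citation of \cite[Corollary~0.2]{CSU13}; there is no dichotomy to run, no infinite family $\mathcal F$ to control, and no normalizer argument inside peripheral algebras.

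By contrast, your Step~2 proposes to extract relative bi-exactness from the extension graph $\Gamma^e$, but the hyperbolicity and fineness of $\Gamma^e$ established in \cite{Oya24b} require $\Gamma$ itself to be hyperbolic (and the relative bi-exactness argument of Theorem~\ref{thm:graph product is relatively bi-exact} further needs $\girth(\Gamma)>20$); uniformly locally finite graphs with $\girth(\Gamma)>4$ need not be hyperbolic, so your coned-off construction would need genuinely new input. Even granting Step~2, your Step~3 --- running the Ozawa--Popa dichotomy and controlling normalizers uniformly over an \emph{infinite} family of peripherals --- is exactly the kind of induction-free argument that is delicate and that the paper avoids entirely. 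Your Step~1 is fine in spirit; the paper obtains CMAP more directly by invoking \cite{Rec17} on finite subgraphs and passing to the inductive limit via \cite[Example~12.3.2]{BO08}.
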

As a byproduct of proving Theorem \ref{thm:intro graph product becomes bi-exact and strongly solid}, we also get bi-exactness of generalization of graph-wreath product below. Theorem \ref{thm:biexactness of graph wreath product} provides a new construction of bi-exact groups. Note that $\Gamma\G$ below is non-amenable in most cases. See Definition \ref{def:graph wreath product} for $\Gamma\G \rtimes G$.
\begin{thm}\label{thm:biexactness of graph wreath product}
    Suppose that $\Gamma$ is a uniformly locally finite countable graph with $\girth(\Gamma)>4$ and that a countable group $G$ acts on $\Gamma$ satisfying $|\stab_G(v)|<\infty$ for any $v \in V(\Gamma)$. Let $\G=\{G_v\}_{v \in \Gamma}$ be a collection of finite groups such that $\sup_{v \in V(\Gamma)}|G_v| < \infty$ and $G_v = G_{gv}$ for any $g \in G$ and $v \in V(\Gamma)$. Then, the following hold.
    \begin{itemize}
        \item[(1)]
        If $G$ is amenable, then $\Gamma\G \rtimes G$ is bi-exact.
        \item[(2)]
        If $G$ is bi-exact, then $\Gamma\G \rtimes G$ is bi-exact relative to $\{\Gamma\G\}$.
    \end{itemize}
\end{thm}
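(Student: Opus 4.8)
The plan is to derive both statements from the dynamics of $\Gamma\G\rtimes G$ on a compactification of the hyperbolic graph $\hGammae$ (the extension graph, or crossing graph) that already encodes the bi-exactness of $\Gamma\G$. First I would record exactness: graph products of exact groups are exact, so $\Gamma\G$ is exact; $G$ is exact in both cases, as amenability and bi-exactness each imply exactness; and exactness is preserved by extensions, so $\Gamma\G\rtimes G$ is exact, which is one half of (relative) bi-exactness in both parts. I would then note that the construction of $\hGammae$ is functorial in the pair $(\Gamma,\G)$, so the hypothesis $G_v=G_{gv}$ lets the $G$-action on $\Gamma$ promote to an action of $G$ on $\hGammae$ by graph automorphisms; together with the canonical $\Gamma\G$-action this assembles into an action $\Gamma\G\rtimes G\act\hGammae$ by isometries, hence an action on the Bowditch compactification $\Delta\hGammae$. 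Write $\phi_h$ for the automorphism of $\hGammae$ induced by $h\in G$.

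Next I would isolate two structural points. First, since $\girth(\Gamma)>4$, every link $\lk_\Gamma(v)$ is an independent set — an edge there would produce a triangle — so by uniform local finiteness $\lk_\Gamma(v)$ is finite, whence the star subgroup $G_v\times\la G_w\mid w\in\lk_\Gamma(v)\ra$ is a finite group; as this and its conjugates are precisely the $\Gamma\G$-stabilizers of vertices of $\hGammae$, the relative bi-exactness of Theorem~\ref{thm:intro graph product becomes relatively bi-exact} — or rather the version established in the course of proving Theorem~\ref{thm:intro graph product becomes bi-exact and strongly solid}, which requires no hyperbolicity of $\Gamma$ — collapses to honest bi-exactness of $\Gamma\G$, realised by a topologically amenable, ``small at infinity'' action $\Gamma\G\act\Delta\hGammae$. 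Second, since $|\stab_G(v)|<\infty$, the $G$-stabilizer of every vertex of $\hGammae$ is finite, so with the first point every vertex of $\hGammae$ has finite $(\Gamma\G\rtimes G)$-stabilizer. (Note that if $\Gamma$ is bounded then $\Gamma$ is finite, hence $G$ is finite, hence $\Gamma\G\rtimes G$ is a finite extension of $\Gamma\G$ and both conclusions are immediate; so one may assume $\Gamma$ unbounded.)

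For part~(1): since $G$ is amenable, topological amenability of $\Gamma\G\act\Delta\hGammae$ passes to $\Gamma\G\rtimes G\act\Delta\hGammae$ by the standard permanence of topologically amenable actions under extensions with amenable quotient. It then remains to check that this compact $(\Gamma\G\rtimes G)$-space is ``small at infinity'': for each fixed $s$ the orbit points $\gamma\cdot x_0$ and $\gamma s\cdot x_0$ converge to the same point of $\Delta\hGammae$ as $\gamma\to\infty$. Writing $\gamma=(n,h)$ with $n\in\Gamma\G$, $h\in G$: for $s\in\Gamma\G$ one has $\gamma\cdot x_0=n\cdot\phi_h(x_0)$ and $\gamma s\cdot x_0=n\cdot\phi_h(s\cdot x_0)$, and because $\phi_h$ and $n$ act isometrically on $\hGammae$ these vertices stay at $\hGammae$-distance at most $d_{\hGammae}(x_0,s\cdot x_0)$, a bound independent of $\gamma$; for $s$ a lift of $g\in G$ the same holds with $s\cdot x_0$ replaced by $g\cdot x_0$. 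I would then have to show that such a uniformly bounded perturbation in $\hGammae$ does not change the limit in $\Delta\hGammae$; here the fineness of $\hGammae$ together with the finiteness of its vertex stabilizers (which rules out group elements escaping to infinity in $\Gamma\G\rtimes G$ while their orbit point stays bounded in $\hGammae$ yet accumulates at two distinct boundary points) is used exactly as in the proof of bi-exactness of $\Gamma\G$. This gives bi-exactness of $\Gamma\G\rtimes G$.

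For part~(2): now $G$ is only bi-exact, so $\Gamma\G\rtimes G\act\Delta\hGammae$ need no longer be topologically amenable — boundary-point stabilizers can contain non-amenable subgroups of $G$ — and I would aim only for bi-exactness relative to $\{\Gamma\G\}$ (the finite link subgroups need not appear in the list). I see two routes. One is to invoke a permanence statement for relative bi-exactness under extensions, of the shape: if $N$ is bi-exact, $N\lhd\Lambda$, $\Lambda/N$ is bi-exact, and $\Lambda/N$ acts on a hyperbolic model of $N$ with finite vertex stabilizers compatibly with the conjugation action, then $\Lambda$ is bi-exact relative to $\{N\}$ — applied with $N=\Gamma\G$, $\Lambda=\Gamma\G\rtimes G$, the finite-stabilizer hypothesis being exactly $|\stab_G(v)|<\infty$ together with $\girth(\Gamma)>4$. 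The other is to build the witnessing compact space directly as a suitable $(\Gamma\G\rtimes G)$-quotient of $\Delta\hGammae\times\overline{G}$, with $\overline{G}$ a compactification realising bi-exactness of $G$ and $\Gamma\G$ acting trivially on the second factor: the $\Delta\hGammae$-factor supplies amenability relative to $\{\Gamma\G\}$ and the $\overline{G}$-factor supplies the small-at-infinity functions transverse to $\Gamma\G$-cosets. The hard part — already present in part~(1) but sharper here — will be the interaction of the two families of infinite directions: the $G$-action can move points of $\partial\hGammae$, but only by isometries with finite vertex stabilizers, and one must exploit precisely this to keep the small-at-infinity estimates relative to $\Gamma\G$-cosets intact while not destroying amenability relative to $\{\Gamma\G\}$, with the Bowditch topology behaving correctly under the relevant bounded perturbations.
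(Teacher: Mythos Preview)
Your proposal has a fundamental error that breaks the entire strategy. You claim that since $\girth(\Gamma)>4$ the link $\lk_\Gamma(v)$ is an independent set and finite, ``whence the star subgroup $G_v\times\la G_w\mid w\in\lk_\Gamma(v)\ra$ is a finite group.'' This is false: precisely because $\lk_\Gamma(v)$ is independent, the subgroup $\la G_w\mid w\in\lk_\Gamma(v)\ra$ inside $\Gamma\G$ is the \emph{free product} $\ast_{w\in\lk_\Gamma(v)}G_w$, not the direct product. As soon as $|\lk_\Gamma(v)|\ge 2$ this group is infinite (indeed virtually free non-abelian), so the $\Gamma\G$-stabilizers of vertices in $\Gamma^e$ are infinite, the relative bi-exactness of Theorem~\ref{thm:intro graph product becomes relatively bi-exact} does \emph{not} collapse to absolute bi-exactness, and the $(\Gamma\G\rtimes G)$-vertex stabilizers in $\Gamma^e$ are not finite either. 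A second, independent problem: hyperbolicity of the extension graph $\Gamma^e$ is only established when $\Gamma$ itself is hyperbolic, but Theorem~\ref{thm:biexactness of graph wreath product} makes no such assumption, so you have no hyperbolic model to start from.

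The paper abandons the extension graph here and instead works with the Cayley graph $X$ of $\Gamma\G$ with respect to $\bigcup_v(G_v\setminus\{1\})$. Two preparatory lemmas show that under $\girth(\Gamma)>4$ this Cayley graph is hyperbolic, and under uniform local finiteness plus $\sup_v|G_v|<\infty$ it is uniformly fine. The key structural gain is that under the $\Gamma\G\rtimes G$-action on $X$, vertex stabilizers are conjugate to $G$ (the action on $V(X)=\Gamma\G$ is transitive with $\stab(1)=G$) and edge stabilizers are finite (equal to $\stab_G(v)$ for the relevant $v$). One then runs Ozawa's argument (Proposition~\ref{prop:Ozawa Prop 11}) with $K=V(X)$: the Borel maps $\zeta_n\colon\Delta X\to\Prob(V(X))$ come from \cite[Lemma~8]{Oza06}, and the required amenability of $\stab_{\Gamma\G\rtimes G}(x)\act Y$ holds tautologically in~(1) (since the stabilizer is amenable) and in~(2) via the $G$-equivariant $\ast$-homomorphism $\ell^\infty(G)\to\ell^\infty(\Gamma\G\rtimes G)$, $f\mapsto (gh\mapsto f(h))$, which lands in $A(\Gamma\G\rtimes G;\{\Gamma\G\})$ and dualizes to a continuous $G$-map $\overline{\Gamma\G\rtimes G}^{\{\Gamma\G\}}\to\overline{G}^{\{\la 1\ra\}}$. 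Your product-space idea for~(2) is in the right spirit, but the correct ``amenability-supplying'' factor is $\Delta X$ for the Cayley graph, not $\Delta\Gamma^e$.
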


This paper is organized as follows. In Section \ref{sec:Preliminaries}, we explain preliminary definitions and known results that are necessary in this paper. In Section \ref{sec:Convergence action on the extension graph}, we discuss the application to convergence actions and prove Theorem \ref{thm:intro convergence action}. In Section \ref{sec:Relative bi-exactness}, we discuss the application to relative bi-exactness and prove Theorem \ref{thm:intro graph product becomes relatively bi-exact}. In Section \ref{sec:Strong solidity and bi-exactness}, we discuss strong solidity and prove Theorem \ref{thm:intro graph product becomes bi-exact and strongly solid} and Theorem \ref{thm:biexactness of graph wreath product}.

%\par

\vspace{2mm}

\noindent\textbf{Acknowledgments.}
I would like to thank Changying Ding for helpful discussions and for teaching me the proof of Proposition \ref{prop:relatively biexact groups are prime}. I would like to thank an anonymous referee for many helpful comments, which improved the exposition of the paper a lot.

\section{Preliminaries}
\label{sec:Preliminaries}

We start with preparing necessary notations about graphs, metric spaces, and group actions. Throughout this paper, we assume that graphs are simplicial (i.e. having no loops nor multiple edges) and a group acts on a graph as graph automorphisms unless otherwise stated.

\begin{defn}\label{def:concepts in graph theory}
    A \emph{graph} $X$ is the pair of a set $V(X)$ and a subset $E(X) \subset V(X)\times V(X)$ satisfying $\forall\, x \in V(X),\, (x,x) \notin E(X)$ and $\forall\, (x,y) \in V(X)^2,\, (x,y) \in E(X) \Leftrightarrow (y,x) \in E(X)$. An element of $V(X)$ is called a \emph{vertex} and an element of $E(X)$ is called an \emph{edge}. For an edge $e=(x,y) \in E(X)$, we denote $x$ by $e_-$ and $y$ by $e_+$, that is, we have $e=(e_-,e_+)$. For a vertex $x\in V(X)$, we define $\lk_X(x), \st_X(x) \subset V(X)$ and $\elk_X(x) \subset E(X)$ by
    \begin{align*}
        \lk_X(x)&=\{y\in V(X) \mid (x,y) \in E(X)\}, \\
        \st_X(x)&=\{x\} \cup \lk_X(x), \\
        \elk_X(x) &= \{ (x,y) \in E(X) \mid y \in \lk_X(x) \}.
    \end{align*}
    We define $\leaf(X)$ by $\leaf(X) = \{x \in V(X) \mid |\lk_X(v)|\le 1 \}$. A \emph{path} $p$ in $X$ is a sequence $p=(p_0,\cdots,p_n)$ of vertices, where $n \in \NN\cup\{0\}$ and $p_i \in V(X)$, such that $(p_i,p_{i+1}) \in E(X)$ for any $i \ge 0$. Given a path $p=(p_0,\cdots,p_n)$ in $X$,
    \begin{itemize}
        \item[-]
        the \emph{length} $|p| \in \NN\cup\{0\}$ of $p$ is defined by $|p|=n$,
        \item[-]
        we denote $p_0$ (the \emph{initial vertex}) by $p_-$ and $p_n$ (the \emph{terminal vertex}) by $p_+$,
        \item[-]
        we define $V(p)=\{p_i \mid 0 \le i \le n\}$ and $E(p) = \{(p_{i-1},p_i),(p_i,p_{i-1}) \mid 1 \le i \le n\}$,
        \item[-]
        a \emph{subsequence of} $V(p)$ is $(p_{i_0},\cdots,p_{i_m})$ with $m \in \NN\cup\{0\}$ such that $i_0 \le \cdots \le i_m$,
        \item[-]
        we say that $p$ \emph{has backtracking at} $p_i$ if $p_{i-1} = p_{i+1}$.
    \end{itemize}
    A \emph{loop} $p$ in $X$ is a path such that $p_-=p_+$. A \emph{circuit} $p=(p_0,\cdots,p_n)$ in $X$ is a loop with $|p|>2$ and without self-intersection except $p_-=p_+$ i.e. $p_i\neq p_j$ for any $i,j$ with $0 \le i <j <n$. For $e \in E(X)$ and $n\in \NN$, we define $\C_X(e,n)$ to be the set of all circuits in $X$ that contain $e$ and have length at most $n$. The \emph{girth} of $X$ $\girth(X) \in \NN$ is defined by $\girth(X)=\min\{|p| \mid \text{$p$ is a circuit in $X$}\}$ if there exists a circuit in $X$. If there is no circuit in $X$, then we define $\girth(X)=\infty$ for convenience. A graph is called \emph{connected} if for any $x,y \in V(X)$, there exists a path $p$ such that $p_-=x$ and $p_+=y$. When a graph $X$ is connected, $X$ becomes a geodesic metric space by a graph metric $d_X$ (i.e. every edge has length 1), hence we also denote this metric space by $X$. Given $L \subset V(X)$, the \emph{induced subgraph on} $L$ is defined by the vertex set $L$ and the edge set $E(X)\cap L^2$.
\end{defn}

\begin{defn}
    Let $X$ be a connected graph. A path $p$ in $X$ is called \emph{geodesic} if $|p|$ is the smallest among all paths from $p_-$ to $p_+$. For $x,y \in V(X)$, we denote by $\geo_X(x,y)$ the set of all geodesic paths in $X$ from $x$ to $y$. For a path $p=(p_0, \cdots, p_n)$ without self-intersection and $i,j$ with $0 \le i \le j \le n$, we denote the subpath $(p_i, \cdots, p_j)$ of $p$ by $p_{[p_i, p_j]}$.
\end{defn}

\begin{rem}
    Since we consider only simplicial graphs throughout this paper, the girth of a graph is always at least 3.
\end{rem}

\begin{rem}\label{rem:vertex with 0 link}
    When a graph $X$ is connected, $x\in V(X)$ satisfies $|\lk_X(x)| = 0$ if and only if $V(X)=\{x\}$.
\end{rem}

\begin{rem}\label{rem:diam when V(Gamma)setminus I(Gamma) is finite}
    If a connected graph $\Gamma$ satisfies $|V(X)\setminus \leaf(X)|<\infty$, then we have $\diam_X(X) < \infty$. Indeed, when $\diam_X(X) \ge 2$, for any $v \in \leaf(X)$ and $w \in \lk_X(v)$, we have $w \notin \leaf(X)$ by $\diam_X(X) \ge 2$. This implies $\diam_X(X) \le \diam_X(V(X)\setminus \leaf(X)) + 2 < \infty$.
\end{rem}

\begin{defn}
    Let $(X,d_X)$ be a metric space. For a subset $A \subset X$, the \emph{diameter} of $A$ $\diam_X(A) \in [0,\infty]$ is defined by $\diam_X(A) = \sup_{x,y \in A}d_X(x,y)$. For $A \subset X$ and $r \in \RR_{>0}$, we define $\N_X(A,r)\subset X$ by $\N_X(A,r) = \{y \in X \mid \exists \, x\in A, d_X(x,y) \le r\}$. When $A$ is a singleton i.e. $A=\{x\}$ with $x \in X$, we denote $\N_X(\{x\},r)$ by $\N(x,r)$ for brevity, that is, $\N(x,r)=\{y \in X \mid d_X(x,y) \le r\}$. For two subsets $A,B \subset X$, we define $d_X(A,B) \in \RR_{\ge0}$ by $d_X(A,B)=\inf_{x\in A, y\in B}d_X(x,y)$.
\end{defn}

\begin{defn}
    Let $G$ be a group. For $g,h \in G$, we define $[g,h] \in G$, by $[g,h] = ghg^{-1}h^{-1}$. For subsets $A, B\subset G$, we define $A B, [A,B] \subset G$ by $A B=\{gh \in G \mid g \in A, h \in B\}$ and $[A, B] = \{[g,h]\in G \mid g \in A, h \in B\}$. For a subset $A \subset G$, we denote by $\la A \ra$ the subgroup of $G$ generated by $A$ and also by $\la\!\la A \ra\!\ra$ the normal subgroup of $G$ generated by $A$, that is, $\la\!\la A \ra\!\ra = \la \bigcup_{g \in G}gAg^{-1} \ra$.
\end{defn}

\begin{defn}
    Let a group $G$ act on a set $X$. We denote by $X/G$ the quotient set of the orbit equivalence relation induced by the action $G \act X$. For $x \in X$, we define $\stab_G(x)\subset G$ by $\stab_G(x)=\{g \in G \mid gx=x\}$. For $g \in G$, we define $\fix_X(g) \subset X$ by $\fix_X(g) = \{ x \in X \mid gx=x \}$.
\end{defn}

\subsection{Graph products of groups}
\label{subsec:Graph products of groups}

Readers are referred to \cite[Definition 3.5]{Gre} for details of graph product of groups.

\begin{defn}\label{def:graph product of groups}
    Let $\Gamma$ be a simplicial graph and $\G=\{G_v\}_{v\in V(\Gamma)}$ be a collection of groups. The \emph{graph product} $\Gamma\G$ is defined by
    \begin{align*}
        \Gamma\G
        =
        \ast_{v\in V(\Gamma)} \, G_v ~ / ~ \la\!\la\, \{ [g_v, g_w] \mid (v,w)\in E(\Gamma), g_v\in G_v, g_w \in G_w \}\, \ra\!\ra.
    \end{align*}
\end{defn}

\begin{rem}\label{rem:vertex subgroup}
    For any $v \in V(\Gamma)$, the group $G_v$ is a subgroup of $\Gamma\G$. We often identify $G_v$ as a subgroup of $\Gamma\G$. Also, for any $v,w \in V(\Gamma)$ with $v\neq w$, we have $G_v \cap G_w = \{1\}$.
\end{rem}

\begin{defn}\label{def:reduced form of graph product}
    Let $\Gamma$ be a simplicial graph and $\G=\{G_v\}_{v\in V(\Gamma)}$ be a collection of groups. Given $g \in \Gamma\G$, a geodesic word of $g$ in the generating set $\bigsqcup_{v\in V(\Gamma)}(G_v\setminus\{1\})$ is called a \emph{normal form of} $g$. We denote the word length of $g$ by $\|g\|$ (i.e. $\|g\|=|g|_{\bigsqcup_{v\in V(\Gamma)}(G_v\setminus\{1\})}$) and call $\|g\|$ the \emph{syllable length} of $g$. Given a normal form $g=g_1\cdots g_n$ of $g$,
    \begin{itemize}
        \item [-] 
        each letter $g_i \in \bigsqcup_{v\in V(\Gamma)}(G_v\setminus\{1\})$ is called a \emph{syllable},
        \item [-]
        and we refer to the process of obtaining the new normal form $g=g_1\cdots g_{i+1}g_i \cdots g_n$, where $1\le i <n$, $g_i \in G_{v_i}$, $g_{i+1} \in G_{v_{i+1}}$, and $(v_i,v_{i+1}) \in E(\Gamma)$, as \emph{syllable shuffling}.
    \end{itemize}
    For $h_1,\cdots,h_n \in \Gamma\G$, we say that the decomposition $h_1\cdots h_n$ is \emph{reduced} if we have $\|h_1\cdots h_n\| = \|h_1\| + \cdots + \|h_n\|$.
\end{defn}

\begin{conv}\label{conv:normal form}
    Throughout this paper, when we say that $g=g_1\cdots g_n$ is a normal form of $g \in \Gamma\G$, we assume that $g_1\cdots g_n$ is a geodesic word in $\bigsqcup_{v\in V(\Gamma)}(G_v\setminus\{1\})$ satisfying $g_i \in \bigsqcup_{v\in V(\Gamma)}(G_v\setminus\{1\})$ for each $i$, even if we don't mention it for brevity.
\end{conv}

\begin{rem}\label{rem:reduced decomposition}
    The decomposition $g=h_1\cdots h_n$ is reduced if and only if for any normal form $h_i=s_{i,1}\cdots s_{i,\|g_i\|}$ of each $h_i$, the word $g=(s_{1,1}\cdots s_{1,\|g_1\|}) \cdots (s_{n,1}\cdots s_{n,\|g_n\|})$ is a normal form of $g$.
\end{rem}

Theorem \ref{thm:normal form theorem} below follows by the same proof as \cite[Theorem 3.9]{Gre}, although the underlying graph $\Gamma$ is assumed to be finite in \cite[Theorem 3.9]{Gre}. That is, we don't need to assume that $\Gamma$ is finite.

\begin{thm}[Normal form theorem]\label{thm:normal form theorem}
    Let $\Gamma$ be a simplicial graph and $\G=\{G_v\}_{v\in V(\Gamma)}$ be a collection of groups. For any $g \in \Gamma\G$ with $g \neq 1$, $g=g_1\cdots g_n$ is a normal form of $g$ if and only if for any pair $(i,j)$ with $1 \le i < j \le n$ satisfying $v_i=v_j$, there exists $k$ with $i<k<j$ such that $(v_k,v_i) \notin E(\Gamma)$. Also, if $g=g_1\cdots g_n$ and $g=h_1\cdots h_m$ are normal forms of $g$, then $n=m$ and we obtain one from the other by finite steps of syllable shuffling.
\end{thm}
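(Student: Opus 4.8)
The plan is to run the classical van der Waerden normal-form argument, building a set $\mathcal{W}$ on which $\Gamma\G$ acts so that the orbit of a distinguished point detects exactly the reduced words; this is precisely the proof of \cite[Theorem 3.9]{Gre}, and the point is simply that nothing in it uses $|V(\Gamma)|<\infty$, since the universal property of the free product $\ast_{v\in V(\Gamma)}G_v$ holds for arbitrary index sets and every word involves only finitely many syllables. Concretely, let $\mathcal{R}$ be the set of finite sequences $w=(h_1,\dots,h_n)$ with $h_i\in G_{v_i}\setminus\{1\}$ satisfying the combinatorial condition of the theorem — for every $i<j$ with $v_i=v_j$ there is $k$ with $i<k<j$ and $(v_k,v_i)\notin E(\Gamma)$ — and let $\mathcal{W}=\mathcal{R}/\!\!\sim$, where $\sim$ is the equivalence relation generated by syllable shuffling (which preserves $n$, indeed the multiset of syllables).

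The first technical step is a ``unique leftmost syllable'' lemma: if $w\in\mathcal{R}$ and $v_k\in\lk_\Gamma(v_i)$ for all $k<i$, then such an $i$ is unique, and one says $[w]$ begins with $v_i$; uniqueness follows because two such indices $i<j$ would have $v_i=v_j$ with every $v_k$ for $i<k<j$ lying in $\lk_\Gamma(v_i)$, contradicting the defining condition of $\mathcal{R}$. Granting this, I would let each $G_v$ act on $\mathcal{W}$ by: $g\cdot[w]$ prepends $g$ if $[w]$ does not begin with $v$, and if $[w]$ begins with $v$ through a syllable $h_i$, then $g\cdot[w]$ replaces $h_i$ by $gh_i$ (deleting that syllable when $gh_i=1$). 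Checking this descends to $\mathcal{W}$ and is a $G_v$-action is a short case analysis; by the universal property these assemble into an action of $\ast_v G_v$ on $\mathcal{W}$, and one verifies that for an edge $(v,w)\in E(\Gamma)$ the maps induced by $g_vg_w$ and $g_wg_v$ agree (this is the only place adjacency enters), so the action factors through $\Gamma\G\act\mathcal{W}$. Now consider the orbit map $g\mapsto g\cdot[\varnothing]$: a downward induction on position shows that for any reduced word $g_1\cdots g_n$ of $g$ one has $g\cdot[\varnothing]=[(g_1,\dots,g_n)]$, the step $g_i\cdot[(g_{i+1},\dots,g_n)]=[(g_i,g_{i+1},\dots,g_n)]$ holding because, if $(g_{i+1},\dots,g_n)$ began with $v_i$, the blocking syllable supplied by reducedness at the corresponding pair would contradict ``begins with''.

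It remains to deduce the two assertions. First, any word in the generators can be brought to a reduced word without increasing its length: if a word fails the condition, choose $i<j$ with $v_i=v_j$ and $v_k\in\lk_\Gamma(v_i)$ for all $i<k<j$; then each $g_k$ commutes with $g_j$, so shuffle $g_j$ next to $g_i$ and amalgamate (deleting if the product is $1$), strictly shortening the word, and iterate until reduced. Combined with the orbit computation, two reduced words for the same element map to the same class of $\mathcal{W}$, hence differ by finitely many syllable shufflings and in particular have equal length $N$; and $N=\|g\|$, since any word for $g$ reduces to such a word without growing. Therefore a word of length $\|g\|$ cannot be shortened, so by the reduction step it is already reduced, which gives ``normal form $\Leftrightarrow$ combinatorial condition'', while two normal forms, being reduced words for the same element, are related by finitely many syllable shufflings. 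I expect the main obstacle to be exactly the construction of the $\Gamma\G$-action on $\mathcal{W}$ — well-definedness on $\sim$-classes, the $G_v$-action axioms, and the triviality of the commutator relations — with everything else being combinatorial bookkeeping that is insensitive to whether $V(\Gamma)$ is finite.
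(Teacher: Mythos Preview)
Your proposal is correct and is exactly what the paper does: it does not give an independent proof but simply observes that the proof of \cite[Theorem 3.9]{Gre} goes through verbatim without the finiteness hypothesis on $V(\Gamma)$, which is precisely the van der Waerden-type action on equivalence classes of reduced words that you outline. Your sketch faithfully reproduces Green's argument and correctly isolates the only point to check, namely that the construction of the $\Gamma\G$-action on $\mathcal{W}$ never appeals to $|V(\Gamma)|<\infty$.
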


\begin{defn}\label{def:support of g}
    Let $\Gamma$ be a simplicial graph and $\G=\{G_v\}_{v\in V(\Gamma)}$ be a collection of groups. Let $g=g_1\cdots g_n$ be a normal form of $g \in \Gamma\G \setminus \{1\}$. For each syllable $g_i$, there exists a unique vertex $v_i \in V(\Gamma)$ with $g \in G_{v_i} \setminus \{1\}$. We define $\supp(g) \subset V(\Gamma)$ by $\supp(g)=\{v_i \mid 1 \le i\le n\}$ and call $\supp(g)$ the \emph{support of} $g$. We define the support of $1\in \Gamma\G$ by $\supp(1)=\emptyset$.
\end{defn}

\begin{rem}
    The support of $g \in \Gamma\G$ is well-defined by Remark \ref{rem:vertex subgroup} and Theorem \ref{thm:normal form theorem}.
\end{rem}

\begin{rem}
    When $g \in G_v\setminus\{1\}$ with $v \in V(\Gamma)$, we often consider the singleton $\supp(g) \,(=\{v\})$ as an element of $V(\Gamma)$ (and denote $\supp(g) \in \st_\Gamma(v)$ for example) by abuse of notation.
\end{rem}

\begin{rem}
    When $g=g_1\cdots g_n$ is a normal form of $g \in \Gamma\G \setminus \{1\}$, we have $\supp(g_i) \neq \supp(g_{i+1})$ for any $i$ since the word $g_1\cdots g_n$ is geodesic.
\end{rem}

Lemma \ref{lem:subsequence of reduced sequence} below easily follows from minimality of the length of a geodesic word.

\begin{lem}\label{lem:subsequence of reduced sequence}
    Let $\Gamma$ be a simplicial graph and $\G=\{G_v\}_{v\in V(\Gamma)}$ be a collection of groups. Let $g=g_1\cdots g_n$ be a normal form of $g \in \Gamma\G$, then any word obtained from $g_1\cdots g_n$ by finite steps of syllable shuffling is a normal form of $g$. Also, for any $i,j$ with $1\le i \le j \le n$, the subword $g_i\cdots g_j$ is a normal form.
\end{lem}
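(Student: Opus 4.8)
Lemma~\ref{lem:subsequence of reduced sequence} makes two assertions: that syllable shuffling preserves normal forms, and that any contiguous subword of a normal form is again a normal form. The plan is to derive both from the characterization in Theorem~\ref{thm:normal form theorem}, which says that $g_1\cdots g_n$ is a normal form if and only if there is no pair $i<j$ with $v_i=v_j$ (where $v_k = \supp(g_k)$) for which every intermediate index $k$ satisfies $(v_k,v_i)\in E(\Gamma)$; call such a pair a \emph{bad pair}. So it suffices to show that neither operation can create a bad pair out of a word that has none.

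First I would handle syllable shuffling, and since any finite sequence of shuffles is a composition of single transpositions, it is enough to treat one transposition: we pass from $g_1\cdots g_n$ to $g_1\cdots g_{i-1}g_{i+1}g_ig_{i+2}\cdots g_n$, where $(v_i,v_{i+1})\in E(\Gamma)$, in particular $v_i\neq v_{i+1}$ since $\Gamma$ is simplicial. Suppose the new word had a bad pair; I would track what happens to each potential repeated-vertex pair under the swap. A pair of positions both $\le i-1$ or both $\ge i+2$, or one on each side, has the same set of intermediate vertices before and after the swap (the multiset $\{v_i,v_{i+1}\}$ appears among the intermediates in both words, or in neither, equally), so such a pair is bad after only if it was bad before. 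The only pairs whose intermediate set genuinely changes are those involving position $i$ or $i+1$. For a pair consisting of the swapped letters themselves: $v_i\neq v_{i+1}$, so they never form a repeated-vertex pair. For a pair $(\ell, i)$ or $(\ell, i+1)$ with $\ell<i$ sharing a vertex: in the old word the relevant repeated pair was either $(\ell,i)$ or $(\ell,i+1)$ with the same or strictly larger intermediate set than in the new word — more precisely, if $v_\ell=v_i$, then the new word has this pair as $(\ell,i')$ where $i'$ is the shifted position of $g_i$, and its intermediate vertices are those of the old pair $(\ell,i)$ together with $v_{i+1}$; so a bad new pair $(\ell,i')$ forces the old $(\ell,i)$ to have had $(v_k,v_\ell)\in E(\Gamma)$ for all old intermediate $k$, and since $v_{i+1}=v_\ell$ would contradict $v_i\ne v_{i+1}$... this needs the careful bookkeeping, and symmetrically for pairs on the right and for $v_\ell = v_{i+1}$. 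In every case a bad pair in the shuffled word yields a bad pair in the original, contradiction; hence the shuffled word is a normal form.

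For the subword statement, suppose $g_1\cdots g_n$ is a normal form and fix $1\le i\le j\le n$; I claim $g_i\cdots g_j$ is a normal form. A bad pair $(p,q)$ in the subword, with $i\le p<q\le j$, is by definition a pair with $v_p=v_q$ such that $(v_k,v_p)\in E(\Gamma)$ for all $k$ with $p<k<q$. But these are exactly the same indices $k$ that lie strictly between $p$ and $q$ in the full word, so $(p,q)$ would be a bad pair in $g_1\cdots g_n$ as well, contradicting that the full word is a normal form. (One should also note the subword is geodesic in the generating set, i.e.\ has no cancellation making it shorter — but this is immediate: if $g_i\cdots g_j$ had syllable length $<j-i+1$, concatenating with $g_1\cdots g_{i-1}$ and $g_{j+1}\cdots g_n$ would give a word of length $<n$ representing $g$, contradicting that $g_1\cdots g_n$ is geodesic.)

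I do not expect a serious obstacle here; the lemma is essentially a direct unpacking of Theorem~\ref{thm:normal form theorem}. The only place demanding care is the single-transposition analysis for syllable shuffling: one must be exhaustive about which position-pairs can become bad, and observe that the condition $v_i\neq v_{i+1}$ (forced by $(v_i,v_{i+1})\in E(\Gamma)$ and simpliciality) is exactly what rules out the swapped letters, or a letter equal to one of them, from creating a new bad configuration. Since this bookkeeping is routine, I would in the write-up simply reduce to a single transposition and invoke Theorem~\ref{thm:normal form theorem}, leaving the case check to the reader, as the lemma statement itself advertises (``easily follows from minimality of the length of a geodesic word'').
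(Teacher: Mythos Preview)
Your proposal is correct, but for the syllable-shuffling part you take a substantially longer route than the paper intends. The paper's hint ``easily follows from minimality of the length of a geodesic word'' is meant literally: recall that a normal form is by definition a \emph{geodesic} word in $\bigsqcup_v(G_v\setminus\{1\})$. A single syllable shuffle swaps $g_ig_{i+1}$ for $g_{i+1}g_i$ where $(v_i,v_{i+1})\in E(\Gamma)$, so the two letters commute in $\Gamma\G$; hence the shuffled word represents the same element $g$ and has the same length $n=\|g\|$, and is therefore geodesic, i.e.\ a normal form. No appeal to the bad-pair criterion of Theorem~\ref{thm:normal form theorem} is needed. Your combinatorial case analysis does work, but it is overkill here and, as you yourself note, the bookkeeping is fiddly; the minimality argument is one line.

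For the subword part you already give exactly the intended argument in your parenthetical: a strictly shorter representative of $g_i\cdots g_j$ would splice in to give a word of length $<n$ for $g$, contradicting geodesicity. The preceding bad-pair discussion for subwords is again correct but redundant once you have this.
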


See \cite{Oya24b} for details on the extension graph of graph product.

\begin{defn}\label{def:extension graph}
Let $\Gamma$ be a simplicial graph and $\G=\{G_v\}_{v\in V(\Gamma)}$ be a collection of non-trivial groups. The \emph{extension graph} $\Gamma^e$ is defined as follows.
\begin{align*}
    V(\Gamma^e)
    &=
    \{gG_vg^{-1} \in 2^{\Gamma\G} \mid v\in V(\Gamma), g\in \Gamma\G \}, \\
    E(\Gamma^e)
    &=
    \{ (gG_vg^{-1},hG_wh^{-1}) \in V(\Gamma^e)^2 \mid \text{$gG_vg^{-1}\neq hG_wh^{-1}$ and $[gG_vg^{-1},hG_wh^{-1}]=\{1\}$} \}.
\end{align*}
\end{defn}

\begin{rem}\label{rem:notation on extension graph}
As in \cite[Convention 3.9]{Oya24b}, the map $\iota \colon \Gamma \to \Gamma^e$ defined by $\iota(v)=G_v$ for each $v\in V(\Gamma)$ is a graph isomorphism from $\Gamma$ to the induced subgraph on $\iota(V(\Gamma))$ in $\Gamma^e$. Hence, we will consider $\Gamma$ as a subgraph of $\Gamma^e$ by this embedding and denote $G_v$ by $v$ for each $v\in V(\Gamma)$. Also, $\Gamma\G$ acts on $\Gamma^e$ by $\Gamma\G \times V(\Gamma^e) \ni (g,x) \to gxg^{-1} \in V(\Gamma^e)$. We will denote $gxg^{-1}$ by $g.x$ for simplicity i.e. $g.x = gxg^{-1}$. By \cite[Remark 3.11]{Oya24b}, for any $x \in V(\Gamma^e)$, there exists unique $v(x) \in V(\Gamma)$ such that $x \in \Gamma\G.v$.
\end{rem}

\subsection{Fine hyperbolic graphs}
\label{subsec:Fine hyperbolic graphs}

In this section, we review fine hyperbolic graphs and Bowditch topology on its Gromov bordification. Readers are referred to \cite{BH99} and \cite[Section 8]{Bow12} for details.

\begin{defn}\label{def:gromov product}
    Let $(X,d_X)$ be a metric space. For $x,y,z\in X$, we define $(x,y)_z$ by
\begin{align}\label{eq:gromov product}
    (x,y)_z=\frac{1}{2}\left( d_X(x,z)+d_X(y,z)-d_X(x,y) \right).    
\end{align}
\end{defn}

\begin{prop} \label{prop:hyp sp}
    For any geodesic metric space $(X,d_X)$, the following conditions are equivalent.
    \item[(1)]
    There exists $\delta\in\NN$ satisfying the following property. Let $x,y,z\in X$, and let $p$ be a geodesic path from $z$ to $x$ and $q$ be a geodesic path from $z$ to $y$. If two points $a\in p$ and $b\in q$ satisfy $d_X(z,a)=d_X(z,b)\le (x,y)_z$, then we have $d_X(a,b) \le \delta$.
    \item[(2)] 
    There exists $\delta\in\NN$ such that for any $w,x,y,z \in X$, we have
    \[
    (x,z)_w \ge \min\{(x,y)_w, (y,z)_w\} - \delta.
    \]
\end{prop}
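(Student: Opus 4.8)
The plan is to prove both implications by direct manipulation of Gromov products, using only the elementary observation that if $a$ lies on a geodesic from $w$ to $x$ then $(a,x)_w = d_X(w,a)$ and $d_X(w,x) = d_X(w,a) + d_X(a,x)$. No appeal to the Gromov boundary or to tree approximation is needed; the hyperbolicity constant will change between the two conditions but will stay in $\NN$, which is all that is required since each condition only asserts the existence of such a $\delta$.

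For $(1) \Rightarrow (2)$, fix $w,x,y,z \in X$ and set $m = \min\{(x,y)_w,(y,z)_w\}$. Since the Gromov product based at $w$ of two points is bounded above by the distance from $w$ to either of them, we have $m \le \min\{d_X(w,x), d_X(w,y), d_X(w,z)\}$, so we may choose points $a$, $b$, $c$ at distance exactly $m$ from $w$ on fixed geodesics from $w$ to $x$, to $y$, and to $z$ respectively. Because $m \le (x,y)_w$, condition (1) applied with basepoint $w$ gives $d_X(a,b) \le \delta$; because $m \le (y,z)_w$, it gives $d_X(b,c) \le \delta$; hence $d_X(a,c) \le 2\delta$. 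Writing $d_X(w,x) = m + d_X(a,x)$, $d_X(w,z) = m + d_X(c,z)$, and $d_X(x,z) \le d_X(x,a) + d_X(a,c) + d_X(c,z)$, a one-line computation yields $(x,z)_w \ge m - \tfrac12 d_X(a,c) \ge m - \delta$, which is exactly (2) with the same $\delta$.

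For $(2) \Rightarrow (1)$, let $x,y,z$, a geodesic $p$ from $z$ to $x$, a geodesic $q$ from $z$ to $y$, and points $a \in p$, $b \in q$ with $t := d_X(z,a) = d_X(z,b) \le (x,y)_z$ be given; the goal is to bound $d_X(a,b)$. Since $a$ lies on $p$ we have $(a,x)_z = t$, since $b$ lies on $q$ we have $(y,b)_z = t$, and $(x,y)_z \ge t$ by hypothesis. Applying the four-point inequality (2) twice gives first $(x,b)_z \ge \min\{(x,y)_z,(y,b)_z\} - \delta \ge t - \delta$, and then $(a,b)_z \ge \min\{(a,x)_z,(x,b)_z\} - \delta \ge t - 2\delta$. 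Since $d_X(a,b) = d_X(z,a) + d_X(z,b) - 2(a,b)_z = 2t - 2(a,b)_z$, we conclude $d_X(a,b) \le 4\delta$, giving (1) with constant $4\delta \in \NN$.

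The arguments are short, so the only point requiring care is bookkeeping: one must check that the auxiliary points $a,b,c$ genuinely exist on the chosen geodesics (guaranteed by $\min\{(x,y)_w,(y,z)_w\} \le (x,y)_w \le \min\{d_X(w,x), d_X(w,y)\}$), that condition (1) is invoked with the correct basepoint, and that every intermediate constant remains a natural number. I do not expect a genuine obstacle: this is the standard comparison between the ``insize'' formulation of $\delta$-hyperbolicity and the four-point (Gromov) formulation, and the route above is the usual one.
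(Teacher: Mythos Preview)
Your proof is correct; both directions are the standard arguments and the constants are tracked accurately. The paper itself does not prove this proposition: it is stated in the preliminaries as a known characterization of hyperbolicity (with an implicit reference to \cite{BH99}) and used without proof, so there is no argument in the paper to compare against.
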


\begin{defn}\label{def:hyperbolic space}
    A geodesic metric space $X$ is called \emph{hyperbolic}, if $X$ satisfies the equivalent conditions (1) and (2) in Proposition \ref{prop:hyp sp}. We call a hyperbolic space $\delta$-\emph{hyperbolic} with $\delta \in \NN$, if $\delta$ satisfies both of (1) and (2) in Proposition \ref{prop:hyp sp}. A connected graph $X$ is called \emph{hyperbolic}, if the geodesic metric space $(X,d_X)$ is hyperbolic.
\end{defn}

\begin{defn}\label{def:fine graph}
    Let $X$ be a simplicial graph. The graph $X$ is called \emph{fine}, if $|\C_X(e,n)|<\infty$ for any $e \in E(X)$ and $n\in\NN$ (see Definition \ref{def:concepts in graph theory}). The graph $X$ is called \emph{uniformly fine}, if there exists a function $f \colon \NN \to \NN$ such that $|\C_X(e,n)| \le f(n)$ for any $e \in E(X)$ and $n\in\NN$.
\end{defn}

\begin{defn}\label{def:Gromov boundary}
    Let $X$ be a fine hyperbolic graph. A \emph{geodesic ray} $p$ (resp. \emph{bi-infinite geodesic path}) in $X$ is a path $p=(p_0,p_1,\cdots)$ (resp. $p=(p_i)_{i\in\ZZ}$) such that every finite subpath is geodesic. Given a geodesic ray $p = (p_0,p_1,\cdots)$ (resp. $p' = (p'_0,p'_{-1},\cdots)$) and $n \in \NN$, we denote the subpath $(p_n, p_{n+1},\cdots)$ (resp. $(p'_{-n}, p'_{-(n+1)},\cdots)$) by $p_{[n,\infty)}$ (resp. by $p'_{(-\infty,-n]}$). Two geodesic rays $p$ and $q$ in $X$ are defined to be equivalent (and denoted $p \sim q$) if their Hausdorff distance is finite i.e. there exists $r \in \RR_{>0}$ such that $p \subset \N_X(q,r)$ and $q \subset \N_X(p,r)$. The Gromov boundary $\partial X$ of $X$ is defined by
    \[
    \partial X = \{\text{geodesic rays}\} / \sim.
    \]
    We define $\Delta X$ by $\Delta X = V(X) \cup \partial X$. For $x \in V(X)$ and $\xi \in \partial X$, we denote by $\geo_X(x,\xi)$ the set of all geodesic rays $p=(p_0,p_1,\cdots)$ such that $x=p_0$ and $\xi=[p]$. For $\xi,\eta \in \partial X$, we denote by $\geo_X(\xi,\eta)$ the set of all bi-infinite geodesic paths $p=(p_i)_{i\in \ZZ}$ such that $\xi=[(p_0,p_{-1},p_{-2},\cdots)]$ and $\eta=[(p_0,p_1,p_2,\cdots)]$.
\end{defn}

\begin{rem}\label{rem:Gromov boundary}
    The set $\partial X$ in Definition \ref{def:Gromov boundary} coincides with the sequential boundary of $X$ (see \cite[Definition 3.12]{BH99}) in the case of fine hyperbolic graphs, although the geodesic boundary and the sequential boundary of a hyperbolic space don't necessarily coincide in general.
\end{rem}

Lemma \ref{lem:induced subgraph of geodesics is locally finite} is not difficult to see from \cite[Lemma 8.2]{Bow12}.

\begin{lem}\label{lem:induced subgraph of geodesics is locally finite}
    Let $X$ be a fine hyperbolic graph. For any $a,b \in \Delta X$, the set $\geo_X(a,b)$ is non-empty and the induced subgraph on the set $\bigcup_{p \,\in\, \geo_X(a,b)} V(p)$ is locally finite.
\end{lem}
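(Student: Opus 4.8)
The plan is to reduce both assertions to the fineness of $X$ together with the thinness of geodesic bigons/triangles in a hyperbolic space, citing \cite[Lemma 8.2]{Bow12} for the key quantitative input: in a $\delta$-hyperbolic fine graph, the number of geodesics (of any of the three types: vertex-to-vertex, vertex-to-boundary, boundary-to-boundary) between two fixed points, up to the natural equivalence, is controlled, and more usefully, any two geodesics with the same endpoints fellow-travel within a uniform constant $\kappa=\kappa(\delta)$. First I would dispose of non-emptiness: for $a,b\in V(X)$ this is just connectedness of $X$; for $a\in V(X)$, $b\in\partial X$ one produces a geodesic ray from $a$ to $b$ by the standard diagonal/Arzel\`a--Ascoli argument, which is legitimate here because, by \cite[Lemma 8.2]{Bow12}, the relevant balls along any geodesic segment emanating from $a$ toward $b$ meet only finitely many vertices — fineness is exactly what rescues the compactness argument that would otherwise fail for non-locally-finite graphs; the case $a,b\in\partial X$ is analogous, concatenating two rays and straightening.

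For local finiteness of the induced subgraph $Y$ on $U:=\bigcup_{p\in\geo_X(a,b)}V(p)$, fix a vertex $x\in U$, so $x$ lies on some geodesic $p_0\in\geo_X(a,b)$. I claim every vertex $y\in U$ adjacent to $x$ lies within bounded distance of $p_0$: indeed $y$ lies on some $p_1\in\geo_X(a,b)$, and since $p_0,p_1$ share endpoints $a,b$, hyperbolicity gives that $p_1\subset\N_X(p_0,\kappa)$ for a constant $\kappa$ depending only on $\delta$ (for bi-infinite geodesics with common ideal endpoints one uses that they are at bounded Hausdorff distance, again a standard $\delta$-hyperbolic fact valid in fine graphs). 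Hence $y\in\N_X(p_0,\kappa)$, so $d_X(x,y)=1$ forces $y\in\N_X(p_0, \kappa)\cap\N(x,1)$. Now pick a vertex $x'$ on $p_0$ with $d_X(x',y)\le\kappa$; then $d_X(x,x')\le\kappa+1$, and $y$ lies on an edge of some geodesic $p_1$ passing through a point within distance $\kappa+1$ of $x$ along segments that can be taken to live inside $\C_X(e,n)$-type configurations. The cleanest way to package this: every edge $(x,y)$ with $y\in U$ is contained in a circuit of length at most $2(\kappa+1)+\text{(length of a }p_0\text{-}p_1\text{ backtrack-free closure)}\le N(\delta)$ in $X$ — one closes up along $p_0$ from $x$ to $x'$, jumps the $\le\kappa$ geodesic to $y$, and returns the single edge $yx$ — so the edge $(x,y)$ lies in $\C_X(e_0,N)$ for a fixed edge $e_0\in\elk_X(x)$ (or a bounded family of such edges), which is finite by fineness. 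Since $\elk_X(x)$ meeting $U$ is thereby covered by finitely many finite sets $\C_X(e,N)$, $x$ has finite valency in $Y$.

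The main obstacle I anticipate is the bookkeeping in the last step: turning "$y$ is within distance $\kappa$ of the geodesic $p_0$" into "the edge $xy$ sits in a circuit of uniformly bounded length" requires care to ensure the closing-up path has no accidental long backtracking and genuinely yields a circuit (not a degenerate loop) of controlled length, and one must handle separately the possibility that $x$ or $y$ is within $\kappa$ of $a$ or $b$ when these are boundary points (so there is no "short return along $p_0$" — but then one instead uses that the two rays out of $a$ already $\kappa$-fellow-travel from the start, giving an even shorter closure). I would also need the precise statement extracted from \cite[Lemma 8.2]{Bow12} that geodesics between a common pair of points in $\Delta X$ are uniformly Hausdorff-close; if that reference only gives it for vertex-to-vertex geodesics, I would bridge the boundary cases by truncating the rays far out and applying the finite case, noting the truncation error is absorbed into $\kappa$. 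Modulo these routine but fiddly estimates, fineness does all the real work: it is precisely the hypothesis that upgrades "bounded-length circuits through a given edge" from "finitely many up to the bound" — which is what local finiteness of $Y$ amounts to.
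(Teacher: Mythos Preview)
The paper gives no proof, deferring entirely to \cite[Lemma~8.2]{Bow12}; your sketch is the standard argument behind that citation, and the strategy (non-emptiness by a diagonal extraction using fineness; local finiteness via fellow-traveling plus the circuit bound) is correct.

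The circuit step, however, has a gap that is more than bookkeeping. Your closed walk goes $x\to x'$ along $p_0$, then by a geodesic $x'\to y$, then by the edge $y\to x$; but nothing prevents that middle geodesic from passing through $x$ (e.g.\ if $x'=p_0(m\pm 1)$ and $x$ is the only common neighbour of $x'$ and $y$), in which case the walk collapses to a backtrack and you recover neither a circuit nor the fixed edge $e_0$. Two ingredients repair this. First, treat separately the case $x\in p_1$: then $(x,y)$ is an edge of the geodesic $p_1$ at $x$, hence an initial edge of a geodesic from $x$ to $a$ or to $b$, and the form of \cite[Lemma~8.2]{Bow12} used elsewhere in the paper (finitely many initial edges of geodesics from a vertex to a fixed point of $\Delta X$) bounds these directly. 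Second, when $x\notin p_1$, do not jump straight from $p_0$ to $y$; instead travel along $p_1$ from $y$ to a point $p_1(k')$ far enough from $x$ that the $\le\kappa$ jump from $p_1(k')$ to $p_0$ lands at some $p_0(m')$ without meeting $x$, then return along $p_0$ to $p_0(m\pm 1)$. Since $p_1$ itself avoids $x$, this yields a path from $y$ to $p_0(m\pm 1)$ inside $X\setminus\{x\}$ of length bounded in terms of $\delta$; loop-erasing it and appending the edges $(x,p_0(m\pm 1))$ and $(y,x)$ now genuinely gives a circuit of bounded length through the fixed edge $e_0=(x,p_0(m\pm 1))$, and fineness finishes as you intended.
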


\begin{defn}
    Let $X$ be a fine hyperbolic graph. For $a \in \Delta X$ and $I \subset E(X)$, we define $P(a,I),P'(a,I) \subset \Delta X$ by
    \begin{align*}
        P(a,I)&=\{b \in \Delta X \mid \forall\, p \in \geo_X(a,b), I\cap E(p) = \emptyset \},\\
        P'(a,I)&=\{b \in \Delta X \mid \exists\, p \in \geo_X(a,b), I\cap E(p) = \emptyset \}.
    \end{align*}
    By \cite[Section 8]{Bow12}, there exists a unique topology on $\Delta X$ such that for any $a\in \Delta X$, the family $\{P(a,I) \mid \text{$I \subset E(X)$ : finite} \}$ is a neighborhood basis of $a$. This topology is called \emph{Bowditch topology}.
\end{defn}

\begin{rem}
    For any $a\in \Delta X$, the family $\{P'(a,I) \mid \text{$I \subset E(X)$ : finite} \}$ is also a neighborhood basis of $a$ in Bowditch topology (see \cite[Section 8]{Bow12}).
\end{rem}

\begin{rem}\label{rem:convergence in Bowditch topology}
    It's not difficult to see that given $\xi \in \partial X$ and a net $(x_i)_{i \in I}$ in $\Delta X$, the net $(x_i)_{i\in I}$ converges to $a$ in Bowditch topology if and only if $\lim_{i \to \infty}(x_i,\xi)_o = \infty$ for some (equivalently, any) $o \in V(X)$. See \cite[Definition 3.15]{BH99} for the definition of the Gromov product $(x_i,\xi)_o$.
\end{rem}

\begin{prop}{\rm \cite[Proposition 8.4, Proposition 8.6]{Bow12}}
    Let $X$ be a fine hyperbolic graph, then Bowditch topology on $\Delta X$ is Hausdorff and compact.
\end{prop}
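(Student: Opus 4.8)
This is Bowditch's theorem \cite[Propositions 8.4 and 8.6]{Bow12}; I sketch how the proof goes, fixing a hyperbolicity constant $\delta$ as in Proposition \ref{prop:hyp sp}. The organising principle is that, although $X$ need not be locally finite, Lemma \ref{lem:induced subgraph of geodesics is locally finite} lets one work inside the \emph{locally finite} subgraphs spanned by all geodesics between a fixed pair of points, and these serve as a substitute for local finiteness of $X$. One first checks that $\{P(a,I)\mid I\subset E(X)\text{ finite}\}$ really is a neighbourhood basis of a topology at $a$; the only non-formal point is that each $P(a,I)$ is open, i.e.\ that for $b\in P(a,I)$ there is a finite $J$ with $P(b,J)\subseteq P(a,I)$. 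One takes $J$ to consist of all edges of the span of $\geo_X(a,b)$ — finite by Lemma \ref{lem:induced subgraph of geodesics is locally finite} — lying within a large multiple of $\delta$ of $b$, together with the edges of all short circuits through them; then a geodesic from $a$ to a point $z$ that avoids $J$ is forced by thinness of triangles to $\delta$-fellow-travel a geodesic from $a$ to $b$ until it is far from $b$, and finiteness of $|\C_X(e,n)|$ upgrades this to the conclusion that it also avoids $I$. The Gromov-product criterion for convergence (Remark \ref{rem:convergence in Bowditch topology}) is used repeatedly below.

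\emph{Hausdorffness.} Let $a\neq b$ in $\Delta X$. If $a,b\in\partial X$, a net converging to both would force $(a,b)_o=\infty$ by Proposition \ref{prop:hyp sp} together with Remark \ref{rem:convergence in Bowditch topology}, which is absurd; the case of a vertex $v$ and a point $\xi\in\partial X$ is excluded similarly, since lying in every $P(v,I)$ keeps $(x_i,\xi)_o$ bounded. For arbitrary $a\neq b$ one separates them by a finite cut: fix $\gamma\in\geo_X(a,b)$ (nonempty by Lemma \ref{lem:induced subgraph of geodesics is locally finite}), a ``central'' vertex $c$ on it, and the locally finite span $Y$ of $\geo_X(a,b)$; let $I$ and $J$ be the finitely many edges of $Y$ within $2\delta$ of $c$ that separate, inside $Y$, the $b$-side respectively the $a$-side from $c$. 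If $z\in P(a,I)\cap P(b,J)$, choose $r\in\geo_X(a,z)$ and $r'\in\geo_X(b,z)$; by thinness of the triangle on $a,b,z$ the vertex $c$ lies within $\delta$ of $r\cup r'$, and following the resulting near-geodesic from $a$ (or $b$) to $c$, which stays in a locally finite subgraph by fineness, shows that $r$ (or $r'$) crosses the corresponding cut, a contradiction. Hence $\Delta X$ is Hausdorff.

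\emph{Compactness.} It suffices to show every universal net $(x_\lambda)_{\lambda\in\Lambda}$ converges; fix $o\in V(X)$. We build a geodesic $\sigma=(o=w_0,w_1,w_2,\dots)$, finite or infinite, with the invariant that the net is eventually inside the set $T_n$ consisting of those $\lambda\in\Lambda$ for which $(w_0,\dots,w_n)$ is an initial segment of some geodesic from $o$ to $x_\lambda$ (true for $n=0$, where $T_0=\Lambda$, using Lemma \ref{lem:induced subgraph of geodesics is locally finite} for nonemptiness of $\geo_X$). Given $w_n$, let $D^{(n)}$ be the set of vertices $u$ for which the net is eventually inside the subset of $T_n$ consisting of those $\lambda$ such that $u$ lies on some geodesic from $o$ to $x_\lambda$ extending $(w_0,\dots,w_n)$; then $w_n\in D^{(n)}$. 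If $D^{(n)}=\{w_n\}$: for any finite $I\subset E(X)$, whenever a geodesic from $w_n$ to $x_\lambda$ meets $I$, concatenating it with $(w_0,\dots,w_n)$ (legitimate by the invariant, as $w_n$ then lies on a geodesic $o\to x_\lambda$) produces a vertex $u\neq w_n$ lying on a geodesic from $o$ to $x_\lambda$ extending $(w_0,\dots,w_n)$, with $u$ an endpoint of an edge of $I$; there are only finitely many such $u$ and each lies outside $D^{(n)}$, so the net is eventually in $P(w_n,I)$, whence $x_\lambda\to w_n$ and we stop. If $D^{(n)}\neq\{w_n\}$, pick $u_*\in D^{(n)}\setminus\{w_n\}$: by Lemma \ref{lem:induced subgraph of geodesics is locally finite} the geodesics from $w_n$ to $u_*$ have only finitely many possible first vertices $v^{(1)},\dots,v^{(N)}\in\lk_X(w_n)$, and any geodesic from $o$ to $x_\lambda$ extending $(w_0,\dots,w_n)$ and passing through $u_*$ has $(n+1)$st vertex among them; hence the net is eventually in the union over $i=1,\dots,N$ of the sets of $\lambda$ for which $(w_0,\dots,w_n,v^{(i)})$ extends to a geodesic from $o$ to $x_\lambda$, and since this union is finite, eventually in one of its members, and we take $w_{n+1}$ to be that $v^{(i)}$, preserving the invariant. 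If the recursion never stops, $\sigma$ is a geodesic ray; set $\xi:=[\sigma]\in\partial X$. Since the net is eventually in $T_n$ for every $n$, eventually a geodesic from $o$ to $x_\lambda$ agrees with $\sigma$ on its first $n$ edges, so Proposition \ref{prop:hyp sp} gives $(x_\lambda,\xi)_o\geq n-\delta$ eventually; thus $(x_\lambda,\xi)_o\to\infty$ and $x_\lambda\to\xi$ by Remark \ref{rem:convergence in Bowditch topology}. In every case the net converges, so $\Delta X$ is compact.

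\emph{Main obstacle.} The crux is the possible failure of local finiteness of $X$: it rules out a naive König-type argument for compactness, and it forces the well-definedness check and Hausdorffness to be carried out inside the locally finite spans of geodesics supplied by Lemma \ref{lem:induced subgraph of geodesics is locally finite}. The two genuinely delicate verifications are therefore: (i) in the well-definedness step, that ``$z$ close to $b$'' propagates to ``all geodesics from $a$ to $z$ avoid $I$'', which must be made quantitative via thinness of triangles and the bound on $|\C_X(e,n)|$; and (ii) in the compactness recursion, that a universal net only resolves \emph{finite} alternatives — which is exactly why every step must be phrased through the finitely many endpoints of a finite edge-set $I$ and the finitely many first vertices (Lemma \ref{lem:induced subgraph of geodesics is locally finite}) of geodesics from $w_n$ to a chosen point of $D^{(n)}$, rather than through the a priori infinite link of a vertex.
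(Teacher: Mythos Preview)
The paper does not prove this proposition at all; it merely cites \cite[Proposition 8.4, Proposition 8.6]{Bow12} and moves on. So there is no ``paper's own proof'' to compare against, and your sketch is already going further than the paper does.

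Your outline is broadly sound and correctly identifies the organising principle: the failure of local finiteness is handled by working inside the locally finite spans supplied by Lemma \ref{lem:induced subgraph of geodesics is locally finite}, together with the fineness bound on $|\C_X(e,n)|$. The compactness argument via universal nets and the recursive construction of a geodesic ray is correct as written; the key point, that each step only involves a finite disjunction (finitely many endpoints of $I$, finitely many first vertices of geodesics from $w_n$ to $u_*$), is exactly what makes the universal-net reduction work, and you state it cleanly.

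The Hausdorffness paragraph is the weakest part. You take $I,J$ to be edges of the span $Y$ of $\geo_X(a,b)$ near $c$, but $P(a,I)$ concerns \emph{all} geodesics in $X$, not only those lying in $Y$; so the conclusion ``$r$ crosses the corresponding cut'' does not follow just from $c$ being $\delta$-close to $r\cup r'$. One needs the further step (as in Bowditch) of using fineness to show that a geodesic from $a$ to a point $\delta$-close to $c$ can be rerouted to a geodesic from $a$ to $c$ that lies in $Y$, hence crosses the cut, and then that the original $r$ must share an edge with it. This is exactly the same mechanism you already invoke in the openness check (``finiteness of $|\C_X(e,n)|$ upgrades this''), so the repair is routine, but as written the Hausdorffness sketch skips it. Since you explicitly frame the whole thing as a sketch with a pointer to \cite{Bow12} for details, this is a minor omission rather than a genuine gap.
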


\subsection{Convergence actions}\label{subsec:Convergence actions}

Readers are referred to \cite{Bow99a} for details of convergence actions.

\begin{defn}
    A net $(x_i)_{i\in I}$ in a set $X$ is called \emph{wandering} if for any $x \in X$, there exists $i_0 \in I$ such that $x_i \neq x$ for any $i \ge i_0$.
\end{defn}

\begin{defn}\label{def:convergence action}
    Let $M$ be a compact Hausdorff space with $|M| \ge 3$ and $G$ be a group acting on $M$ homeomorphically. For a net $(g_i)_{i\in I}$ in $G$ and $a,b \in M$, we denote
    \begin{align*}
        g_i|_{M\setminus \{a\}} \twoheadrightarrow b
    \end{align*}
    if the map $(g_i)_{i\in I}$ locally uniformly converges to $b$ on $M\setminus \{a\}$ (i.e. for any compact set $K \subset M\setminus \{a\}$ and any open neighborhood $U$ of $b$ in $M$, there exists $i_0\in I$ such that $g_i(K)\subset U$ for any $i \ge i_0$). A net $(g_i)_{i\in I}$ in $G$ is called a \emph{collapsing net} if there exist $a,b \in M$ such that $g_i|_{M\setminus \{a\}} \twoheadrightarrow b$. The action $G \act M$ is called a \emph{convergence action} if every wandering net in $G$ has a collapsing subset.
\end{defn}

\begin{rem}
     In the definition of collapsing net above, we don't require $a \neq b$.
\end{rem}

\begin{rem}
   In the definition of a convergence action $G \act M$, we always assume that $M$ is a compact Hausdorff space with $|M| \ge 3$ and the action is homeomorphic.
\end{rem}

\begin{rem}\label{rem:equivalent definition using distint triples}
    By \cite[Proposition 1.1]{Bow99a}, the action $G\act M$ is a convergence action if and only if the action of $G$ on the space $\Theta(M)$ of distinct triples in $M$ is properly discontinuous i.e. for any compact subsets $K,L$ in $\Theta(M)$, the set $\{g \in G \mid g K \cap L \neq \emptyset\}$ is finite (see \cite[Section 1]{Bow99a}).
\end{rem}

\begin{defn}
    Suppose that a group $G$ acts on a compact Hausdorff space $M$ with $|M| \ge 3$ as a convergence action. A point $x \in M$ is called a \emph{limit point} if there exists $y \in M$ and a net $(g_i)_{i\in I}$ in $G$ such that $(g_i y)_{i \in I} \subset M\setminus \{x\}$ and $g_i y \to x$ in $M$. The \emph{limit set} $\Lambda(G)$ is defined as the set of all limit points in $M$. An element $g \in G$ is called \emph{elliptic} if $g$ has finite order, \emph{parabolic} if $g$ has infinite order and $|\fix_M(g)|=1$, and \emph{loxodromic} if $g$ has infinite order and $|\fix_M(g)|=2$. A point $x \in M$ is called a \emph{conical limit point} if there exist a net $(g_i)_{i\in I}$ in $G$ and two distinct points $a,b \in M$ such that $g_ix \to a$ and $g_iy \to b$ for any $y \in M\setminus \{x\}$. A point $x \in M$ is called a \emph{bounded parabolic point} if the stabilizer $\stab_G(x)$ is infinite, contains no loxodromic element, and acts cocompactly on $\Lambda(G) \setminus \{x\}$. The action $G \act M$ is called
    \begin{itemize}
        \item
        \emph{uniform} if the action of $G$ on the space $\Theta(M)$ of distinct triples is cocompact i.e. there exists a compact set $K \subset \Theta(M)$ such that $\bigcup_{g \in G} gK=\Theta(M)$,
        \item 
        \emph{geometrically finite} if every point $x \in \Lambda(G)$ is either a conical limit point or a bounded parabolic point,
        \item 
        \emph{elementary} if $G$ is finite or there exists a $G$-invariant subset $A$ with $1 \le |A| \le 2$.
    \end{itemize}
\end{defn}

\begin{rem}\label{rem:action on limit set is minimal}
    It is known that the limit set $\Lambda(G)$ is a $G$-invariant closed set in $M$ and that if the convergence action $G\act M$ is non-elementary, then the action $G \act \Lambda(G)$ is minimal (i.e. for any nonempty $G$-invariant closed set $A$ in $\Lambda(G)$, we have $A=\Lambda(G)$) (see \cite[Section 2]{Bow99a}).
\end{rem}

\begin{rem}\label{rem:parabolic fixed point}
    By \cite[Lemma 2.1]{Bow99a}, every element of $G$ is elliptic, parabolic, or loxodromic. If $x\in M$ is a fixed point of a parabolic element $g \in G$ (i.e. $gx=x$), then $\stab_G(x)$ is infinite by $\la g \ra \subset \stab_G(x)$ and contains no loxodromic element by \cite[Lemma 2.2]{Bow99a}. It's also known that a loxodromic element cannot share a fixed point with a parabolic element and that if two loxodromic elements share a fixed point, then they have both fixed points in common.
\end{rem}

\begin{rem}\label{rem:conical limit point and bounded parabolic point are limit points}
    By \cite[Lemma 3.1]{Bow99a}, every conical limit point is a limit point. It's also not difficult to see that every bounded parabolic point is a limit point.
\end{rem}

\begin{rem}\label{rem:equivalent definition of conical limit point}
    A point $x \in M$ is a conical limit point if and only if for some $y \in M\setminus\{x\}$ (equivalently for any $y \in M\setminus\{x\}$) there exist nets $(x_i)_{i\in I}$ in $M\setminus\{x,y\}$ and $(g_i)_{i\in I}$ in $G$ such that $x_i \to x$ and $g_i(x,y,x_i)$ remains in a compact subset of $\Theta^0(M)$, where $\Theta^0(M)$ is the space of distinct ordered triples (see \cite[Section 3]{Bow99a}).
\end{rem}

\begin{rem}\label{rem:uniform convergence action}
    It is known that if a convergence action $G \act M$ is uniform, then every non-isolated point in $M$ is a conical limit point (see \cite[Section 3]{Bow99a}). When $M$ is an infinite compact metric space, the converse also holds by \cite[Theorem 1A]{Tuk98}.
\end{rem}

\subsection{Relative bi-exactness and strong solidity}

Readers are referred to \cite{BO08} and \cite{AP} for details of von Neumann algebras. Definition \ref{def:strongly solid} comes from \cite[p.731]{OP10}.

\begin{defn}\label{def:strongly solid}
    A $\rm{II}_1$ factor $M$ is called \emph{strongly solid} if for every diffuse amenable von Neumann subalgebras $A \subset M$, the normalizer $\N_M(A)''$ generated by $\N_M(A) = \{u \in \mathcal{U}(M)\mid uAu^*=A\}$ is amenable.
\end{defn}

\begin{defn}\label{def:prime}
    A von Neumann algebra $M$ is called \emph{prime} if whenever $M$ is isomorphic to a tensor product $P\tensor Q$ of von Neumann subalgebras $P$ and $Q$, then either $P$ or $Q$ is finite dimensional. 
\end{defn}

In the remainder of this section, we explain relative bi-exactness of groups. For a group $G$, the Banach spaces $\ell^p(G) \,(1\le p <\infty)$, $\ell^\infty(G)$, and $c_0(G)$ are defined by
\begin{align*}
    \ell^p(G)&=\{f \colon G \to \CC \mid \sum_{g \in G}|f(g)|^p<\infty \} ~~(1 \le p < \infty), \\
    \ell^\infty(G)&=\{f \colon G \to \CC \mid \sup_{g \in G}|f(g)|<\infty \}, \\
    c_0(G) &= \{f \in \ell^\infty(G) \mid \forall \, \e>0,\, \#\{g \in G \mid |f(g)|\ge \e\}<\infty \}.
\end{align*}

\begin{defn}\label{def:topological amenability}
    Let $G$ be a group and $K$ be a compact Hausdorff space. A homeomorphic action $G \act K$ is called \emph{topologically amenable} if for any finite set $F \subset G$ and any $\e \in \RR_{>0}$, there exists a continuous map $m \colon K \to \mathrm{Prob}(G)$ such that
    \[
    \max_{g \in F}\sup_{x\in K}\|m(gx) - g.m(x)\|_1< \e.
    \]
\end{defn}

\begin{rem}
    In Definition \ref{def:topological amenability}, the continuity of $m$ is with respect to the weak-$\ast$ topology of $\mathrm{Prob}(G) \subset \ell^1(G)$, where $\ell^1(G)$ is identified with the space of all bounded linear $\CC$-valued functions on $c_0(G)$.
\end{rem}

\begin{lem}\label{lem:G-equivariant map passes topological amenability}
    Suppose that $X$ and $Y$ are compact Hausdorff spaces and a group $G$ acts on $X$ and $Y$ homeomorphically. If the action $G \act X$ is topologically amenable and there exists a $G$-equivariant continuous map $\phi\colon Y \to X$, then the action $G \act Y$ is topologically amenable.
\end{lem}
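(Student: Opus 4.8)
The plan is to pull back a continuous equivariant family of probability measures along $\phi$. Recall that topological amenability of $G \act X$ means that for each finite $F \subset G$ and each $\e > 0$ there is a continuous map $m \colon X \to \mathrm{Prob}(G)$ (continuous for the weak-$\ast$ topology coming from the duality with $c_0(G)$) with $\max_{g \in F}\sup_{x \in X}\|m(gx) - g.m(x)\|_1 < \e$. So, given $F$ and $\e$, I would first apply the hypothesis to produce such an $m$ for the action $G \act X$.

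Next I would set $n = m \circ \phi \colon Y \to \mathrm{Prob}(G)$. Since $\phi$ is continuous and $m$ is continuous, $n$ is continuous. The composition is still $\mathrm{Prob}(G)$-valued, so $n$ is a legitimate candidate for witnessing topological amenability of $G \act Y$. The only thing to check is the approximate equivariance estimate. Here I would use that $\phi$ is $G$-equivariant: for any $g \in F$ and any $y \in Y$,
\[
n(gy) - g.n(y) = m(\phi(gy)) - g.m(\phi(y)) = m(g\phi(y)) - g.m(\phi(y)),
\]
and therefore
\[
\|n(gy) - g.n(y)\|_1 = \|m(g\phi(y)) - g.m(\phi(y))\|_1 \le \sup_{x \in X}\|m(gx) - g.m(x)\|_1 < \e,
\]
where in the inequality I take $x = \phi(y) \in X$. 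Taking the supremum over $y \in Y$ and the maximum over $g \in F$ gives $\max_{g \in F}\sup_{y \in Y}\|n(gy) - g.n(y)\|_1 < \e$, which is exactly the defining condition for $G \act Y$. Since $F$ and $\e$ were arbitrary, the action $G \act Y$ is topologically amenable.

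There is essentially no obstacle here; the argument is a direct pullback. The only point that warrants a sentence of care is that $n = m \circ \phi$ is continuous in the correct (weak-$\ast$) topology on $\mathrm{Prob}(G)$, which is immediate since it is a composition of a continuous map $\phi \colon Y \to X$ with the weak-$\ast$-continuous map $m \colon X \to \mathrm{Prob}(G)$. One could also note that compactness of $X$ and $Y$ is not really used in the verification itself and enters only through the standing conventions in Definition \ref{def:topological amenability}.
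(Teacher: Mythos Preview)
Your proof is correct and follows exactly the same approach as the paper: the paper's proof simply states that the result follows by considering $m \circ \phi \colon Y \to \Prob(G)$, and you have spelled out the straightforward verification in detail.
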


\begin{proof}
    This follows by considering $m \circ \phi \colon Y \to \Prob(G)$, where $m \colon X\to \Prob(G)$ is the map in Definition \ref{def:topological amenability} taken for a finite set $F \subset G$ and $\e \in \RR_{>0}$.
\end{proof}

\begin{defn}
    Let $G$ be a group and $\G$ be a family of subgroups of $G$. A set $S \subset G$ is called \emph{small relative to} $\G$ if there exist $n\in\NN$, $s_1,\cdots,s_n,t_1,\cdots,t_n \in G$, and $H_1,\cdots,H_n \in \G$ such that $S \subset \bigcup_{i=1}^n s_iH_it_i$. For a function $f \colon G \to \CC$ and $a \in \CC$, we define
    \begin{align*}
        \text{
        $\lim_{g\to \infty/\G}f(g) = a \iff \text{$\forall\, \e \in \RR_{>0}$, the set $\{g\in G \mid |f(g)-a| \ge \e\}$ is small relative to $\G$}$.
        }
    \end{align*}
\end{defn}

\begin{rem}
    $c_0(G) = \{f \in \ell^\infty(G) \mid \lim_{g \to \infty/\{\la 1 \ra\}} f(g)=0\}$.
\end{rem}

\begin{defn}\label{def:bi-exact group}
    A group $G$ is called \emph{exact} if there exists a compact Hausdorff space on which $G$ acts topologically amenably. A group $G$ is called \emph{bi-exact relative to a collection of subgroups} $\G$, if $G$ is exact and there exists a map $\mu\colon G\to \mathrm{Prob}(G)$ such that for every $s,t\in G$, we have $\lim_{x\to \infty/\G}\|\mu(sxt)-s.\mu(x)\|_1=0$. If $G$ is bi-exact relative to $\{\la 1 \ra\}$, then $G$ is called \emph{bi-exact}.
\end{defn}

\begin{defn}\label{def: boundary small at infinity}
    Let $G$ be a group and $\G$ be a collection of subgroups of $G$. Define $c_0(G;\G), A(G;\G) \subset \ell^\infty(G)$ by
    \begin{align*}
        c_0(G;\G) &= \{f \in \ell^\infty(G) \mid \lim_{g \to \infty/\G}f(g)=0\},\\
        A(G; \G) &= \{f \in \ell^\infty(G) \mid \forall\, t \in G,\, f-f^t \in c_0(G;\G) \},    
    \end{align*}
    where $f^t \in \ell^\infty(G)$ is defined by $f^t(g)=f(gt^{-1})$. Note that $A(G;\G)$ is a unital commutative $C^*$-algebra that is invariant under the left $G$-action. Define $\overline{G}^{\G}$ to be the Gelfand dual of $A(G;\G)$. Note that $\overline{G}^{\G}$ is a compact Hausdorff space with $G$-action and satisfies $C(\overline{G}^{\G})=A(G;\G)$, where $C(\overline{G}^{\G})$ is the set of all continuous maps from $\overline{G}^{\G}$ to $\CC$ (see \cite[Theorem 11.18]{Rud91}).
\end{defn}

Proposition \ref{prop:equivalent condition of biexact group} follows from the proof of \cite[Proposition 15.2.3]{BO08}

\begin{prop}\label{prop:equivalent condition of biexact group}
    A group $G$ is bi-exact relative to a collection of subgroups $\G$ of $G$ if and only if the action $G \act \overline{G}^{\G}$ is topologically amenable.
\end{prop}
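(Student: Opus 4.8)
The plan is to follow the template of \cite[Proposition 15.2.3]{BO08}, translating the equivalence between bi-exactness relative to $\G$ and topological amenability of $G \act \overline{G}^{\G}$ into the relative setting. Throughout, $G$ acts on $\ell^\infty(G)$ by left translation, $C(\overline{G}^{\G}) = A(G;\G)$, and the key point is that $A(G;\G)$ is exactly the algebra of functions which are ``approximately right-invariant modulo $c_0(G;\G)$''.

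\emph{Direction ($\Leftarrow$).} Suppose $G \act \overline{G}^{\G}$ is topologically amenable. First, since $G$ acts on the compact Hausdorff space $\overline{G}^{\G}$ topologically amenably, $G$ is exact. To produce the map $\mu \colon G \to \Prob(G)$, fix a finite set $F \subset G$ and $\e > 0$ and let $m \colon \overline{G}^{\G} \to \Prob(G)$ be the continuous map from Definition \ref{def:topological amenability}. Identify $G$ with a dense subset of $\overline{G}^{\G}$ (via the characters $g \mapsto \mathrm{ev}_g$) and set $\mu_{F,\e}(x) = m(x)$ for $x \in G$. Then for $t \in F$, the quantity $\|\mu_{F,\e}(xt) - x.\mu_{F,\e}(x)\|_1$ is controlled by pulling back the inequality $\sup_{\omega}\|m(g\omega) - g.m(\omega)\|_1 < \e$; the right-translation by $t$ on $G \subset \overline{G}^{\G}$ must be reconciled with the left $G$-action, and this is where one uses that $A(G;\G)$ is right-$t$-invariant modulo $c_0(G;\G)$, so that $x \mapsto xt$ induces a map on $\overline{G}^{\G}$ that agrees with the identity after composing with the quotient $\ell^\infty(G) \to \ell^\infty(G)/c_0(G;\G)$. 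A standard diagonal argument over finite $F$ and $\e \to 0$ then yields a single $\mu$ with $\lim_{x \to \infty/\G}\|\mu(sxt) - s.\mu(x)\|_1 = 0$ for all $s,t \in G$; the left multiplication by $s$ is handled by the $G$-equivariance of $m$, and the right multiplication by $t$ by the observation above.

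\emph{Direction ($\Rightarrow$).} Suppose $G$ is bi-exact relative to $\G$, with exactness witnessed by a topologically amenable action $G \act K$ on some compact Hausdorff space, and with $\mu \colon G \to \Prob(G)$ as in Definition \ref{def:bi-exact group}. The goal is to build a $G$-equivariant unital $\ast$-homomorphism $A(G;\G) \to C(K) \tensor \ell^\infty(G)$ or, more directly, to show $G \act \overline{G}^{\G}$ is topologically amenable by exhibiting a $G$-equivariant continuous map $\overline{G}^{\G} \to$ (a topologically amenable $G$-space). Concretely, one considers the map $g \mapsto \mu(g) \in \Prob(G) \subset \ell^1(G)$; the condition $\lim_{x \to \infty/\G}\|\mu(sxt) - s.\mu(x)\|_1 = 0$ says precisely that, viewed in $\ell^\infty(G, \ell^1(G))$, the function $g \mapsto \mu(g)$ is $G$-equivariant modulo $c_0(G;\G)$-valued errors, so it descends to a continuous $G$-map from $\overline{G}^{\G}$ into a suitable space of probability measures (or a weak-$\ast$ compactification thereof). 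Combining this with the topologically amenable action $G \act K$ via Lemma \ref{lem:G-equivariant map passes topological amenability} --- applied to a $G$-equivariant continuous map from $\overline{G}^{\G}$ to $K \times (\text{amenable space})$ built from $\mu$ and the exactness data --- gives topological amenability of $G \act \overline{G}^{\G}$.

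\emph{Main obstacle.} The delicate point in both directions is the bookkeeping of the \emph{right} $G$-action: $\overline{G}^{\G}$ is defined only as a left $G$-space, yet bi-exactness is a two-sided condition ($sxt$), so one must repeatedly invoke that $A(G;\G)$ consists of functions invariant under right translation modulo $c_0(G;\G)$ in order to make right multiplication by $t$ well-defined on the quotient/compactification. Ensuring the diagonal limit in direction ($\Leftarrow$) produces a \emph{single} $\mu$ that works simultaneously for all $s,t$ (not just a net depending on $F,\e$) requires a careful choice of an exhausting sequence of finite subsets and a compactness/ultrafilter argument, exactly as in \cite[Proposition 15.2.3]{BO08}; since the statement asserts this is ``the same proof,'' I would simply verify that no step in \cite{BO08} used $\G = \{\la 1 \ra\}$ in an essential way --- each use of $c_0(G)$ there is replaced verbatim by $c_0(G;\G)$, and each use of ``small'' by ``small relative to $\G$.''
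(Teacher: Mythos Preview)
Your overall approach---defer to \cite[Proposition~15.2.3]{BO08} and check that nothing in that proof is specific to $\G=\{\langle 1\rangle\}$---is exactly what the paper does; the paper gives no argument beyond that citation. Your sketch of the $(\Leftarrow)$ direction correctly isolates the mechanism: right translation by $t$ acts on $A(G;\G)$ as the identity modulo $c_0(G;\G)$, so a continuous amenability witness $m\colon\overline{G}^\G\to\Prob(G)$ restricted to $G$ is automatically almost right-invariant relative to $\G$; combined with the built-in left almost-equivariance this gives the map $\mu$ (one does need a tightness argument to pass from weak-$*$ to $\ell^1$ control, and the auxiliary equivalence between the ``single $\mu$'' and ``one $\mu$ per $(F,\e)$'' formulations of bi-exactness, both of which are in \cite{BO08}).

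Your $(\Rightarrow)$ sketch, however, is too vague to stand on its own: the sentence ``it descends to a continuous $G$-map from $\overline{G}^\G$ into a suitable space of probability measures'' hides a real issue (the coordinatewise extension of $\mu$ to $\overline{G}^\G$ need not land in $\Prob(G)$), and invoking Lemma~\ref{lem:G-equivariant map passes topological amenability} requires a $G$-equivariant continuous map from $\overline{G}^\G$ to an amenable $G$-space, which you have not produced---there is no canonical map from $\overline{G}^\G$ to the abstract space $K$ witnessing exactness. In \cite{BO08} this direction is handled through the $C^*$-algebraic characterization (nuclearity of the reduced crossed product $C(\overline{G}^\G)\rtimes_r G$) rather than by building an explicit equivariant map, so if you want to expand the sketch you should follow that route instead.
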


\begin{prop}\label{prop:Ozawa Prop 11}{\rm \cite[Proposition 11]{Oza06}}
    Suppose that $X$ and $Y$ are compact Hausdorff spaces, $K$ is a countable discrete set, and $G$ is a countable group acting on $X$, $Y$, and $K$ homeomorphically. Suppose that the two conditions below hold.
    \begin{itemize}
        \item[(1)]
        For any $\e \in \RR_{>0}$ and any finite set $F \subset G$, there exists a Borel map $\zeta \colon X \to \Prob(K)$ (i.e. the function $X \ni x \mapsto \zeta(x)(a) \in \RR$ is Borel for every $a \in K$) such that $\max_{g \in F}\sup_{x \in X}\|\zeta(gx) - g\zeta(x)\|_1 < \e$.
        \item[(2)]
        For any $a\in K$, the action $\stab_G(a) \act Y$ is topologically amenable.
    \end{itemize}
    Then, the diagonal action $G \act X \times Y$ (i.e. $g(x,y)=(gx,gy)$) is topologically amenable.
\end{prop}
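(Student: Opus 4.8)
The goal is to check the defining condition of Definition \ref{def:topological amenability} for the diagonal action $G \act X \times Y$: given $\e \in \RR_{>0}$ and a finite subset $F \subset G$, we must produce a continuous map $\mu \colon X \times Y \to \Prob(G)$ with $\max_{g \in F}\sup_{(x,y) \in X \times Y}\|\mu(g(x,y)) - g.\mu(x,y)\|_1 < \e$. The plan is to first construct such a $\mu$ that is merely Borel in the $X$-variable and to upgrade it to a continuous one at the end. Fix $\e$ and $F$, and assume $e \in F = F^{-1}$. Applying hypothesis (1) with $\e/3$ and $F$ gives a Borel map $\zeta \colon X \to \Prob(K)$ with $\sup_{x}\|\zeta(gx) - g\zeta(x)\|_1 < \e/3$ for all $g \in F$; approximating $\zeta(x)$ in $\ell^1(K)$ by a finitely supported probability measure, measurably in $x$, we may assume each $\zeta(x)$ has finite support.

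To bring in hypothesis (2), I would organise the data along the $G$-orbits in $K$. Choose representatives $\{a_0\}$ of $K/G$ and, for each $a \in K$, an element $g_a \in G$ with $g_a a_0 = a$ (where $a_0$ is the representative of the orbit of $a$), normalised so that $g_{a_0} = e$; then $\stab_G(a) = g_a\,\stab_G(a_0)\,g_a^{-1}$. For each representative $a_0$, use the topological amenability of $\stab_G(a_0) \act Y$ from hypothesis (2) to pick a continuous map $\psi_{a_0} \colon Y \to \Prob(\stab_G(a_0))$ that is approximately $\stab_G(a_0)$-equivariant on the finite subsets of $\stab_G(a_0)$ that will actually occur below, and set
\[
m(a,y) \;=\; (g_a)_*\,\psi_{a_0}\!\left(g_a^{-1}y\right) \;\in\; \Prob\!\left(g_a\stab_G(a_0)\right) \subset \Prob(G).
\]
Then $m \colon K \times Y \to \Prob(G)$ is continuous in its second variable, and a short computation — writing $gg_a = g_{ga}\,\sigma(g,a)$ with $\sigma(g,a) := g_{ga}^{-1}gg_a \in \stab_G(a_0)$, and using that pushing forward along a bijection of $G$ is an $\ell^1$-isometry — gives, for $g \in F$,
\[
\left\| m(ga, gy) - g.m(a,y) \right\|_1 \;=\; \left\| \psi_{a_0}(z) - \sigma(g,a)_*\,\psi_{a_0}\!\left(\sigma(g,a)^{-1}z\right) \right\|_1, \qquad z = g_{ga}^{-1}gy,
\]
i.e. exactly the approximate-equivariance defect of $\psi_{a_0}$ at the element $\sigma(g,a)$.

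Now define $\mu(x,y) = \sum_{a \in K}\zeta(x)(a)\,m(a,y)$, a finite convex combination of probability measures that is continuous in $y$ and Borel in $x$. For $g \in F$, after reindexing and using that each $m(b,gy)$ has norm $1$,
\[
\left\| \mu(g(x,y)) - g.\mu(x,y) \right\|_1 \;\le\; \left\| \zeta(gx) - g\zeta(x) \right\|_1 \;+\; \sum_{a \in K}\zeta(x)(a)\left\| m(ga,gy) - g.m(a,y)\right\|_1,
\]
where the first term is $< \e/3$. Hence it remains to make the second term $< 2\e/3$, uniformly in $(x,y)$ and $g \in F$; by the identity above this amounts to choosing the $\psi_{a_0}$ so that their equivariance defect is small at every cocycle value $\sigma(g,a)$, $g \in F$, that receives positive $\zeta$-weight. \emph{This is the crux and the main obstacle.} One cannot simply invoke hypothesis (2) once per orbit with a single finite set and transport one map $\psi_{a_0}$ along the orbit, because the sets $\{\sigma(g,a) : g \in F,\ a \text{ in a fixed orbit}\}$ need not be finite (they can genuinely be infinite, e.g. for $G$ the Heisenberg group and $K = G/Z$ no choice of section makes them finite). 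The way around this is the technical content of \cite[Proposition 11]{Oza06}: one makes a single simultaneous choice of the sections $g_\bullet$ and of the maps $\psi_{a_0}$, exhausting the countable group $G$ and each orbit step by step and re-invoking amenability of $\stab_G(a_0) \act Y$ at increasing precision, so that the weighted defect stays uniformly small.

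Finally, the map $\mu$ so obtained is only Borel in the $X$-variable, whereas Definition \ref{def:topological amenability} asks for continuity. This is closed by the standard fact that, for a countable group acting on a compact space, the existence for every $\e$ and every finite $F$ of a Borel map to $\Prob(G)$ that is $(\e,F)$-approximately equivariant already implies that the action is topologically amenable (one regularises the Borel maps; this is where the word ``Borel'' in hypothesis (1) is accounted for). Applying this to $G \act X \times Y$ completes the proof.
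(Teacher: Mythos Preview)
The paper does not supply its own proof of this proposition: it is stated purely as a citation of \cite[Proposition 11]{Oza06}, with no argument given. So there is no in-paper proof to compare your attempt against.

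Viewed on its own, your sketch correctly identifies the standard strategy (convex-combine measures $m(a,y)\in\Prob(G)$ against the weights $\zeta(x)(a)$) and, more importantly, correctly isolates the genuine difficulty: for a fixed finite $F\subset G$, the cocycle values $\sigma(g,a)=g_{ga}^{-1}gg_a\in\stab_G(a_0)$ need not lie in a single finite subset of $\stab_G(a_0)$ as $a$ ranges over an orbit, so one cannot choose a single $\psi_{a_0}$ that is $(\e,F')$-equivariant for one finite $F'$ and be done. Your Heisenberg example is apt. However, at exactly this point your argument defers to ``the technical content of \cite[Proposition 11]{Oza06}'', i.e.\ to the very statement you are trying to prove; the description you give of that content (``exhausting $G$ and each orbit step by step and re-invoking amenability at increasing precision'') is a plausible slogan but not an argument, and in particular you do not explain how the exhaustion is synchronised with the (a priori $x$-dependent) supports of $\zeta(x)$ so as to yield a \emph{uniform} bound on the weighted defect. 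As an independent proof this is therefore incomplete at the crux. Separately, the final ``Borel implies continuous'' step is asserted as a standard fact without reference; it is indeed known (e.g.\ via Anantharaman-Delaroche--Renault or the averaging/partition-of-unity arguments in \cite{BO08}), but if you intend a self-contained proof you should either supply the argument or cite a precise source.
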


\section{Convergence action on the extension graph}
\label{sec:Convergence action on the extension graph}

The goal of this section is to prove Theorem \ref{thm:intro convergence action}, which corresponds to Theorem \ref{thm:action on extension graph is convergence action} (1), (6), (7). We start with proving Proposition \ref{prop:action on fine graph with finite edge stabilizer is convergence action}. Although Proposition \ref{prop:action on fine graph with finite edge stabilizer is convergence action} might be known to experts, we record its self-contained proof because I couldn't find a reference stating this result. The lemmas from Lemma \ref{lem:net converging to a vertex} up to Lemma \ref{lem:absorbing dynamics imply convergence action} are preparation to prove  Proposition \ref{prop:action on fine graph with finite edge stabilizer is convergence action}. See Definition \ref{def:concepts in graph theory} for $\elk_X(a)$ and Section \ref{subsec:Fine hyperbolic graphs} for relevant notations.

\begin{lem}\label{lem:net converging to a vertex}
    Let $X$ be a fine hyperbolic graph and let $a \in V(X)$. For any finite set $A \subset \elk_X(a)$, there exists a finite set $F \subset \elk_X(a)$ satisfying the following condition {\rm (\textreferencemark)}. 
    \begin{itemize}
        \item[{\rm (\textreferencemark)}]
        For any $b \in \Delta X$ and $c \in \Delta X \setminus P(a,A)$ such that there exists $p\in \geo_X(b,c)$ with $a \notin V(p)$, we have $b \in \Delta X \setminus P'(a,F)$.
    \end{itemize}
\end{lem}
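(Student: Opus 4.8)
The plan is to extract $F$ from $A$ using fineness together with the local finiteness of geodesics in $X$ (Lemma \ref{lem:induced subgraph of geodesics is locally finite}). First I would unwind what $c \notin P(a,A)$ means: there exists a geodesic $q \in \geo_X(a,c)$ meeting $A$, so $q$ passes through one of the finitely many edges of $A$, all of which are incident to $a$. Thus $q$ starts at $a$ and its first edge lies in $A$ (after possibly discarding a backtracking step, using that geodesics have no backtracking). The point $c$ therefore lies ``on the $A$-side'' of $a$ in a bounded initial segment. The key geometric input is that any geodesic $p \in \geo_X(b,c)$ avoiding $a$, when concatenated with $q$, produces a path from $b$ to $a$ that is a bounded-length perturbation of a geodesic; more precisely, the Gromov product $(b,c)_a$ is bounded by the hyperbolicity constant $\delta$ plus a constant, because $a \notin V(p)$ forces $p$ to ``go around'' $a$, so $b$ and $c$ are not far apart as seen from $a$ after accounting for the detour. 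Hence every geodesic from $a$ to $b$ must, within a bounded number of steps from $a$, fellow-travel a geodesic from $a$ to $c$, and in particular its initial edge is constrained.

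Here is the cleaner route I would actually pursue. Fix $\delta$ a hyperbolicity constant for $X$. Let $R$ be a bound, depending only on $\delta$, such that for $b,c$ as in the hypothesis and any $q \in \geo_X(a,c)$, $p\in\geo_X(b,c)$ with $a\notin V(p)$, we have $(b,c)_a \le R$: this follows because the union $q \cup p$ is a path from $a$ to $b$ through $c$, $c$ lies within distance $d_X(a,c)$ of $a$ along $q$, and $a \notin V(p)$ means $p$ does not shortcut back to $a$, so a standard thin-triangles / tripod estimate caps $(b,c)_a$. (If $d_X(a,c) \le R$ this is automatic; otherwise one compares the geodesic $[a,b]$ with $q$ concatenated with $[c,b]$ and uses that $p$ stays away from $a$.) Now consider the set $W$ of all vertices lying on some geodesic $[a,x]$, for $x \in \Delta X$, within distance $R+1$ of $a$; by Lemma \ref{lem:induced subgraph of geodesics is locally finite}, iterated $R+1$ times starting from the edges in $A$ (or simply from $\st_X(a)$ restricted suitably), $W$ is finite, hence the set $F$ of edges of $X$ incident to $W$ and lying within the $(R+1)$-ball geodesically reachable from $a$ is finite. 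I would then take $F$ to be precisely the (finite) set of all edges $e \in \elk_X(a)$ that are the initial edge of some geodesic $[a,b']$ with the property that $b'$ satisfies the hypothesis with some admissible $c'$ — but to keep $F$ from depending on $b,c$ I instead define $F$ structurally: $F = \bigcup_{e \in A}\{$ initial edges at $a$ of geodesics $[a,x]$ with $(x, e_+)_a \le R\}$, which is finite by fineness and Lemma \ref{lem:induced subgraph of geodesics is locally finite} since each such geodesic is forced to fellow-travel $[a,e_+]$ near $a$.

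\textbf{Conclusion of the argument.} Given $b,c$ as in (\textreferencemark), pick $q\in\geo_X(a,c)$ realizing $c\notin P(a,A)$, so the initial edge $e$ of $q$ lies in $A$. For any $p' \in \geo_X(a,b)$, hyperbolicity plus $(b,c)_a \le R$ forces the initial edge of $p'$ to coincide with (or be boundedly close to, hence constrained into the finite set $F$ built above) the initial portion of $q$; in all cases the initial edge of $p'$ lies in $F$, so $E(p')\cap F \ne \emptyset$. Since this holds for every $p' \in \geo_X(a,b)$, we get $b \in \Delta X \setminus P'(a,F)$, as desired.

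\textbf{Main obstacle.} The delicate point is making $F$ genuinely independent of the pair $(b,c)$ while keeping it finite: the Gromov-product bound $R$ depends only on $\delta$, but one must check that the collection of possible ``initial edges at $a$ of geodesics toward points $b$ that are $R$-close to the $A$-side'' is finite. This is exactly where fineness (together with the local finiteness statement of Lemma \ref{lem:induced subgraph of geodesics is locally finite}) does the work, but one has to be careful that the relevant geodesics all pass through a finite portion of $X$ near $a$ — which they do, since their initial $R+1$ vertices fellow-travel one of the finitely many geodesic initial segments emanating along the edges of $A$. Verifying the Gromov-product bound $(b,c)_a \le R$ cleanly from ``$a \notin V(p)$'' for $p \in \geo_X(b,c)$, and handling uniformly the case where $c \in \partial X$ or $b \in \partial X$ (so that ``geodesic'' means geodesic ray and one argues with the sequential Gromov product as in Remark \ref{rem:convergence in Bowditch topology}), is the second point requiring care.
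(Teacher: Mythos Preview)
Your central claim — that there is a uniform bound $(b,c)_a \le R$ depending only on $\delta$ — is false, and the rest of the argument rests on it. Take $X$ a tree, $a$ a vertex, $c$ at distance $1000$ from $a$, and $b$ the vertex on $[a,c]$ at distance $500$ from $a$. Then $[b,c]$ is a subpath of $[a,c]$ not containing $a$, while $(b,c)_a = d_X(a,b) = 500$. In fact the intuition is reversed: the condition $a \notin V(p)$ tends to make $(b,c)_a$ \emph{large} (morally $b$ and $c$ lie on the same side of $a$), not small, and no uniform upper bound exists. Your proposed set $F = \bigcup_{e \in A}\{\text{initial edges at }a\text{ of geodesics }[a,x]\text{ with }(x,e_+)_a \le R\}$ is also vacuous, since $(x,e_+)_a \le d_X(a,e_+) = 1$ for every $x$, so $F = \elk_X(a)$, which need not be finite. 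Finally, Lemma \ref{lem:induced subgraph of geodesics is locally finite} concerns geodesics between two \emph{fixed} points of $\Delta X$; it does not bound the set of initial edges of geodesics $[a,b]$ as $b$ varies.

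What actually works (and what the paper does) is to abandon Gromov-product bounds altogether and argue directly with circuits. Given geodesics $q \in \geo_X(a,c)$ and $r \in \geo_X(a,b)$ with initial edges $e_1 \in A_0$ and $e_2$ at $a$, one shows by a thin-triangle case analysis (splitting on whether $(b,c)_a$ exceeds $2\delta$ or not, and using $a \notin V(p)$ to guarantee the triangle does not degenerate at $a$) that $e_1$ and $e_2$ lie on a common circuit of length at most $18\delta$. Then $F$ is defined as $A_0$ together with all edges at $a$ lying on some circuit in $\C_X(e,18\delta)$ for $e \in A_0$; this is finite precisely by fineness, with no appeal to local finiteness or fellow-traveling of initial segments.
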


\begin{proof}
    Let $X$ be $\delta$-hyperbolic with $\delta \in \NN$. By \cite[Lemma 8.2, Lemma 8.3]{Bow12}, there exists a finite set $A_0 \subset \elk_X(a)$ such that $P'(a,A_0) \subset P(a,A)$. We define $F$ by $F=A_0 \cup\{f \in E(X) \mid e \in A_0, \gamma \in \C_X(e, 18\delta), f \in E(\gamma), f_-=a \}$. The set $F$ is finite since $X$ is fine. By \cite[Lemma 8.2, Lemma 8.3]{Bow12}, there exists a finite set $F_0 \subset \elk_X(a)$ such that $P'(a,F_0) \subset P(a,F)$. 
    We will show that $F_0$ satisfies the condition (\textreferencemark) above. 
    
    Let $b \in \Delta X$ and $c \in \Delta X \setminus P(a,A)$ such that there exists $p\in \geo_X(b,c)$ satisfying $a \notin V(p)$. We'll discuss only the case $\{b,c\} \subset \partial X$, because the other cases can be proved similarly. There exists $q \in \geo_X(a,c)$, $r \in \geo_X(a,b)$, and $b_0, c_0 \in V(p)$ such that $p=p_{(-\infty,b_0]}p_{[b_0,c_0]}p_{[c_0,\infty)}$, $q_{[c_0,\infty)}=p_{[c_0,\infty)}$, and $r_{[b_0,\infty)}=p^{-1}_{[b_0,\infty)}$. By taking $b_0$ and $c_0$ that make $d_{\Gamma^e}(a,b_0)+d_{\Gamma^e}(a,b_0)$ minimal among such $q,r,b_0,c_0$, we can assume $V(p_{[b_0,c_0]}\cap V(q_{[a,c_0]})=\{c_0\}$ and $V(p_{[b_0,c_0]}\cap V(r_{[a,b_0]})=\{b_0\}$. By $a \notin V(p)$, we have $a \notin \{b_0,c_0\}$. Let $e_1 \in E(q)$ and $e_2 \in E(r)$ satisfy $e_{1-}=e_{2-}=a$. By $c \in \Delta X \setminus P(a,A) \subset \Delta X \setminus P'(a,A_0)$, we have $e_1 \in A_0$.

    (i) When $(V(q_{[a,c_0]})\cup V(r_{[a,b_0]})) \setminus \{a\} \neq \emptyset$, take $v \in (V(q_{[a,c_0]})\cup V(r_{[a,b_0]})) \setminus \{a\}$ such that $d_{\Gamma^e}(a,v)$ is the minimum among all elements in $(V(q_{[a,c_0]})\cup V(r_{[a,b_0]})) \setminus \{a\}$. If $d_{\Gamma^e}(a,v)=1$, then we have $e_2=e_1 \in A_0$. If $1 < d_{\Gamma^e}(a,v) \le 2\delta$, then the loop $q_{[a,v]}r^{-1}_{[v,a]}$ is a circuit of length at most $4\delta$. Hence, $e_2 \in F$. If $2\delta < d_{\Gamma^e}(a,v)$, then take $w_1 \in V(q_{[a,v]})$ and $w_2 \in V(r_{[a,v]})$ such that $d_{\Gamma^e}(a,w_1)=d_{\Gamma^e}(a,w_2)=2\delta$. We have $d_{\Gamma^e}(w_1,w_2) \le \delta$ since the loops $q_{[a,v]}r^{-1}_{[v,a]}$ a simple geodesic bigon. Take $\alpha \in \geo_{\Gamma^e}(w_1, w_2)$. By $|\alpha| < d_{\Gamma^e}(a,w_1)$, we can take a subpath $\alpha'$ of $\alpha$ such that $\alpha'_- \in V(q_{[a,w_1]})$ and $\alpha'_+ \in V(r_{[a,w_2]})$ and the loop $q_{[a,\alpha'_-]}\alpha' r^{-1}_{[\alpha'_+,a]}$ is a circuit. By $|q_{[a,\alpha'_-]}\alpha' r^{-1}_{[\alpha'_+,a]}| \le 2\delta+\delta+2\delta$, we have $e_2 \in F$.

    (ii) When $V(q_{[a,c_0]})\cup V(r_{[a,b_0]}) = \{a\}$, the loop $q_{[a, c_0]} p^{-1}_{[c_0, b_0]} r^{-1}_{[b_0, a]}$ is a simple geodesic triangle. If $(b_0,c_0)_a > 2\delta$ (see Definition \ref{def:gromov product}), then there exist $w_1 \in V(q_{[a,c_0]})$ and $w_2 \in V(r_{[a,b_0]})$ such that $d_{\Gamma^e}(a,w_1)=d_{\Gamma^e}(a,w_2)=2\delta$ and $d_{\Gamma^e}(w_1,w_2) \le \delta$. Hence, in the same way as case (i), we can see $e_2 \in F$. 
    If $(b_0,c_0)_a \le 2\delta$, then let $z_1 \in V(q_{[a,c_0]})$, $z_2 \in V(r_{[a,b_0]})$, and $z_3 \in V(p_{[b_0,c_0]})$ satisfy $d_{\Gamma^e}(a,z_1)=d_{\Gamma^e}(a,z_2)=(b_0,c_0)_a$ and $d_{\Gamma^e}(c_0,z_3)=(b_0,a)_{c_0}$. When $(b_0,a)_{c_0} > 3\delta$, we take $w_1 \in V(q_{[z_1,c_0]})$ and $w'_1 \in V(p_{[z_3,c_0]})$ satisfying $d_{\Gamma^e}(z_1,w_1)=d_{\Gamma^e}(z_3,w'_1)=3\delta$, and when $(b_0,a)_{c_0} \le 3\delta$, we define $w_1,w'_1$ by $w_1=w'_1=c_0$. We have $d_{\Gamma^e}(w_1,w'_1) \le \delta$. Take $\alpha_1 \in \geo_{\Gamma^e}(w_1, w_1')$. We can take a subpath $\alpha'_1$ of $\alpha_1$ that meets $q_{[a,w_1]}$ in a single point $\alpha'_{1-}$ and meets $p_{[b_0,w'_1]}$ in a single point $\alpha'_{1+}$. Similarly, when $(c_0,a)_{b_0} > 3\delta$, we take $w_2 \in V(r_{[z_2,b_0]})$ and $w'_2 \in V(p_{[b_0,z_3]})$ satisfying $d_{\Gamma^e}(z_2,w_2)=d_{\Gamma^e}(z_3,w'_2)=3\delta$, and when $(c_0,a)_{b_0} \le 3\delta$, we define $w_2,w'_2$ by $w_2=w'_2=b_0$. Also, take $\alpha_2 \in \geo_{\Gamma^e}(w_2, w_2')$ and a subpath $\alpha'_2$ of $\alpha_2$ that meets $r_{[a,w_2]}$ in a single point $\alpha'_{2-}$ and meets $p_{[w'_2,c_0]}$ in a single point $\alpha'_{2+}$. We can see that the loop $q_{[a,\alpha'_{1-}]} \alpha'_1 p^{-1}_{[\alpha'_{1+},\alpha'_{2+}]} \alpha_2^{\prime -1} r^{-1}_{[\alpha'_{2-}, a]}$ is a circuit of length at most $18\delta \,(= 5\delta+\delta+6\delta+\delta+5\delta)$. Hence, $e_2 \in F$.

    By (i) and (ii), $e_2 \in F$. Recall $r \in\geo_X(a,b)$. Hence, $b \in \Delta X \setminus P(a,F)\subset \Delta X \setminus P'(a,F_0)$.
\end{proof}

\begin{cor}\label{cor:net converging to a vertex}
    Let $X$ be a fine hyperbolic graph. Suppose that a net $(x_i)_{i \in I}$ of vertices in $V(X)$ converges to $a \in V(X)$. Then, for any finite set $A \subset \elk_X(a)$, there exists $i_0 \in I$ such that for any $i \ge i_0$, $c \in \Delta X \setminus P(a,A)$, and $p \in \geo_X(x_i, c)$, we have $a \in V(p)$.
\end{cor}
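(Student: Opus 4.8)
The plan is to deduce this directly from Lemma \ref{lem:net converging to a vertex}, using that $\{P'(a,F) \mid F \subset E(X)\text{ finite}\}$ is a neighborhood basis of $a$ in Bowditch topology.

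First, given the finite set $A \subset \elk_X(a)$, I would apply Lemma \ref{lem:net converging to a vertex} to obtain a finite set $F \subset \elk_X(a)$ satisfying condition {\rm (\textreferencemark)} of that lemma. Since $P'(a,F)$ is a neighborhood of $a$ and the net $(x_i)_{i \in I}$ converges to $a$ in Bowditch topology, there exists $i_0 \in I$ such that $x_i \in P'(a,F)$ for every $i \ge i_0$.

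Next, I claim this $i_0$ works. Fix $i \ge i_0$, $c \in \Delta X \setminus P(a,A)$, and $p \in \geo_X(x_i,c)$, and suppose toward a contradiction that $a \notin V(p)$. Then the data $b = x_i \in \Delta X$, $c \in \Delta X \setminus P(a,A)$, and $p \in \geo_X(x_i,c)$ with $a \notin V(p)$ satisfy exactly the hypothesis of condition {\rm (\textreferencemark)}, so we conclude $x_i \in \Delta X \setminus P'(a,F)$, contradicting $x_i \in P'(a,F)$. Hence $a \in V(p)$, which is the assertion.

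There is essentially no real obstacle here beyond correctly unwinding the definitions; the only point needing a moment's care is that one wants membership of $x_i$ in $P'(a,F)$ rather than in $P(a,F)$, which is legitimate because the family $\{P'(a,I) \mid I \subset E(X)\text{ finite}\}$ is also a neighborhood basis of $a$ in Bowditch topology (as recorded in the remark following the definition of Bowditch topology). Translating net convergence into "eventually in every basic neighborhood" is then routine.
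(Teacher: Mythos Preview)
Your proof is correct and essentially identical to the paper's. The only cosmetic difference is that the paper uses $P(a,F)$ as the neighborhood into which the $x_i$ eventually fall (and then notes $P(a,F)\subset P'(a,F)$), whereas you go straight to $P'(a,F)$ via the remark on neighborhood bases; the logical content is the same.
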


\begin{proof}
    Let $A \subset \elk_X(a)$ be finite. There exists a finite set $F \subset \elk_X(a)$ satisfying the condition (\textreferencemark) of Lemma \ref{lem:net converging to a vertex} for $a$ and $A$. By $x_i \to a$, there exists $i_0 \in I$ such that $x_i \in P(a,F)$ for any $i \ge i_0$. For any $i \ge i_0$, $c \in \Delta X \setminus P(a,A)$ and $p \in \geo_X(x_i,c)$, we have $a \in V(p)$ by $x_i \in P(a,F) \subset P'(a,F)$ and the condition (\textreferencemark).
\end{proof}

\begin{lem}\label{lem:extension graph has absorbing dynamics}
    Suppose that $X$ is a fine hyperbolic graph and $G$ is a group acting on $X$ such that $\stab_G(e) \,(=\stab_G(e_-)\cap \stab_G(e_+))$ is finite for any $e \in E(X)$. Let $(g_i)_{i \in I}$ be a wandering net in $G$ and let $a \in V(X)$. If the net $(g_ia)_{i \in I}$ converges to $b \in \Delta X$, then for any $x \in V(X)$, $(g_ix)_{i \in I}$ converges to $b$ as well.
\end{lem}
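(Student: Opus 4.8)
The plan is to fix the limit point $b$, let $Y \subset V(X)$ denote the set of vertices $x$ for which $(g_i x)_{i \in I}$ converges to $b$ in $\Delta X$, and prove $Y = V(X)$. Since $a \in Y$ by hypothesis and $X$ is connected, it suffices to show that $Y$ is closed under passing to an adjacent vertex, i.e. that $w \in Y$ and $w' \in \lk_X(w)$ imply $w' \in Y$; iterating this along an edge-path from $a$ to an arbitrary vertex then gives $Y = V(X)$. Fix $\delta \in \NN$ with $X$ $\delta$-hyperbolic, and write $e = (w,w') \in E(X)$. The one place where the hypothesis on edge stabilizers enters is the following: the net $(g_i e)_{i \in I}$ in $E(X)$ is wandering. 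Indeed, if $\{i \in I \mid g_i e = f\}$ were cofinal for some $f \in E(X)$, then fixing one such index $i_1$ would force $g_i \in g_{i_1}\stab_G(e)$ on that cofinal set; as $\stab_G(e)$ is finite, this would produce a cofinal set of indices on which $(g_i)_{i \in I}$ takes only finitely many values, contradicting that $(g_i)_{i \in I}$ is wandering. In particular, $g_i e \notin A$ eventually for every finite $A \subset E(X)$. I would now split into the cases $b \in \partial X$ and $b \in V(X)$.

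\emph{Case $b \in \partial X$.} Fix $o \in V(X)$. By Remark \ref{rem:convergence in Bowditch topology}, the assumption $w \in Y$ means $(g_i w, b)_o \to \infty$; since the Gromov product with a point of $\partial X$ is bounded above by the distance up to a constant depending only on $\delta$, this forces $d_X(o, g_i w) \to \infty$. Because $d_X(g_i w', g_i w) = d_X(w', w) = 1$, the definition of the Gromov product gives $(g_i w', g_i w)_o \ge d_X(o, g_i w) - 1 \to \infty$. Combining this with the standard extension of the $\delta$-hyperbolic inequality to one point of $\partial X$ (see \cite{BH99}), namely $(g_i w', b)_o \ge \min\{(g_i w', g_i w)_o,\, (g_i w, b)_o\} - \delta'$ for some $\delta'$ depending only on $\delta$, we obtain $(g_i w', b)_o \to \infty$, i.e. $w' \in Y$.

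\emph{Case $b \in V(X)$.} Here $(g_i w)_{i \in I}$ is a net of vertices converging to the vertex $b$, so Corollary \ref{cor:net converging to a vertex} applies. Fix a finite set $A \subset \elk_X(b)$; since such sets form a neighbourhood basis of $b$ in Bowditch topology (see \cite[Section 8]{Bow12}), it is enough to show that $g_i w' \in P(b, A)$ eventually. Suppose not; then $\{i \in I \mid g_i w' \notin P(b,A)\}$ is cofinal in $I$. Taking $i_0 \in I$ from Corollary \ref{cor:net converging to a vertex} applied to $A$, there are cofinally many indices $i \ge i_0$ with $g_i w' \notin P(b, A)$. For any such $i$, applying Corollary \ref{cor:net converging to a vertex} with $c = g_i w' \in \Delta X \setminus P(b, A)$ shows that $b \in V(p)$ for the unique $p \in \geo_X(g_i w, g_i w')$; since $d_X(g_i w, g_i w') = 1$, this path is the edge-path $(g_i w, g_i w')$, so $b \in \{g_i w, g_i w'\}$. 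The case $b = g_i w'$ is impossible, as $b \in P(b, A)$ whereas $g_i w' \notin P(b, A)$; hence $b = g_i w$, and then the only element of $\geo_X(b, g_i w')$ is the edge $g_i e$, so $g_i w' \notin P(b, A)$ forces $g_i e \in A$. As this holds for cofinally many $i$, it contradicts that $g_i e \notin A$ eventually. Therefore $g_i w' \in P(b, A)$ eventually, and since $A$ was arbitrary, $w' \in Y$. I expect this second case to be the main obstacle: the combinatorial input of Corollary \ref{cor:net converging to a vertex} must be combined with the distance-$1$ degeneracy of the geodesic between $g_i w$ and $g_i w'$ to pin down $b = g_i w$, after which the finiteness of $\stab_G(e)$ closes the argument; the boundary case is a routine Gromov-product estimate and the reduction via connectedness is immediate.
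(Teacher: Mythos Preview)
Your proof is correct, and it takes a genuinely different route from the paper in the vertex case $b \in V(X)$.

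The paper argues for a general vertex $x$ directly: assuming $(g_i x)$ fails to converge to $b$, it extracts by compactness a subnet with $g_j x \to c \in V(X)\setminus\{b\}$, applies Corollary~\ref{cor:net converging to a vertex} twice to force $\{b,c\}\subset V(g_j\beta)$ for a fixed $\beta\in\geo_X(a,x)$, then uses fineness of $\geo_X(b,c)$ to pin down a subnet on which an entire geodesic segment $g_j\beta_{[b_0,c_0]}$ is constant, and finally invokes the finite edge stabiliser on an edge of that segment. Your reduction to $d_X(w,w')=1$ via connectedness short-circuits all of this: the geodesic $g_i\beta$ is now the single edge $g_i e$, so Corollary~\ref{cor:net converging to a vertex} immediately forces $g_i e\in A$ on a cofinal set, and the edge-wandering observation (which is exactly the finite edge stabiliser hypothesis) closes the argument without ever passing to a second limit point $c$ or invoking the finiteness of $\geo_X(b,c)$. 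In effect you have traded the paper's compactness-and-fineness step for an induction along edge-paths, yielding a shorter proof that uses fineness only through Corollary~\ref{cor:net converging to a vertex}. The boundary case $b\in\partial X$ is handled the same way in both arguments.
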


\begin{proof}
    If $b \in \partial X$, then it's not difficult to show $\lim_{i \to \infty} g_ix=b$ by using $\sup_{i \in I}d_X(g_ia,g_ix) \le d_X(a,x)$. In the following, we assume $b \in V(X)$. Suppose for contradiction that $(g_ix)_{i \in I}$ doesn't converge to $b$. Since $\Delta X$ is compact, there exist $c \in \Delta X\setminus\{b\}$ and a subnet $(g_jx)_{j \in J}$ of $(g_ix)_{i \in I}$ such that $(g_jx)_{j \in J}$ converges to $c$. If $c \in \partial X$, then this implies $b=\lim_{j \to \infty}g_ja = c$ by $\sup_{j \in J}d_X(g_ja,g_jx) \le d_X(a,x)$ as above, which contradicts the assumption $b \in V(X)$. Hence, $c \in V(X) \setminus \{b\}$. Fix $\alpha \in \geo_X(b,c)$ and let $e_1, e_2 \in E(\alpha)$ be the edges satisfying $e_{1-}=b$ and $e_{2+}=c$. By applying Corollary \ref{cor:net converging to a vertex} to $(g_ja)_{j \in J}$, $b$, and $c \in \Delta X \setminus P(b,\{e_1\})$, there exists $j_0 \in J$ such that for any $j \ge j_0$ and any $p \in \geo_X(g_ja, c)$, we have $b \in V(p)$. In particular, we have $(g_ja)_{j \ge j_0} \subset \Delta X \setminus P(c, \{e_2^{-1}\})$. Hence, by applying Corollary \ref{cor:net converging to a vertex} to $(g_jx)_{j \in J}$, $c$, and $(g_ja)_{j \ge j_0} \subset \Delta X \setminus P(c, \{e_2^{-1}\})$, there exists $j_1 \in J$ such that for any $j' \ge j_1$, $j \ge j_0$, and $p \in \geo_X(g_ja, g_{j'}x)$, we have $c \in V(p)$. Take $j_2 \in J$ satisfying $j_2 \ge j_0$ and $j_2 \ge j_1$, then for any $j \ge j_2$ and any $p \in \geo_X(g_ja, g_jx)$, we have $\{b,c\} \subset V(p)$. In particular, fix $\beta \in \geo_X(a,x)$, then we have $\{b,c\}\subset V(g_j\beta)$ with $d_X(g_ja,b) < d_X(g_ja,c)$ for any $j \ge j_2$. By $d_X(g_ja,b) \le d_X(g_ja,g_jx)=d_X(a,x)$ for any $j \in J$, there exist $n \in \NN\cup\{0\}$ and a subnet $(g_j)_{j \in J_1}$ of $(g_j)_{j \ge j_2}$ such that $d_X(g_ja,b)=n$ for any $j \in J_1$. Let $b_0, c_0 \in V(\beta)$ satisfy $d_X(a,b_0)=n$ and $d_X(a,c_0)=n+d_X(b,c)$, then we have $g_jb_0=b$ and $g_jc_0=c$ for any $j \in J_1$. Since $X$ is fine, the set $\geo_X(b,c)$ is finite by \cite[Lemma 8.2]{Bow12}. Hence, there exist $\gamma \in \geo_X(b,c)$ and a subnet $(g_j)_{j \in J_2}$ of $(g_j)_{j \in J_1}$ such that $g_j\beta_{[b_0,c_0]}=\gamma$ for any $j \in J_2$. Let $e_3 \in E(\beta_{[b_0,c_0]})$ be the edge satisfying $e_{3-}=b_0$ and fix $j_3 \in J_2$, then we have $g_{j_3}^{-1}g_j \in \stab_G(e_3)$ for any $j \in J_2$. Since $(g_j)_{j \in J_2}$ is a subnet of the wandering net $(g_i)_{i \in I}$, the net $(g_j)_{j \in J_2}$ is also wandering. This and $(g_j)_{j \in J_2} \subset g_{j_3}\stab_G(e_3)$ contradict that $\stab_G(e_3)$ is finite. Thus, $(g_ix)_{i \in I}$ converges to $b$.
\end{proof}

\begin{lem}\label{lem:absorbing dynamics imply convergence action}
    Let $X$ be a fine hyperbolic graph and $G$ be a group acting on $X$. Suppose that for any wandering net $(g_i)_{i \in I}$ in $G$ and $a,b \in V(X)$ satisfying $\lim_{i \to \infty}g_ia = b$ in $\Delta X$, and any $x \in V(X)$, we have $\lim_{i \to \infty}g_ix = b$ in $\Delta X$. Then, the action $G \act \Delta X$ is a convergence action.
\end{lem}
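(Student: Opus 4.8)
The plan is to verify Definition~\ref{def:convergence action} directly: given a wandering net $(g_i)_{i\in I}$ in $G$ I will produce a collapsing subnet. (We assume $|\Delta X|\ge 3$, which holds automatically once $X$ has more than two vertices; otherwise there is nothing to prove.) Fix a basepoint $o\in V(X)$. Since $\Delta X$ is compact, first pass to a subnet along which $g_io\to b$ and $g_i^{-1}o\to a$ for some $a,b\in\Delta X$; this costs nothing, as a subnet of a wandering net is again wandering and so is the inverse net $(g_i^{-1})$. Next I upgrade the hypothesis from $o$ to all of $V(X)$: if $b\in V(X)$ this is just the hypothesis applied to $(g_i)$, while if $b\in\partial X$ it follows from $d_X(g_iv,g_io)=d_X(v,o)$ being bounded together with the Gromov-product description of convergence to boundary points (Remark~\ref{rem:convergence in Bowditch topology}), since Gromov products vary by at most a bounded amount under bounded perturbations. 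Thus $g_iv\to b$ for all $v\in V(X)$, and symmetrically $g_i^{-1}v\to a$ for all $v\in V(X)$; applying this along a geodesic ray (Lemma~\ref{lem:induced subgraph of geodesics is locally finite}) representing $b$ also yields $g_i^{-1}b\to a$, and symmetrically $g_ia\to b$, regardless of whether $a,b$ are vertices.

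The core of the proof is then to show $g_i|_{\Delta X\setminus\{a\}}\twoheadrightarrow b$, and I would do this first in the case $a,b\in V(X)$. Let $K\subset\Delta X\setminus\{a\}$ be compact and $U$ an open neighborhood of $b$; the goal is $g_iK\subset U$ eventually. Using the neighborhood basis $\{P(b,I)\}$ and the localization lemma of \cite[Lemma 8.2, Lemma 8.3]{Bow12} (as in the proof of Lemma~\ref{lem:net converging to a vertex}), I may assume $U=P'(b,F)$ with $F\subset\elk_X(b)$ finite. Because $G$ acts by graph automorphisms, $g_ic\in P'(b,F)$ is equivalent to $c\in P'(w_i,g_i^{-1}F)$, where $w_i:=g_i^{-1}b\in V(X)$ and $g_i^{-1}F\subset\elk_X(w_i)$ has $|F|$ elements; and since a geodesic issuing from $w_i$ touches $w_i$ only at its first edge, this says exactly that some geodesic from $w_i$ to $c$ has initial edge outside $g_i^{-1}F$. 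Now $w_i\to a$ by the first paragraph, and since $K$ is compact with $a\notin K$ I can pick (localizing at $a$ via \cite[Lemma 8.3]{Bow12}) a finite $A_0\subset\elk_X(a)$ with $K\subset\Delta X\setminus P(a,A_0)$. Corollary~\ref{cor:net converging to a vertex} applied to $(w_i)$ and $A_0$ then gives $i_0$ so that for $i\ge i_0$ every geodesic from $w_i$ to every $c\in K$ passes through $a$ (and, enlarging $i_0$, also $w_i\notin K$ for $i\ge i_0$, which is possible since $w_i\to a\notin K$). Hence, for such $i$ and $c$, any geodesic $w_i\to c$ decomposes at $a$ as a geodesic $w_i\to a$ followed by a geodesic $a\to c$, and conversely any geodesic $w_i\to a$ can be followed by a suitable geodesic $a\to c$ to produce a geodesic $w_i\to c$ (one checks the concatenation has the right length, resp.\ is a geodesic ray). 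So the set of possible initial edges of geodesics $w_i\to c$ is independent of $c\in K$ — it equals the set of initial edges of geodesics $w_i\to a$ — and it fails to be contained in $g_i^{-1}F$ precisely when $a\in P'(w_i,g_i^{-1}F)$, i.e.\ when $g_ia\in P'(b,F)=U$, which holds for all large $i$ since $g_ia\to b$. For such $i$, choosing a geodesic $w_i\to a$ with initial edge outside $g_i^{-1}F$ and extending it by a suitable geodesic $a\to c$ (which, since $w_i\notin\{a\}\cup K$, avoids $w_i$ and hence $g_i^{-1}F$) yields, uniformly in $c\in K$, a geodesic $w_i\to c$ avoiding $g_i^{-1}F$. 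Therefore $g_iK\subset U$ eventually, so $(g_i)$ is a collapsing net, completing the verification.

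Two loose ends remain, the second being where I expect the real work. (i) The degenerate possibility $w_i=g_i^{-1}b=a$ (equivalently $g_ia=b$) cofinally: here the argument above is vacuous, but one proceeds directly — by compactness $K\subset\Delta X\setminus P'(a,A')$ for some finite $A'\subset\elk_X(a)$, so every geodesic from $a$ to any $c\in K$ has first vertex in the finite set $\{e_+:e\in A'\}$; since $g_iu\to b$ for each such vertex $u$, the edges $(b,g_iu)$ eventually lie outside $F$, so $g_i$ carries some geodesic $a\to c$ to a geodesic $b\to g_ic$ missing $F$, uniformly in $c$. (ii) The cases where $a$ or $b$ lies in $\partial X$: these run along the same lines, with geodesic rays or bi-infinite geodesics in place of geodesic segments, but they require the analogue of Corollary~\ref{cor:net converging to a vertex} for a net converging to a \emph{boundary} point, which follows from the same Bowditch-topology estimates of \cite[Section 8]{Bow12}; I expect establishing this analogue, together with $g_i^{-1}b\to a$ and $g_ia\to b$ in the boundary case, to be the most technical step. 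The conceptual point to emphasize is that this whole upgrade — from pointwise convergence of $(g_i)$ on $V(X)$ to locally uniform convergence on $\Delta X\setminus\{a\}$ — cannot be obtained by soft topological reasoning (it is false in general for homeomorphisms of compact Hausdorff spaces) and genuinely uses fineness and hyperbolicity via Corollary~\ref{cor:net converging to a vertex}.
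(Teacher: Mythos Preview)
Your treatment of the case $a,b\in V(X)$ is correct and essentially matches the paper's case~(i): you pull back via $w_i=g_i^{-1}b$, apply Corollary~\ref{cor:net converging to a vertex}, and reduce to $g_ia\to b$. The paper does the same computation on the other side (it shows every geodesic from $a$ to $g_jc$ passes through $g_jb\in P(a,A)$, using $P$ rather than $P'$, so no concatenation step is needed), but this is cosmetic. Your degenerate subcase $w_i=a$ is likewise the paper's case~(i-2).

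Two points diverge from the paper. First, your claim that $g_i^{-1}b\to a$ (and $g_ia\to b$) ``regardless of whether $a,b$ are vertices'' is not established by the sentence ``applying this along a geodesic ray'': knowing $g_i^{-1}p_n\to a$ for each fixed $n$ does not let you swap limits to conclude $g_i^{-1}b\to a$ when $b\in\partial X$. The paper never proves or uses this claim for boundary points. Second, your plan for the boundary cases --- an ``analogue of Corollary~\ref{cor:net converging to a vertex} for a net converging to a boundary point'' plus the above claim --- is more elaborate than what the paper actually does, and your expectation that this is ``the most technical step'' is misplaced. In the paper the boundary cases are the \emph{easiest}: when both $a,b\in\partial X$ (case~(iii)) the argument is four lines of Gromov-product arithmetic using only $\delta$-hyperbolicity, with no fineness and no Corollary~\ref{cor:net converging to a vertex} at all; the mixed case (one vertex, one boundary point) combines one application of Corollary~\ref{cor:net converging to a vertex} at the vertex with a short Gromov-product estimate at the boundary point, again without needing $g_i^{-1}b\to a$. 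So rather than developing a boundary analogue of Corollary~\ref{cor:net converging to a vertex}, just compute with $(\,\cdot\,,\,\cdot\,)_o$ and Remark~\ref{rem:convergence in Bowditch topology}.
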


\begin{proof}
    Fix $v \in V(X)$ and let $(g_i)_{i \in I}$ be a wandering net in $G$. Since $\Delta X$ is compact, there exist a subnet $(g_j)_{j \in J}$ of $(g_i)_{i \in I}$ and $a,b \in \Delta X$ such that $\lim_{j \to \infty}g_jv = a$ and $\lim_{j \to \infty}g_j^{-1} v = b$. Note that $(g_j)_{j \in J}$ is wandering since $(g_i)_{i \in I}$ is wandering.

    (i) When $\{a, b\} \subset V(X)$, we have $\lim_{j \to \infty}g_j b = a$ and $\lim_{j \to \infty}g_j^{-1} a = b$ by our condition on the action $G \act X$. Let $A \subset \elk_X(a)$ and $B \subset \elk_X(b)$ be finite sets. By taking a subnet of $(g_j)_{j \in J}$ depending on whether $g_j b\neq a$ eventually as $j \to \infty$ or not, we can assume $(\forall j \in J,\, g_j b\neq a) \vee (\forall j\in J,\, g_j b= a)$ without loss of generality.
    
    (i-1) When $\forall j \in J,\, g_j b\neq a$, by $\lim_{j \to \infty}g_j^{-1} a = b$ and Corollary \ref{cor:net converging to a vertex}, there exists $j_0 \in J$ such that for any $j \ge j_0$, $c \in \Delta X \setminus P(b,B)$ and $p' \in \geo_X(g_j^{-1} a,c)$, we have $b \in V(p')$. Hence, for any $j \ge j_0$, $c \in \Delta X \setminus P(b,B)$, and $p \in \geo_X(a, g_j c)$, we have $g_j b \in V(p)$. Meanwhile, by $\lim_{j \to \infty}g_j b = a$, there exists $j_1 \in J$ such that $g_j b \in P(a,A)$ for any $j \ge j_1$. Take $j_2 \in J$ satisfying $j_2 \ge j_0$ and $j_2 \ge j_1$. Let $j \ge j_2$ and $c \in \Delta X \setminus P(b,B)$. Since we have $g_j b \in V(p)$ for any $p \in \geo_X(a,g_j c)$, the subpath $p_{[a,g_j b]}$ is in $\geo_X(a,g_j b)$ and satisfy $E(p)\cap A = E(p_{[a,g_j b]})\cap A = \emptyset$ by $g_j b \neq a$ and $g_j b \in P(a,A)$. Hence, for any $j \ge j_2$ and $c \in \Delta X \setminus P(b,B)$, we have $g_j c \in P(a,A)$

    (i-2) When $\forall j \in J,\, g_j b = a$, since we have $\lim_{j \to \infty} g_j^{-1} e_+=b$ for any $e \in A$ by our condition, we can apply Corollary \ref{cor:net converging to a vertex} to $(g_j^{-1} e_+)_{j\in J}$, $b$, and $B$, and see that there exists $j_0 \in J$ such that for any $j \ge j_0$, $e \in A$, $c \in \Delta X \setminus P(b,B)$, and $p' \in \geo_X(g_j^{-1} e_+,c)$, we have $b \in V(p')$. This implies $a=g_j b \in V(p)$ for any $j \ge j_0$, $e \in A$, $c \in \Delta X \setminus P(b,B)$, and $p \in \geo_X(e_+,g_j c)$. Hence, for any $j \ge j_0$, $c \in \Delta X \setminus P(b,B)$, and $p \in \geo_X(a,g_jc)$, we have $E(p)\cap A = \emptyset$. Thus, $g_j c \in P(a,A)$ for any $j \ge j_0$ and $c \in \Delta X \setminus P(b,B)$.
    
    By (i-1) and (i-2), $(g_j)_{j \in J}$ satisfies $g_j|_{\Delta X \setminus \{b\}} \twoheadrightarrow a$ (see Definition \ref{def:convergence action}). Similarly, we can also see that $(g_j^{-1})_{j \in J}$ satisfies $g_j^{-1}|_{\Delta X \setminus \{a\}} \twoheadrightarrow b$.

    (ii) When $a \in V(X)$ and $b \in \partial X$, we have $\lim_{j \to \infty} g_j a = a$ and $\lim_{j \to \infty} g_j^{-1} a=b$. We first show that $(g_j^{-1})_{j \in J}$ satisfies $g_j^{-1}|_{\Delta X \setminus \{a\}} \twoheadrightarrow b$. Let $A \subset \elk_X(a)$ be finite and let $r \in \NN$. By $\lim_{j \to \infty} g_j^{-1} a=b \in \partial X$, there exists $j_0 \in J$ such that for any $j \ge j_0$, $(b,g_j^{-1} a)_a > r+2\delta$ (see Remark \ref{rem:convergence in Bowditch topology}). Meanwhile, by $\lim_{j \to \infty}g_j a = a$ and Corollary \ref{cor:net converging to a vertex}, there exists $j_1 \in J$ such that for any $j \ge j_1$, $c \in \Delta X \setminus P(a,A)$ and $p' \in \geo_X(g_j a,c)$, we have $a \in V(p')$. This implies $g_j^{-1} a \in V(p)$ for any $j \ge j_1$, $c \in \Delta X \setminus P(a,A)$, and $p \in \geo_X(a,g_j^{-1} c)$. Take $j_2 \in J$ satisfying $j_2 \ge j_0$ and $j_2 \ge j_1$. Let $j \ge j_2$ and $c \in \Delta X \setminus P(a,A)$, then by $\forall\, p \in \geo_X(a,g_j^{-1} c), \, g_j^{-1} a \in V(p)$, we have $(g_j^{-1} a, g_j^{-1} c)_a \ge d_X(a,g_j^{-1} a) \ge (b,g_j^{-1}a)_a > r+2\delta$. This implies
    $
    (b,g_j^{-1}c)_a \ge \min\{(b,g_j^{-1}a)_a,(g_j^{-1}a, g_j^{-1}c)_a\}-\delta > r+2\delta-2\delta = r.
    $ Hence, $(g_j^{-1})_{j \in J}$ satisfies $g_j^{-1}|_{\Delta X \setminus \{a\}} \twoheadrightarrow b$.
    
    We next show that $(g_j)_{j \in J}$ satisfies $g_j|_{\Delta X \setminus \{b\}} \twoheadrightarrow a$. Let $A \subset \elk_X(a)$ be finite and let $r \in \NN$. For $a$ and $A$, there exists a finite set $F \subset \elk_X(a)$ satisfying the condition (\textreferencemark) of Lemma \ref{lem:net converging to a vertex}. By $g_j a \to a$ and $g_j^{-1} a \to b$, there exists $j_0 \in J$ such that for any $j \ge j_0$, we have $g_j a \in P(a,F)$ and $(b, g_j^{-1} a)_a > r+2\delta$. Let $j\ge j_0$ and let $c \in \Delta X$ satisfy $(b, c)_a \le r$. By $(b, g_j^{-1} a)_a > r+2\delta$ and $(b, c)_a \le r$, we have $(g_j^{-1} a, c)_a \le r+2\delta$. This implies $g_j^{-1} a \notin V(p')$ for any $p' \in \geo_X(a,c)$. Indeed, if there exists $p \in \geo_X(a,c)$ such that $g_j^{-1} a \in p$, then we have $(g_j^{-1} a, c)_a \ge d_X(a,g_j^{-1}a) \ge (b, g_j^{-1} a)_a > r+2\delta$, which contradicts $(g_j^{-1} a, c)_a \le r+2\delta$. Hence, $a \notin V(p)$ for any $p \in \geo_X(g_j a, g_j c)$. This, $g_j a \in P(a,F) \subset P'(a,F)$, and the condition (\textreferencemark) imply $g_j c \in P(a,A)$. Hence, $(g_j)_{j \in J}$ satisfies $g_j|_{\Delta X \setminus \{b\}} \twoheadrightarrow a$. 
    
    The argument in the case $a\in \partial X$ and $b \in V(X)$ is the same as case (ii).

    (iii) When $\{a,b\} \subset \partial X$, let $r,r' \in \NN$ and let $c \in \Delta X$ satisfy $(b,c)_v \le r$. By $g_j^{-1} v \to b$ and $g_j v \to a$, there exists $j_0\in J$ such that for any $j \ge j_0$, we have $(b,g_j^{-1} v)_v > r+r'+4\delta$ and $(a,g_j v)_v > r'+2\delta$. Let $j \ge j_0$. By $(b,g_j^{-1} v)_v > r+r'+4\delta$ and $r \ge (b,c)_v \ge \min\{(b,g_j^{-1} v)_v, (g_j^{-1} v, c)_v\} -2\delta$, we have $(g_j^{-1} v, c)_v\le r+2\delta$. This implies
    \begin{align*}
    (g_j v, g_j c)_v
    &=
    (v,c)_{g_j^{-1}v}
    \ge
    d_X(g_j^{-1} v, v) - (g_j^{-1} v, c)_v
    \ge
    (b,g_j^{-1} v)_v - (g_j^{-1} v, c)_v \\
    &>
    (r+r'+4\delta) - (r+2\delta)
    =
    r' + 2\delta.
    \end{align*}
    This and $(a,g_j v)_v > r'+2\delta$ imply $(a, g_j c)_v \ge \min\{(a, g_j v)_v, (g_j v, g_j c)_v\} - 2\delta > r'$. Hence, $(g_j)_{j \in J}$ satisfies $g_j|_{\Delta X \setminus \{b\}} \twoheadrightarrow a$. Similarly, we can also show that $(g_j^{-1})_{j \in J}$ satisfies $g_j^{-1}|_{\Delta X \setminus \{a\}} \twoheadrightarrow b$.
\end{proof}

\begin{prop}\label{prop:action on fine graph with finite edge stabilizer is convergence action}
    Let $X$ be a fine hyperbolic graph and $G$ be a group acting on $X$. If $\stab_G(e) \,(=\stab_G(e_-)\cap \stab_G(e_+))$ is finite for any $e \in E(X)$, then the action $G \act \Delta X$ is a convergence action.
\end{prop}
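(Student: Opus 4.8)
The plan is to chain together the two preparatory lemmas, since essentially all the real work has already been carried out there. First I would note that the hypothesis on $G\act X$---namely that $\stab_G(e)=\stab_G(e_-)\cap\stab_G(e_+)$ is finite for every $e\in E(X)$---is exactly the hypothesis of Lemma~\ref{lem:extension graph has absorbing dynamics}. Applying that lemma, one obtains the following \emph{absorbing dynamics} property for $G\act X$: for every wandering net $(g_i)_{i\in I}$ in $G$, every $a\in V(X)$, and every $b\in\Delta X$ with $\lim_{i\to\infty}g_ia=b$, one has $\lim_{i\to\infty}g_ix=b$ for all $x\in V(X)$.

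Next I would observe that this is (a priori even stronger than) the hypothesis of Lemma~\ref{lem:absorbing dynamics imply convergence action}, which only asks for the same conclusion in the special case $a,b\in V(X)$. Since $V(X)\subset\Delta X$, restricting the statement just obtained to $b\in V(X)$ gives precisely what Lemma~\ref{lem:absorbing dynamics imply convergence action} requires. Invoking that lemma then yields that $G\act\Delta X$ is a convergence action, which is the assertion of the proposition.

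I do not expect any genuine obstacle at this stage. The compactness-plus-fineness argument that forces a wandering net to be eventually constant on a fixed finite collection of geodesics---contradicting finiteness of some edge stabilizer---is the substance of Lemma~\ref{lem:extension graph has absorbing dynamics}, and the Gromov-product bookkeeping needed to upgrade vertex-level collapsing to the collapsing behaviour required in Definition~\ref{def:convergence action} on all of $\Delta X$, including boundary points, is the substance of Lemma~\ref{lem:absorbing dynamics imply convergence action}. The only point to verify is that the two lemmas dovetail, i.e.\ that the conclusion of the first matches the hypothesis of the second; as noted above, it does, with room to spare.
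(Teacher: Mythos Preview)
Your proposal is correct and matches the paper's own proof essentially verbatim: the paper simply writes that the proposition follows from Lemma~\ref{lem:extension graph has absorbing dynamics} and Lemma~\ref{lem:absorbing dynamics imply convergence action}. Your observation that the conclusion of the first lemma is even slightly stronger than the hypothesis of the second (allowing $b\in\Delta X$ rather than only $b\in V(X)$) is accurate and harmless.
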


\begin{proof}
    This follows from Lemma \ref{lem:extension graph has absorbing dynamics} and Lemma \ref{lem:absorbing dynamics imply convergence action}.
\end{proof}

Lemma \ref{lem:geodesic to the same boundary point go to the same plane} below is used to prove Theorem \ref{thm:action on extension graph is convergence action} (7). See Section \ref{subsec:Graph products of groups} for relevant notations.

\begin{lem}\label{lem:geodesic to the same boundary point go to the same plane}
    Suppose that $\Gamma$ is a fine hyperbolic graph with $\girth(\Gamma) > 20$ and that $\G=\{G_v\}_{v\in V(\Gamma)}$ is a collection of non-trivial finite groups. Let $x \in \partial \Gamma^e$ and $g \in \Gamma\G$ and let $p=(x_0,x_1,\cdots)$ be a geodesic ray in $\Gamma^e$ ($\forall n\ge 0, x_n \in V(\Gamma^e)$) such that $\lim_{n \to \infty}x_n = x$ holds and the sequence $(x_n)_{n \ge 0}$ is eventually in $g.\Gamma$. If $q=(y_0,y_1,\cdots)$ is a geodesic ray in $\Gamma^e$ ($\forall n\ge 0, y_n \in V(\Gamma^e)$) with $\lim_{n \to \infty}y_n = x$, then the sequence $(y_n)_{n \ge 0}$ is also eventually in $g.\Gamma$. 
\end{lem}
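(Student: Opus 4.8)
The plan is to reduce to the case $g=1$, and then to show that a geodesic ray of $\Gamma^e$ converging to a point of $\partial\Gamma$ (as a subset of $\partial\Gamma^e$) must eventually enter the base copy $\Gamma$, using the fellow-traveller property in the hyperbolic graph $\Gamma^e$ together with the fine/combinatorial structure of $\Gamma^e$ from \cite{Oya24b} that is forced by the hypothesis $\girth(\Gamma)>20$.

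\emph{Reduction.} Since $\Gamma\G$ acts on $\Gamma^e$ by graph automorphisms carrying each copy $h.\Gamma$ to $(gh).\Gamma$, applying $g^{-1}$ reduces the statement to $g=1$: now $p$ has a tail inside $\Gamma\subset\Gamma^e$, and because $\Gamma$ embeds isometrically in $\Gamma^e$ (\cite{Oya24b}) this tail is a genuine geodesic ray of $\Gamma$, so $x$ is the image in $\partial\Gamma^e$ of a point $\bar x\in\partial\Gamma$. It suffices to show that every geodesic ray $q$ in $\Gamma^e$ with $[q]=x$ is eventually in $\Gamma$. Since $\Gamma^e$ is $\delta$-hyperbolic and $[q]=[p]=x$, the rays $p$ and $q$ are asymptotic, and as the tail of $p$ lies in $\Gamma$, thin triangles (together with the description of convergence to $x$ in Remark \ref{rem:convergence in Bowditch topology}) give a constant $R=R(\delta)$ such that the tail of $q$ lies in $\N_{\Gamma^e}(\Gamma,R)$; note this uses only $[q]=x$, not the starting vertex of $q$, so the general case needs no extra care.

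\emph{Projection and main argument.} From the geodesic combinatorics of $\Gamma^e$ in \cite{Oya24b} I would use a projection ("gate") map $\gamma\colon V(\Gamma^e)\to V(\Gamma)$, equal to the identity on $V(\Gamma)$, with the property that every geodesic of $\Gamma^e$ from a vertex $z$ to any vertex of $\Gamma$ passes through $\gamma(z)$ (if this is not on the shelf in exactly this form, it should follow from the normal-form description of geodesics to conjugates there). The gate property yields: (i) $d_{\Gamma^e}(q(j),\gamma(q(j)))\le R$ for large $j$ (take a geodesic from $q(j)$ to a nearby vertex of $\Gamma$); (ii) $\gamma(q(j))$ and $\gamma(q(j+1))$ are equal or adjacent, so $j\mapsto\gamma(q(j))$ is a path in $\Gamma$; (iii) by (i) and $q(j)\to x$ this path converges to $\bar x\in\partial\Gamma$, hence is non-constant infinitely often. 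Now suppose $q$ is not eventually in $\Gamma$, i.e. $q(j)\notin\Gamma$ for infinitely many $j$. Combining the two infinite sets and keeping the defect bounded via (i), one extracts a short circuit of $\Gamma^e$: when $q(j),q(j+1)\notin\Gamma$ and $\gamma(q(j))\neq\gamma(q(j+1))$ are adjacent, the four vertices $q(j),\,q(j+1),\,\gamma(q(j+1)),\,\gamma(q(j))$ form a $4$-circuit — the missing edges follow from the gate identity $d_{\Gamma^e}(q(j),\gamma(q(j+1)))=1+d_{\Gamma^e}(\gamma(q(j)),\gamma(q(j+1)))=2$ and its symmetric counterpart — and more generally an excursion of $q$ out of and back into $\Gamma$, confined to $\N_{\Gamma^e}(\Gamma,R)$, together with a geodesic of $\Gamma$ joining its endpoints, is a circuit of bounded length. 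But $\girth(\Gamma)>20$ forces, via \cite{Oya24b}, that every sufficiently short circuit of $\Gamma^e$ lies inside a single copy of $\Gamma$, hence has length $>20$; this is the contradiction. Undoing the translation gives the statement.

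\emph{Expected obstacle.} The delicate point is the last step, and specifically ruling out \emph{long} excursions of $q$ out of $\Gamma$: such an excursion can a priori thread through several distinct copies of $\Gamma$, so one cannot merely quote "two copies of $\Gamma$ meet in a bounded set." Instead one must bound the excursion length by combining the fellow-traveller bound (which confines the excursion to $\N_{\Gamma^e}(\Gamma,R)$) with the girth estimate for $\Gamma^e$ coming from \cite{Oya24b}; this is precisely where the strong hypothesis $\girth(\Gamma)>20$, rather than a modest girth bound, does the work. (A harmless degenerate case — $\gamma\circ q$ eventually constant — is excluded since a constant path cannot converge to $\bar x\in\partial\Gamma$.)
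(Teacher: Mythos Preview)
Your reduction to $g=1$ and the fellow-traveller estimate placing the tail of $q$ in $\N_{\Gamma^e}(\Gamma,R)$ are fine, but the core of the argument has real gaps. First, the gate map $\gamma$ with the property ``every $\Gamma^e$-geodesic from $z$ to any vertex of $\Gamma$ passes through $\gamma(z)$'' is asserted, not proved; the convexity of $\Gamma$ in $\Gamma^e$ (\cite[Corollary 3.30]{Oya24b}) does not by itself yield a gate, and you should not lean on it without a reference or an argument. Second, your $4$-circuit computation is wrong: the gate identity gives $d_{\Gamma^e}(q(j),\gamma(q(j+1)))=d_{\Gamma^e}(q(j),\gamma(q(j)))+d_{\Gamma^e}(\gamma(q(j)),\gamma(q(j+1)))=a_j+1$, not $1+1$; combining with the symmetric inequality only yields $a_j=a_{j+1}$, never $a_j=1$, so the four vertices need not form a $4$-circuit at all---you get a loop of length $2a_j+2\le 2R+2$, and you have neither shown it is embedded nor that $\girth(\Gamma^e)$ exceeds this $\delta$-dependent bound. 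Third, and most seriously, your ``expected obstacle'' paragraph concedes that long excursions are the real issue and then waves at ``combining the fellow-traveller bound with the girth estimate'' without saying how; a geodesic confined to $\N_{\Gamma^e}(\Gamma,R)$ can still run parallel to $\Gamma$ for arbitrarily long stretches, so no short circuit is produced and the contradiction never materialises.

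The paper's proof avoids all of this by working with one geodesic triangle at a time. Assuming $y_0=x_0$, for each $n$ it picks $m=N+n+\delta+7$, joins $x_m$ to $y_m$ by a short geodesic $r$ (length $\le\delta$), and applies the structural results \cite[Corollary 3.37, Proposition 3.39]{Oya24b} to the triangle $p_{[x_0,x_m]}\, r\, q_{[y_0,y_m]}^{-1}$: these produce indices $m_1\le m_2$, $\ell_1\le\ell_2$ and a single translate $g_1.\Gamma$ containing $p_{[x_{m_1},x_{m_2}]}\cup q_{[y_{\ell_1},y_{\ell_2}]}$ with endpoints within distance $2$, together with a dichotomy at the ``inner'' corner. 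Because the relevant portion of $p$ already lies in $g.\Gamma$ and has length $\ge 3$, the rigidity fact $\diam_{\Gamma^e}(g.\Gamma\cap g_1.\Gamma)\ge 3 \Rightarrow g=g_1$ (\cite[Remark 3.12(3)]{Oya24b}) forces $g_1=g$, hence some $y_\ell\in g.\Gamma$ with $\ell>N+n$. This gives an increasing sequence $(n_i)$ with $y_{n_i}\in g.\Gamma$, and convexity \cite[Corollary 3.30]{Oya24b} fills in the gaps. The general case $y_0\neq x_0$ is reduced to this one by splicing. The point is that \cite{Oya24b} already supplies the exact combinatorial decomposition of geodesic triangles in $\Gamma^e$ that your gate/circuit argument is trying to reconstruct by hand.
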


\begin{proof}
    Let $\Gamma^e$ be $\delta$-hyperbolic with $\delta \in \NN$ and let $N \in \NN$ satisfy $p_{[x_N,\infty)}\subset g.\Gamma$. We first show the statement when $y_0=x_0$. Given $n \in \NN$, define $m \in \NN$ by $m=N+n+\delta+7$ for brevity. We have $d_{\Gamma^e}(x_m,y_m) \le \delta$ by $\lim_{i \to \infty} x_i = \lim_{i \to \infty} y_i =x$. Take $r \in \geo_{\Gamma^e}(x_m,y_m)$. By applying \cite[Corollary 3.36]{Oya24b} and \cite[Proposition 3.38]{Oya24b} to the geodesic triangle $p_{[x_0,x_m]}rq^{-1}_{[y_m,y_0]}$, there exist $m_1,m_2,\ell_1,\ell_2 \in \{0,\cdots,m\}$, $a,b\in V(r)$, and $g_1\in \Gamma\G$ satisfying $m_1 \le m_2$, $\ell_1 \le \ell_2$, and the conditions (i) and (ii) below.
    \begin{itemize}
        \item[(i)]
        $p_{[x_{m_1},x_{m_2}]} \cup q_{[y_{\ell_1},y_{\ell_2}]} \cup r_{[a,b]} \subset g_1.\Gamma$ and $\max\{d_{\Gamma^e}(x_{m_1},y_{\ell_1}), d_{\Gamma^e}(x_{m_2},a), d_{\Gamma^e}(y_{\ell_2},b)\}\le 2$.
        \item[(ii)]
        Either (ii-1) or (ii-2) holds, (ii-1) $x_{m_1} = y_{\ell_1}$, (ii-2) there exist $m_0 \in \{0,\cdots,m_1\}$, $\ell_0 \in \{0,\cdots,\ell_1\}$, and $g_0 \in \Gamma\G$ such that $p_{[x_{m_0},x_{m_1}]} \cup q_{[y_{\ell_0},y_{\ell_1}]} \subset g_0.\Gamma$, $d_{\Gamma^e}(x_{m_1},y_{\ell_1}) \le 2$, and $\min\{|p_{[x_{m_0},x_{m_1}]}|, |q_{[y_{\ell_0},y_{\ell_1}]}|\} \ge 7$.
    \end{itemize}
    We have $m - m_2 = |p_{[x_{m_2}, x_m]}| \le d_{\Gamma^e}(x_{m_2}, a) + d_{\Gamma^e}(a, x_m) \le 2+ d_{\Gamma^e}(x_m,y_m) \le \delta+2$. Similarly, $m - \ell_2 = |q_{[y_{\ell_2}, y_m]}| \le d_{\Gamma^e}(y_{\ell_2}, b) + d_{\Gamma^e}(b, y_m) \le \delta+2$. This and $m=N+n+\delta+7$ imply $\min\{m_2,\ell_2\} \ge N+n+5$.
    
    When $|p_{[x_{m_1}, x_{m_2}]}| \ge 3$. By $m_2-m_1=|p_{[x_{m_1}, x_{m_2}]}| \ge 3$, we have $x_{m_2-3} \in p_{[x_{m_1},x_{m_2}]}$. By $m_2-3 \ge N+n+2 > N$, $p_{[x_N,\infty)}\subset g.\Gamma$, and the condition (i), we have $p_{[x_{m_2-3},x_{m_2}]} \subset g.\Gamma \cap g_1.\Gamma$. Hence, $\diam_{\Gamma^e}(g.\Gamma\cap g_1.\Gamma) \ge 3$. By this and \cite[Remark 3.12 (3)]{Oya24b}, we have $g=g_1$. By $q_{[y_{\ell_1},y_{\ell_2}]} \subset g_1.\Gamma=g.\Gamma$, we have $y_{\ell_2} \in g.\Gamma$. Also, recall $\ell_2 \ge N+n+5$.
    
    When $|p_{[x_{m_1}, x_{m_2}]}| \le 2$. In case (ii-1), we have $\ell_1=m_1 \ge m_2 - 2 \ge N+n+3$ and $y_{\ell_1} = x_{m_1} \in g.\Gamma$ by $m_1 > N$. In case (ii-2), we have $x_{m_1-3} \in p_{[x_{m_0},x_{m_1}]}$ by $|p_{[x_{m_0},x_{m_1}]}| \ge 7$. By $m_1 - 3 \ge N+n$, $p_{[x_N,\infty)}\subset g.\Gamma$, and the condition (ii-2), we have $p_{[x_{m_1-3},x_{m_1}]} \subset g.\Gamma \cap g_0.\Gamma$. Hence, we have $g=g_0$ by \cite[Remark 3.12 (3)]{Oya24b}. By $q_{[y_{\ell_0},y_{\ell_1}]} \subset g_0.\Gamma=g.\Gamma$, we have $y_{\ell_1} \in g.\Gamma$. Also, $\ell_1 \ge m_1 - d_{\Gamma^e}(x_{m_1},y_{\ell_1}) \ge N+n+3 -2$.

    Thus, for any $n \in \NN$, there exists $\ell \in \NN$ such that $\ell > N+n$ and $y_\ell \in g.\Gamma$. Hence, there exists an increasing sequence $(n_i)_{i \in \NN}$ in $\NN$ such that $(y_{n_i})_{i \in \NN} \subset g.\Gamma$. By \cite[Corollary 3.29]{Oya24b}, we have $q_{[y_{n_i},y_{n_{i+1}}]} \subset g.\Gamma$ for any $i \ge 1$. This implies $q_{[y_{n_1,\infty})} \subset g.\Gamma$.
    
    Next, we will show the statement in general case i.e. $y_0 \neq x_0$ can happen. There exists $N_1 \in \NN$ and $\alpha \in \geo_{\Gamma^e}(x_0,y_{N_1})$ such that the path $q'$ defined by $q' =\alpha q_{[y_{N_1},\infty)}$ is a geodesic ray in $\Gamma^e$ (see \cite[Lemma 3.27]{Oya24a}). By applying the above argument to $p$ and $q'$, we can see that $q'$ is eventually in $g.\Gamma$. Thus, $q$ is also eventually in $g.\Gamma$.
\end{proof}

We are now ready to prove Theorem \ref{thm:intro convergence action}, which corresponds to Theorem \ref{thm:action on extension graph is convergence action} (1), (6), (7). In Theorem \ref{thm:action on extension graph is convergence action}, the condition $\diam_\Gamma(\Gamma) > 1$ is natural. Indeed, we can see $\diam_\Gamma(\Gamma) \le 1$ if and only if $|\Delta\Gamma^e| \le 2$ by \cite[Lemma 3.14 (1)]{Oya24b}. However, it is conventional to assume $|M| \ge 3$ in Definition \ref{def:convergence action} because of the equivalent definition of a convergence action using distinct triples (see Remark \ref{rem:equivalent definition using distint triples}). See Definition \ref{def:concepts in graph theory} for $\leaf(\Gamma)$ and Section \ref{subsec:Convergence actions} for relevant definitions.

\begin{thm}\label{thm:action on extension graph is convergence action}
    Suppose that $\Gamma$ is a fine hyperbolic graph with $\girth(\Gamma) > 20$ and $\diam_\Gamma(\Gamma) > 1$ and that $\G=\{G_v\}_{v\in V(\Gamma)}$ is a collection of non-trivial finite groups. Then, the action of $\Gamma\G$ on $\Delta\Gamma^e$ is a convergence action and the following hold.
    \begin{itemize}
        \item[(1)]
        The action $\Gamma\G \act \Delta\Gamma^e$ is non-elementary if and only if $\diam_\Gamma(\Gamma) > 2$.
        \item[(2)]
        We have $\Lambda(\Gamma\G) = \Delta\Gamma^e\setminus\, \Gamma\G.\leaf(\Gamma)$. If $x \in \Gamma\G.\leaf(\Gamma)$, then $x$ is isolated in $\Delta\Gamma^e$.
        \item[(3)]
        The action $\Gamma\G \act \Delta\Gamma^e$ is minimal if and only if $\leaf(\Gamma)=\emptyset$.
        \item[(4)]
        No point in $V(\Gamma^e)$ is a conical limit point.
        \item[(5)]
        A point $v \in V(\Gamma) \setminus \leaf(\Gamma)$ is bounded parabolic if and only if $|\lk_{\Gamma}(v)\setminus \leaf(\Gamma)| < \infty$.
        \item[(6)]
        The action $\Gamma\G \act \Delta\Gamma^e$ is not uniform.
        \item[(7)]
        The action $\Gamma\G \act \Delta\Gamma^e$ is geometrically finite if and only if $|V(\Gamma)\setminus \leaf(\Gamma)| < \infty$.
    \end{itemize}
\end{thm}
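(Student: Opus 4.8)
The plan is to establish the seven items more or less in the stated order, using the convergence action machinery (Proposition \ref{prop:action on fine graph with finite edge stabilizer is convergence action} gives that $\Gamma\G \act \Delta\Gamma^e$ is a convergence action, since edge stabilizers in $\Gamma^e$ are of the form $g\la G_v, G_w\ra g^{-1}$ with $v,w$ adjacent, which is finite because $G_v, G_w$ are finite) together with the geometry of $\Gamma^e$ developed in \cite{Oya24b}. For (1), the forward direction is a contrapositive: if $\diam_\Gamma(\Gamma)=2$, I would exhibit a $\Gamma\G$-invariant set of size $\le 2$ in $\Delta\Gamma^e$, presumably using \cite[Lemma 3.14]{Oya24b} — when $\diam_\Gamma(\Gamma)\le 2$ the extension graph has bounded diameter (Remark \ref{rem:diam when V(Gamma)setminus I(Gamma) is finite} style argument inside $\Gamma^e$) so $\partial\Gamma^e$ is small and $\Delta\Gamma^e$ collapses; for the reverse direction, $\diam_\Gamma(\Gamma)>2$ lets me build two independent loxodromic elements with disjoint fixed-point pairs (e.g. take $g_v \in G_v$, $g_w \in G_w$ for $v,w$ at distance $3$, and consider a long reduced product), which rules out any invariant set of size $\le 2$ and shows $\Gamma\G$ is infinite, hence non-elementary.

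For (2), the identification $\Lambda(\Gamma\G)=\Delta\Gamma^e\setminus \Gamma\G.\leaf(\Gamma)$ proceeds by showing: (a) every point of $\partial\Gamma^e$ and every vertex $v(x)\notin\leaf(\Gamma)$-orbit is a limit point, by writing it as a limit of a wandering orbit — for a vertex $x$ with $v(x)$ of valency $\ge 2$ one uses two neighbors and alternating group elements to push an orbit toward $x$; (b) if $v \in \leaf(\Gamma)$, then $G_v$ together with $\st_\Gamma(v)$ shows the star of $x=gG_vg^{-1}$ in $\Gamma^e$ is a single edge (or $x$ is isolated when $|\lk|=0$), so $\{P(x,I)\}$ contains a singleton and $x$ is isolated; an isolated point cannot be a limit of a net avoiding it, so it is not a limit point. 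Then (3) is immediate: $\Gamma\G \act \Delta\Gamma^e$ is minimal iff $\Lambda(\Gamma\G)=\Delta\Gamma^e$ iff $\Gamma\G.\leaf(\Gamma)=\emptyset$ iff $\leaf(\Gamma)=\emptyset$ (for a non-elementary action minimality on the limit set is automatic by Remark \ref{rem:action on limit set is minimal}; the elementary case $\diam_\Gamma(\Gamma)=2$ is handled separately but there $\leaf(\Gamma)=\emptyset$ forces... actually one checks directly).

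For (4), no vertex $x\in V(\Gamma^e)$ is conical: using Remark \ref{rem:equivalent definition of conical limit point}, a conical limit point requires nets $(g_i)$ with $g_i(x,y,x_i)$ staying in a compact set of distinct triples while $x_i\to x$; but any $g_i$ moving $x$ a bounded distance lies in finitely many cosets of an edge stabilizer (fineness + finite edge stabilizers, exactly the mechanism in the proof of Lemma \ref{lem:extension graph has absorbing dynamics}), contradicting that the net is wandering, so the orbit of a compact triple set near $x$ cannot accumulate at $x$. For (5), a vertex $v\notin\leaf(\Gamma)$ has $\stab_{\Gamma\G}(v)=G_v \times \la G_w : w\in\lk_\Gamma(v)\ra$ up to finite index, which is infinite; it contains no loxodromic element because every element fixes $v$ hence has a global fixed point in a way incompatible with loxodromic dynamics; and $\stab_{\Gamma\G}(v)$ acts cocompactly on $\Lambda(\Gamma\G)\setminus\{v\}$ iff the "horoball" structure around $v$ is finite, which translates via Lemma \ref{lem:geodesic to the same boundary point go to the same plane} and \cite{Oya24b} into $|\lk_\Gamma(v)\setminus\leaf(\Gamma)|<\infty$ (the leaves in the link contribute isolated points not in $\Lambda$, so they don't obstruct cocompactness). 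Items (6) and (7) then combine (4) and (5): (6) holds because a uniform convergence action has every non-isolated point conical (Remark \ref{rem:uniform convergence action}), but non-leaf vertices of $V(\Gamma^e)$ are non-isolated and not conical by (2) and (4); for (7), geometrically finite means every limit point is conical or bounded parabolic, so by (4) every $v\in V(\Gamma)\setminus\leaf(\Gamma)$ (and its orbit) must be bounded parabolic, and by (5) that requires $|\lk_\Gamma(v)\setminus\leaf(\Gamma)|<\infty$ for all such $v$, plus a finiteness of the orbit count — I expect this is equivalent to $|V(\Gamma)\setminus\leaf(\Gamma)|<\infty$ via Remark \ref{rem:diam when V(Gamma)setminus I(Gamma) is finite} (bounded diameter of the non-leaf part forces local finiteness there to be global finiteness); conversely if $|V(\Gamma)\setminus\leaf(\Gamma)|<\infty$ one checks boundary points are conical directly. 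The main obstacle I anticipate is (5) — pinning down the cocompactness of the parabolic stabilizer on $\Lambda\setminus\{v\}$ and reconciling it with the leaves in $\lk_\Gamma(v)$ — and the forward direction of (7), where I must rule out the scenario of infinitely many non-leaf vertices each with finite non-leaf link but no global bound; Remark \ref{rem:diam when V(Gamma)setminus I(Gamma) is finite} is the key leverage there, since $\diam_\Gamma(\Gamma)>2$ forces any leaf's neighbor to be a non-leaf, so an infinite non-leaf set with finite links would still yield a non-compact quotient of $\Lambda$.
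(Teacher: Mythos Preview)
Your outline tracks the paper's architecture for (2), (3), (6), and the convergence-action setup, but there are two genuine gaps.

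For (4), your argument that ``any $g_i$ moving $x$ a bounded distance lies in finitely many cosets of an edge stabilizer, contradicting that the net is wandering'' does not work: $\stab_{\Gamma\G}(x)$ is infinite for non-leaf $x$, so wandering nets fixing $x$ certainly exist, and your reasoning does not separate conical from non-conical. The paper instead observes that $\stab_{\Gamma\G}(x)$ contains a torsion-free element $g$ (since $|\lk_\Gamma(v(x))|\ge 2$ and $\girth(\Gamma)>3$ yield a nontrivial free product inside the stabilizer), shows $\fix_{\Delta\Gamma^e}(g)=\{x\}$ so $g$ is parabolic, and then invokes the standard fact that a parabolic fixed point is never conical.

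The more serious gap is the forward direction of (7). You argue: geometric finiteness plus (4) forces every non-leaf vertex to be bounded parabolic, hence by (5) every $|\lk_\Gamma(v)\setminus\leaf(\Gamma)|<\infty$, and you then hope this implies $|V(\Gamma)\setminus\leaf(\Gamma)|<\infty$. It does not. In the residual case where every non-leaf vertex has finite non-leaf link but there are infinitely many non-leaf vertices, \emph{all} vertex limit points are bounded parabolic by (5), so there is no obstruction at the vertices at all. The obstruction is on $\partial\Gamma^e$: the induced subgraph on $V(\Gamma)\setminus\leaf(\Gamma)$ is then connected, infinite, and locally finite, so it contains a geodesic ray $p\subset\Gamma$ with endpoint $x\in\partial\Gamma^e$. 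The paper shows, using Lemma~\ref{lem:geodesic to the same boundary point go to the same plane} (this is where that lemma is needed, not in (5)), that $\stab_{\Gamma\G}(x)=\{1\}$, so $x$ is not bounded parabolic; and then gives a separate argument that $x$ is not conical either, by proving that for any wandering net $(g_i)$ with $g_i.v_0\to a$ and $g_i.x\to b$ one must have $a=b$ (otherwise long subsegments of the $g_i.p$ would eventually coincide inside a single $g_j.\Gamma$, forcing $g_i$ to be eventually constant). This boundary-point analysis is the heart of the ``only if'' direction and is absent from your plan. Your converse of (7) is also understated: showing every $x\in\partial\Gamma^e$ is conical when $|V(\Gamma)\setminus\leaf(\Gamma)|<\infty$ requires a genuine construction---the paper builds elements $g_n$ with $g_n.(v(x_{n-1}),v(x_n))=(x_{n-1},x_n)$ along a ray to $x$ and splits into two cases according to whether the increments $h_n=g_{n-1}^{-1}g_n$ take infinitely many values, extracting in each case a subnet witnessing conicality via Remark~\ref{rem:equivalent definition of conical limit point}.
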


\begin{proof}
    Note that $\Gamma^e$ is fine and hyperbolic by \cite[Theorem 1.2 (1)]{Oya24b} and \cite[Theorem 1.2 (3)]{Oya24b}. Since we have $|\stab_{\Gamma\G}(e)| = |G_{e_-}\times G_{e_+}| < \infty$ for any $e \in E(\Gamma)$ by \cite[Remark 3.12 (1)]{Oya24b}, the action $\Gamma\G \act \Delta \Gamma^e$ is a convergence action by Proposition \ref{prop:action on fine graph with finite edge stabilizer is convergence action}.
    
    (1) When $\diam_\Gamma(\Gamma) = 2$, there exists $v \in V(\Gamma)$ such that $V(\Gamma) = \st_\Gamma(v)$, hence $\Gamma\G$ fixes the point $v \in V(\Gamma^e)$. Hence, the action $\Gamma\G \act \Delta\Gamma^e$ is elementary. 
    
     When $\diam_\Gamma(\Gamma) > 2$, $\Gamma$ contains an induced subgraph isomorphic to a line of length $3$ by $\girth(\Gamma) > 20$. Hence, $\Gamma\G$ contains the free group of rank $2$. By \cite[Lemma 2.2]{Bow99a}, $\Gamma\G$ doesn't fix any set of two distinct points in $\Delta \Gamma^e$ setwise since $\Gamma\G$ is not virtually cyclic. Given $x \in V(\Gamma^e)$, let $g \in \Gamma\G$ satisfy $x = g.v(x)$. By \cite[Corollary 3.6]{Oya24b}, we have $\stab_{\Gamma\G}(x) = g \la G_w \mid w \in \st_\Gamma(v(x)) \ra g^{-1}$. Hence, $\Gamma\G \neq \stab_{\Gamma\G}(x)$ for any $x \in V(\Gamma^e)$ by $\diam_\Gamma(\Gamma) > 2$. We next show that $\Gamma\G$ doesn't fix any point in $\partial \Gamma^e$. Let $x\in \partial \Gamma^e$. Fix $v \in V(\Gamma)$ with $|\lk_{\Gamma}(v)|\ge 2$ and a geodesic ray $p \in \geo_{\Gamma^e}(v,x)$. Let $e \in V(p)$ satisfy $e_- = v$. Since $\Gamma^e$ is fine and hyperbolic, by \cite[Lemma 8.2]{Bow12} there exists a finite set $A \subset \elk_{\Gamma^e}(v)$ such that $E(q)\cap A \neq \emptyset$ for any $q \in \geo_{\Gamma^e}(v,x)$. Since $\lk_{\Gamma^e}(v)$ is infinite by \cite[Lemma 3.14 (2)]{Oya24b}, there exists $g\in \stab_{\Gamma\G}(v)$ such that $g.e \notin A$. Hence, we have $g.x \neq x$ by $g.p \in \geo_{\Gamma^e}(v,g.x)$ and $E(g.p)\cap A = \emptyset$. Thus, $\Gamma\G$ doesn't fix any $x \in \partial \Gamma^e$. Since $\Gamma\G$ is infinite and doesn't fix any non-empty subset of cardinality at most $2$ setwise, the action $\Gamma\G \act \Gamma^e$ is non-elementary.

    (2) Let $v \in \leaf(\Gamma)$. By $\diam_\Gamma(\Gamma) > 1$ and Remark \ref{rem:vertex with 0 link}, we have $|\lk_{\Gamma}(v)|=1$. This implies $|\lk_{\Gamma^e}(v)|=1$ by \cite[Lemma 3.14 (1)]{Oya24b}. Hence, $v \in \leaf(\Gamma)$ is isolated in $\Delta\Gamma^e$. Since $G$ acts on $\Delta\Gamma^e$ homeomorphically, every point in $\Gamma\G.\leaf(\Gamma)\,(=\bigcup_{g\, \in \Gamma\G} g.\leaf(\Gamma))$ is isolated in $\Delta\Gamma^e$ as well. This implies $\Lambda(\Gamma\G) \subset \Delta\Gamma^e\setminus\, \Gamma\G.\leaf(\Gamma)$. We will show the converse inclusion. Let $v \in V(\Gamma)\setminus \leaf(\Gamma)$. The subgroup $\stab_{\Gamma\G}(v)$ is infinite since we have $|\lk_{\Gamma}(v)| \ge 2$ and $\Gamma$ contains no triangle. Hence, we can take a wandering net $(g_n)_{n\in \NN}$ in $\stab_{\Gamma\G}(v)$ such that $g_n.v=v$ for any $n \in\NN$. Take $w \in \lk_{\Gamma}(v)$, which is possible by $|\lk_{\Gamma}(v)| \ge 2$, then we have $\lim_{n\to\infty}g_n.w = v$ by Lemma \ref{lem:extension graph has absorbing dynamics}. Note $v \notin \{g_n.w\mid n \in\NN\}$ by \cite[Corollary 3.7]{Oya24b}. Thus, $v \in \Lambda(\Gamma\G)$. Since $\Lambda(\Gamma\G)$ is $\Gamma\G$-invariant, this implies $V(\Gamma^e) \setminus \Gamma\G.\leaf(\Gamma) \subset \Lambda(\Gamma\G)$. Next, let $x \in \partial\Gamma^e$. Fix $v \in V(\Gamma)$ and $p \in \geo_{\Gamma^e}(v,x)$. Given $R \in \NN$, let $y \in V(p)$ and $g \in \Gamma\G$ satisfy $d_{\Gamma^e}(v,y) = R$ and $y = g.v(y)$ (see Remark \ref{rem:notation on extension graph}). We have $|\lk_{\Gamma}(v(y))|\ge 2$ by $|\lk_{\Gamma^e}(y)| \ge 2$ and \cite[Lemma 3.14 (1)]{Oya24b}. Hence, $\stab_{\Gamma\G}(y)$ is infinite. Take a wandering net $(g_n)_{n \in \NN}$ in $\stab_{\Gamma\G}(y)$, then we have $\lim_{n \to \infty} g_n.v = y$ by $\lim_{n \to \infty} g_n.y = y$ and Lemma \ref{lem:extension graph has absorbing dynamics}. By applying Corollary \ref{cor:net converging to a vertex} to $v$, $(g_n.v)_{n\in\NN}$, and $y$, there exists $N \in\NN$ such that for any $n \ge N$ and $q \in \geo_{\Gamma^e}(v, g_n.v)$, we have $y \in V(q)$. This implies $(g_N.v, x)_v \ge R$ (see Remark \ref{rem:convergence in Bowditch topology}). Hence, we have $x \in \overline{\Gamma\G.v}$. This implies $\partial\Gamma^e \subset \Lambda(\Gamma\G)$.

    (3) When $\diam_\Gamma(\Gamma)=2$, the action $\Gamma\G \act \Gamma^e$ is not minimal since $\Gamma\G$ fixes a point $v \in V(\Gamma)$ with $V(\Gamma) = \st_\Gamma(v)$. Also, we have $\leaf(\Gamma)\neq \emptyset$ since $\Gamma$ contains no triangle. When $\diam_\Gamma(\Gamma) > 2$, the statement follows from Theorem \ref{thm:action on extension graph is convergence action} (1) and (2) and Remark \ref{rem:action on limit set is minimal}.

    (4) No point in $\Gamma\G.\leaf(\Gamma)$ is a conical limit point by Theorem \ref{thm:action on extension graph is convergence action} (2) and Remark \ref{rem:conical limit point and bounded parabolic point are limit points}. Let $x \in V(\Gamma^e)\setminus \, \Gamma\G.\leaf(\Gamma)$. Since we have $|\lk_\Gamma(v(x))| \ge 2$ by $x \notin \Gamma\G.\leaf(\Gamma)$ and $\Gamma$ contains no triangle, there exists $g \in \stab_{\Gamma\G}(x)$ that is torsion-free. Since the net $(g^n)_{n \in \NN}$ is wandering and we have $\forall n \in \NN, g^n.x = x$, Lemma \ref{lem:extension graph has absorbing dynamics} implies $\lim_{n \to \infty} g^n.y = x$ for any $y \in \lk_{\Gamma^e}(x)$. Hence, it's not difficult to see $\lim_{n \to \infty} g^n.y = x$ for any $y \in \Delta\Gamma^e$ since every geodesic from $x$ to every other point in $\Delta\Gamma^e$ contains some point in $\lk_{\Gamma^e}(x)$. This implies $\fix_{\Delta \Gamma^e}(g)=\{x\}$, that is, $g$ is parabolic. Since a parabolic fixed point cannot be a conical limit point by \cite[Proposition 3.2]{Bow99a}, $x$ is not a conical limit point.
    
%    Suppose for contradiction that a point $x \in V(\Gamma)\setminus \, \Gamma\G.\leaf(\Gamma)$ is a conical limit point. There exist a net $(g_i)_{i \in I}$ in $\Gamma\G$ and $a,b \in \Delta\Gamma^e$ with $a \neq b$ such that $g_i.x \to a$ and $g_i.y \to b$ for any $y \in \Delta\Gamma^e \setminus \{v\}$. Suppose for contradiction that there exist $g \Gamma\G$ and a subset $(g_j)_{j \in J}$ of $(g_j)_{j \in J}$ such that $g_j=g$ for any $j \in J$. Since we have $|\lk_{\Gamma^e}(x)| \ge 2$ by $x \notin \Gamma\G.\leaf(\Gamma)$ and \cite[Lemma 3.14 (1)]{Oya24b}, there exist distinct points $y_1,y_2  \in |\lk_{\Gamma^e}(x)|$. For each $k\in\{1,2\}$, we have $g.y_k = \lim_{j \to \infty} g_j. y_k = b $ by $x \neq y_k$. This implies $y_1=y_2 \,(=g^{-1}.b)$, which contradicts $y_1\neq y_2$. Hence, the net $(g_i)_{i \in I}$ is wandering. Take $z \in V(\Gamma^e) \setminus\{x\}$, which is possible by $|\lk_{\Gamma^e}(x)| \ge 2$. If $a \in V(\Gamma^e)$, then we have $\lim_{i \in I} g_i.z = \lim_{i \in I} g_i.x = a$ by Lemma \ref{lem:extension graph has absorbing dynamics}. If $a \in \partial\Gamma^e$, then we also have $\lim_{i \in I} g_i.z = \lim_{i \in I} g_i.x = a$ by $\{z,x\} \subset V(\Gamma^e)$ and by a standard argument using Gromov products. This contradicts $\lim_{i \in I} g_i.z = b$ by $z \neq x$. Thus, $x$ is not a conical limit point.

    (5) Let $v \in V(\Gamma) \setminus \leaf(\Gamma)$. Suppose for contradiction that we have $|\lk_{\Gamma}(v)\setminus \leaf(\Gamma)| = \infty$ and there exists a compact set $K \subset \Lambda(\Gamma\G)\setminus \{v\}$ such that $\Lambda(\Gamma\G)\setminus \{v\} = \stab_{\Gamma\G}(v).K$. This implies $\lk_{\Gamma}(v)\setminus \leaf(\Gamma) \subset \Lambda(\Gamma\G)\setminus \{v\} = \stab_{\Gamma\G}(v).K$ by \cite[Corollary 3.7]{Oya24b} and Theorem \ref{thm:action on extension graph is convergence action} (2). Hence, for any $w \in \lk_{\Gamma}(v)\setminus \leaf(\Gamma)$, there exists $g_w \in \stab_{\Gamma\G}(v)$ such that $g_w.w \in K$. By $|\lk_{\Gamma}(v)\setminus \leaf(\Gamma)| = \infty$ and \cite[Corollary 3.7]{Oya24b}, the set $\{g_w.w \in \lk_{\Gamma^e}(v)  \mid w \in \lk_\Gamma(v)\setminus \leaf(\Gamma)\}$ is infinite. Hence, there exists a sequence in $\{g_w. w \mid w \in \lk_{\Gamma}(v) \setminus \leaf(\Gamma) \} (\subset K)$ that converges to $v$. This implies $v \in K$ since $K$ is closed. This contradicts $K \subset \Lambda(\Gamma\G) \setminus \{v\}$. Hence, $v$ is not bounded parabolic when $|\lk_{\Gamma}(v)\setminus \leaf(\Gamma)| = \infty$.

    Conversely, suppose $|\lk_{\Gamma}(v)\setminus \leaf(\Gamma)| < \infty$. By $v \in V(\Gamma) \setminus \leaf(\Gamma)$ and by the proof of Theorem \ref{thm:action on extension graph is convergence action} (5), the group $\stab_{\Gamma\G}(v)$ contains a parabolic element. Hence, $\stab_{\Gamma\G}(v)$ is infinite and contains no loxodromic element by Remark \ref{rem:parabolic fixed point}. For each $w \in \lk_{\Gamma}(v)\setminus \leaf(\Gamma)$, there exists an open set $U_w$ in $\Delta\Gamma^e$ such that $v \in U_w \subset P(v,\{(v,w)\})$ since $P(v,\{(v,w)\})$ is a neighborhood of $v$. The set $\Delta\Gamma^e\setminus U_w$ is compact since $\Delta\Gamma^e$ is compact. Define $K_w$ by $K_w = \Lambda(\Gamma\G) \cap (\Delta\Gamma^e \setminus U_w)$. We claim $\Lambda(\Gamma\G) \setminus \{v\} = \bigcup_{w \in \lk_{\Gamma}(v)\setminus \leaf(\Gamma)} \stab_{\Gamma\G}(v).K_w$. Indeed, given $x \in \Lambda(\Gamma\G) \setminus \{v\}$, fix $p \in \geo_{\Gamma^e}(v,x)$. Let $e \in E(p)$ and $g \in \stab_{\Gamma\G}(v)$ satisfy $e_- = v$ and $e_+ = g.v(e_+)$. Note $v(e_+) \in \lk_{\Gamma}(v)$ by $e_+ \in \lk_{\Gamma^e} (v)$. We can see $v(e_+) \in \lk_{\Gamma}(v) \setminus \leaf(\Gamma)$ by $x \in \Lambda(\Gamma\G) = \Delta\Gamma^e\setminus\, \Gamma\G.\leaf(\Gamma)$ and \cite[Lemma 3.14 (1)]{Oya24b}. On the other hand, we have $g^{-1}. x \in \Delta\Gamma^e \setminus P(v,\{g^{-1}.e\})$ by $g^{-1}.e \in E(g^{-1}.p)$. We also have $g^{-1}.x \in \Lambda(\Gamma\G)$ since $\Lambda(\Gamma\G)$ is $\Gamma\G$-invariant. Hence, $g^{-1}.x \in \Lambda(\Gamma\G) \cap (\Delta\Gamma^e \setminus P(v,\{g^{-1}.e\})) \subset K_{v(e_+)}$. This shows $\Lambda(\Gamma\G) \setminus \{v\} \subset \bigcup_{w \in \lk_{\Gamma}(v)\setminus \leaf(\Gamma)} \stab_{\Gamma\G}(v).K_w$. The converse inclusion is straightforward since $\Lambda(\Gamma\G)$ is $\Gamma\G$-invariant. Since we have $|\lk_{\Gamma}(v)\setminus \leaf(\Gamma)| < \infty$ and $\Lambda(\Gamma\G)$ is closed in $\Delta\Gamma^e$, the set $K$ defined by $K = \bigcup_{\lk_{\Gamma}(v)\setminus \leaf(\Gamma)} K_w$ is compact. The set $K$ also satisfies $\Lambda(\Gamma\G) \setminus \{v\} = \stab_{\Gamma\G}(v). K$. Hence, $v$ is a bounded parabolic point.

    (6) By $\diam_\Gamma(\Gamma) > 1$, there exists $v \in V(\Gamma)$ such that $|\lk_{\Gamma}(v)| \ge 2$. By Theorem \ref{thm:action on extension graph is convergence action} (2) and (4), $v$ is a limit point but not a conical limit point. Hence, $\Gamma\G \act \Delta\Gamma^e$ is not uniform by Remark \ref{rem:uniform convergence action}.

    (7) (i) We first assume $|V(\Gamma) \setminus \leaf(\Gamma)|<\infty$ and show that the action $\Gamma\G \act \Delta\Gamma^e$ is geometrically finite. By Theorem \ref{thm:action on extension graph is convergence action} (5), every point in $V(\Gamma^e) \setminus \Gamma\G.\leaf(\Gamma)$ is bounded parabolic. We will show that every point in $\partial \Gamma^e$ is a conical limit point. Note that $\partial \Gamma^e \neq \emptyset$ implies $V(\Gamma)\setminus \leaf(\Gamma) \neq \emptyset$. Let $x \in \partial \Gamma^e$. Fix $v_0 \in V(\Gamma)\setminus \leaf(\Gamma)$ and $p \in \geo_{\Gamma^e}(v_0,x)$. Let $(x_n)_{n \in \NN\cup\{0\}} \subset V(\Gamma^e)$ satisfy $p=(x_0,x_1,x_2,\cdots)$. Note $x_0=v_0$. For any $n \in \NN$, we have $v(x_n) \in V(\Gamma)\setminus \leaf(\Gamma)$ by $|\lk_{\Gamma^e}(x_n)| \ge 2$ and \cite[Lemma 3.14 (1)]{Oya24b}. For each $n \in \NN$, there exists $g_n \in \Gamma\G$ such that $g_n.(v(x_{n-1}),v(x_n))=(x_{n-1},x_n)$ by \cite[Lemma 3.8]{Oya24b}. Define $(h_n)_{n\in\NN} \subset \Gamma\G$ by $h_1=g_1$ and $h_n=g_{n-1}^{-1}g_n, \forall n \ge 2$. By $v(x_0)=x_0=g_1.v(x_0)$ and $g_{n-1}.v(x_{n-1})=g_n.v(x_{n-1}), \forall n \ge 2$, we have $h_n \in \stab_{\Gamma\G}(v(x_{n-1}))$ for any $n \ge 1$. 
    
    (i-1) When the set $\{h_n \mid n \in \NN\}$ is infinite, by $|V(\Gamma)\setminus \leaf(\Gamma)|<\infty$ there exist $u,v,w \in V(\Gamma)\setminus \leaf(\Gamma)$ and a subsequence $(x_{n_i})_{i \in \NN}$ of $(x_n)_{n \in \NN}$ such that $(v(x_{n_i - 1}), v(x_{n_i}), v(x_{n_i + 1}))=(u,v,w)$ for any $i \ge 1$ and the elements of the sequence $(h_{n_i+1})_{i \in \NN}$ are all distinct. Note $\{(u,v), (v,w)\} \subset E(\Gamma)$. Since the sequence $(h_{n_i+1})_{i \in \NN}$ is wandering and we have $(h_{n_i+1})_{i \in \NN} \subset \stab_{\Gamma\G}(v)$, we have $\lim_{i \to \infty} h_{n_i+1}.w = v$ by Lemma \ref{lem:extension graph has absorbing dynamics}. This implies $\lim_{i \to \infty} g_{n_i}^{-1}.x = v$ since we have $(v,h_{n_i+1}.w) = h_{n_i+1}.(v(x_{n_i}), v(x_{n_i + 1})) = g_{n_i}^{-1}.(x_{n_i}, x_{n_i+1}) \in E(g_{n_i}^{-1}.p_{[x_{n_i}, \infty)})$ for any $i \ge 1$. On the other hand, we have $(g_{n_i}^{-1}.v_0)_{i \in\NN} \subset \Delta\Gamma^e \setminus P(v,\{(u,v)\})$ by $(u,v) = (v(x_{n_i - 1}), v(x_{n_i})) = g_{n_i}^{-1}.(x_{n_i - 1}, x_{n_i}) \in E(g_{n_i}^{-1}.p_{[v_0,x_{n_i}]}), \forall i \ge 1$. Since $P(v,\{(u,v)\})$ is a neighborhood of $v$ and $\Delta\Gamma^e$ is compact, there exist $b \in \Delta\Gamma^e\setminus\{v\}$ and a subnet $(g_{n_j})_{j \in J}$ of $(g_{n_i})_{i \in \NN}$ such that $\lim_{j \to \infty} g_{n_j}^{-1}.v_0 = b$. 
    
    (i-2) When the set $\{h_n \mid n \in \NN\}$ is finite, define $F \subset \Gamma\G$ by $F=\{h_n \mid n \in \NN\}$. We will show that the sequence $\{g_n^{-1}p\}_{n \in \NN}$ of geodesic rays has a subsequence that ``converges to a bi-infinite geodesic path $\gamma$" in the sense of \eqref{eq:converge to gamma}.
    
    Fix $(v',h') \in (V(\Gamma)\setminus \leaf(\Gamma))\times F$. For each $n \in \NN\setminus\{1\}$, define $s_n=(s'_n(k), s''_n(k))_{k \in \ZZ} \in ((V(\Gamma)\setminus \leaf(\Gamma))\times F)^\ZZ$ by
    \begin{align*}
        (s'_n(k), s''_n(k))=
        \begin{cases}
        (v',h')
        & {\rm if~} k \le -(n-1) \\
        (v(x_{k+n-1}),h_{k+n-1})
        & {\rm if~} k > -(n-1).
        \end{cases}
    \end{align*}
    Since the set $(V(\Gamma)\setminus \leaf(\Gamma))\times F$ is finite, there exists $(w(k),f(k))_{k \in \ZZ} \in ((V(\Gamma)\setminus \leaf(\Gamma))\times F)^\ZZ$ and a subsequence $(s_{n_i})_{i \in \NN}$ of $(s_n)_{n \ge 2}$ such that for any $N\in\NN$ and any $i \ge N$ and $k \in\ZZ$ with $|k| \le N$, we have $(s'_{n_i}(k), s''_{n_i}(k))=(w(k),f(k))$. Define the sequence $\gamma = (\gamma(k))_{k \in \ZZ}$ in $V(\Gamma^e)$ by
    \begin{align*}
        \gamma(k)=
        \begin{cases}
        f(2)f(3)\cdots f(k).w(k)
        & {\rm if~} k \ge 2 \\
        w(k)
        & {\rm if~} 0 \le k \le 1 \\
        f(1)^{-1}f(0)^{-1}\cdots f(k+1)^{-1}.w(k)
        & {\rm if~} k \le -1.
        \end{cases}
    \end{align*}
     Let $N \in\NN$ with $N \ge 2$ and let $i > N$. Note $n_i \ge i > N$. If $2 \le k \le N$, then
     \begin{align*}
        g_{n_i}^{-1}.x_{n_i+k-1} 
        &= h_{n_i+1}\cdots h_{n_i+k-1}.v(x_{n_i+k-1}) 
        = s''_{n_i}(2)s''_{n_i}(3)\cdots s''_{n_i}(k).s'_{n_i}(k) \\
        &= f(2)f(3)\cdots f(k).w(k)
        = \gamma(k).
     \end{align*}
     Also, $g_{n_i}^{-1}.(x_{n_i-1},x_{n_i})=(v(x_{n_i-1}),v(x_{n_i}))=(s'_{n_i}(0), s''_{n_i}(1))=(w(0),w(1))=(\gamma(0),\gamma(1))$. If $-N \le k \le -1$, then
     \begin{align*}
         g_{n_i}^{-1}.x_{n_i+k-1} 
         &= h_{n_i}^{-1}h_{n_i-1}^{-1}\cdots h_{n_i+k}^{-1}. v(x_{n_i+k-1}) 
         = s''_{n_i}(1)^{-1} s''_{n_i}(0)^{-1} \cdots s''_{n_i}(k+1)^{-1}. s'_{n_i}(k) \\
         &= f(1)^{-1}f(0)^{-1}\cdots f(k+1)^{-1}.w(k) 
         = \gamma(k).
     \end{align*}
    These imply for any $N \ge 2$ and $i > N$,
    \begin{align}\label{eq:converge to gamma}
    g_{n_i}^{-1}.p_{[x_{n_i-N-1},x_{n_i+N-1}]} = (\gamma(-N),\cdots,\gamma(-1),\gamma(0),\gamma(1),\cdots,\gamma(N)).
    \end{align}
    Hence, $\gamma$ is a bi-infinite geodesic path in $\Gamma^e$. Define $\gamma_-,\gamma_+ \in \partial \Gamma^e$ by $\gamma_- = \lim_{k \to -\infty} \gamma(k)$ and $\gamma_+ = \lim_{k \to \infty} \gamma(k)$. We have $\gamma_- \neq \gamma_+$ since $\gamma$ is bi-infinite geodesic. The above argument also implies $\lim_{i \to \infty}g_{n_i}^{-1}. v_0 = \gamma_-$ and $\lim_{i \to \infty}g_{n_i}^{-1}. x = \gamma_+$.

    By (i-1) and (i-2), there exist $a,b \in \Delta\Gamma^e$ with $a \neq b$ and a subset $(g_{n_j})_{j \in J}$ of $(g_n)_{n \in \NN}$ such that $\lim_{i \to \infty}g_{n_i}^{-1}. x = a$ and $\lim_{i \to \infty}g_{n_i}^{-1}. v_0 = b$. By $|V(\Gamma)| \ge 3 \,(\Leftrightarrow \diam_\Gamma(\Gamma) > 1)$, there exists $c \in V(\Gamma) \setminus \{a,b\}$. By $|V(\Gamma)\setminus \leaf(\Gamma)|<\infty$ and Remark \ref{rem:diam when V(Gamma)setminus I(Gamma) is finite}, we have $d_{\Gamma^e}(g_n.c, x_n)=d_{\Gamma^e}(c, v(x_n)) \le \diam_\Gamma(\Gamma)<\infty$ for any $n \ge 1$. This implies $\lim_{n \to \infty} g_n.c = x$. Define the sequence $(y_n)_{n\in\NN}$ in $V(\Gamma^e)$ by $y_n = g_n.c, \, \forall n \ge 1$, then the net $(y_{n_j})_{j \in J}$ satisfies $\lim_{j \to \infty} y_{n_j} = x$ and $\lim_{j \to \infty} g_{n_j}^{-1}.y_{n_j} = c$. Thus, every point $x \in \partial \Gamma^e$ is a conical limit point by Remark \ref{rem:equivalent definition of conical limit point}. Since every point in $\Delta\Gamma^e\setminus\, \Gamma\G.\leaf(\Gamma)$ is either a conical limit point or a bounded parabolic point and we have $\Lambda(\Gamma\G) = \Delta\Gamma^e\setminus\, \Gamma\G.\leaf(\Gamma)$ by Theorem \ref{thm:action on extension graph is convergence action} (2), the action $\Gamma\G \act \Delta\Gamma^e$ is geometrically finite.

    (ii) Next, we assume $|V(\Gamma) \setminus \leaf(\Gamma)|=\infty$ and show that $\Gamma\G \act \Delta\Gamma^e$ is not geometrically finite. If there exists $v \in V(\Gamma)\setminus \leaf(\Gamma)$ such that $|\lk_\Gamma(v) \setminus \leaf(\Gamma)|=\infty$, then $v$ is a limit point by Theorem \ref{thm:action on extension graph is convergence action} (2), but not a conical limit point nor a bounded parabolic point by Theorem \ref{thm:action on extension graph is convergence action} (4) and (5). Hence, the action $\Gamma\G \act \Delta\Gamma^e$ is not geometrically finite.
    
    In the following, we assume $|\lk_\Gamma(v) \setminus \leaf(\Gamma)|<\infty$ for every $v \in V(\Gamma)\setminus \leaf(\Gamma)$. Since the induced subgraph on $V(\Gamma)\setminus \leaf(\Gamma)$ is a connected infinite locally finite graph, we can see that there exists a geodesic ray $p=(v_0,v_1,\cdots)$ in $\Gamma$, where $\forall n \in\NN, \, v_n \in V(\Gamma)$. Define $x\in \partial \Gamma^e$ by $x = \lim_{n\to\infty}v_n$. We claim $\stab_{\Gamma\G}(x)=\{1\}$. Indeed, let $g \in \Gamma\G$ satisfy $g.x=x$. By $p \subset \Gamma$ and Lemma \ref{lem:geodesic to the same boundary point go to the same plane}, the sequence $(gv_n)_{n \in \NN}$ is also eventually in $\Gamma$. This and $g.p \subset g.\Gamma$ imply $\diam_{\Gamma^e}(\Gamma\cap g.\Gamma)=\infty$. Hence, $g=1$ by \cite[Remark 3.12 (3)]{Oya24b}. By $\stab_{\Gamma\G}(x)=\{1\}$, $x$ is not bounded parabolic. 
    
    Next, we show the claim that if $a,b \in \Delta \Gamma^e$ and a wandering net $(g_i)_{i \in I}$ satisfy $\lim_{i \to \infty}g_i. v_0 = a$ and $\lim_{i \to \infty}g_i.x = b$, then we have $a=b$. We'll discuss three cases (ii-1)-(ii-3), (ii-1) when $\{a,b\}\subset \partial \Gamma^e$, (ii-2) when $a \in V(\Gamma^e)$, (ii-3) when $b \in V(\Gamma^e)$.

    In case (ii-1), suppose $a\neq b$ for contradiction. Since $\Gamma^e$ is fine, we can see that there exist a bi-infinite geodesic path $\gamma=(\cdots,\gamma(-1),\gamma(0),\gamma(1),\cdots)$ in $\Gamma^e$ ($\forall\, k\in \ZZ, \, \gamma(k) \in V(\Gamma^e)$) and a subnet $(g_j)_{j \in J}$ of $(g_i)_{i \in I}$ such that for any $N \in\NN$, there exists $j_N \in J$ such that $\gamma_{[\gamma(-N),\gamma(N)]}$ is a subpath of $g_j.p$ for any $j \ge j_N$. Hence, for any $j,j' \in J$ satisfying $j\ge j_3$ and $j'\ge j_3$, we have $\gamma_{[\gamma(-3),\gamma(3)]} \subset g_j.\Gamma \cap g_{j'}.\Gamma$, hence $\diam_{\Gamma^e}(g_j.\Gamma \cap g_{j'}.\Gamma) \ge 6$. This implies $g_j = g_{j'}$ for any $j,j'\ge j_3$ by \cite[Remark 3.12 (3)]{Oya24b}. This contradicts that the net $(g_j)_{j\in J}$ is wandering.

    In case (ii-2), if there exist $m\in \NN\cup\{0\}$ and a subset $(g_j)_{j\in J}$ of $(g_i)_{i \in I}$ such that $g_j.v_m = a$ for any $j \in J$, then the net $(g_j)_{j\in J}$ satisfies $\lim_{j \to \infty}g_j.x = a$ since $g_j.p_{[v_m,\infty)}$ is a geodesic ray from $a$ to $g_j.x$ for each $j \in J$ and we have $\lim_{j \to \infty}g_j.v_{m+1} = a$ by Lemma \ref{lem:extension graph has absorbing dynamics} (note also that $(g_j)_{j\in J}$ is wandering). This implies $a=\lim_{j \to \infty}g_j.x = \lim_{i \to \infty}g_i.x = b$. 
    
    If there does not exist such $m$ and $(g_j)_{j \in J}$, then there exists $i_0 \in I$ such that $a \notin V(g_i.p)$ for any $i \ge i_0$. Indeed, by \cite[Corollary 3.7]{Oya24b}, we have $|\{n \in \NN\cup\{0\} \mid a \in \Gamma\G. v_n\}| \le 1$ since the vertices $(v_n)_{n \ge 0} \subset \Gamma$ are all distinct, which holds since $p$ is geodesic. If $\{n \in \NN\cup\{0\} \mid a \in \Gamma\G. v_n\}=\emptyset$, then $a \notin V(g_ip)$ for any $i \in I$. If $\{n \in \NN\cup\{0\} \mid a \in \Gamma\G. v_n\}=\{m\}$, then there exist $i_0 \in I$ such that $g_i.v_m\neq a, \forall i \ge i_0$ by our assumption, hence $\forall i \ge i_0,\, a \notin V(g_i.p)$. 
    
    For any finite set $A \subset \elk_{\Gamma^e}(a)$, there exists a finite set $F \subset \elk_{\Gamma^e}(a)$ satisfying the condition (\textreferencemark) of Lemma \ref{lem:net converging to a vertex}. By $\lim_{i \to \infty} g_i.v_0 = a$, there exists $i_1 \in I$ such that $g_i.v_0 \in P(a,F) \subset P'(a,F)$ for any $i \ge i_1$. Take $i_2 \in I$ satisfying $i_2\ge i_0$ and $i_2 \ge i_1$. For any $i \ge i_2$, we have $g_i.x \in P(a,A)$ by $a \notin V(g_i.p)$ and the condition (\textreferencemark). This implies $\lim_{i \to \infty} g_i.x = a$, hence $a=b$.

    In case (ii-3), if there exist $m\in \NN\cup\{0\}$ and a subnet $(g_j)_{j\in J}$ of $(g_i)_{i \in I}$ such that $g_j.v_m = b$ for any $j \in J$, then the net $(g_j)_{j\in J}$ satisfies $\lim_{j \to \infty}g_j.v_0 = b$ by Lemma \ref{lem:extension graph has absorbing dynamics} since $(g_j)_{j\in J}$ is wandering. This implies $b=\lim_{j \to \infty}g_j.v_0 = \lim_{i \to \infty}g_i.v_0 = a$. If there does not exist such $m$ and $(g_j)_{j \in J}$, then as in the case (ii-2), there exists $i_0 \in I$ such that $b \notin V(g_i.p)$ for any $i \ge i_0$. Hence, we can show $\lim_{i \to \infty}g_i.v_0 = b$ by the same argument as (ii-2) using $\lim_{i \to \infty}g_i.x = b$ and Lemma \ref{lem:net converging to a vertex}. This implies $a=b$.

    By (ii-1), (ii-2), and (ii-3), the claim follows. Suppose for contradiction that $x$ is a conical limit point. There exist a net $(g_i)_{i \in I}$ in $\Gamma\G$ and $a,b \in \Delta\Gamma^e$ with $a \neq b$ such that $g_i.x \to b$ and $g_i.y \to a$ for any $y \in \Delta\Gamma^e \setminus \{x\}$. We have $g_i.v_0 \to a$ by $v_0 \neq x$. Also, the net $(g_i)_{i \in I}$ is wandering by $|\Delta\Gamma^e| \ge 3 \,(\Leftrightarrow \diam_\Gamma(\Gamma) > 1)$. Indeed, suppose for contradiction that there exist $g \in \Gamma\G$ and a subnet $(g_j)_{j \in J}$ of $(g_i)_{i \in I}$ such that $g_j=g$ for any $j \in J$. Take distinct points $y_1,y_2  \in \Delta\Gamma^e \setminus \{x\}$ by $|\Delta\Gamma^e| \ge 3$. For each $k\in\{1,2\}$, we have $g.y_k = \lim_{j \to \infty} g_j. y_k = a $ by $x \neq y_k$. This implies $y_1=y_2 \,(=g^{-1}.a)$, which contradicts $y_1\neq y_2$. Since the net $(g_i)_{i \in I}$ is wandering, the claim above implies $a = \lim_{i \to \infty} g_i.v_0 = \lim_{i \to \infty} g_i.x = b$, which contradicts $a \neq b$. Hence, $x$ is neither a conical limit point nor a bounded parabolic point. We also have $x \in \partial\Gamma^e \subset \Lambda(\Gamma\G)$ by Theorem \ref{thm:action on extension graph is convergence action} (2). Thus, the action $\Gamma\G \act \Delta\Gamma^e$ is not geometrically finite.
\end{proof}

In Theorem \ref{thm:action on extension graph is convergence action}, the condition that $\{G_v\}_{v\in V(\Gamma)}$ is a collection of finite groups is essential by Proposition \ref{prop: infinite case doesn't admit non-elementary convergence action} below. Proposition \ref{prop: infinite case doesn't admit non-elementary convergence action} is a variant of \cite[Proposition 7.9]{Sun19}, though it's different in that elements of infinite order don't need to generate the whole group. Although the condition $\girth(\Gamma)>20$ in Theorem \ref{thm:action on extension graph is convergence action} may not be optimal, it is also essential to some extent. Corollary \ref{cor: square case doesn't admit non-elementary convergence action} verifies this point.

\begin{prop}\label{prop: infinite case doesn't admit non-elementary convergence action}
    Suppose that $\Gamma$ is a connected simplicial graph and $\{G_v\}_{v \in V(\Gamma)}$ is a collection of non-trivial groups. If for any $v \in V(\Gamma)$, there exists an element $g_v \in G_v$ of infinite order, then the group $\Gamma\G$ doesn't admit non-elementary convergence action.
\end{prop}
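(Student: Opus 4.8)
The plan is to show that every convergence action $\Gamma\G\act M$ (with $|M|\ge 3$) is elementary. Since $G_v$ embeds in $\Gamma\G$ and $g_v$ has infinite order, $\Gamma\G$ contains a copy of $\ZZ$ and is in particular infinite, so it suffices to exhibit a $\Gamma\G$-invariant subset $A\subseteq M$ with $1\le|A|\le 2$. I would take $A$ to be the common fixed set of the distinguished elements $\{g_v\}_{v\in V(\Gamma)}$.

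First I would record two elementary facts about an infinite-order element $g$ of a convergence group $G\act M$. By Remark~\ref{rem:parabolic fixed point}, $g$ is parabolic or loxodromic, so $1\le|\fix_M(g)|\le 2$; and moreover $\fix_M(g)=\fix_M(g^2)$. The inclusion $\fix_M(g)\subseteq\fix_M(g^2)$ is trivial, and for the reverse inclusion it suffices to rule out that $g$ is parabolic with $\fix_M(g)=\{p\}$ while $g^2$ is loxodromic with $\fix_M(g^2)=\{p,q\}$, $q\neq p$: in that case $g$ commutes with $g^2$ and hence permutes the two-point set $\{p,q\}$, and since $g$ fixes $p$ it must also fix $q$, contradicting that a parabolic element has a unique fixed point (the case $g$ loxodromic is immediate from $|\fix_M(g^2)|\le 2$).

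The key step is the claim that \emph{commuting infinite-order elements of $G$ have the same fixed set}: if $gh=hg$, then $h$ restricts to a bijection of $\fix_M(g)$ onto itself (because $g(hx)=h(gx)=hx$ for $x\in\fix_M(g)$, and likewise for $h^{-1}$), so $h^2$ fixes $\fix_M(g)$ pointwise, whence $\fix_M(g)\subseteq\fix_M(h^2)=\fix_M(h)$; by symmetry $\fix_M(h)=\fix_M(g)$. I would apply this with $(g,h)=(g_v,g_w)$ for each edge $(v,w)\in E(\Gamma)$ — legitimate because $[g_v,g_w]=1$ in $\Gamma\G$ by definition of the graph product and both elements have infinite order — and then propagate along paths in the connected graph $\Gamma$ to conclude that there is a single set $A\subseteq M$, with $1\le|A|\le 2$, such that $\fix_M(g_v)=A$ for every $v\in V(\Gamma)$.

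To finish, fix $v\in V(\Gamma)$ and choose a vertex $w$ adjacent to $v$ (which exists since $\Gamma$ is connected with at least two vertices). Every $h\in G_v$ commutes with $g_w$ in $\Gamma\G$, hence maps $\fix_M(g_w)=A$ onto itself; thus each vertex group $G_v$ preserves $A$ setwise, and since $\Gamma\G$ is generated by $\bigcup_{v\in V(\Gamma)}G_v$, the set $A$ is $\Gamma\G$-invariant. As $1\le|A|\le 2$ and $\Gamma\G$ is infinite, the action is elementary, contradicting non-elementarity. The argument is short and I do not expect a serious obstacle; the two points that need care are the identity $\fix_M(g)=\fix_M(g^2)$ (the only place the convergence-group classification of elements is used) and the passage from ``each $g_v$ fixes $A$'' to ``each $G_v$ preserves $A$'', which is precisely where adjacency of vertices in $\Gamma$ is exploited. (The degenerate case $|V(\Gamma)|=1$, where $\Gamma\G=G_v$, does not occur in the situations where the proposition is invoked.)
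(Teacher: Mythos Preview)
Your proof is correct and follows essentially the same approach as the paper: both exploit the commutation relations along edges of $\Gamma$ to show that the fixed sets $\fix_M(g_v)$ all coincide and are preserved by each vertex group, then propagate via connectedness of $\Gamma$. The paper carries out the key step by a direct parabolic/loxodromic case split (invoking that a parabolic and a loxodromic cannot share a fixed point) rather than through your cleaner lemma $\fix_M(g)=\fix_M(g^2)$, and it too implicitly assumes $|V(\Gamma)|\ge 2$, so your caveat about the single-vertex case is well taken.
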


\begin{proof}
    Suppose that $\Gamma\G$ acts on a compact Hausdorff space $M$ with $|M| \ge 3$ as a convergence action. Let $v,w \in V(\Gamma)$ with $(v,w) \in E(\Gamma)$. Since the element $g_v \in G_v$ has infinite order, $g_v$ is either parabolic or loxodromic. Hence, $1\le |\fix_M(g_v)| \le 2$. We will show $\fix_M(g_v)=\fix_M(g_w)$ and $g\fix_M(g_v)=\fix_M(g_v)$ for any $g \in \la G_v , G_w \ra \,(=G_v \times G_w)$. For any $g \in G_w$ and $x \in \fix_M(g_v)$, we have $g_vg x=gg_vx=gx$, hence $gx \in \fix_M(g_v)$. This implies $g\fix_M(g_v)=\fix_M(g_v)$ for any $g \in G_w$. Since the element $g_w \in G_w$ has infinite order, $g_w$ is either parabolic or loxodromic. When $|\fix_M(g_v)|=2$, the element $g_w^2$ fixes $\fix_M(g_v)$ pointwise by $g_w\fix_M(g_v)=\fix_M(g_v)$. Hence, $g_w$ is loxodromic. This implies $\fix_M(g_w)=\fix_M(g_w^2)=\fix_M(g_v)$. When $|\fix_M(g_v)|=1$, the element $g_v$ is parabolic. By $g_w\fix_M(g_v)=\fix_M(g_v)$, we have $\fix_M(g_v) \subset \fix_M(g_w)$. Hence, $g_w$ is also parabolic since a loxodromic element and a parabolic element cannot share their fixed points. This implies $\fix_M(g_v) = \fix_M(g_w)$. Thus, we have $\fix_M(g_w) = \fix_M(g_v)$ in either case. We can show $g\fix_M(g_w)=\fix_M(g_w)$ for any $g \in G_v$ in the same way as we did for $\fix_M(g_v)$ and $G_w$. By this and $\fix_M(g_w) = \fix_M(g_v)$, we have $g\fix_M(g_v) = \fix_M(g_v)$ for any $g \in \la G_v ,G_w \ra $.
    
    Fix $v_0 \in V(\Gamma)$. Since $\Gamma$ is connected, for any $v \in V(\Gamma)$, there exists a sequence $v_0,v_1,\cdots, v_n$ in $V(\Gamma)$ such that $v_n=v$ and $(v_{i-1},v_i) \in E(\Gamma)$ for any $i \in \{1,\cdots,n\}$. For any $i \ge 1$, by applying the above argument to $(v_{i-1}, v_i) \in E(\Gamma)$, we have $\fix_M(g_{v_{i-1}})=\fix_M(g_{v_i})$ and $g\fix_M(g_{v_{i-1}})=\fix_M(g_{v_{i-1}})$ for any $g \in \la G_{v_{i-1}} , G_{v_i} \ra$. Hence, $g\fix_M(g_{v_0})=\fix_M(g_{v_0})$ for any $g \in G_v\,(=G_{v_n})$. Since $v \in V(\Gamma)$ is arbitrary, this implies $g\fix_M(g_{v_0})=\fix_M(g_{v_0})$ for any $g \in \Gamma\G$. By $1\le |\fix_M(g_{v_0})| \le 2$, the action $\Gamma\G \act M$ is elementary.
\end{proof}

\begin{cor}\label{cor: square case doesn't admit non-elementary convergence action}
    Suppose that $\Gamma$ is a circuit of length $4$ and $\{G_v\}_{v \in V(\Gamma)}$ is a collection of non-trivial groups, then $\Gamma\G$ doesn't admit non-elementary convergence action.
\end{cor}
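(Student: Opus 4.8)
The plan is to reduce Corollary~\ref{cor: square case doesn't admit non-elementary convergence action} to Proposition~\ref{prop: infinite case doesn't admit non-elementary convergence action} by recognizing the $4$-circuit as a join and hence $\Gamma\G$ as a graph product over a single edge. Label the vertices of $\Gamma$ as $1,2,3,4$ in cyclic order, so the edges are $\{1,2\},\{2,3\},\{3,4\},\{4,1\}$ and the only non-adjacent pairs are $\{1,3\}$ and $\{2,4\}$. Reading off Definition~\ref{def:graph product of groups}, the defining relations of $\Gamma\G$ assert precisely that each of $G_1,G_3$ commutes elementwise with each of $G_2,G_4$, while no relation is imposed between $G_1$ and $G_3$, nor between $G_2$ and $G_4$. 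Using the universal properties of the free product, the direct product, and the graph product, one gets mutually inverse homomorphisms identifying $\Gamma\G$ with $(G_1\ast G_3)\times(G_2\ast G_4)$ (alternatively this can be read off Theorem~\ref{thm:normal form theorem}). Equivalently, $\Gamma\G$ is the graph product over the graph with two vertices $u,v$ joined by an edge, with vertex groups $G_u=G_1\ast G_3$ and $G_v=G_2\ast G_4$.

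Next I would check that this description of $\Gamma\G$ meets the hypotheses of Proposition~\ref{prop: infinite case doesn't admit non-elementary convergence action}: the defining graph is a single edge, hence connected; and $G_1\ast G_3$ and $G_2\ast G_4$ are non-trivial because $G_1,G_2,G_3,G_4$ are. It remains to exhibit an element of infinite order in each. For non-trivial $g_1\in G_1$ and $g_3\in G_3$, the element $g_1g_3\in G_1\ast G_3$ satisfies that $(g_1g_3)^n$ is a reduced word of syllable length $2n$ in the free product, so $(g_1g_3)^n\neq 1$ for all $n\ge 1$; thus $g_1g_3$ has infinite order, and symmetrically $g_2g_4$ has infinite order in $G_2\ast G_4$ for non-trivial $g_2\in G_2$, $g_4\in G_4$. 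With these verifications, Proposition~\ref{prop: infinite case doesn't admit non-elementary convergence action} applies directly and yields that $\Gamma\G$ admits no non-elementary convergence action, which finishes the proof.

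The argument is essentially a bookkeeping reduction, so I do not expect a genuine obstacle; the only two points needing a routine check are the decomposition $\Gamma\G\cong(G_1\ast G_3)\times(G_2\ast G_4)$ and the elementary fact that a free product of two non-trivial groups contains an element of infinite order. If one preferred to avoid the explicit reduction, one could instead rerun the proof of Proposition~\ref{prop: infinite case doesn't admit non-elementary convergence action} verbatim with the commuting infinite-order pair $g_1g_3$, $g_2g_4$ in place of $g_v$, $g_w$: for a convergence action $\Gamma\G\act M$ with $|M|\ge 3$, each of these elements is parabolic or loxodromic, so $1\le|\fix_M(g_1g_3)|,|\fix_M(g_2g_4)|\le 2$, and the same fixed-point bookkeeping as in that proof gives $\fix_M(g_1g_3)=\fix_M(g_2g_4)$ and shows this set is preserved by both $G_1\ast G_3$ and $G_2\ast G_4$, hence by all of $\Gamma\G$, forcing the action to be elementary. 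Invoking the proposition directly is the cleaner route.
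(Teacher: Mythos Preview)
Your proof is correct and follows exactly the same approach as the paper: identify $\Gamma\G\cong(G_1\ast G_3)\times(G_2\ast G_4)$, observe that $g_1g_3$ and $g_2g_4$ have infinite order, and apply Proposition~\ref{prop: infinite case doesn't admit non-elementary convergence action}. Your write-up merely supplies a few extra details the paper leaves implicit.
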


\begin{proof}
    This follows from Proposition \ref{prop: infinite case doesn't admit non-elementary convergence action} since we have $\Gamma\G=(G_1 \ast G_3) \times (G_2 \ast G_4)$, where $\{G_v\}_{v \in V(\Gamma)}=\{G_1,G_2,G_3,G_4\}$, and for any $g_1 \in G_1\setminus\{1\}$, $g_2 \in G_2\setminus\{1\}$, $g_3 \in G_3\setminus\{1\}$, and $g_4 \in G_4\setminus\{1\}$, the elements $g_1g_3 \in G_1 \ast G_3$ and $g_2g_4 \in G_2 \ast G_4$ are torsion free.
\end{proof}

    The girth condition $\girth(\Gamma)>20$ in Theorem \ref{thm:action on extension graph is convergence action} and Theorem \ref{thm:graph product is relatively bi-exact} ultimately comes from the classification results for geodesic bigons and triangles in the extension graph $\Gamma^e$ in \cite[Section 3.4]{Oya24b} (e.g. \cite[Corollary 3.36, Proposition 3.38]{Oya24b}), which played a crucial role to prove geometric properties of $\Gamma^e$ in \cite[Theorem 1.2]{Oya24b}. However, the lower bound 20 has room for improvement depending on specific results. For example, the proof of hyperbolicity of $\Gamma^e$ in \cite[Theorem 1.2 (1)]{Oya24b} doesn't use the condition that a subpath of a geodesic in a copy of $\Gamma$ has length $\ge 7$ (e.g. \cite[Proposition 3.35 (2)]{Oya24b}), so it could be shown if other conditions in \cite[Corollary 3.36, Proposition 3.38]{Oya24b} are proved with a weaker bound.

\section{Relative bi-exactness}
\label{sec:Relative bi-exactness}

The goal of this section is to prove Theorem \ref{thm:intro graph product becomes relatively bi-exact}, which corresponds to Theorem \ref{thm:graph product is relatively bi-exact}. We start by studying right coset representatives of a subgroup of graph product generated by a subset of vertex groups, which is from Definition \ref{def:R_F} up to Lemma \ref{lem:properties of p_F, r_F}.

\begin{defn}\label{def:R_F}
    Let $\Gamma$ be a simplicial graph and $\G=\{G_v\}_{v \in V(\Gamma)}$ be a collection of non-trivial groups. For a subset $F \subset V(\Gamma)$, we define $R_F \subset \Gamma\G$ to be the union of $\{1\}$ and the set of all elements $g \in \Gamma\G\setminus\{1\}$ such that for any normal form $g=s_1\cdots s_n$ of $g$, we have $\supp(s_1) \notin F$ (see Convention \ref{conv:normal form}).
\end{defn}

\begin{rem}
    For example, set $V(\Gamma)=\{a,b\}$ and $F = \{a\}$, then in case $E(\Gamma)=\emptyset$, we have $R_F=\{1\}\cup\{g \in G_a\ast G_b\setminus\{1\} \mid \text{the normal form of $g$ starts with a syllable in $G_b$} \}$ and in case $E(\Gamma)=\{(a,b),(b,a)\}$, we have $R_F=G_b$.
\end{rem}

It turns out that the set $R_F$ in Definition \ref{def:R_F} is a set of right coset representatives of the subgroup $F\G$ of $\Gamma\G$ generated by the set $\bigcup_{v \in F} G_v$, which we show in Lemma \ref{lem:properties of R_F} below.

\begin{lem}\label{lem:properties of R_F}
    Let $\Gamma$ be a simplicial graph and $\G=\{G_v\}_{v \in V(\Gamma)}$ be a collection of non-trivial groups. For any $F \subset V(\Gamma)$, the following hold, where we set $F\G = \la G_v \mid v \in F \ra$.
    \begin{itemize}
        \item[(1)]
        The set $R_F$ is a complete representative of the right cosets of the subgroup $F\G$ in $\Gamma\G$, that is, for any $g \in \Gamma\G$, there exist unique $a \in F\G$ and $b \in R_F$ such that $g=ab$.
        \item[(2)]
        Let $g \in R_F$, $v \in V(\Gamma)$, and $h \in G_v \setminus \{1\}$. If one of the conditions (i)-(iii) below holds, then we have $gh \in R_F$. (i) $\|gh\| < \|g\| + \|h\|$, (ii) $\supp(g) \not\subset \lk_\Gamma(v)$, (iii) $v \notin F$.
    \end{itemize}
\end{lem}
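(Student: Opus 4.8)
The plan is to reduce everything to one combinatorial observation about normal forms. Fix $F\subset V(\Gamma)$, write $v\sim w$ for adjacency in $\Gamma$, and for $g\in\Gamma\G\setminus\{1\}$ with a normal form $g=s_1\cdots s_n$ put $v_i:=\supp(s_i)$; call $s_i$ \emph{reachable} if $v_i\sim v_j$ for every $j<i$. By Theorem \ref{thm:normal form theorem} and Lemma \ref{lem:subsequence of reduced sequence}, the reachable syllables are exactly those that can be shuffled into the first position, so this notion does not depend on the chosen normal form, and by Definition \ref{def:R_F} we have $g\in R_F$ if and only if $\supp(s_i)\notin F$ for every reachable syllable $s_i$. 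The key preliminary is a \emph{reduced concatenation lemma}: if $a\in F\G$ and $b\in R_F$, then $\|ab\|=\|a\|+\|b\|$ and the concatenation of a normal form of $a$ (all of whose syllables have support in $F$, since $a\in F\G$) with a normal form of $b$ is a normal form of $ab$. If that concatenation $w$ were non-geodesic, Theorem \ref{thm:normal form theorem} would supply positions $p<q$ in $w$ of equal support with all intermediate supports adjacent to it; since each half of $w$ is geodesic, necessarily $p\le\|a\|<q$, whence the syllable of $b$ in position $q-\|a\|$ has support in $F$ and is reachable in $b$, contradicting $b\in R_F$.

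Part (1) then follows quickly. For existence I induct on $\|g\|$: if $g\in R_F$ take $(a,b)=(1,g)$; otherwise some normal form of $g$ begins with a syllable $s_1$ with $\supp(s_1)\in F$, so $g=s_1\cdot(s_2\cdots s_n)$ with $s_1\in F\G$ and $\|s_2\cdots s_n\|<\|g\|$ by Lemma \ref{lem:subsequence of reduced sequence}, and the inductive hypothesis applied to $s_2\cdots s_n$ finishes the argument. For uniqueness it suffices to show that $c\in F\G$ and $cb\in R_F$ with $b\in R_F$ force $c=1$: if $c\neq1$, the reduced concatenation lemma produces a normal form of $cb$ beginning with a syllable of $c$, whose support lies in $F$, so $cb\notin R_F$. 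Applying this with $c=a_2^{-1}a_1$ to an equality $a_1b_1=a_2b_2$ (so $b_1=(a_2^{-1}a_1)b_2\in R_F$) yields $a_1=a_2$ and $b_1=b_2$.

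For part (2), fix a normal form $g=s_1\cdots s_n$ of $g\in R_F$, set $v_i=\supp(s_i)$, and let $h\in G_v\setminus\{1\}$, so $\|h\|=1$ and $\|gh\|\in\{n-1,n,n+1\}$. If $\|gh\|=n+1$, then $s_1\cdots s_nh$ is a normal form of $gh$; a syllable $s_j$ is reachable in it exactly when it is reachable in $g$, and the only other syllable that could be reachable is $h$, which is reachable iff $v\sim v_j$ for all $j\le n$. If $\|gh\|\le n$ --- equivalently, condition (i) holds, since $\|h\|=1$ --- then $s_1\cdots s_nh$ is non-geodesic, so by Theorem \ref{thm:normal form theorem} there is $i\le n$ with $v_i=v$ and $v_i\sim v_j$ for all $j>i$; shuffling $s_i$ rightwards gives $gh=s_1\cdots\widehat{s_i}\cdots s_n(s_ih)$, with the last syllable dropped when $s_ih=1$. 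A short argument with Theorem \ref{thm:normal form theorem}, applied to the syllables of $g$ of support $v$, shows that this word admits no further cancellation and hence is a normal form of $gh$; moreover, since $v_i\sim v_j$ for all $j>i$, a syllable $s_j$ with $j\neq i$ is reachable in this word exactly when it is reachable in $g$, and the only other syllable that could be reachable is $s_ih$ (of support $v$), reachable iff $v\sim v_j$ for all $j\neq i$.

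Thus in every case $gh\in R_F$ reduces to showing that the single new candidate reachable syllable, which has support $v$, is either not reachable or has $v\notin F$; the remaining candidate reachable syllables match those of $g$ and so have support outside $F$ because $g\in R_F$. Under (iii) there is nothing to do, as $v\notin F$. Under (i) we are in the second shape above, and if the syllable $s_ih$ is reachable in $gh$ then $v=v_i\sim v_j$ for all $j$, so $s_i$ was already reachable in $g$, forcing $v\notin F$; hence if $v\in F$ this syllable is not reachable. Under (ii) the case $v\notin F$ is handled as in (iii), so assume $v\in F$; I would then use the sublemma that $g\in R_F$, $v\in F$, and $\supp(g)\not\subset\lk_\Gamma(v)$ imply the existence of $v_j\in\supp(g)$ with $v_j\neq v$ and $v_j\not\sim v$ (if the only witness of $\supp(g)\not\subset\lk_\Gamma(v)$ were $v$ itself, then $v\in\supp(g)$, and applying $g\in R_F$ to the first syllable of $g$ of support $v$ produces an earlier syllable of support $\neq v$ non-adjacent to $v$), and this $v_j$ --- with $j\neq i$ in the second shape since $v_i=v$ --- prevents the support-$v$ candidate from being reachable in either shape. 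The main obstacle is the bookkeeping in the second shape of part (2): verifying that $s_1\cdots\widehat{s_i}\cdots s_n(s_ih)$ really is a normal form and that deleting $s_i$ does not change which syllables are reachable --- a careful but elementary use of Theorem \ref{thm:normal form theorem}.
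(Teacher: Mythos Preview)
Your proof is correct and follows essentially the same approach as the paper: both arguments reduce, via the normal form theorem, to analyzing which syllables can be shuffled into the leftmost position (your ``reachable'' syllables), with only organizational differences---you induct on $\|g\|$ for existence in (1) while the paper uses a maximality argument, and for (2)(ii) the paper reduces the case $\|gh\|<\|g\|+\|h\|$ to (i) rather than handling both shapes directly. One small typo: in your uniqueness argument, $a_1b_1=a_2b_2$ gives $b_2=(a_2^{-1}a_1)b_1$, not $b_1=(a_2^{-1}a_1)b_2$, though this is immaterial since the roles of $b_1,b_2$ are symmetric.
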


\begin{proof}
    (1) Let $g \in \Gamma\G \setminus \{1\}$. For a normal form $g=s_1\cdots s_n$ of $g$, define $k_{(s_1,\cdots,s_n)} \in \NN\cup\{0\}$ by $k_{(s_1,\cdots,s_n)}=0$ if $\supp(s_1) \notin F$ and $k_{(s_1,\cdots,s_n)}=k \in \NN$ if $\supp(s_1\cdots s_k) \subset F$ and $\supp(s_{k+1})\notin F$. Take a normal form $g=s_1\cdots s_n$ of $g$ such that $k_{(s_1,\cdots,s_n)}$ is the maximum among all normal forms of $g$. Let $k=k_{(s_1,\cdots,s_n)}$ for brevity, then we have $s_{k+1}\cdots s_n \in R_F$ by maximality of $k_{(s_1,\cdots,s_n)}$. Hence, $\Gamma\G=F\G \cdot R_F$.

    Let $a,a' \in F\G$ and $b,b' \in R_F$ satisfy $ab=a'b'$. Suppose $a\neq a'$ for contradiction. Let $a^{\prime-1}a=s_1\cdots s_n$, $b=t_1\cdots t_m$, $b=t'_1\cdots t'_{m'}$ be normal forms of $a^{\prime-1}a$, $b$, and $b'$ respectively. Note that $s_1$ exists by $a^{\prime-1}a\neq 1$. If the word $s_1\cdots s_n t_1\cdots t_m$ is not geodesic, then by Theorem \ref{thm:normal form theorem}, there exist syllables $s_i$ and $t_j$ such that $\supp(s_i)=\supp(t_j)$ and $\{\supp(s_{i'}) \mid i<i'\} \cup \{\supp(t_{j'}) \mid j'<j\} \subset \lk_\Gamma(\supp(t_j))$. Hence, $b=t_j t_1\cdots t_{j-1}t_{j+1}\cdots t_m$. By $\{a,a'\}\subset F\G$, we have $\supp(t_j) = \supp(s_i) \in F$, which contradicts $b \in R_F$. If the word $s_1\cdots s_n t_1\cdots t_m$ is geodesic, then by $(a^{\prime-1}a)b=b'$ and Theorem \ref{thm:normal form theorem}, the word $s_1\cdots s_n t_1\cdots t_m$ is obtained from the word $t'_1\cdots t'_{m'}$ by a finite step of syllable shuffling. Hence, there exists a syllable $t'_j$ such that $t'_j=s_1$ and $\{\supp(t'_{j'}) \mid j'<j\} \subset \lk_\Gamma(\supp(t'_j))$. Hence, $b'=t'_j t'_1\cdots t'_{j-1}t'_{j+1}\cdots t'_{m'}$. By $\supp(t'_j) = \supp(s_1) \in F$, this contradicts $b' \in R_F$. Thus, $a=a'$. This and $ab=a'b'$ imply $b=b'$.

    (2) Suppose $gh \notin R_F$ for contradiction. Let $g=s_1\cdots s_n$ be a normal form of $g$. 
    
    (i) When $\|gh\| <\|g\| + \|h\|$, there exists a syllable $s_i$ such that $\supp(s_i)=v$ and $\{\supp(s_{i'}) \mid i<i'\}\subset \lk_\Gamma(\supp(s_i))$. Note $\supp(s_i) \in \lk_\Gamma(\supp(s_{i'}))$ for any $i'>i$. If $s_ih \neq 1$, then $gh=s_1\cdots s_{i-1}(s_ih)s_{i+1} \cdots s_n$ is a normal form of $gh$, and if $s_ih = 1$, then $gh=s_1\cdots s_{i-1}s_{i+1} \cdots s_n$ is a normal form of $gh$. In either case, by $gh \notin R_F$, we can see that there exists $j \in \{1,\cdots,n\}$ such that $\supp(s_j) \in F$ and $\{\supp(s_{i'}) \mid i'<j\}\subset \lk_\Gamma(\supp(s_j))$, which contradicts $g \in R_F$. 
    
    (ii)-(iii) When $\supp(g) \not\subset \lk_\Gamma(v)$ or $v \notin F$. Since we've discussed case (i), we further assume $\|gh\| = \|g\| + \|h\|$. Hence, $gh=s_1\cdots s_nh$ is a normal form of $gh$. Set $s_{n+1}=h$. By $gh \notin R_F$, there exists a syllable $s_j$ such that $\supp(s_j) \in F$ and $\{\supp(s_{i'}) \mid i'<j\}\subset \lk_\Gamma(\supp(s_j))$. By $\supp(g) \not\subset \lk_\Gamma(v) \, \vee \,v \notin F$, we have $j\neq n+1$, hence $1 \le j \le n$. This contradicts $g \in R_F$.
\end{proof}

Below we introduce notations to decompose an element of $\Gamma\G$ into that of the subgroup $F\G$ and the set $R_F$ of right coset representatives, which is possible by Lemma \ref{lem:properties of R_F} (1).

\begin{defn}\label{def:p_F, r_F}
    Let $\Gamma$ be a simplicial graph and $\G=\{G_v\}_{v \in V(\Gamma)}$ be a collection of non-trivial groups. Let $F$ be a subset of $V(\Gamma)$. For $g \in \Gamma\G$, we define $p_F(g) \in F\G$ and $r_F(g) \in R_F$ to be the unique elements satisfying $g=p_F(g) r_F(g)$.
\end{defn}

In Lemma \ref{lem:properties of p_F, r_F} below, we see how $p_F(g)$ and $r_F(g)$ change when we multiply group elements from left and right. This plays an important role in the proof of Theorem \ref{thm:graph product is relatively bi-exact}.

\begin{lem}\label{lem:properties of p_F, r_F}
    Let $\Gamma$ be a simplicial graph and $\G=\{G_v\}_{v \in V(\Gamma)}$ be a collection of non-trivial groups. For any $F\subset V(\Gamma)$, the following hold.
    \begin{itemize}
        \item[(1)]
        For any $g \in\Gamma\G$ and $h \in F\G$, $p_F(hg)=h  p_F(g)$.
        \item[(2)]
        For any $g \in \Gamma\G$, $v \in V(\Gamma)$, and $h \in G_v \setminus \{1\}$, the following (i)-(iii) are all equivalent: (i) $p_F(g) \neq p_F(gh)$, (ii) $v \in F$ and $\supp(r_F(g)) \subset \lk_\Gamma(v)$, (iii) $p_F(gh) = p_F(g)h$.
    \end{itemize}    
\end{lem}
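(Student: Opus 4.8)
The plan is to prove part (1) directly from uniqueness in Lemma \ref{lem:properties of R_F} (1), and to prove part (2) by a cyclic chain of implications (i) $\Rightarrow$ (ii) $\Rightarrow$ (iii) $\Rightarrow$ (i), using the sufficient conditions for membership in $R_F$ established in Lemma \ref{lem:properties of R_F} (2) together with the normal form theorem.

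For part (1), I would write $g = p_F(g)r_F(g)$ with $p_F(g)\in F\G$ and $r_F(g)\in R_F$. Then $hg = (h\,p_F(g))\,r_F(g)$, and since $F\G$ is a subgroup we have $h\,p_F(g)\in F\G$, while $r_F(g)\in R_F$; by the uniqueness clause of Lemma \ref{lem:properties of R_F} (1) this forces $p_F(hg)=h\,p_F(g)$ (and incidentally $r_F(hg)=r_F(g)$).

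For part (2), first I would dispatch (iii) $\Rightarrow$ (i): if $p_F(gh)=p_F(g)h$ then, since $h\neq 1$ and $h$ has support $v$, we have $p_F(g)h\neq p_F(g)$ (multiplication by a nontrivial group element changes the element), giving (i). Next, (ii) $\Rightarrow$ (iii): assume $v\in F$ and $\supp(r_F(g))\subset\lk_\Gamma(v)$. Writing $g = p_F(g)r_F(g)$, I compute $gh = p_F(g)\,r_F(g)\,h$. Because every syllable of $r_F(g)$ has support in $\lk_\Gamma(v)$, each such syllable commutes with $h\in G_v$, so $r_F(g)h = h\,r_F(g)$; thus $gh = p_F(g)h\,r_F(g)$. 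Since $v\in F$ we have $h\in F\G$, so $p_F(g)h\in F\G$, and $r_F(g)\in R_F$; uniqueness in Lemma \ref{lem:properties of R_F} (1) yields $p_F(gh)=p_F(g)h$, which is (iii). Finally, the crux is (i) $\Rightarrow$ (ii): I prove the contrapositive, namely that if \emph{not} (ii) holds — i.e. $v\notin F$, or $\supp(r_F(g))\not\subset\lk_\Gamma(v)$ — then $p_F(gh)=p_F(g)$. In either of these cases I want to conclude $r_F(g)h\in R_F$, for then $gh = p_F(g)\,(r_F(g)h)$ is a valid decomposition and uniqueness gives $p_F(gh)=p_F(g)$. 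Membership $r_F(g)h\in R_F$ follows from Lemma \ref{lem:properties of R_F} (2): if $\|r_F(g)h\| < \|r_F(g)\|+\|h\|$ we apply clause (i) of that lemma; otherwise the decomposition $r_F(g)h$ is reduced, and then clause (ii) applies when $\supp(r_F(g))\not\subset\lk_\Gamma(v)$, while clause (iii) applies when $v\notin F$ — covering exactly the negation of (ii).

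The main obstacle is the (i) $\Rightarrow$ (ii) direction, and specifically making sure the case split on $\|r_F(g)h\|$ versus $\|r_F(g)\|+\|h\|$ is handled cleanly: in the non-reduced case I must not assume anything about $F$ or links and should simply invoke Lemma \ref{lem:properties of R_F} (2)(i), whereas in the reduced case I rely on whichever of conditions (ii), (iii) of that lemma is guaranteed by the failure of the present (ii). A minor subtlety worth checking is the degenerate possibility $r_F(g)=1$, but then $\supp(r_F(g))=\emptyset\subset\lk_\Gamma(v)$ holds vacuously, so the failure of (ii) must come from $v\notin F$, and Lemma \ref{lem:properties of R_F} (2)(iii) still applies to $1\cdot h = h$.
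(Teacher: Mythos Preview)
Your proposal is correct and follows essentially the same approach as the paper: part (1) via uniqueness in Lemma \ref{lem:properties of R_F} (1), and part (2) via the cycle (i)$\Rightarrow$(ii)$\Rightarrow$(iii)$\Rightarrow$(i) using Lemma \ref{lem:properties of R_F} (2). One small simplification: in your (i)$\Rightarrow$(ii) contrapositive, the case split on whether $\|r_F(g)h\|<\|r_F(g)\|+\|h\|$ is unnecessary, since the negation of (ii) already gives you one of conditions (ii) or (iii) of Lemma \ref{lem:properties of R_F} (2) directly, regardless of the syllable length.
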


\begin{proof}
    (1) Note $hg=h p_F(g) r_F(g)$ and $hp_F(g) \in F\G$. Hence, $p_F(hg)=h p_F(g)$ by Lemma \ref{lem:properties of R_F} (1).

    (2) $\underline{(i)\Rightarrow(ii)}$ If $r_F(g)h \in R_F$, then we have $p_F(gh) = p_F(g)$ by $gh=p_F(g)r_F(g)h$ and Lemma \ref{lem:properties of R_F} (1), which contradicts the assumption $p_F(gh) \neq p_F(g)$. Hence, $r_F(g)h \not\in R_F$. This and Lemma \ref{lem:properties of R_F} (2) imply $\|r_F(g)h\| = \|r_F(g)\| + \|h\|$, $\supp(r_F(g)) \subset \lk_\Gamma(v)$, and $v \in F$.

    $\underline{(ii)\Rightarrow(iii)}$ By $v \in F$ and $\supp(r_F(g)) \subset \lk_\Gamma(v)$, we have $gh=p_F(g)r_F(g)h=p_F(g)hr_F(g)$ and $p_F(g)h \in F\G$. This implies $p_F(gh) = p_F(g)h$ by Lemma \ref{lem:properties of R_F} (1).

    $\underline{(iii)\Rightarrow(i)}$ Trivial by $h \neq 1$.
\end{proof}

Before working on the proof of Theorem \ref{thm:graph product is relatively bi-exact}, we need to prove two more lemmas, Lemma \ref{lem:tight fine graph admits a sequence for amenable action} and Lemma \ref{lem:finite quotient of biexact-group is biexact}. The proof of of Lemma \ref{lem:tight fine graph admits a sequence for amenable action} below is essentially the same as the proofs of \cite[Theorem 1.33]{Kai04} and \cite[Lemma 8]{Oza06}, but because Claim \ref{claim: cardinality of S(x,z,ell,m)} is a new ingredient in the proof and some constants need to be adjusted, we give full details of the proof to make it self-contained. Note that Lemma \ref{lem:tight fine graph admits a sequence for amenable action} doesn't follow from \cite[Lemma 8]{Oza06}, because $X$ doesn't need to be uniformly fine. In fact, the extension graph $\Gamma^e$ is not uniformly fine when $\sup_{v \in V(\Gamma)} |G_v| = \infty$ and $\girth(\Gamma) < \infty$ in Theorem \ref{thm:graph product is relatively bi-exact}.

In Lemma \ref{lem:tight fine graph admits a sequence for amenable action}, the set $\partial X$ is a $G_\delta$ subset of $\Delta X$, hence Borel. In its proof below, given $a,b\in V(X)$ and $r\in \NN$, we define $V(a,b), V(a,b\,;r) \subset V(X)$ by (see \cite[Section 5.2]{Oya24b})
\begin{align}\label{eq:V(a,b,r)}
\begin{array}{rcl}
V(a,b) &=& \bigcup\{V(p) \mid p \in \geo_X(a,b) \}, \\
V(a,b\,;r) &=& \bigcup\{ V(a',b') \mid a'\in \N_X(a,r),\, b'\in \N_X(b,r) \}.    
\end{array}
\end{align}

\begin{lem}\label{lem:tight fine graph admits a sequence for amenable action}
    Let $X$ be a fine hyperbolic countable graph satisfying the condition of \cite[Propositiion 5.5 (2)]{Oya24b}. Then, there exists a sequence $(\eta_n)_{n=1}^\infty$ of Borel maps from $V(X) \times \partial X $ to $\Prob(V(X))$ such that for any $d \in \NN$,
    \begin{align}\label{eq:eta_n(x,z)}
        \lim_{n \to \infty}\sup_{z \in \partial X} \sup_{x,x' \in V(X), d_{X}(x,x') \le d} \|\eta_n(x,z) -\eta_n(x',z)\|_1 = 0.
    \end{align} 
\end{lem}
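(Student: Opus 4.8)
The plan is to follow the strategy of Kaimanovich \cite{Kai04} and Ozawa \cite{Oza06}, the only difference being that uniform fineness of $X$ is replaced by the hypothesis \cite[Proposition 5.5 (2)]{Oya24b} at the one place where a cardinality bound on ``geodesic shadows'' is needed. For $x \in V(X)$, $z \in \partial X$ and integers $0 \le \ell \le m$, set
\[
S(x,z,\ell,m) = \{\, y \in V(X) \mid \ell \le d_X(x,y) \le m,\ y \in V(p)\ \text{for some}\ p \in \geo_X(x,z) \,\}.
\]
By Lemma \ref{lem:induced subgraph of geodesics is locally finite} the subgraph induced on $\bigcup_{p \in \geo_X(x,z)}V(p)$ is connected and locally finite, so $S(x,z,\ell,m)$ is finite; it is nonempty because $\geo_X(x,z)\ne\emptyset$ and any $p\in\geo_X(x,z)$ contributes the $m-\ell+1$ distinct vertices $p(\ell),\dots,p(m)$. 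The first ingredient, which is the new Claim \ref{claim: cardinality of S(x,z,ell,m)}, is a bound of the form $|S(x,z,\ell,m)| \le \Phi(m-\ell)$ with $\Phi$ depending only on $X$ (and its hyperbolicity constant $\delta$), \emph{uniformly} over $x$, $z$ and $\ell$. I would prove this by noting that $\delta$-thinness of geodesic triangles forces the fixed-radius slice $S(x,z,\ell,\ell)$ to lie inside a region of diameter $O(\delta)$, and then invoking \cite[Proposition 5.5 (2)]{Oya24b} — in place of uniform fineness — to bound the number of vertices that can simultaneously lie on geodesic rays toward $z$ inside such a bounded region; summing over the at most $m-\ell+1$ radii yields $\Phi$.

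Next I would define $\eta_n(x,z)\in\Prob(V(X))$ as an average, over $\ell$ in a suitable block, of the uniform probability measures $\mathrm{unif}\big(S(x,z,\ell,\ell+k_n)\big)$ on windows of a slowly growing length $k_n\to\infty$; concretely one may take $\eta_n(x,z)=\frac1n\sum_{\ell=1}^{n}\mathrm{unif}\big(S(x,z,\ell,\ell+n)\big)$, which is manifestly a probability measure on $V(X)$. Since $V(X)$ is countable, a map into $\Prob(V(X))\subset\ell^1(V(X))$ is Borel iff each coordinate $(x,z)\mapsto\eta_n(x,z)(y)$ is Borel, and this reduces to Borel measurability of the sets $\{(x,z):y\in S(x,z,\ell,m)\}$ and of the functions $(x,z)\mapsto|S(x,z,\ell,m)|$; both hold because for fixed $x,y$ the condition ``$y$ lies on a geodesic ray from $x$ to $z$'' cuts out a Borel (in fact closed) subset of $\partial X$ in the Bowditch topology, and the remaining dependence on $x$ is just on the discrete space $V(X)$.

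For \eqref{eq:eta_n(x,z)}, fix $d\in\NN$ and $x,x'$ with $d_X(x,x')\le d$. I would use the elementary inequality $\|\mathrm{unif}(A)-\mathrm{unif}(B)\|_1 \le 3|A\triangle B|/\min(|A|,|B|)$ together with hyperbolicity: geodesic rays from $x$ and from $x'$ to the common endpoint $z$ fellow-travel, so for each $\ell$ the windows $S(x,z,\ell,\ell+n)$ and $S(x',z,\ell',\ell'+n)$, for the appropriate radial shift $|\ell-\ell'|\le d$, differ only inside radial bands of width $O(\delta+d)$ near the two ends of the window (outside these bands each vertex on an $x$-ray to $z$ lies on an $x'$-ray to $z$ and conversely). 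By the Claim each such band contains at most $O(\delta+d)\cdot\Phi(0)$ vertices, a bound independent of $\ell$ and of $n$, whereas $|S(x,z,\ell,\ell+n)|\ge n+1$; averaging over $\ell$ (which telescopes away the radial shift) then gives $\|\eta_n(x,z)-\eta_n(x',z)\|_1 = O_d(1/n)$, uniformly in $z\in\partial X$, which is \eqref{eq:eta_n(x,z)}.

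The main obstacle is the geometric step inside the last paragraph: one must show that the symmetric difference between the shadow windows for $x$ and for $x'$ is genuinely confined to regions whose size is bounded independently of $n$. A priori a vertex lying on a geodesic ray from $x$ to $z$ need only lie \emph{near}, not \emph{on}, a geodesic ray from $x'$ to $z$, so this ``transversal'' discrepancy has to be controlled at every radius, and it is precisely here that the Claim — hence \cite[Proposition 5.5 (2)]{Oya24b} — and the averaging over $\ell$ are indispensable. Establishing the Claim itself, with a constant uniform in $\ell$ although $X$ is only assumed fine, is the other substantive point; everything else is a routine adaptation of \cite[Lemma 8]{Oza06}.
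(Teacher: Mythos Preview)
Your definition of $S(x,z,\ell,m)$ and the averaging scheme are \emph{not} the ones used in \cite{Kai04,Oza06}, and the difference is exactly where your argument breaks. In the paper (and in Ozawa's Lemma~8) one sets
\[
S(x,z,\ell,m)=\{\alpha(\ell)\mid x'\in\N_X(x,m),\ \alpha\in\geo_X(x',z)\},
\]
i.e.\ the \emph{basepoint} is thickened and the radius $\ell$ is fixed; $\eta_n$ then averages $\chi_{S(x,z,6n,i)}$ over the thickening parameter $i\in\{n+1,\dots,2n\}$. The payoff is a \emph{trivial} set-theoretic monotonicity: if $d_X(x,x')\le d$ then $\N_X(x',i)\subset\N_X(x,i+d)$, hence $S(x',z,6n,i)\subset S(x,z,6n,i+d)$ and $S(x,z,6n,i-d)\subset S(x,z,6n,i)\cap S(x',z,6n,i)$. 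The Claim (this is the only place \cite[Proposition~5.5~(2)]{Oya24b} enters) gives $|S(x,z,6n,i)|\le P_1(2i+4\delta+1)$, and an AM--GM / telescoping estimate then yields $\|\eta_n(x,z)-\eta_n(x',z)\|_1\le 2\big(1-(P_1(4n+2d+4\delta+1))^{-2d/n}\big)\to 0$.

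Your sets, by contrast, are radial shells on rays from the \emph{single} basepoint $x$, and you average over $\ell$. There is no monotonicity here: a vertex on some $p\in\geo_X(x,z)$ deep inside the window is only guaranteed to lie within $O(\delta)$ of rays in $\geo_X(x',z)$, not \emph{on} one. So the ``transversal discrepancy'' you flag is not confined to two bands of width $O(\delta+d)$ at the ends of the window---it can occur at every one of the $n+1$ radii. Your Claim bounds each radial slice by a constant $C$, but that only gives $|S\triangle S'|\le 2C(n+1)$ against $|S|\ge n+1$, hence $\|\mathrm{unif}(S)-\mathrm{unif}(S')\|_1=O(1)$, not $o(1)$; averaging over $\ell$ telescopes radial shifts but does nothing for this bulk term. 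The sentence ``outside these bands each vertex on an $x$-ray to $z$ lies on an $x'$-ray to $z$'' is the unjustified step, and in general fine hyperbolic graphs it is false. The repair is precisely the basepoint-thickening definition above.
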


\begin{proof}
   Let $X$ be $\delta$-hyperbolic with $\delta\in \NN$. For $x \in V(X)$, $z \in \partial X$, and $\ell,m \in \NN$, we define $S(x,z,\ell,m) \subset V(X)$ by
\begin{align*}
    S(x,z,\ell,m)
    =
    \{\alpha(\ell) \mid x' \in \N_X(x,m), \alpha\in \geo_X(x',z) \}.
\end{align*}
Let $P_1, k_1 \in \NN$ be the constants as in \cite[Propositiion 5.5 (2)]{Oya24b} for $k=2\delta$.
\begin{claim}\label{claim: cardinality of S(x,z,ell,m)}
    For any $x \in V(X)$, $z \in \partial X$, and $\ell,m \in \NN$ with $\ell \ge 2m+k_1+4\delta$, we have $|S(x,z,\ell,m)| \le  P_1\cdot (2m+4\delta+1)$.
\end{claim}
\begin{proof}[Proof of claim \ref{claim: cardinality of S(x,z,ell,m)}]
    Fix $\alpha \in \geo_X(x,z)$ and take $b \in V(\alpha)$ satisfying $d_{X}(x,b)=3\ell$. Let $y \in S(x,z,\ell,m)$. There exist $x' \in \N_{X}(x,m)$ and $\beta \in \geo_X(x',z)$ such that $y=\beta(\ell)$. By $d_{X}(y,x')=\ell > m \ge d_{X}(x,x')$, there exists $w \in V(\alpha)$ such that $d_{X}(y,w) \le 2\delta$. Note $|d_{X}(x,w) - \ell| =|d_{X}(x,w) - d_{X}(y,x')| \le d_{X}(x,x') + d_{X}(y,w) \le m+2\delta$. Similarly, by $d_{X}(x,b)=3\ell$ and $\ell \ge 2m+k_1+4\delta$, there exists $b' \in V(\beta)$ such that $d_{X}(b,b') \le 2\delta$ and $y\in \beta_{[x', b']}$. Hence, $y \in V(x,b \, ;m+2\delta)\cap \N_{X}(w,2\delta)$ (see \eqref{eq:V(a,b,r)}). By $d_{X}(w,\{x,b\}) \ge k_1+m+2\delta$ and \cite[Propositiion 5.5 (2)]{Oya24b}, we have $|V(x,b \, ;m+2\delta)\cap \N_{X}(w,2\delta)| \le P_1$. Also, there are at most $2(m+2\delta)+1$ choices for $w$ by $|d_{X}(x,w) - \ell| \le m+2\delta$. Thus, $|S(x,z,\ell,m)| \le  P_1(2m+4\delta+1)$. This finishes the proof of Claim \ref{claim: cardinality of S(x,z,ell,m)}.
\end{proof}
For a finite set $S \subset V(X)$, we denote the characteristic function $V(X) \to \{0,1\}$ of $S$ by $1_S$ and define $\chi_S \in \Prob(V(X))$ by $\chi_S = \frac{1}{|S|}\cdot1_S$. For $x \in V(X)$, $z \in \partial X$, and $n\in \NN$ with $n \ge k_1+4\delta$, define $\eta_n(x,z) \in \Prob(V(X))$ by
\begin{align*}
    \eta_n(x,z) = \frac{1}{n} \sum_{i=n+1}^{2n} \chi_{S(x,z,6n,i)}.
\end{align*}
We show that $(\eta_n)_{n \ge k_1+4\delta}$ satisfies \eqref{eq:eta_n(x,z)}. Let $d \in\NN$, $z\in \partial X$, and $x,x' \in V(X)$ satisfy $d_{X}(x,x') \le d$. Let $n \ge k_1+4\delta+2d$ and denote $S_i=S(x,z,6n,i)$ and $S'_i=S(x',z,6n,i)$ for brevity. We have $S_i \cup S'_i \subset S_{i+d}$ and $S_{i-d} \subset S_i \cap S'_i$ for any $i \in \{n+1,\cdots,2n\}$. This implies for any $i \in \{n+1,\cdots,2n\}$,
\begin{align*}
    \|\chi_{S_i} - \chi_{S'_i} \|_1
    =
    2\Big(1 - \frac{|S_i\cap S'_i|}{\max\{|S_i|, |S'_i|\}} \Big)
    \le
    2\Big( 1 - \frac{|S_{i-d}|}{|S_{i+d}|} \Big).
\end{align*}
For any $i \in \{n-d,\cdots,2n+d\}$, we have $|S_i| \le P_1(2i+4\delta+1)$ by $6n \ge 2(2n+d)+k_1+4\delta$ and Claim \ref{claim: cardinality of S(x,z,ell,m)}. Hence,
\begin{align*}
    \|\eta_n(x,z) -\eta_n(x',z)\|_1
    &\le
    \frac{1}{n} \sum_{i=n+1}^{2n} \|\chi_{S_i} - \chi_{S'_i} \|_1
    \le
    2\Big( 1 - \frac{1}{n} \sum_{i=n+1}^{2n} \frac{|S_{i-d}|}{|S_{i+d}|} \Big) \\
    &\le
    2\bigg( 1 - \Big(\prod_{i=n+1}^{2n} \frac{|S_{i-d}|}{|S_{i+d}|}\Big)^{\frac{1}{n} } \bigg) 
    =
    2\bigg( 1 - \Big( \frac{\prod_{i=n+1-d}^{n+d} |S_i|}{ \prod_{i=2n+1-d}^{2n+d} |S_i|}\Big)^{\frac{1}{n} } \bigg) \\
    &\le
    2\bigg( 1 - \Big(P_1 \cdot \big( 2(2n+d)+4\delta+1 \big) \Big)^{\frac{-2d}{n}} \bigg).
\end{align*}
By $\lim_{n \to \infty} \Big(P_1 \cdot \big( 2(2n+d)+4\delta+1 \big) \Big)^{\frac{-2d}{n}} = 1$, the equation \eqref{eq:eta_n(x,z)} follows.

For $x,y \in V(X)$, define $T(x,y) \subset \partial X$ by $T(x,y)=\{z \in \partial X \mid \exists \alpha \in \geo_X(x,z), y \in V(\alpha) \}$. Since $X$ is fine, $T(x,y)$ is closed in $\partial X$. For any $x,y \in V(X)$ and $\ell,m \in \NN$, we have
\begin{align*}
    \{z\in \partial X \mid y \in S(x,z,\ell,m) \}
    =
    \bigcup\{ T(x',y) \mid \text{ $x' \in V(X)$ s.t. $d_X(x,x') \le m \wedge d_X(x',y) = \ell$ } \},
\end{align*}
hence the set $\{z\in \partial X \mid y \in S(x,z,\ell,m) \}$ is Borel in $\partial X$. This implies that the function $\chi_{S(x,z,6n,i)} \colon V(X) \times \partial X \to \Prob(V(X))$ is Borel for fixed $n$ and $i$ (see Proposition \ref{prop:Ozawa Prop 11} (1) for the definition of a map to $\Prob(V(X))$ being Borel). Hence, the maps $(\eta_n)_{n \ge k_1+4\delta}$ are Borel.
\end{proof}

Lemma \ref{lem:finite quotient of biexact-group is biexact} below is used to show that vertex stabilizers of the extension graph of graph product of finite groups are bi-exact in Theorem \ref{thm:graph product is relatively bi-exact}. Lemma \ref{lem:finite quotient of biexact-group is biexact} is a direct consequence of \cite[Theorem 1.2]{Sak09} when a group $G$ is countable, but the proof below doesn't assume countability of groups.

\begin{lem}\label{lem:finite quotient of biexact-group is biexact}
    Suppose that $G$ is a group and $H$ is a normal subgroup of $G$. If $H$ is finite and the quotient group $G/H$ is bi-exact, then $G$ is bi-exact.
\end{lem}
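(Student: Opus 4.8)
The plan is to verify directly the definition of bi-exactness (Definition \ref{def:bi-exact group}) for $G$, using the hypothesis that $Q := G/H$ is bi-exact together with the finiteness of $H$. Write $\pi\colon G \to Q$ for the quotient map. First I would dispose of exactness: if $Q$ acts topologically amenably on a compact Hausdorff space $K$, then $G$ acts on $K$ through $\pi$, and this action is topologically amenable as well — given a finite set $F \subset G$ and $\e > 0$, apply the definition of topological amenability of $Q \act K$ to $\pi(F)$ and $\e$ and pull back the resulting map $m\colon K \to \Prob(Q)$ to a map $K \to \Prob(G)$ by pushing each point mass $\delta_{\pi(g)}$ forward along a fixed set-theoretic section of $\pi$ (or, more symmetrically, by spreading mass uniformly over the $H$-coset; since $H$ is finite this is a finite average and the $\ell^1$-estimate is preserved). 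Hence $G$ is exact.

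Next I would produce the ``bi-exactness map'' $\mu\colon G \to \Prob(G)$ from the analogous map $\mu_Q\colon Q \to \Prob(Q)$ witnessing bi-exactness of $Q$ relative to $\{\la 1 \ra\}$. Fix a set-theoretic section $\sigma\colon Q \to G$ of $\pi$, so that every $g \in G$ is uniquely $g = \sigma(\pi(g))\cdot h$ with $h \in H$; equivalently $G = \sigma(Q)\cdot H$ with $|H| < \infty$. Given $\mu_Q(q) = \sum_{q'} \mu_Q(q)(q')\,\delta_{q'} \in \Prob(Q)$, define $\mu(g) \in \Prob(G)$ by replacing each $\delta_{q'}$ with the uniform probability measure on the coset $\sigma(q')H$ (a finite set of size $|H|$), i.e. $\mu(g) = \sum_{q'} \mu_Q(\pi(g))(q')\cdot \frac{1}{|H|}\mathbf 1_{\sigma(q')H}$. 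This $\mu$ is $H$-bi-invariant in an appropriate sense and, crucially, the pushforward of $\mu(g)$ along $\pi$ is exactly $\mu_Q(\pi(g))$.

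The key estimate is then: for $s,t \in G$ one must show $\lim_{x \to \infty}\|\mu(sxt) - s.\mu(x)\|_1 = 0$, where the limit is over $x \to \infty$ in $G$ (equivalently, off the finite subgroup $\la 1\ra$, which just means $x$ leaves every finite subset of $G$). Here $s.\mu(x)$ denotes the left translate. The point is that the map $\pi$ induces an isometric identification between $\ell^1$ of functions on $G$ that are constant on each $H$-coset (with the appropriate normalization) and $\ell^1(Q)$, and all the measures $\mu(sxt)$, $s.\mu(x)$ lie in the former space by construction. Under this identification $\mu(sxt)$ corresponds to $\mu_Q(\pi(s)\pi(x)\pi(t))$ and $s.\mu(x)$ corresponds to $\pi(s).\mu_Q(\pi(x))$, so $\|\mu(sxt) - s.\mu(x)\|_1 = \|\mu_Q(\pi(s)\pi(x)\pi(t)) - \pi(s).\mu_Q(\pi(x))\|_1$. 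Finally, as $x \to \infty$ in $G$, the image $\pi(x) \to \infty$ in $Q$ as well: the preimage $\pi^{-1}(\text{finite set})$ is a finite union of $H$-cosets, hence finite, so $x$ leaving all finite subsets of $G$ forces $\pi(x)$ to leave all finite subsets of $Q$. Applying the bi-exactness hypothesis for $Q$ gives that the right-hand side tends to $0$, completing the proof.

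I expect the main obstacle to be purely bookkeeping: being careful that the various natural maps between $\Prob(G)$ and $\Prob(Q)$ (pushforward along $\pi$, averaging over $H$-cosets via a section $\sigma$) interact correctly with left translation by $s$, right translation by $t$, and the $\ell^1$-norm, so that the two sides of the estimate genuinely match up after applying $\pi_*$. No deep idea is needed beyond the observation that averaging over a \emph{finite} group is a contractive, norm-preserving operation compatible with the group actions; the finiteness of $H$ is exactly what makes ``$x \to \infty$ in $G$'' descend to ``$\pi(x) \to \infty$ in $Q$'' and what keeps all the coset averages finite sums.
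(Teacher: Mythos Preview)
Your proposal is correct. Both arguments rest on the same elementary observation---averaging over the finite normal subgroup $H$ gives an isometric lift $\Prob(Q)\to\Prob(G)$ via $\nu\mapsto\frac{1}{|H|}\nu\circ\pi$, and finiteness of $H$ makes ``$x\to\infty$ in $G$'' equivalent to ``$\pi(x)\to\infty$ in $Q$''---but they package it differently. You verify Definition~\ref{def:bi-exact group} directly: exactness is handled by pulling back a topologically amenable $Q$-action through $\pi$ and lifting the maps $K\to\Prob(Q)$ to $K\to\Prob(G)$, and the bi-exactness witness $\mu$ is obtained by lifting $\mu_Q$ in the same way. The paper instead invokes the equivalent characterization of Proposition~\ref{prop:equivalent condition of biexact group} (bi-exactness of $G$ is topological amenability of $G\act\overline{G}^{\{\la 1\ra\}}$), then constructs a $G$-equivariant continuous map $\overline{G}^{\{\la 1\ra\}}\to\overline{K}^{\{\la 1\ra\}}$ via Gelfand duality applied to the pullback $\Phi\colon\ell^\infty(K)\to\ell^\infty(G)$, and transports topological amenability along this map (again using the $\frac{1}{|H|}$-averaging to pass from $\Prob(K)$ to $\Prob(G)$). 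Your route is more elementary and self-contained, avoiding Gelfand duality; the paper's route is more uniform with the rest of Section~\ref{sec:Relative bi-exactness}, where the boundary-action viewpoint is used repeatedly.
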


\begin{proof}
    Let $q\colon G \to G/H$ be the quotient map. Set $K=G/H$ for brevity. In the following, for any $K$-action $\alpha \colon K \act X$ on a set $X$, we will consider the $G$-action $\alpha \circ q \colon G\act X$. 
    
    Define the $G$-equivariant unital $\ast$-homomorphism $\Phi \colon \ell^\infty(K) \to \ell^\infty(G)$ by $\Phi(f)=f \circ q$. By $|H| <\infty$, we can see $\Phi(c_0(K)) \subset c_0(G)$. This implies $\Phi(A(K;\{\la 1 \ra\})) \subset A(G;\{\la 1 \ra\})$. Hence, there exists a $G$-equivariant continuous map $\widetilde{q} \colon \overline{G}^{\{\la 1 \ra\}} \to \overline{K}^{\{\la 1 \ra\}}$ such that $\widetilde{q}|_{G} = q$. Let $F \subset G$ be a finite set and let $\e \in \RR_{>0}$. Since $K$ is bi-exact, the action $K\act \overline{K}^{\{\la 1 \ra\}}$ is topologically amenable by Proposition \ref{prop:equivalent condition of biexact group}. Hence, there exists a continuous map $m \colon \overline{K}^{\{\la 1 \ra\}} \to \Prob(K)$ such that $\max_{g \in q(F)}\sup_{x\in \overline{K}^{\{\la 1 \ra\}}}\|m(gx) - g.m(x)\|_1< \e$. Define $\widetilde{m} \colon \overline{K}^{\{\la 1 \ra\}} \to \Prob(G)$ by $\widetilde{m}(x) = \frac{1}{|H|} m(x) \circ q$. The map $\widetilde{m}\circ \widetilde{q} \colon \overline{G}^{\{\la 1 \ra\}} \to \Prob(G)$ is continuous and satisfies $\max_{g \in F}\sup_{x\in \overline{G}^{\{\la 1 \ra\}}}\|\widetilde{m}\circ \widetilde{q}(gx) - g.(\widetilde{m}\circ \widetilde{q})(x)\|_1< \e$ by $\forall g\in G, \forall x \in \overline{G}^{\{\la 1 \ra\}}, \|\widetilde{m}\circ \widetilde{q}(gx) - g.(\widetilde{m}\circ \widetilde{q})(x)\|_1=\|m(q(g)\widetilde{q}(x)) - q(g).m(\widetilde{q}(x))\|_1$. Hence, $G$ is bi-exact.
\end{proof}

We are finally ready to prove Theorem \ref{thm:graph product is relatively bi-exact}. See Definition \ref{def: boundary small at infinity} for relevant concepts in the proof.

\begin{thm}\label{thm:graph product is relatively bi-exact}
    Suppose that $\Gamma$ is a uniformly fine hyperbolic countable graph with $\girth(\Gamma) > 20$ and that $\G=\{G_v\}_{v \in V(\Gamma)}$ is a collection of non-trivial finite groups. Then, $\Gamma\G$ is bi-exact relative to the collection of subgroups $\{\, \la G_w \mid w\in \lk_{\Gamma}(v) \ra \,\}_{v \in V(\Gamma)}$.
\end{thm}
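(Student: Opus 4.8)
The plan is to verify, via Proposition \ref{prop:equivalent condition of biexact group}, that the action of $\Gamma\G$ on $\overline{\Gamma\G}^{\,\G}$ is topologically amenable, where $\G=\{\,\la G_w\mid w\in\lk_\Gamma(v)\ra\,\}_{v\in V(\Gamma)}$; write $P_v=\la G_w\mid w\in\lk_\Gamma(v)\ra$. Two structural remarks set the stage: since $\girth(\Gamma)>3$ the set $\lk_\Gamma(v)$ carries no edge, so $P_v=\ast_{w\in\lk_\Gamma(v)}G_w$ is a free product of finite groups; and by \cite[Corollary 3.6]{Oya24b} every vertex stabilizer of $\Gamma^e$ is conjugate to $\la G_w\mid w\in\st_\Gamma(v)\ra=G_v\times P_v$, which contains a conjugate of some $P_v\in\G$ with finite index and is, in particular, a subset of $\Gamma\G$ small relative to $\G$. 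By \cite[Theorem 1.2]{Oya24b}, $\Gamma^e$ is fine, hyperbolic and countable and all its edge stabilizers are finite, so $\Gamma\G\act\Delta\Gamma^e$ is a convergence action by Proposition \ref{prop:action on fine graph with finite edge stabilizer is convergence action}.

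The engine is Ozawa's fibering criterion, Proposition \ref{prop:Ozawa Prop 11}, applied with base space $X=\Delta\Gamma^e$, countable discrete $\Gamma\G$-set $K=V(\Gamma^e)$, and $Y=\overline{\Gamma\G}^{\,\G}$. For hypothesis (1) I would feed in Lemma \ref{lem:tight fine graph admits a sequence for amenable action}: uniform fineness of $\Gamma$ forces $\Gamma^e$ to satisfy the hypothesis of \cite[Proposition 5.5 (2)]{Oya24b}, so there is a sequence $(\eta_n)$ of Borel maps $V(\Gamma^e)\times\partial\Gamma^e\to\Prob(V(\Gamma^e))$ with the near-invariance \eqref{eq:eta_n(x,z)}; fixing a basepoint $o$ and putting $\zeta_n=\eta_n(o,\cdot)$ on the (Borel, $\Gamma\G$-invariant) set $\partial\Gamma^e$ and $\zeta_n(x)=\delta_x$ on $V(\Gamma^e)$ yields Borel maps $\Delta\Gamma^e\to\Prob(V(\Gamma^e))$ that are $(\e,F)$-equivariant for $n$ large, for all finite $F\subset\Gamma\G$ and $\e>0$ (with $Y$ a point this already gives exactness of $\Gamma\G$). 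Hypothesis (2) asks that $\stab_{\Gamma\G}(a)\act\overline{\Gamma\G}^{\,\G}$ be topologically amenable for each $a\in V(\Gamma^e)$; by conjugation invariance it is enough to treat $P_v=F\G$ with $F=\lk_\Gamma(v)$, since $\stab_{\Gamma\G}(a)$ is a finite extension of a conjugate of it and topological amenability passes to subgroups and, along finite-index inclusions, to overgroups. Here the coset machinery enters: by Lemma \ref{lem:properties of R_F}(1) one has $\Gamma\G=P_v\cdot R_F$, and by Lemma \ref{lem:properties of p_F, r_F}(1) the retraction $r_F\colon\Gamma\G\to R_F$ is invariant under left translation by $P_v$ while $p_F$ is $P_v$-equivariant; since the fibers $P_v\rho$ of $r_F$ are small relative to $\G$, the indicators $1_{P_v\rho}$ lie in $A(\Gamma\G;\G)$, so $r_F$ extends to a continuous $P_v$-invariant map $\overline{\Gamma\G}^{\,\G}\to\beta R_F$, exhibiting $\overline{\Gamma\G}^{\,\G}$, as a $P_v$-space, as fibered over a compact space with trivial $P_v$-action, with each fiber identified (through $p_F$) with a compactification of $P_v$ dominating the one witnessing bi-exactness of $P_v$. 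Since $P_v$ is a free product of finite groups it is bi-exact — this follows by running the present argument on the Bass--Serre tree of $P_v$ (a uniformly fine hyperbolic countable graph on which $P_v$ acts with finite vertex and edge stabilizers), or, together with Lemma \ref{lem:finite quotient of biexact-group is biexact}, from the classical finitely generated case — so a fibered second application of Proposition \ref{prop:Ozawa Prop 11} (base $=\beta R_F$) settles hypothesis (2), and the passage from $P_v$ to $\stab_{\Gamma\G}(a)\cong G_v\times P_v$ is exactly Lemma \ref{lem:finite quotient of biexact-group is biexact}.

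With (1) and (2) in hand, Proposition \ref{prop:Ozawa Prop 11} gives topological amenability of $\Gamma\G\act\Delta\Gamma^e\times\overline{\Gamma\G}^{\,\G}$; since the projection onto the second factor is continuous and $\Gamma\G$-equivariant, the remaining task is to produce a $\Gamma\G$-equivariant Borel map $\psi\colon\overline{\Gamma\G}^{\,\G}\to\Delta\Gamma^e$ restricting to $g\mapsto g.o$ on $\Gamma\G$, and then apply Lemma \ref{lem:G-equivariant map passes topological amenability} to $(\psi,\mathrm{id})$; equivalently one carries out the first application of Proposition \ref{prop:Ozawa Prop 11} with $X=\overline{\Gamma\G}^{\,\G}$ directly, the Borel map $\overline{\Gamma\G}^{\,\G}\to\Prob(V(\Gamma^e))$ being assembled from the $\eta_n$'s evaluated along $\Gamma^e$-geodesics issuing from $o$ towards $g.o$. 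I expect the main obstacle to lie precisely at this interface between $\overline{\Gamma\G}^{\,\G}$ and the geometry of $\Gamma^e$: in contrast to the geometrically finite situation, $g\mapsto g.o$ does not extend \emph{continuously} to $\overline{\Gamma\G}^{\,\G}$, so the near-equivariant family has to be controlled purely through the quantitative estimate \eqref{eq:eta_n(x,z)} — which is exactly what allows non-uniform fineness of $\Gamma^e$ to be tolerated — and all the ``small relative to $\G$'' bookkeeping, both in hypothesis (2) and in checking $(\e,F)$-almost-equivariance of the assembled family, must be executed through the normal-form description of $\Gamma\G$ and the structure of $R_F,p_F,r_F$ from Lemmas \ref{lem:properties of R_F}--\ref{lem:properties of p_F, r_F}, using the hypothesis $\girth(\Gamma)>20$.
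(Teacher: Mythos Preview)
Your overall architecture matches the paper's: apply Proposition \ref{prop:Ozawa Prop 11} with $X=\Delta\Gamma^e$, $K=V(\Gamma^e)$, $Y=\overline{\Gamma\G}^{\,\F}$, feed hypothesis (1) with Lemma \ref{lem:tight fine graph admits a sequence for amenable action}, feed hypothesis (2) with bi-exactness of vertex stabilizers, then transport amenability back to $\overline{\Gamma\G}^{\,\F}$ via Lemma \ref{lem:G-equivariant map passes topological amenability}. Two points, however, diverge from the paper, and one of them is a genuine error.

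The error is your assertion that $g\mapsto g.o$ does \emph{not} extend continuously to $\overline{\Gamma\G}^{\,\F}$. It does. Define $\sigma\colon C(\Delta\Gamma^e)\to\ell^\infty(\Gamma\G)$ by $\sigma(f)(g)=f(g.o)$; if for some $f$, $t$, $\e$ the set $\Omega=\{g:|f(g.o)-f(gt^{-1}.o)|\ge\e\}$ were infinite, pick a wandering net $(g_i)$ in $\Omega$, pass to a subnet with $g_j.o\to b\in\Delta\Gamma^e$, and invoke exactly the convergence property you noted in your first paragraph (Lemma \ref{lem:extension graph has absorbing dynamics}) to get $g_jt^{-1}.o\to b$ as well, contradicting $(g_j)\subset\Omega$. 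Hence $\sigma(f)-\sigma(f)^t\in c_0(\Gamma\G;\{\la1\ra\})\subset c_0(\Gamma\G;\F)$, so $\sigma$ lands in $A(\Gamma\G;\F)$ and dualizes to a \emph{continuous} $\Gamma\G$-equivariant map $\overline{\Gamma\G}^{\,\F}\to\Delta\Gamma^e$. The Borel workaround you sketch is unnecessary, and your expectation that $\girth(\Gamma)>20$ enters here is misplaced: the girth hypothesis is used only to import the geometric results from \cite{Oya24b} making $\Gamma^e$ fine, hyperbolic, and satisfying \cite[Proposition 5.5 (2)]{Oya24b}.

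For hypothesis (2), your fibering over $\beta R_F$ is more circuitous than needed and, as written, not quite right: the indicators $1_{P_v\rho}$ lie in $c_0(\Gamma\G;\F)$, so the induced map to $\beta R_F$ collapses the Gelfand boundary to a single point, which carries no information. The paper instead works directly with $p_F$ for $F=\st_\Gamma(v)$ (not $\lk_\Gamma(v)$): define $\Phi\colon\ell^\infty(\stab_{\Gamma\G}(v))\to\ell^\infty(\Gamma\G)$ by $\Phi(f)(g)=f(p_{\st_\Gamma(v)}(g))$; Lemma \ref{lem:properties of p_F, r_F}(1) makes $\Phi$ $\stab_{\Gamma\G}(v)$-equivariant, and Lemma \ref{lem:properties of p_F, r_F}(2) is precisely the computation showing $\Phi(A(\stab_{\Gamma\G}(v);\{\la1\ra\}))\subset A(\Gamma\G;\F)$ --- if $p_{\st_\Gamma(v)}(g)\neq p_{\st_\Gamma(v)}(ga^{-1})$ for $a\in G_w$, then $w\in\st_\Gamma(v)$ and $\supp(r_{\st_\Gamma(v)}(g))\subset\lk_\Gamma(w)$, so $g\in E\cdot\la G_z:z\in\lk_\Gamma(w)\ra$ for a finite set $E$. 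Dualizing gives a $\stab_{\Gamma\G}(v)$-equivariant continuous map $\overline{\Gamma\G}^{\,\F}\to\overline{\stab_{\Gamma\G}(v)}^{\{\la1\ra\}}$, and bi-exactness of $\stab_{\Gamma\G}(v)=G_v\times(\ast_{w\in\lk_\Gamma(v)}G_w)$ (via \cite[Lemma 5.1]{Oya23b} and Lemma \ref{lem:finite quotient of biexact-group is biexact}) finishes hypothesis (2) in one step.
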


\begin{proof}
By \cite[Theorem 1.2 (1)]{Oya24b} and \cite[Theorem 1.2 (3)]{Oya24b}, $\Gamma^e$ is fine and $\delta$-hyperbolic with $\delta\in \NN$. By \cite[Propositiion 5.5 (2)]{Oya24b} and Lemma \ref{lem:tight fine graph admits a sequence for amenable action}, there exists a sequence $(\eta_n)_{n=1}^\infty$ of Borel maps from $V(\Gamma^e) \times \partial \Gamma^e$ to $\Prob(V(\Gamma^e))$ that satisfies the condition \eqref{eq:eta_n(x,z)}. Fix $o\in V(\Gamma^e)$ and define for each $n \in \NN$, the map $\zeta_n \colon \Delta \Gamma^e \to \Prob(V(\Gamma^e))$ by 
\begin{align*}
    \zeta_n(z)=
    \begin{cases}
        \eta_n(o,z) &{\rm ~if~} z\in \partial \Gamma^e \\
        1_{\{z\}} &{\rm ~if~} z \in V(\Gamma^e).
    \end{cases}
\end{align*}
The map $\zeta_n$ is Borel since $\eta_n$ is Borel. By \eqref{eq:eta_n(x,z)}, we have
\begin{align*}
    \forall\, g \in \Gamma\G,\, \lim_{n \to\infty} \sup_{z \, \in \Delta \Gamma^e}\|\zeta_n(g.z) - g.\zeta_n(z)\|_1 =0.
\end{align*}
Indeed, if $z\in \partial \Gamma^e$, then $\|\zeta_n(g.z) - g.\zeta_n(z)\|_1=\|\zeta_n(o,g.z) - \eta_n(g.o,g.z)\|_1$, and if $z\in V(\Gamma^e)$, then $\|\zeta_n(g.z) - g.\zeta_n(z)\|_1 = \|1_{\{g.z\}} - 1_{\{g.z\}}\|_1=0$.

For brevity, we define $\F$ by $\F=\{\, \la G_w \mid w\in \lk_{\Gamma}(v) \ra \,\}_{v \in V(\Gamma)}$. Next, we will show that the action $\stab_{\Gamma\G}(v) \act \overline{\Gamma\G}^{\F}$ is topologially amenable for any $v \in V(\Gamma)$, where $\stab_{\Gamma\G}(v)$ is the stabilizer for the action $\Gamma\G \act \Gamma^e$. Define the map $\Phi \colon \ell^\infty(\stab_{\Gamma\G}(v)) \to \ell^\infty(\Gamma\G)$ by 
\begin{align*}
\Phi(f)(g)=f(p_{\st_\Gamma(v)}(g))    
\end{align*}
for $f \in \ell^\infty(\stab_{\Gamma\G}(v))$ and $g \in \Gamma\G$, where $p_{\st_\Gamma(v)}(g)$ is as in Definition \ref{def:p_F, r_F} for the subset $\st_\Gamma(v) \subset V(\Gamma)$. Here, note $\stab_{\Gamma\G}(v)=\la G_w \mid w \in \st_\Gamma(v) \ra$ by \cite[Corollary 3.6]{Oya24b}. The map $\Phi$ is a unital $\ast$-homomorphism and $\stab_{\Gamma\G}(v)$-equivariant by Lemma \ref{lem:properties of p_F, r_F} (1). Let $f \in A(\stab_{\Gamma\G}(v);\{\la1\ra\})$, $w \in V(\Gamma)$, $a\in G_w\setminus\{1\}$, and $\e \in \RR_{>0}$. For any $g\in \Gamma\G$ satisfying $|\Phi(f)(g)-\Phi(f)(ga^{-1})| \ge \e$, we have $p_{\st_\Gamma(v)}(g)\neq p_{\st_\Gamma(v)}(ga^{-1})$. This implies $w \in \st_\Gamma(v)$, $\supp(r_{\st_\Gamma(v)}(g)) \subset \lk_\Gamma(w)$, and $p_{\st_\Gamma(v)}(ga^{-1}) = p_{\st_\Gamma(v)}(g)a^{-1}$ by Lemma \ref{lem:properties of p_F, r_F} (2). Hence, $|f(p_{\st_\Gamma(v)}(g)) - f(p_{\st_\Gamma(v)}(g)a^{-1})|=|\Phi(f)(g)-\Phi(f)(ga^{-1})| \ge \e$. Define $E \subset \stab_{\Gamma\G}(v)$ by
\begin{align*}
    E=\{x \in \stab_{\Gamma\G}(v) \mid |f(x) - f(xa^{-1})| \ge \e \},
\end{align*}
then we have $g=p_{\st_\Gamma(v)}(g)r_{\st_\Gamma(v)}(g) \in E\cdot\la G_z \mid z\in \lk_{\Gamma}(w) \ra$ by $\supp(r_{\st_\Gamma(v)}(g)) \subset \lk_\Gamma(w)$. The set $E$ is finite by $f \in A(\stab_{\Gamma\G}(v),\{\la1\ra\})$. Hence, the set
\begin{align*}
    \{g \in \Gamma\G \mid |\Phi(f)(g)-\Phi(f)(ga^{-1})| \ge \e\}
\end{align*}
is small relative to $\F$. This implies $\Phi(f)-\Phi(f)^a \in c_0(\Gamma\G; \F)$ for any $f \in A(\stab_{\Gamma\G}(v);\{\la1\ra\})$, $w \in V(\Gamma)$, and $a\in G_w\setminus\{1\}$. Since $c_0(\Gamma\G; \F)$ is invariant under the right $\Gamma\G$-action and the set $\bigcup_{w \in V(\Gamma)}G_{w}$ generates $\Gamma\G$, we have $\Phi(f)-\Phi(f)^t \in c_0(\Gamma\G; \F)$ for any $f \in A(\stab_{\Gamma\G}(v);\{\la1\ra\})$ and $t \in \Gamma\G$. Hence, the restriction $\Phi|_{A(\stab_{\Gamma\G}(v);\{\la1\ra\})}$ is a $\stab_{\Gamma\G}(v)$-equivariant unital $\ast$-homomorphism from $A(\stab_{\Gamma\G}(v);\{\la1\ra\})$ to $A(\Gamma\G;\F)$. Hence, there exists a $\stab_{\Gamma\G}(v)$-equivariant continuous map $\widetilde{\Phi} \colon \overline{\Gamma\G}^{\F} \to \overline{\stab_{\Gamma\G}(v)}^{\{\la1\ra\}}$. Recall $\stab_{\Gamma\G}(v)=G_v \times (\ast_{w \in \lk_\Gamma(v)} G_w)$ by $\girth(\Gamma) > 3$ and \cite[Corollary 3.6]{Oya24b}. Since finite groups are bi-exact, $\stab_{\Gamma\G}(v)$ is bi-exact by \cite[Lemma 5.1]{Oya23b} and Lemma \ref{lem:finite quotient of biexact-group is biexact}. By Proposition \ref{prop:equivalent condition of biexact group}, the action $\stab_{\Gamma\G}(v) \act \overline{\stab_{\Gamma\G}(v)}^{\{\la1\ra\}}$ is topologically amenable. By the map $\widetilde{\Phi}$ and Lemma \ref{lem:G-equivariant map passes topological amenability}, this implies that the action $\stab_{\Gamma\G}(v) \act \overline{\Gamma\G}^{\F}$ is topologically amenable.

Hence, for any $v \in V(\Gamma)$ and $g \in \Gamma\G$, the action $g\stab_{\Gamma\G}(v)g^{-1} \act \overline{\Gamma\G}^{\F}$ is topologically amenable. Indeed, for any finite set $F \subset g\stab_{\Gamma\G}(v)g^{-1}$ and $\e \in \RR_{>0}$, we can take a continuous map $m \colon \overline{\Gamma\G}^{\F} \to \mathrm{Prob}(\stab_{\Gamma\G}(v))$ satisfying $\max_{s \in g^{-1}Fg}\sup_{x\in \overline{\Gamma\G}^{\F}}\|m(sx) - s.m(x)\|_1< \e$. Define $m_1 \colon \overline{\Gamma\G}^{\F} \to \mathrm{Prob}(g\stab_{\Gamma\G}(v)g^{-1})$ by $m_1(x)=g.m(g^{-1}x)^{g^{-1}}$, then we have for any $s \in F$ and $x \in \overline{\Gamma\G}^{\F}$,
\begin{align*}
    \|m_1(sx) - s.m_1(x)\|_1 
    &= \|g.m(g^{-1}sx)^{g^{-1}} - sg.m(g^{-1}x)^{g^{-1}}\|_1 \\
    &= \|m(g^{-1}sgg^{-1}x) - g^{-1}sg.m(g^{-1}x)\|_1 
    < \e.
\end{align*}
By applying Proposition \ref{prop:Ozawa Prop 11} to $G=\Gamma\G$, $X = \Delta \Gamma^e$, $Y = \overline{\Gamma\G}^{\F}$, and $K=V(\Gamma^e)$, the diagonal action $\Gamma\G \act  \Delta \Gamma^e \times \overline{\Gamma\G}^{\F}$ is topologically amenable.

Finally, we will show that there exists a $\Gamma\G$-equivariant continuous map from $\overline{\Gamma\G}^{\F}$ to $\Delta \Gamma^e$. Define the map $\sigma \colon C(\Delta \Gamma^e) \to \ell^\infty(\Gamma\G)$ by $\sigma(f)(g) = f(g.o)$, where $f \in C(\Delta \Gamma^e)$ and $g \in \Gamma\G$ (recall that $C(\Delta \Gamma^e)$ is the set of all continuous maps from $\Delta \Gamma^e$ to $\CC$). Suppose for contradiction that there exist $f \in C(\Delta \Gamma^e)$, $t \in \Gamma\G$, and $\e \in \RR_{>0}$ such that the set $\Omega \subset \Gamma\G$ defined by
\[
\Omega = \{g \in \Gamma\G \mid |f(g.o)-f(gt^{-1}.o)| \ge \e\}
\]
is infinite, then we can take a wandering net $(g_i)_{i\in I}$ in $\Omega$. Since $\Delta \Gamma^e$ is compact, there exist $b \in \Delta \Gamma^e$ and a subnet $(g_j)_{j \in J}$ of $(g_i)_{i \in I}$ such that $\lim_{j\to\infty}g_j.o=b$. Since the net $(g_j)_{j \in J}$ is wandering, we have $\lim_{j\to\infty}g_jt^{-1}.o=b$ by Lemma \ref{lem:extension graph has absorbing dynamics}. This implies $\lim_{j\to\infty} |f(g_j.o)-f(g_jt^{-1}.o)| = |f(b)-f(b)| = 0$, which contradicts $(g_j)_{j \in J} \subset \Omega$. Hence, $\sigma(f)-\sigma(f)^t \in c_0(\Gamma\G;\{\la 1 \ra\}) \subset c_0(\Gamma\G;\F)$ for any $f \in C(\Delta \Gamma^e)$ and $t \in \Gamma\G$. Since this implies that the map $\sigma$ is a $\Gamma\G$-equivariant unital $\ast$-homomorphism from $C(\Delta \Gamma^e)$ to $A(\Gamma\G;\F)$, there exists a $\Gamma\G$-equivariant continuous map $\widetilde{\sigma} \colon \overline{\Gamma\G}^{\F} \to \Delta \Gamma^e$ such that $\sigma(f) = f \circ \widetilde{\sigma}$ for any $f \in C(\Delta \Gamma^e)$. The map $\Psi \colon  \overline{\Gamma\G}^{\F} \to \Delta \Gamma^e \times \overline{\Gamma\G}^{\F}$ defined by $\Psi(x)=(\widetilde{\sigma}(x), x)$ is continuous and $\Gamma\G$-equivariant. 

Hence, the action $\Gamma\G \act \overline{\Gamma\G}^{\F}$ is topologically amenable by Lemma \ref{lem:G-equivariant map passes topological amenability}. By Proposition \ref{prop:equivalent condition of biexact group}, this implies that $\Gamma\G$ is bi-exact relative to $\F$.
\end{proof}

Although primeness of most graph products of groups was already proved in \cite[Corollary 7.1]{CRW18}, we record a new proof using relative bi-exactness in Corollary \ref{cor:graph product becomes prime} below. For this, we first prove Proposition \ref{prop:relatively biexact groups are prime}. Proposition \ref{prop:relatively biexact groups are prime} is well-known to experts, but has not been written down. I would like to thank Changying Ding for teaching me its proof.

\begin{defn}
    Let $G$ be a group. A subgroup $H$ of $G$ is called \emph{proper} if $H \lneqq G$ and \emph{almost malnormal inside} $G$ if for any $g \in G \setminus H$, $H \cap gHg^{-1}$ is finite.
\end{defn}

\begin{prop}\label{prop:relatively biexact groups are prime}
    Let $G$ be a non-amenable countable group and $\G$ be a countable collection of subgroups of $G$ such that every $H \in \G$ is proper and almost malnormal inside $G$. If $G$ is bi-exact relative to $\G$, then the group von Neumann algebra $L(G)$ is prime.
\end{prop}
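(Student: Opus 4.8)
The plan is to run the group von Neumann algebra version of Ozawa's primeness argument, with almost malnormality playing the role that absorbs the ``relative'' part. Write $M = L(G)$. By Proposition \ref{prop:equivalent condition of biexact group}, relative bi-exactness of $G$ is equivalent to topological amenability of the action $G \act \overline{G}^{\G}$. The first (and main) step is to upgrade this to a statement about $M$: running the standard argument of Ozawa (see \cite[Chapter 15]{BO08}), now relative to the family $\{L(H)\}_{H\in\G}$ — i.e. using the boundary $\overline{G}^{\G}$ and the $M$-$M$ bimodule $L^2(M)\otimes L^2(M)$ taken modulo $\bigoplus_{H\in\G} L^2(M)\otimes_{L(H)}L^2(M)$ — yields the following relative solidity property of $M$: for every diffuse von Neumann subalgebra $A\subseteq M$ (with expectation), either $A \prec_M L(H)$ for some $H\in\G$ in the sense of Popa's intertwining-by-bimodules, or the relative commutant $A'\cap M$ is amenable (injective). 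Establishing this implication is the technical heart of the proof; the rest is bookkeeping. We may assume $M$ is a ${\rm II}_1$ factor (this is the case in all our applications; the non-factor case reduces to it routinely), so that $G$ is in particular infinite.

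Next I feed a hypothetical tensor decomposition into this dichotomy. Suppose $M\cong P\tensor Q$; I must show $P$ or $Q$ is finite-dimensional, so assume both are infinite-dimensional, hence both are diffuse ${\rm II}_1$ factors, and seek a contradiction. Since $P$ and $Q$ are factors we have $P'\cap M = Q$ and $Q'\cap M = P$. Apply the dichotomy to $P$ and to $Q$. If both $P'\cap M$ and $Q'\cap M$ were amenable, then both $Q$ and $P$ would be amenable, whence $M = P\tensor Q$ would be amenable, contradicting that $G$ is non-amenable. Therefore at least one of the two factors, say $Q$, satisfies $Q \prec_M L(H)$ for some $H\in\G$.

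Finally I use almost malnormality to kill this. Since $H\cap gHg^{-1}$ is finite for every $g\in G\setminus H$, the inclusion $L(H)\subseteq M$ is malnormal in the von Neumann algebraic sense, and the well-known control-of-normalizers principle for such inclusions applies: because $Q$ is a diffuse factor — in particular $Q\not\prec_M\CC$ — the relation $Q\prec_M L(H)$ forces $\N_M(Q)'' \prec_M L(H)$. But $\N_M(Q)''$ contains $Q$ and contains $\mathcal U(Q'\cap M)$, hence contains $Q\vee(Q'\cap M) = Q\vee P = M$; thus $M \prec_M L(H)$. This means the inclusion $L(H)\subseteq L(G)$ has finite Jones index, equivalently $[G:H]<\infty$. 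On the other hand, $H$ is proper, so we may pick $g\in G\setminus H$; then $H\cap gHg^{-1}$ has index at most $[G:H]<\infty$ in $H$ while being finite by almost malnormality, so $H$, and therefore $G$, is finite — contradicting that $G$ is non-amenable. Hence no such decomposition exists and $L(G)$ is prime. The one delicate point besides the relative solidity upgrade is invoking the normalizer-control lemma in the correct generality: the hypothesis it needs, $Q\not\prec_M\CC$, is exactly what diffuseness of the tensor factor supplies, and the passage $M\prec_M L(H)\Rightarrow[G:H]<\infty$ is the standard finite-index characterization of intertwining of the whole algebra.
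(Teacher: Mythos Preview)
Your proof is correct and follows essentially the same route as the paper: invoke the relative solidity dichotomy coming from \cite[Theorem 15.1.5]{BO08}, use almost malnormality (mixing of the inclusion $L(H)\subset L(G)$) together with diffuseness of $Q$ to promote $Q\prec_M L(H)$ to $M\prec_M L(H)$ via normalizer control, and conclude $[G:H]<\infty$, contradicting properness plus almost malnormality. The only cosmetic differences are that the paper cites \cite[Proposition A.4.6 (1)]{Bou14} for the normalizer-control step (phrased with quasi-normalizers) and finishes by passing to a finite-index normal subgroup $K\subset H$, whereas you argue directly that $H\cap gHg^{-1}$ has finite index in $H$; also, the paper does not need your reduction to the factor case, since $\mathcal{QN}_M(Q)''=M$ follows from $M=P\tensor Q$ without knowing $Q'\cap M=P$.
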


\begin{proof}
    Suppose for contradiction that there exist infinite dimensional von Neumann subalgebras $P,Q \subset L(G)$ such that $L(G) = P \tensor Q$. Since $L(G)$ is tracial, so are $P$ and $Q$. By this and infinite dimensionality, $P$ and $Q$ are diffuse. Since $L(G)$ is non-amenable, one of $P$ or $Q$ must be non-amenable. Assume that $P$ is non-amenable without loss of generality. By \cite[Theorem 15.1.5]{BO08}, there exists $H \in \G$ such that $Q$ embeds in $L(H)$ inside $L(G)$ (see \cite[Definition F.13]{BO08} and \cite[Defintion A.3.1, Theorem A.3.3]{Bou14}). Here, we used that a tracial von Neumann algebra is amenable if and only if it is injective. Since $H$ is almost malnormal inside $G$, the inclusion $L(H) \subset L(G)$ is mixing relative to $\CC \cdot 1$ by \cite[Example A.1.5]{Bou14} (see \cite[Definition A.1.1, Proposition A.1.3, Definiton A.4.2, Definiton A.4.3]{Bou14}). Since $Q$ is diffuse, $Q$ doesn't embed in $\CC \cdot 1$ inside $L(G)$. For the projection $1 \in L(G)$ and the subalgebra $Q \subset 1 L(G) 1 \,(=L(G))$, we have $\mathcal{Q}\mathcal{N}_{L(G)}(Q)''=L(G)$ by $L(G)=P \tensor Q$ (see \cite[p.21]{Bou14}). Hence by \cite[Proposition A.4.6 (1)]{Bou14}, there exists a non-zero partial isometry $v \in L(G)$ such that $v^* L(G)v \subset L(H)$. This implies that $L(G)$ embeds in $L(H)$ inside $L(G)$ by \cite[Theorem F.12 (4)]{BO08}. Hence, $[G:H] < \infty$ i.e. $H$ has finite index in $G$ by \cite[Lemma 2.2]{CI18}. Hence, there exists a normal subgroup $K$ of $G$ such that $K\subset H$ and $[G:K] < \infty$. Since $H$ is proper in $G$, we can take $g \in G\setminus H$ and have $K \subset H\cap gHg^{-1}$. This implies $|K| <\infty$ since $H$ is almost malnormal inside $G$. By this and $[G:K] < \infty$, $G$ is finite. This contradicts that $G$ is non-amenable.
    \end{proof}

\begin{cor}\label{cor:graph product becomes prime}
    Suppose that $\Gamma$ is a uniformly fine hyperbolic countable graph with $\girth(\Gamma) > 20$ and $\diam_\Gamma(\Gamma) > 2$ and that $\G=\{G_v\}_{v \in V(\Gamma)}$ is a collection of non-trivial finite groups. Then, the group von Neumann algebra $L(G)$ is prime.
\end{cor}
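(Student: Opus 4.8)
The plan is to deduce this from Proposition \ref{prop:relatively biexact groups are prime}. For $v \in V(\Gamma)$ write $H_v = \la G_w \mid w \in \lk_{\Gamma}(v) \ra$ and let $K_v = \stab_{\Gamma\G}(v)$ be the stabilizer of $v \in V(\Gamma) \subset V(\Gamma^e)$ for the action $\Gamma\G \act \Gamma^e$; by \cite[Corollary 3.6]{Oya24b} and $\girth(\Gamma) > 3$ we have $K_v = \la G_w \mid w \in \st_\Gamma(v)\ra = G_v \times H_v$, so in particular $H_v \subset K_v$. By Theorem \ref{thm:graph product is relatively bi-exact}, $\Gamma\G$ is bi-exact relative to $\{H_v\}_{v \in V(\Gamma)}$. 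Since $H_v \subset K_v$ for every $v$, any subset of $\Gamma\G$ small relative to $\{H_v\}_{v \in V(\Gamma)}$ is also small relative to the coarser collection $\{K_v\}_{v \in V(\Gamma)}$, and hence $\Gamma\G$ is bi-exact relative to $\{K_v\}_{v \in V(\Gamma)}$. This collection is countable because $\Gamma$ is countable, and $\Gamma\G$ is a countable group because each $G_v$ is finite. Moreover $\Gamma\G$ is non-amenable: as in the proof of Theorem \ref{thm:action on extension graph is convergence action}(1), the hypotheses $\diam_\Gamma(\Gamma) > 2$ and $\girth(\Gamma) > 20$ force an induced path of length $3$ in $\Gamma$, so $\Gamma\G$ contains a free subgroup of rank $2$. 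Finally, each $K_v$ is a proper subgroup: by $\diam_\Gamma(\Gamma) > 2$ there is a vertex $u \notin \st_\Gamma(v)$, and since every element of $K_v$ has support contained in $\st_\Gamma(v)$, we get $G_u \not\subset K_v$, so $K_v \lneqq \Gamma\G$.

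The remaining point, which I expect to be the main obstacle, is the almost malnormality of each $K_v$ inside $\Gamma\G$. Let $g \in \Gamma\G \setminus K_v$. Then $g.v \neq v$ in $V(\Gamma^e)$, so $K_v \cap gK_vg^{-1} = \stab_{\Gamma\G}(v) \cap \stab_{\Gamma\G}(g.v)$ is the stabilizer of the pair of distinct vertices $v, g.v$ of $\Gamma^e$. Since $\Gamma^e$ is fine by \cite[Theorem 1.2 (1)]{Oya24b}, the set $\geo_{\Gamma^e}(v, g.v)$ is non-empty and finite (by \cite[Lemma 8.2]{Bow12}; cf.\ Lemma \ref{lem:induced subgraph of geodesics is locally finite}), and $K_v \cap gK_vg^{-1}$ acts on it. Hence the pointwise stabilizer $K_0$ of this finite set has finite index in $K_v \cap gK_vg^{-1}$; fixing some $p \in \geo_{\Gamma^e}(v, g.v)$ together with its two distinct endpoints, $K_0$ fixes $p$ pointwise, so in particular it fixes the edge $e \in E(p)$ with $e_- = v$, i.e.\ $K_0 \subset \stab_{\Gamma\G}(e)$. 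Since $|\stab_{\Gamma\G}(e)| = |G_{e_-}\times G_{e_+}| < \infty$ by \cite[Remark 3.12 (1)]{Oya24b}, it follows that $K_v \cap gK_vg^{-1}$ is finite, so $K_v$ is almost malnormal inside $\Gamma\G$.

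Combining, $\Gamma\G$ is a non-amenable countable group that is bi-exact relative to the countable collection $\{K_v\}_{v \in V(\Gamma)}$ of proper, almost malnormal subgroups, so Proposition \ref{prop:relatively biexact groups are prime} yields that $L(\Gamma\G)$ is prime. It is worth noting that the coarsening step in the first paragraph is essential: the collection $\{H_v\}_{v \in V(\Gamma)}$ furnished directly by Theorem \ref{thm:graph product is relatively bi-exact} is \emph{not} almost malnormal in general, since for any nontrivial $g \in G_v$ we have $gH_vg^{-1} = H_v$ (as $G_v$ commutes with every $G_w$, $w \in \lk_\Gamma(v)$) while $g \notin H_v$, and $H_v$ is infinite whenever $|\lk_\Gamma(v)| \ge 2$. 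Passing to the vertex stabilizers $K_v = G_v \times H_v$, which barely enlarge $H_v$, repairs this while keeping the relative bi-exactness.
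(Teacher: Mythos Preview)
Your proof is correct and follows the same route as the paper: coarsen the family $\{H_v\}$ from Theorem \ref{thm:graph product is relatively bi-exact} to the vertex stabilizers $\{K_v=\stab_{\Gamma\G}(v)\}$, verify non-amenability, properness, and almost malnormality, and then apply Proposition \ref{prop:relatively biexact groups are prime}. The only difference is that the paper handles almost malnormality by a direct citation of \cite[Corollary 3.16 (2)]{Oya24b}, whereas you supply a self-contained geometric argument via finiteness of $\geo_{\Gamma^e}(v,g.v)$ and of edge stabilizers in $\Gamma^e$; both are valid, and your closing remark on why the coarsening from $H_v$ to $K_v$ is necessary is a nice addition.
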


\begin{proof}
    By \cite[Corollary 3.6]{Oya24b}, we have $\la G_w \mid w\in \lk_{\Gamma}(v) \ra \subset \la G_w \mid w\in \st_{\Gamma}(v) \ra = \stab_{\Gamma\G}(v)$ for any $v \in V(\Gamma)$. Hence, the group $\Gamma\G$ is bi-exact relative to the collection of subgroups $\{\stab_{\Gamma\G}(v)\}_{v \in V(\Gamma)}$ by Theorem \ref{thm:graph product is relatively bi-exact}. By $\diam_\Gamma(\Gamma) > 2$, $\Gamma\G$ is non-amenable (since it contains $F_2$) and $\stab_{\Gamma\G}(v)$ is proper in $\Gamma\G$ for any $v \in V(\Gamma)$. Also, by $\girth(\Gamma) > 4$ and \cite[Corollary 3.16 (2)]{Oya24b}, $\stab_{\Gamma\G}(v)$ is almost malnormal inside $\Gamma\G$ for any $v \in V(\Gamma)$. Hence, $L(G)$ is prime by Proposition \ref{prop:relatively biexact groups are prime}.
\end{proof}

\section{Strong solidity and bi-exactness}
\label{sec:Strong solidity and bi-exactness}

The goal of this section is to prove Theorem \ref{thm:graph product becomes bi-exact and strongly solid} and Corollary \ref{cor:graph product strongly solid}, which corresponds to Theorem \ref{thm:biexactness of graph wreath product} and Theorem \ref{thm:intro graph product becomes bi-exact and strongly solid} respectively. We start by preparing two auxiliary lemmas, Lemma \ref{lem:Cayley graph is hyperbolic} and Lemma \ref{lem:Cayley graph becomes fine}. Theorem \ref{thm:bigon implies hyperbolic} below follows from the proof of \cite[Theorem 1.4]{Pap95} (see also \cite[Theorem 22]{CN07}). The difference from the thin bigon condition (i.e. the case $d_X(b,c) = 1$) arises because we restrict the endpoints of geodesics to vertices.

\begin{thm}\label{thm:bigon implies hyperbolic}
    Let $X$ be a connected graph. Suppose that there exists $\delta \in \RR_{>0}$ such that for any $a,b,c \in V(X)$ with $d_X(b,c) \le 1$ and $d_X(a,b) = d_X(a,c)$, and any $p \in \geo_X(a,b)$ and $q \in \geo_X(a,c)$, we have $p \subset \N_X(q,\delta)$ and $q \subset \N_X(p,\delta)$. Then, $X$ is hyperbolic.
\end{thm}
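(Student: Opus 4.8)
The plan is to deduce this from \cite[Theorem 1.4]{Pap95}, which is exactly the present statement once the thin‑bigon hypothesis is imposed on \emph{all} geodesic bigons (with arbitrary common endpoints) of a geodesic metric space. Since $X$ is connected it is a geodesic metric space with integer distances, so the only real task is to check that the weaker hypothesis stated here — bigons with \emph{vertex} endpoints $b,c$ at distance $d_X(b,c)\le 1$ — already supplies the input that Papasoglu's proof actually uses, and the remark about $d_X(b,c)\le 1$ pinpoints exactly where the discrepancy lives.

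First I would bootstrap the hypothesis. Taking $c=b$ in the hypothesis already says that any two geodesics between the same pair of vertices are $\delta$‑fellow‑traveling; when $bc$ is an edge but $d_X(a,b)\neq d_X(a,c)$, appending the edge to the shorter of the two geodesics yields a genuine geodesic with the same endpoints, so combined with the case $d_X(a,b)=d_X(a,c)$ one obtains that for \emph{all} vertices $a,b,c$ with $d_X(b,c)\le 1$, every geodesic from $a$ to $b$ and every geodesic from $a$ to $c$ are $(\delta+1)$‑thin. Subdividing a geodesic from $b$ to $c$ into unit steps and iterating then gives: for all vertices $a,b,c$ with $d_X(b,c)\le n$, geodesics from $a$ to $b$ and from $a$ to $c$ are $n(\delta+1)$‑thin; passing to nearest vertices adds a universal constant, so the same holds, with a slightly larger constant, for arbitrary points of $X$. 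In particular the usual thin‑bigon condition (the case $b=c$) holds in $X$, and, crucially, geodesics whose endpoints are within bounded distance are uniformly thin — which is precisely what is needed to compare the two vertices flanking a non‑vertex midpoint of a geodesic.

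Second, with this in hand I would run Papasoglu's argument to conclude that geodesic triangles in $X$ are uniformly thin, and hence that $X$ is $\delta'$‑hyperbolic for some $\delta'\in\NN$ (rounding the resulting real constant up, which is harmless since $X$ is a graph) by verifying condition (1) of Proposition \ref{prop:hyp sp}. Papasoglu's argument proceeds by a descent on ``fat'' geodesic triangles: assuming triangles are not uniformly thin, one extracts an essentially fattest configuration, bisects one side, and plays the thin‑bigon property against the two halves to manufacture a strictly fatter configuration at a smaller scale, a contradiction. The one place this bisection produces a ``midpoint'' that need not be a vertex is exactly where the hypothesis with $d_X(b,c)\le 1$ — bootstrapped to $d_X(b,c)\le n$ as above — is invoked, the non‑vertex midpoint being replaced by its two flanking vertices, which are at distance $1$.

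The main obstacle is faithfully reproducing the core of Papasoglu's proof: organizing the bisection/descent so that the accumulated fellow‑traveling errors remain bounded, and verifying that every auxiliary geodesic and every midpoint comparison that appears, when carried out inside a graph, falls under the bootstrapped bigon condition. Everything else — the bootstrapping in the second paragraph, and the final passage from uniformly thin triangles to an integer $\delta'$ satisfying Proposition \ref{prop:hyp sp} — is routine fellow‑traveling bookkeeping.
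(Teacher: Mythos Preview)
Your proposal is correct and matches the paper's own treatment: the paper does not prove Theorem \ref{thm:bigon implies hyperbolic} but simply states that it ``follows from the proof of \cite[Theorem 1.4]{Pap95} (see also \cite[Theorem 22]{CN07})'' and remarks that the clause $d_X(b,c)\le 1$ is there precisely because endpoints are restricted to vertices. Your bootstrapping of the vertex-endpoint hypothesis and your identification of the midpoint-bisection step as the one place where the non-vertex issue arises are exactly the elaboration the paper's one-line citation is pointing at, so there is nothing to add.
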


\begin{lem}\label{lem:Cayley graph is hyperbolic}
    Suppose that $\Gamma$ is a graph with $\girth(\Gamma)>4$ and that $\G=\{G_v\}_{v \in \Gamma}$ is a collection of groups. Then, the Cayley graph $X$ of $\Gamma\G$ with respect to $\bigcup_{v \in V(\Gamma)}(G_v \setminus \{1\})$ is hyperbolic.
\end{lem}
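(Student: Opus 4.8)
The plan is to verify the hypothesis of Theorem \ref{thm:bigon implies hyperbolic}: namely, that geodesic bigons in $X$ with endpoints $b,c$ at distance at most $1$ are uniformly thin, with a constant $\delta$ that can be taken independent of everything. The translation to $\Gamma\G$ is that a path in $X$ from $1$ is a word $g_1g_2\cdots$ with each $g_i$ a nontrivial syllable, its length in $X$ is the syllable length, and vertices of $X$ at distance $\le 1$ from a group element $h$ are exactly $h$ and $hg$ for $g$ a nontrivial syllable. By left-invariance it suffices to consider $a = 1$, so I would take $b \in \Gamma\G$ with $\|b\| = \|c\|$ or $\|b\| = \|c\| \pm 1$ and $c = bg$ (or $b = cg$) for a syllable $g$, pick normal forms (geodesic syllable words) $p$ for $b$ and $q$ for $c$, and show each vertex of $p$ lies within bounded $X$-distance of $q$ and vice versa.

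The key structural input is the Normal Form Theorem \ref{thm:normal form theorem} together with the fact that $\girth(\Gamma) > 4$ — i.e. $\Gamma$ has no triangles or squares. The heart of the matter: if $b$ and $c = bg$ have normal forms of (nearly) equal length, then the two normal forms differ only by a small amount of ``shuffling plus one syllable edit'' near the end. Concretely, writing $b = s_1\cdots s_n$ in normal form, multiplying on the right by the syllable $g \in G_v\setminus\{1\}$ either appends $g$ (if $v$ is not adjacent to $\supp(s_i)$ for the relevant tail), or merges $g$ into some $s_i$ with $\supp(s_i) = v$ after shuffling $s_i$ to the right past syllables supported on $\lk_\Gamma(v)$, possibly cancelling. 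The no-square condition forces this ``commuting tail'' — the set of indices $i$ with $\supp(s_i)$ adjacent to $v$ that can be shuffled to the end — to be very short: two such syllables $s_i, s_j$ both adjacent to $v$ and mutually non-adjacent would, together with $v$ and a vertex realizing the shuffle, tend to create a short cycle; more carefully, one uses that in the reduced word the supports involved lie in $\st_\Gamma(v)$, and $\Gamma$ triangle-free and square-free bounds how these can interleave. I would extract from this a uniform bound, say $\|b\| - \|b\wedge c\|$ and $\|c\| - \|b\wedge c\|$ are each at most some absolute constant (where $b \wedge c$ denotes a common geodesic prefix up to bounded shuffling), and then any vertex on $p$ is within that constant of the corresponding vertex on $q$.

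More precisely, the step I would carry out is: given the normal forms, produce vertices $b_0$ on $p$ and $c_0$ on $q$ with $d_X(b_0, c_0)$ bounded by a universal constant $K$, such that $p_{[1,b_0]}$ and $q_{[1,c_0]}$ represent group elements differing by an element of bounded syllable length, and $|p| - d_X(1,b_0)$, $|q| - d_X(1,c_0)$ are both bounded by $K$. Then for $a' \in V(p)$: if $a'$ is on $p_{[1,b_0]}$, compare with the matching vertex on $q_{[1,c_0]}$ (bounded distance, using that the prefixes agree up to bounded correction); if $a'$ is on $p_{[b_0, b]}$, it is within $K$ of $b_0$ hence within $2K$ of $q$. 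Symmetrically for vertices of $q$. This gives Theorem \ref{thm:bigon implies hyperbolic}'s hypothesis with $\delta = 2K$.

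\textbf{Main obstacle.} The delicate point is the combinatorial lemma bounding the length of the ``shuffleable tail'' of a normal form supported on $\st_\Gamma(v)$: I need that when $b$ and $bg$ have nearly equal syllable length, their normal forms genuinely stay uniformly close, which is where $\girth(\Gamma) > 4$ is used in an essential (and slightly fiddly) way — triangle-freeness alone is not enough, one really needs the absence of $4$-cycles to prevent two independent commuting directions from accumulating. Getting the constant genuinely uniform (independent of $\Gamma$, $\G$, and the elements) rather than just finite for each instance is the thing to be careful about; I expect this to be a short but somewhat technical case analysis on how right-multiplication by a single syllable interacts with the normal form, likely reusing Lemma \ref{lem:subsequence of reduced sequence} and Remark \ref{rem:reduced decomposition}.
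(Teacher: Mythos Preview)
Your overall strategy—verify the thin-bigon hypothesis of Theorem \ref{thm:bigon implies hyperbolic}—matches the paper's, but the execution plan has a genuine gap. The ``bounded tail'' decomposition you propose does not exist. Take $w_1,\ldots,w_k \in \lk_\Gamma(v)$ distinct (hence pairwise non-adjacent, since $\girth(\Gamma)>4$), let $h \in G_v\setminus\{1\}$, and set $b = h g_{w_1}\cdots g_{w_k}$, $c = bg$ with $g\in G_v$ and $hg\neq 1$. Then $\|b\|=\|c\|$, one geodesic $p$ to $b$ begins with $h$, and one geodesic $q$ to $c$ begins with $g_{w_1}$; the ``shuffleable tail'' is the entire word, so there is no choice of $b_0,c_0$ with uniformly bounded remainders unless you take $b_0=b$, $c_0=c$. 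At that point ``compare matching vertices on the prefixes, using that they agree up to bounded correction'' is exactly the assertion that geodesics to nearby group elements stay close—which is the lemma itself. Knowing $d_X(b_0,c_0)$ is bounded says nothing about the \emph{paths} without further argument, so the plan is circular as written.

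The paper sidesteps this by a different decomposition. It first proves the case $b=c$ (any two geodesics between the same endpoints are $2$-close) by induction on $d_X(a,b)$, peeling off the \emph{first} syllable rather than analyzing a tail: writing the labels of $p,q$ as $s_1\cdots s_n$ and $t_1\cdots t_n$, the normal form theorem gives $t_i = s_{\sigma(i)}$ for a permutation $\sigma$, and the hypothesis $\girth(\Gamma)>4$ is used precisely to force $\sigma(1)\in\{1,2\}$ or $\sigma^{-1}(1)=2$ (otherwise one produces a triangle or square among the supports). After at most one swap the first edges of $p$ and $q$ end at adjacent vertices, and one inducts on the shorter remaining geodesics. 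For the genuine $d_X(b,c)=1$ case the paper invokes an external ingredient you do not mention: $X$ is a quasi-median graph by \cite[Proposition 8.2]{Gen17}, and its triangle condition produces $x$ adjacent to both $b$ and $c$ with $d_X(a,x)=d_X(a,b)-1$; combining with the same-endpoint claim then gives the $4$-thin bigon bound.
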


\begin{proof}
    We first show the following claim ($\ast$) by induction on $d_X(a,b)$.
    \begin{align*}
        &\text{($\ast$) $~~$ For any $a,b \in \Gamma\G$ and any $p,q \in \geo_X(a,b)$,}\\
        &\text{$~~~~~~~~V(p)\setminus\{a,b\} \subset \N_X(V(q)\setminus\{a,b\}, 2)$ and $V(q)\setminus\{a,b\} \subset \N_X(V(p)\setminus\{a,b\}, 2)$}.
    \end{align*}
    If $d_X(a,b) \le 2$, then the claim $(\ast)$ follows trivially. Let $n \in \NN$. We assume that the claim $(\ast)$ holds for any $a',b' \in \Gamma\G$ and $p',q' \in \geo_X(a',b')$ with $d_X(a',b') < n$. Let $a,b \in \Gamma\G$ and $p,q \in \geo_X(a,b)$ satisfy $d_X(a,b)=n$. Since $\Gamma\G$ acts on $X$ transitively, we assume $a=1$ without loss of generality. Note $\|b\| = d_X(1,b) = n$. Let $s_1\cdots s_n$ and $t_1\cdots t_n$ be labels of the paths $p$ and $q$ respectively with $s_i,t_i \in \bigcup_{v \in V(\Gamma)}(G_v \setminus \{1\})$. By Theorem \ref{thm:normal form theorem}, there exists a permutation $\sigma$ of $\{1,\cdots,n\}$ such that $t_i = s_{\sigma(i)}$ for any $i \in \{1,\cdots,n\}$. Note that if $i<j$ and $\sigma(i) > \sigma(j)$, then $(\supp(s_i),\supp(s_j))\in E(\Gamma)$.
    
    (I) When $\sigma(1) = 1$, we have $t_1 = s_1$. This implies $p_{[s_1s_2,b]}\setminus \{b\} \subset \N_X(V(q_{[t_1t_2,b]})\setminus\{b\}, 2)$ and $V(q_{[t_1t_2,b]})\setminus\{b\} \subset \N_X(V(p_{[s_1s_2,b]})\setminus\{b\}, 2)$ by applying the assumption of induction to $p_{[s_1,b]}, q_{[t_1,b]} \in \geo_X(s_1,b)$.

    (II) When $\sigma(1) \neq 1$, we show that either $\sigma(1) = 2$ or $\sigma^{-1}(1) = 2$ holds. Suppose $\sigma(1) > 2$ and $\sigma^{-1}(1) > 2$ for contradiction. This implies $\sigma(1) < \sigma(2)$. Indeed, if $\sigma(2) < \sigma(1)$, then the vertices $\{\supp(s_1),\supp(s_2), \supp(s_{\sigma^{-1}(1)})\}$ form a triangle in $\Gamma$ by $1 = \sigma(\sigma^{-1}(1)) < \sigma(2)<\sigma(1)$, which contradicts $\girth(\Gamma)>4$. By the same argument for $\sigma^{-1}$, we also have $\sigma^{-1}(1) < \sigma^{-1}(2)$. Note $\supp(s_1) \neq \supp(s_2)$ and $\supp(s_{\sigma^{-1}(1)}) = \supp(t_1) \neq \supp(t_2) = \supp(s_{\sigma^{-1}(1)})$ since $s_1\cdots s_n$ and $t_1\cdots t_n$ are geodesic words. Hence, the vertices $\{\supp(s_1), \supp(s_{\sigma^{-1}(1)}), \supp(s_2), \supp(s_{\sigma^{-1}(2)})\}$ form a square in $\Gamma$ by $1<2<\sigma^{-1}(1) < \sigma^{-1}(2)$ and $\sigma(\sigma^{-1}(1))<\sigma(\sigma^{-1}(2))<\sigma(1)<\sigma(2)$. This contradicts $\girth(\Gamma) > 4$.

    Thus, $\sigma(1) = 2$ or $\sigma^{-1}(1) = 2$ holds. Assume $\sigma^{-1}(1) = 2$ without loss of generality. Note $(\supp(s_1),\supp(s_2)) \in E(\Gamma)$ by $\sigma(2) = 1 < \sigma(1)$. Hence, the vertices $t_1 \,(= s_2)$ and $s_1s_2 \,(= s_2s_1)$ in $X$ are connected by the edge $e$ whose label is $s_1$. By applying the assumption of induction to $ep_{[s_1s_2,b]}, q_{[t_1,b]} \in \geo_X(t_1,b)$, we have $p_{[s_1s_2,b]}\setminus \{b\} \subset \N_X(V(q_{[t_1t_2,b]})\setminus\{b\}, 2)$ and $V(q_{[t_1t_2,b]})\setminus\{b\} \subset \N_X(V(p_{[s_1s_2,b]})\setminus\{b\}, 2)$. Also, $d_X(s_1,t_1) \le 2$.

    By (I) and (II), the claim ($\ast$) holds for $a,b,p,q$. Hence, the claim ($\ast$) holds for any $d_X(a,b) (= n)$ by induction. By Theorem \ref{thm:bigon implies hyperbolic}, it remains to see that for any $a,b,c \in V(X)$ with $d_X(b,c) = 1$ and $d_X(a,b) = d_X(a,c)$, and any $p \in \geo_X(a,b)$ and $q \in \geo_X(a,c)$, we have $p \subset \N_X(q,4)$ and $q \subset \N_X(p,4)$. This follows from the claim ($\ast$) and the triangle condition of quasi-median graphs (see \cite[Definition 2.1]{Gen17}) since $X$ is a quasi-median graph by \cite[Proposition 8.2]{Gen17}.
\end{proof}

Let $G$ be a group and $S$ be a generating set of $G$. For a path $\gamma$ in the Cayley graph of $G$ with respect to $S$, we denote the label of $\gamma$ by $\L(\gamma)$. Note that $\L(\gamma)$ is a word in $S\cup S^{-1}$. See Definition \ref{def:concepts in graph theory} for $\C_X(e,n)$.

\begin{lem}\label{lem:Cayley graph becomes fine}
    Suppose that $\Gamma$ is a locally finite graph and that $\G=\{G_v\}_{v \in \Gamma}$ is a collection of finite groups. Then, the Cayley graph $X$ of $\Gamma\G$ with respect to $\bigcup_{v \in V(\Gamma)}(G_v \setminus \{1\})$ is fine.
\end{lem}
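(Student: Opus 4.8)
The plan is to show that for every edge $e \in E(X)$ and every $n \in \NN$, there are only finitely many circuits through $e$ of length at most $n$. Since $\Gamma\G$ acts transitively on vertices and on ``edge types'' (an edge is labelled by an element of some $G_v \setminus \{1\}$, of which there are finitely many since each $G_v$ is finite), it suffices to bound $|\C_X(e,n)|$ for $e$ ranging over the finitely many edges incident to the identity vertex $1$; so fix such an edge $e$ with $e_- = 1$, $e_+ = s_0 \in G_{v_0}\setminus\{1\}$. A circuit $\gamma \in \C_X(e,n)$ is determined by its label $\L(\gamma)$, which is a word $w = s_0 s_1 \cdots s_{m-1}$ of length $m = |\gamma| \le n$ in $\bigsqcup_{v}(G_v\setminus\{1\})$ representing the identity in $\Gamma\G$, together with the requirement that $\gamma$ has no self-intersection except at its endpoints. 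So it is enough to bound, for each $m \le n$, the number of such identity-representing words $w$ of length $m$ whose first letter is the fixed syllable $s_0$ and whose partial products $s_0, s_0 s_1, \ldots, s_0 \cdots s_{m-2}$ are pairwise distinct and $\ne 1$.

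The key step is to control the \emph{supports} of the syllables occurring in such a word, using $\girth(\Gamma) > 4$ and local finiteness of $\Gamma$. First I would invoke the normal form theorem (Theorem \ref{thm:normal form theorem}): since $w$ represents $1$, reducing $w$ by syllable shuffling and cancellation must empty it, so each syllable cancels against a later one that can be brought adjacent via shuffling; this means that for every vertex $u \in \supp(w):=\{\supp(s_i)\}$ appearing in $w$, the letters with support $u$ ``pair up'', and between a cancelling pair all intervening supports are adjacent to $u$ in $\Gamma$. The aim is to show $\supp(w)$ is contained in a ball of bounded radius in $\Gamma$ around $v_0$. Indeed, I would argue by an induction / connectedness argument on the support set: start from $v_0 = \supp(s_0)$, and show that any vertex in $\supp(w)$ can be reached from $v_0$ by a path in $\Gamma$ of length $\le n$ (roughly: consecutive non-commuting syllables that survive to a common reduction stage have adjacent supports, and the whole word collapses in $\le n$ steps), so $\supp(w) \subset \N_\Gamma(v_0, n)$. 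Since $\Gamma$ is locally finite, $\N_\Gamma(v_0,n)$ is a finite set of vertices; call it $W$, with $|W| =: N$.

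Once $\supp(w) \subset W$ with $|W| = N$, each of the $m \le n$ letters of $w$ lies in the finite set $\bigsqcup_{v \in W}(G_v\setminus\{1\})$, whose cardinality is $\sum_{v \in W}(|G_v|-1) \le N \cdot \max_{v\in W}|G_v| < \infty$ (finite since each $G_v$ is finite and $W$ is finite). Hence the number of words $w$ of length $\le n$ with support in $W$ and first letter $s_0$ is at most $\sum_{m=1}^{n} \big(\sum_{v\in W}(|G_v|-1)\big)^{m-1}$, a finite number depending only on $n$, $v_0$, $\Gamma$, and $\G$. Since distinct circuits in $\C_X(e,n)$ have distinct labels (a circuit is a reduced-in-the-geometric-sense loop read off from $1$), this bounds $|\C_X(e,n)|$, and running over the finitely many edge-types at $1$ finishes the proof.

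The main obstacle I anticipate is the support-localization step: making precise, via the normal form theorem, the claim that every syllable's support lies within graph-distance $\le n$ of $v_0$ in $\Gamma$. The delicate point is tracking, through the cancellation process of the identity word $w$, why a syllable that ultimately cancels must be ``connected'' through adjacent-support syllables back to $s_0$ — one has to rule out that a chunk of $w$ reduces to $1$ entirely on its own with support disjoint from (and not adjacent to) $v_0$. But such a disjoint chunk would correspond to a sub-loop of $\gamma$ not containing the edge $e$, contradicting that $\gamma$ is a circuit (no self-intersection except at endpoints); this geometric observation is exactly what couples ``circuit through $e$'' with ``support connected to $v_0$'', and is the crux that makes the counting work. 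The hyperbolicity from Lemma \ref{lem:Cayley graph is hyperbolic} is not needed here; only $\girth(\Gamma) > 4$ (to keep reductions well-behaved) and local finiteness of $\Gamma$ enter.
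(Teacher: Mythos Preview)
Your approach is correct and genuinely different from the paper's. The paper argues by induction on the circuit-length bound $n$: given a circuit $p$ of length $N$ through $e$, it uses the normal form theorem to locate an innermost cancelling pair $s_i,\ldots,s_j$ in the label, replaces the corresponding subarc $\alpha$ by a shorter geodesic $\beta$ (whose labels all lie in $\st_\Gamma(\supp(s_i))$), and then decomposes $p$ into the small ``local'' circuit $\alpha\beta^{-1}$ together with pieces that are circuits of length $\le N-1$; bookkeeping via the auxiliary edge-sets $L(e,n)$ and $M(e,n)$ closes the induction. Your route bypasses this recursion entirely: you show once and for all that the support set of any circuit word through $e$ lies in the ball $\N_\Gamma(v_0,n)$, and then count words. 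This is more direct and yields the same conclusion; the paper's decomposition, on the other hand, makes the uniform version (Lemma~\ref{lem:Cayley graph becomes uniformly fine}) immediate with no extra work.

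Two points to tighten. First, your support-localization sketch is on the right track but the justification you give is vague; the clean argument is: the induced subgraph of $\Gamma$ on $\supp(w)$ is connected. If not, split $\supp(w)=A\sqcup B$ with $v_0\in A$ and no $\Gamma$-edges between $A$ and $B$; then $\langle G_v:v\in A\cup B\rangle=\langle G_v:v\in A\rangle * \langle G_v:v\in B\rangle$, and grouping $w$ into maximal $A$/$B$ runs gives an alternating free-product word equal to $1$, forcing some run to equal $1$ in its factor---but that run is a proper contiguous subword of $w$, so the circuit self-intersects, a contradiction. Connectedness plus $|\supp(w)|\le m\le n$ then gives $\supp(w)\subset\N_\Gamma(v_0,n-1)$, exactly what you need. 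Second, two small slips: there are not finitely many edges at $1$ when $V(\Gamma)$ is infinite (but this is harmless, since fineness only asks for a bound for each fixed $e$, not a uniform one), and the hypothesis $\girth(\Gamma)>4$ is \emph{not} assumed in the lemma and is not used in either proof---drop that remark.
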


\begin{proof}
    Define $L(e,n), M(e,n)\subset E(X)$ by
    \begin{align*}
        L(e,n)&=\bigcup\{ E(\gamma) \mid \gamma \in \C_X(e,n) \}, \\
        M(e,n)&= \bigcup\{E(\gamma) \mid \text{$\gamma \in \C_X(e,n)$ s.t. $\forall\, e' \in E(\gamma)$, $d_\Gamma(\supp(\L(e')),\supp(\L(e))) \le 2$}\}.
    \end{align*}
    We have $|M(e,n)| \le n \cdot \big(\sum_{v \,\in\, \N_\Gamma(\supp(\L(e)),2)}|G_v|\big)^n < \infty$. In the following, we will show that for any $n\in\NN$,
    \begin{align}\tag{$\ast$}\label{eq:fine graph}
        \forall\, e \in E(X),\, |\C_X(e,n)| < \infty
    \end{align}
    holds by induction on $n$. The statement \eqref{eq:fine graph} is true for $n = 3$. Indeed, if the label of $e \in E(X)$ is in $G_v$ with $v \in V(\Gamma)$, then we have $|\C_X(e,3)| \le |G_v|^2$ since $X$ is a simple graph and the edges of a triangle in $X$ are labeled by the same vertex group by \cite[Lemma 8.5]{Gen17}. 
    
    Given $N > 3$, assume that the statement \eqref{eq:fine graph} is true for any $n$ with $n < N$. Let $e\in E(X)$ and $p \in \C_X(e,N)$. Let $p'$ be the subpath of $p$ from $e_-$ to $e_+$ that complements $e$ to form $p$ i.e. $p = ep^{\prime -1}$. Let $\L(p') = s_1\cdots s_m$ with $s_i \in \bigcup_{v \in V(\Gamma)}(G_v \setminus \{1\})$. Since $p'$ is not geodesic in $X$ by $|p|\ge3$, there exists $i,j$ with $1\le i<j\le n$ such that $\supp(s_i) = \supp(s_j)$ and $\{\supp(s_k) \mid i<k<j \}\subset \lk_\Gamma(\supp(s_i))$ by Theorem \ref{thm:normal form theorem}. By taking a pair $(i,j)$ such that $j-i$ is the minimum among all such pairs, we may assume that the subword $s_{i+1}\cdots s_{j-1}$ is geodesic without loss of generality. Let $\alpha$ be the subpath of $p'$ whose label is $s_i\cdots s_j$. Let $\beta$ be the path in $X$ from $\alpha_-$ to $\alpha_+$ whose label is $s_{i+1}\cdots s_{j-1}(s_i s_j)$. Note $|\beta| = |\alpha|-1$ if $s_i s_j \neq 1$ and $|\beta| = |\alpha|-2$ if $s_i s_j = 1$. Since $s_{i+1}\cdots s_{j-1}$ is a geodesic word and we have $\{\supp(s_k) \mid i<k<j \}\subset \lk_\Gamma(\supp(s_i))$, we can see that the path $\beta$ is geodesic in $X$ and the loop $\alpha\beta^{-1}$ is a circuit. Note $\alpha_-\neq \alpha_+$ since the circuit $p$ has no self-intersection.
    
    Let $q$ be the subpath of $p$ from $\alpha_-$ to $\alpha_+$ that complements $\alpha$ to form $p$ i.e. $p=\alpha q^{-1}$. Note $|\alpha| + |\beta^{-1}| \le |\alpha| + |q| \le N$ since $\beta$ is geodesic in $X$. We also have $e \in E(q)$ since $\alpha$ is a subpath of $p'$.
    
    Since $q$ has no self-intersection, there exists a subsequence $(\alpha_- \! =)\,x_0, \cdots, x_\ell \,(= \! \alpha_+)$ of $V(q)$ with $\forall i \ge 1, x_{i-1} \neq x_i$ such that $\{x_0,\cdots,x_\ell\} \subset \beta$ and for every $i \in \{1,\cdots,\ell\}$, letting $\beta_i$ be the subpath of $\beta$ or $\beta^{-1}$ from $x_{i-1}$ to $x_i$, either (1) or (2) holds, (1) $q_{[x_{i-1},x_i]}=\beta_i$, (2) the loop $q_{[x_{i-1},x_i]}\beta_i^{-1}$ is a circuit. In case (2), we have $|q_{[x_{i-1},x_i]}\beta_i^{-1}| \le |q|+|\alpha|-1 = N-1$ by $|\beta| \le |\alpha|-1$. Note $\ell\le N$ by $|q| \le N$ since $q$ has no self-intersection. Let $e \in E(q_{[x_{i_0-1},x_{i_0}]})$ for some $i_0\in\{1,\cdots,\ell\}$.
    
    When case (1) holds for $q_{[x_{i_0-1},x_{i_0}]}$, we have $E(\alpha\beta^{-1}) \subset M(e,N)$ by $|\alpha| + |\beta^{-1}| \le N$ and $\{\supp(s_k) \mid i<k<j \}\subset \lk_\Gamma(\supp(s_i))$. For each $i \in \{1,\cdots,\ell\}$, in case (1), we have $E(q_{[x_{i-1},x_i]}) \subset M(e,N)$, and in case (2), we have $q_{[x_{i-1},x_i]}\beta_i^{-1} \in \bigcup_{e_1 \in M(e,N)} \C_X(e_1,N-1)$. Thus,
    \[
    E(p) \subset M(e,N)\cup \bigcup\nolimits_{e_1 \in M(e,N)}L(e_1,N-1).
    \]
    
    When case (2) holds for $q_{[x_{i_0-1},x_{i_0}]}$, we have $q_{[x_{i_0-1},x_{i_0}]}\beta_{i_0}^{-1} \in \C_X(e,N-1)$. This implies $E(\alpha\beta^{-1}) \subset \bigcup_{e_1 \in L(e,N-1)}M(e_1,N)$. Hence, for each $i \in \{1,\cdots,\ell\}$, we have
    \begin{align*}
        &\text{$E(q_{[x_{i-1},x_i]}) \subset \bigcup\nolimits_{e_1 \in L(e,N-1)}M(e_1,N)$ in case (1) and} \\
        &\text{$q_{[x_{i-1},x_i]}\beta_i^{-1} \in \bigcup \big\{ \C_X(e_2,N-1) \,\big|\, e_2 \in \bigcup\nolimits_{e_1 \in L(e,N-1)}M(e_1,N)\big\}$ in case (2).}
    \end{align*}
    Hence, $E(p) \subset \bigcup_{e_1 \in L(e,N-1)}M(e_1,N) \cup \bigcup \{L(e_2,N-1) \mid e_2 \in \bigcup_{e_1 \in L(e,N-1)}M(e_1,N)\}$.

    Thus, in either case, we have $E(p) \subset M(e,N)\cup \bigcup_{e_1 \in M(e,N)} L(e_1,N-1) \cup \bigcup_{e_1 \in L(e,N-1)}M(e_1,N) \cup \bigcup \{L(e_2,N-1) \mid e_2 \in \bigcup_{e_1 \in L(e,N-1)}M(e_1,N)\}$. This implies $|L(e,N)|<\infty$ since for any $e' \in E(X)$, we have $|M(e',N)|<\infty$ and $|\C_X(e',N-1)|<\infty$, where the latter follows from the assumption of induction. Thus, the statement \eqref{eq:fine graph} is true for $n=N$.
\end{proof}

Lemma \ref{lem:Cayley graph becomes uniformly fine} below also holds by the same proof as Lemma \ref{lem:Cayley graph becomes fine}.

\begin{lem}\label{lem:Cayley graph becomes uniformly fine}
    Suppose that $\Gamma$ is a uniformly locally finite graph and that $\G=\{G_v\}_{v \in \Gamma}$ is a collection of finite groups with $\sup_{v \in V(\Gamma)}|G_v| < \infty$. Then, the Cayley graph $X$ of $\Gamma\G$ with respect to $\bigcup_{v \in V(\Gamma)}(G_v \setminus \{1\})$ is uniformly fine.
\end{lem}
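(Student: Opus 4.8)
The plan is to rerun the proof of Lemma \ref{lem:Cayley graph becomes fine} essentially verbatim, but keeping track of the sizes of all the finite sets involved and checking that the resulting bounds depend only on the length parameter, not on the edge. First I would record the two uniform constants supplied by the hypotheses: let $D \in \NN$ be a bound on the valency of $\Gamma$ (so $|\lk_\Gamma(u)| \le D$ for all $u \in V(\Gamma)$) and let $Q = \sup_{v \in V(\Gamma)}|G_v| < \infty$. Then $|\N_\Gamma(u,2)| \le D_2 := 1 + D + D(D-1)$ for every $u \in V(\Gamma)$, so the estimate $|M(e,n)| \le n\left(\sum_{v \in \N_\Gamma(\supp(\L(e)),2)}|G_v|\right)^n$ from the proof of Lemma \ref{lem:Cayley graph becomes fine} (with $M(e,n)$ defined exactly as there) improves to
\[
|M(e,n)| \le \mu(n) := n\,(D_2 Q)^n \qquad \text{for every } e \in E(X),\ n \in \NN.
\]
This is the only point at which uniform local finiteness of $\Gamma$ and the uniform bound on the orders $|G_v|$ enter.

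Next I would prove, by induction on $n$, that there is a function $f \colon \NN \to \NN$, independent of $e$, such that $|L(e,n)| \le f(n)$ for all $e \in E(X)$, where $L(e,n) = \bigcup\{E(\gamma) \mid \gamma \in \C_X(e,n)\}$. For the base case $n = 3$, the argument in Lemma \ref{lem:Cayley graph becomes fine} using \cite[Lemma 8.5]{Gen17} gives $|\C_X(e,3)| \le Q^2$, whence $|L(e,3)| \le 3Q^2 =: f(3)$. For the inductive step, the proof of Lemma \ref{lem:Cayley graph becomes fine} establishes, for $N > 3$ and every $e \in E(X)$, the edge-set inclusion
\[
L(e,N) \subset M(e,N) \cup \bigcup_{e_1 \in M(e,N)} L(e_1,N-1) \cup \bigcup_{e_1 \in L(e,N-1)} M(e_1,N) \cup \bigcup \{\, L(e_2,N-1) \mid e_2 \in {\textstyle\bigcup_{e_1 \in L(e,N-1)}} M(e_1,N) \,\}.
\]
Taking cardinalities and substituting $|M(e',N)| \le \mu(N)$ and, by the inductive hypothesis applied uniformly in $e'$, $|L(e',N-1)| \le f(N-1)$, one obtains
\[
|L(e,N)| \le \mu(N) + \mu(N) f(N-1) + f(N-1)\mu(N) + f(N-1)\mu(N) f(N-1) = \mu(N)\,(1 + f(N-1))^2,
\]
so $f(N) := \mu(N)(1 + f(N-1))^2$ is a legitimate bound depending on $N$ alone, which closes the induction.

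Finally, every circuit in $\C_X(e,n)$ has length at most $n$ and all of its edges lie in $L(e,n)$; traversing it from $e$ in one of the two directions, it is therefore specified by a sequence of at most $n$ consecutively adjacent edges drawn from $L(e,n)$, so $|\C_X(e,n)| \le n\,f(n)^{n}$ for every $e \in E(X)$. Hence $X$ is uniformly fine, with witnessing function $n \mapsto n\,f(n)^{n}$. I do not expect a genuine obstacle: the geometric content — the edge-set inclusion displayed above, obtained in the proof of Lemma \ref{lem:Cayley graph becomes fine} via the normal form theorem and \cite[Lemma 8.5]{Gen17} — is already available and need not be redone; the only thing requiring care is the bookkeeping, namely verifying that at each stage of the recursion every quantity being counted ($|M(e',N)|$, $|L(e',N-1)|$, and the number of edges $e_2$ appearing in the innermost union) is controlled by a function of $N$ alone, which is exactly what the constants $D$, $Q$ and the inductive hypothesis guarantee.
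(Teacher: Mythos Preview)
Your proposal is correct and is exactly the approach the paper takes: the paper's proof of Lemma~\ref{lem:Cayley graph becomes uniformly fine} is the single sentence ``also holds by the same proof as Lemma~\ref{lem:Cayley graph becomes fine}'', and you have carried out precisely that, tracking uniform bounds on $|M(e,n)|$ (via the constants $D$ and $Q$) and on $|L(e,n)|$ through the same inductive edge-set inclusion. Your bookkeeping is sound and the final crude bound $|\C_X(e,n)| \le n\,f(n)^n$ suffices.
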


Now, we are ready to prove Theorem \ref{thm:graph product becomes bi-exact and strongly solid}. In Theorem \ref{thm:graph product becomes bi-exact and strongly solid}, $\Gamma$ does not need to be connected.

\begin{defn}\label{def:graph wreath product}
     Let a group $G$ act on a simplicial graph $\Gamma$. Let $\G=\{G_v\}_{v \in \Gamma}$ be a collection of groups such that $G_v = G_{gv}$ for any $g \in G$ and $v \in V(\Gamma)$. For each $g \in G$, the identity map $G_v \ni a \mapsto a \in G_{gv}$ defined on each $v \in V(\Gamma)$ extends to the group automorphism $\alpha_g \colon \Gamma\G \to \Gamma\G$. This defines a group homomorphism $\alpha \colon G \ni g \mapsto \alpha_g \in \mathrm{Aut}(\Gamma\G)$, hence the semi-direct product $\Gamma\G \rtimes G$.
\end{defn}

\begin{rem}\label{rem:graph wreath product}
    When $G_v = G_w$ for any $v,w \in V(\Gamma)$, the semi-direct product $\Gamma\G \rtimes G$ in Definition \ref{def:graph wreath product} is called the graph-wreath product.
\end{rem}

\begin{thm}\label{thm:graph product becomes bi-exact and strongly solid}
    Suppose that $\Gamma$ is a uniformly locally finite countable graph with $\girth(\Gamma)>4$ and that a countable group $G$ acts on $\Gamma$ satisfying $|\stab_G(v)|<\infty$ for any $v \in V(\Gamma)$. Let $\G=\{G_v\}_{v \in \Gamma}$ be a collection of finite groups such that $\sup_{v \in V(\Gamma)}|G_v| < \infty$ and $G_v = G_{gv}$ for any $g \in G$ and $v \in V(\Gamma)$. Then, the following holds.
    \begin{itemize}
        \item[(1)]
        If $G$ is amenable, then $\Gamma\G \rtimes G$ is bi-exact.
        \item[(2)]
        If $G$ is bi-exact, then $\Gamma\G \rtimes G$ is bi-exact relative to $\{\Gamma\G\}$.
    \end{itemize}
\end{thm}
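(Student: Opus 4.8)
The plan is to build a compact Hausdorff space on which $\Gamma\G \rtimes G$ acts, witness amenability of that action in the appropriate relative sense, and feed it through Proposition \ref{prop:equivalent condition of biexact group}. First I would pass to the Cayley graph $X$ of $\Gamma\G$ with respect to the generating set $\bigsqcup_{v \in V(\Gamma)}(G_v \setminus \{1\})$. By Lemma \ref{lem:Cayley graph is hyperbolic} (using $\girth(\Gamma) > 4$), $X$ is hyperbolic, and by Lemma \ref{lem:Cayley graph becomes uniformly fine} (using that $\Gamma$ is uniformly locally finite and $\sup_v |G_v| < \infty$), $X$ is uniformly fine. Crucially the semidirect product $\Gamma\G \rtimes G$ acts on $X$: $\Gamma\G$ acts by left translation, and $G$ acts via the automorphisms $\alpha_g$ of Definition \ref{def:graph wreath product}, which preserve the generating set since $G_v = G_{gv}$; these two actions are compatible and give a $(\Gamma\G\rtimes G)$-action on $X$ by graph automorphisms. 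The key finiteness point is that the edge stabilizers in $\Gamma\G\rtimes G$ are finite: an edge $e$ of $X$ joins $h$ and $hs$ with $s \in G_v\setminus\{1\}$, its $\Gamma\G$-stabilizer is $\{1\}$ (free left action), and the stabilizer of the corresponding vertex pair in the full group involves only $\stab_G(v)$-type data together with the finite groups $G_v$; since $|\stab_G(v)| < \infty$ and $|G_v| < \infty$, all edge stabilizers are finite.

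With finite edge stabilizers in hand, Proposition \ref{prop:action on fine graph with finite edge stabilizer is convergence action} gives that the action $(\Gamma\G\rtimes G) \act \Delta X$ on the Bowditch compactification is a convergence action; more importantly, I would run the same machinery as in the proof of Theorem \ref{thm:graph product is relatively bi-exact}. Namely: (a) verify that $X$ satisfies the hypothesis of \cite[Proposition 5.5 (2)]{Oya24b} — this is where uniform fineness plus the girth/local-finiteness bound enters — and invoke Lemma \ref{lem:tight fine graph admits a sequence for amenable action} to obtain Borel maps $\eta_n \colon V(X)\times\partial X \to \Prob(V(X))$ with the asymptotic equivariance property \eqref{eq:eta_n(x,z)}; (b) convert these into maps $\zeta_n \colon \Delta X \to \Prob(V(X))$ as in that proof, giving condition (1) of Proposition \ref{prop:Ozawa Prop 11} for the action on $\Delta X$ with $K = V(X)$; (c) analyze the vertex stabilizers $\stab_{\Gamma\G\rtimes G}(x)$ for $x \in V(X)$. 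Here is where the two cases of the theorem diverge. In case (1), $G$ is amenable and $\Gamma\G$ has finite vertex stabilizers in its action on $X$ — wait, more precisely the stabilizer of a vertex $h \in \Gamma\G = V(X)$ inside $\Gamma\G\rtimes G$ is conjugate to $G$ itself (the copy fixing $1$), which is amenable, hence bi-exact, hence acts topologically amenably on a point; so condition (2) of Proposition \ref{prop:Ozawa Prop 11} holds trivially and the diagonal action on $\Delta X \times \{pt\}$ — equivalently the action on $\Delta X$ — is topologically amenable, so $\Gamma\G\rtimes G$ is exact and, via a $(\Gamma\G\rtimes G)$-equivariant map $\overline{\Gamma\G\rtimes G}^{\{1\}} \to \Delta X$ built exactly as $\widetilde\sigma$ in Theorem \ref{thm:graph product is relatively bi-exact} from Lemma \ref{lem:extension graph has absorbing dynamics} (whose proof only used fineness and hyperbolicity, so it applies verbatim to $X$), the action on $\overline{\Gamma\G\rtimes G}^{\{1\}}$ is topologically amenable, giving bi-exactness by Proposition \ref{prop:equivalent condition of biexact group}.

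In case (2), $G$ is bi-exact rather than amenable, so the vertex stabilizer $\cong G$ acts topologically amenably on $\overline{G}^{\{1\}}$; take $Y = \overline{G}^{\{1\}}$ (suitably induced up to the conjugates of $G$ as in the relevant-subgroup bookkeeping of Theorem \ref{thm:graph product is relatively bi-exact}), apply Proposition \ref{prop:Ozawa Prop 11} to get topological amenability of $(\Gamma\G\rtimes G) \act \Delta X \times Y$, then use the equivariant maps $\widetilde\sigma \colon \overline{\Gamma\G\rtimes G}^{\{\Gamma\G\}} \to \Delta X$ and the canonical $\overline{\Gamma\G\rtimes G}^{\{\Gamma\G\}} \to \overline{G}^{\{1\}}$-type map (using $\Gamma\G \triangleleft \Gamma\G\rtimes G$ with quotient $G$, so $c_0(\Gamma\G\rtimes G;\{\Gamma\G\})$ pulls back from $c_0(G)$) to produce a $(\Gamma\G\rtimes G)$-equivariant continuous map into $\Delta X \times Y$; Lemma \ref{lem:G-equivariant map passes topological amenability} then gives topological amenability of the action on $\overline{\Gamma\G\rtimes G}^{\{\Gamma\G\}}$, and Proposition \ref{prop:equivalent condition of biexact group} yields bi-exactness relative to $\{\Gamma\G\}$. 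The main obstacle I anticipate is step (a): checking that the Cayley graph $X$ genuinely satisfies the hypothesis of \cite[Proposition 5.5 (2)]{Oya24b} (a ``tightness''/bounded-ball-in-cone type condition) — the statement of Lemma \ref{lem:tight fine graph admits a sequence for amenable action} is phrased exactly to be reusable, but one must confirm uniform fineness of $X$ (Lemma \ref{lem:Cayley graph becomes uniformly fine}) plus hyperbolicity (Lemma \ref{lem:Cayley graph is hyperbolic}) together imply that hypothesis, and secondarily, bookkeeping the conjugates of $G$ and of $\Gamma\G$ as the ``relevant subgroups'' so that the orbit $V(X)/(\Gamma\G\rtimes G)$ is a single point (since $\Gamma\G\rtimes G$ acts transitively on $V(X) = \Gamma\G$) streamlines this considerably compared to the extension-graph case.
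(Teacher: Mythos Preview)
Your plan is correct and matches the paper's proof almost exactly: same Cayley graph $X$, same observation that vertex stabilizers are conjugate to $G$ and edge stabilizers are finite, same use of Proposition~\ref{prop:Ozawa Prop 11} and Lemma~\ref{lem:extension graph has absorbing dynamics} to build $\widetilde\sigma$, and the same quotient map $\Gamma\G\rtimes G \to G$ to pull back the $G$-boundary in case~(2).

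The one place you make life harder for yourself is your ``main obstacle,'' step~(a). You do not need Lemma~\ref{lem:tight fine graph admits a sequence for amenable action} here, and hence do not need to verify the condition of \cite[Proposition~5.5~(2)]{Oya24b} for $X$. The whole point of Lemma~\ref{lem:tight fine graph admits a sequence for amenable action} was to handle the extension graph $\Gamma^e$, which is \emph{not} uniformly fine when $\sup_v|G_v|=\infty$. In the present theorem you have already established that $X$ is \emph{uniformly} fine (Lemma~\ref{lem:Cayley graph becomes uniformly fine}), so you can cite \cite[Lemma~8]{Oza06} directly to obtain the maps $\eta_n$; this is exactly what the paper does. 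With that shortcut, your obstacle disappears. For case~(2), the paper takes $Y=\overline{\Gamma\G\rtimes G}^{\{\Gamma\G\}}$ rather than $Y=\overline{G}^{\{\langle 1\rangle\}}$, first using the $G$-equivariant map $\widetilde P\colon\overline{\Gamma\G\rtimes G}^{\{\Gamma\G\}}\to\overline{G}^{\{\langle 1\rangle\}}$ to show $G$ (hence every vertex stabilizer) acts amenably on $Y$; your variant works too, since the quotient map restricts to an isomorphism on each conjugate of $G$, but the paper's choice avoids that extra check.
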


\begin{proof}
    Let $X$ be the Cayley graph of $\Gamma\G$ with respect to $\bigcup_{v \in V(\Gamma)}(G_v \setminus \{1\})$. By Lemma \ref{lem:Cayley graph is hyperbolic} and Lemma \ref{lem:Cayley graph becomes uniformly fine}, $X$ is hyperbolic and uniformly fine. The group $G$ acts on $X$ as graph automorphism by $G \times \Gamma\G \ni (g,x) \mapsto gxg^{-1} \in \Gamma\G$ i.e. conjugation in $\Gamma\G \rtimes G$, which may not preserve labels of $X$. This and the action $\Gamma\G \act X$ by left multiplication extend to the action $\Gamma\G \rtimes G \act X$. Since we have $\stab_{\Gamma\G \rtimes G}(1) = G$ and the action $\Gamma\G \rtimes G \act V(X)$ is transitive, every vertex stabilizer of the action $\Gamma\G \rtimes G \act X$ is conjugate to $G$. Also, for any $v\in V(\Gamma)$ and $g \in G_v\setminus\{1\}$, $\stab_{\Gamma\G \rtimes G}(1) \cap \stab_{\Gamma\G \rtimes G}(g) = \stab_G(v)$. Note $|\stab_G(v)|<\infty$. Hence, every edge stabilizer of the action $\Gamma\G \rtimes G \act X$ is finite.
    
    Since $X$ is hyperbolic and uniformly fine, by \cite[Lemma 8]{Oza06} there exists a sequence $(\eta_n)_{n=1}^\infty$ of Borel maps from $V(X) \times \partial X$ to $\Prob(V(X))$ such that for any $d \in \NN$,
    \begin{align}\label{eq:eta_n(x,z) for bi-exactness}
        \lim_{n \to \infty}\sup_{z \in \partial X} \sup_{x,x' \in V(X), d_{X}(x,x') \le d} \|\eta_n(x,z) -\eta_n(x',z)\|_1 = 0.
    \end{align}
    In the same way as in the proof of Theorem \ref{thm:graph product is relatively bi-exact}, we can see from \eqref{eq:eta_n(x,z) for bi-exactness} that there exists a sequence $(\zeta_n)_{n=1}^\infty$ of Borel maps $\zeta_n \colon \Delta X \to \Prob(V(X))$ such that for any $g \in \Gamma\G \rtimes G$, $\lim_{n \to\infty} \sup_{z \, \in \Delta X}\|\zeta_n(gz) - g.\zeta_n(z)\|_1 =0$.

     Fix $o \in V(X)$ and define the map $\sigma \colon C(\Delta X) \to \ell^\infty(\Gamma\G\rtimes G)$ by $\sigma(f)(g) = f(go)$, where $f \in C(\Delta X)$ and $g \in \Gamma\G\rtimes G$. In the same way as in the proof of Theorem \ref{thm:graph product is relatively bi-exact}, we can see $\sigma(C(\Delta X)) \subset A(\Gamma\G\rtimes G; \{\la 1 \ra\})$ by Lemma \ref{lem:extension graph has absorbing dynamics}.

    (1) Let $G$ be amenable, then for any $x \in V(X)$, the action $\stab_{\Gamma\G \rtimes G}(x) \act \overline{\Gamma\G \rtimes G}^{\{\la 1 \ra\}}$ is topologically amenable since $\stab_{\Gamma\G \rtimes G}(x)$ is conjugate to $G$. Hence, the diagonal action $\Gamma\G\rtimes G \act \Delta X \times \overline{\Gamma\G \rtimes G}^{\{\la 1 \ra\}}$ is topologically amenable by Theorem \ref{prop:Ozawa Prop 11}. By $\sigma(C(\Delta X)) \subset A(\Gamma\G\rtimes G, \{\la 1 \ra\})$, $\sigma$ induces a $\Gamma\G \rtimes G$-equivariant continuous map $\widetilde{\sigma} \colon \overline{\Gamma\G \rtimes G}^{\{\la 1 \ra\}} \to \Delta X$. Since the map $\Psi \colon  \overline{\Gamma\G\rtimes G}^{\{\la 1 \ra\}} \to \Delta X \times \overline{\Gamma\G\rtimes G}^{\{\la 1 \ra\}}$ defined by $\Psi(x)=(\widetilde{\sigma}(x), x)$ is continuous and $\Gamma\G\rtimes G$-equivariant, the action $\Gamma\G \rtimes G \act \overline{\Gamma\G \rtimes G}^{\{\la 1 \ra\}}$ is topologically amenable by Lemma \ref{lem:G-equivariant map passes topological amenability}. By Proposition \ref{prop:equivalent condition of biexact group}, this implies that $\Gamma\G \rtimes G$ is bi-exact.

    (2) Let $G$ be bi-exact. Define the map $P \colon \ell^\infty(G) \to \ell^\infty(\Gamma\G \rtimes G)$ by $P(f)(gh) = f(h)$, where $f \in \ell^\infty(G)$, $g \in \Gamma\G$, and $h \in G$. We can see $P(A(G;\{\la 1 \ra\})) \subset A(\Gamma\G \rtimes G ; \{\Gamma\G\})$ and $P$ is $G$-equivariant. Hence, $P$ induces a $G$-equivariant continuous map $\widetilde{P} \colon \overline{\Gamma\G \rtimes G}^{\{\Gamma\G\}} \to \overline{G}^{\{\la 1 \ra\}}$. This implies topological amenability of $G \act \overline{\Gamma\G \rtimes G}^{\{\Gamma\G\}}$ since $G$ is bi-exact. Hence, for any $x \in V(X)$, the action $\stab_{\Gamma\G \rtimes G}(x) \act \overline{\Gamma\G \rtimes G}^{\{\Gamma\G\}}$ is topologically amenable since $\stab_{\Gamma\G \rtimes G}(x)$ is conjugate to $G$. By $\sigma(C(\Delta X)) \subset A(\Gamma\G\rtimes G; \{\la 1 \ra\}) \subset A(\Gamma\G \rtimes G ; \{\Gamma\G\})$, $\sigma$ induces a $\Gamma\G \rtimes G$-equivariant continuous map $\widetilde{\sigma} \colon \overline{\Gamma\G \rtimes G}^{\{\Gamma\G\}} \to \Delta X$. Thus, in the same way as Theorem \ref{thm:graph product becomes bi-exact and strongly solid} (1), we can see topological amenability of $\Gamma\G \rtimes G \act \overline{\Gamma\G \rtimes G}^{\{\Gamma\G\}}$ by Theorem \ref{prop:Ozawa Prop 11}. Hence, $\Gamma\G \rtimes G$ is bi-exact relative to $\{\Gamma\G\}$ by Proposition \ref{prop:equivalent condition of biexact group}.
\end{proof}

\begin{rem}\label{rem:girth is essential in bi-exactness}
    In Theorem \ref{thm:graph product becomes bi-exact and strongly solid} (1), the condition $\girth(\Gamma)>4$ is essential for showing bi-exactness. Indeed, if $\Gamma$ is a simplicial graph that contains a subset $C \subset V(\Gamma)$ whose induced subgraph is a circuit of length 4 and a collection $\G = \{G_v\}_{v \in V(\Gamma)}$ of groups satisfies $\min_{v \in C}|G_v| \ge 3$, then the graph product $\Gamma\G$ contains $F_2 \times F_2$, hence $\Gamma\G \rtimes G$ is not bi-exact. Note that if $H$ and $K$ are infinite groups and $H \times K$ is non-amenable, then $H \times K$ is not bi-exact. For the same reason, finiteness of vertex groups is also essential. Indeed, if $\Gamma$ is a simplicial graph in which $a,b,c \in V(\Gamma)$ satisfy $d_\Gamma(a,b)=d_\Gamma(b,c)=1$ and $d_\Gamma(a,c)>1$ and a collection $\G = \{G_v\}_{v \in V(\Gamma)}$ of groups satisfies $|G_b|=\infty$ and $\min\{|G_a|,|G_c| \}>2$, then $\Gamma\G$ contains $G_b \times F_2$, hence $\Gamma\G \rtimes G$ is not bi-exact.
\end{rem}

Next, we prove Theorem \ref{thm:intro graph product becomes bi-exact and strongly solid}. A group $G$ is called \emph{i.c.c. (infinite conjugacy class)} if for any $g \in G\setminus\{1\}$, the set $\{hgh^{-1} \mid h \in G\}$ is infinite.

\begin{cor}\label{cor:graph product strongly solid}
    Suppose that $\Gamma$ is a uniformly locally finite countable graph with $\girth(\Gamma)>4$ and $\diam_\Gamma(\Gamma) > 2$, and that $\G=\{G_v\}_{v \in \Gamma}$ is a collection of finite groups with $\sup_{v \in V(\Gamma)}|G_v| < \infty$, then the group von Neumann algebra $L(\Gamma\G)$ is strongly solid.
\end{cor}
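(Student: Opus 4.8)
The plan is to derive strong solidity of the group von Neumann algebra $L(\Gamma\G)$ from the following three facts, and then to invoke the now-standard structure theory of strongly solid factors: $\Gamma\G$ is a countable i.c.c.\ group, it is bi-exact, and it is weakly amenable with Cowling--Haagerup constant $1$ (equivalently, has the complete metric approximation property). Granting these, strong solidity of $L(\Gamma\G)$ follows from the work of Ozawa--Popa and Chifan--Sinclair: an i.c.c.\ countable group that is bi-exact and has the complete metric approximation property has strongly solid group von Neumann algebra (see \cite{OP10, CS13}, and also \cite{PV14}). So the task reduces to verifying the three facts.

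Bi-exactness is immediate from Theorem~\ref{thm:graph product becomes bi-exact and strongly solid}~(1): I would apply it with $G$ the trivial group, which is amenable, acts trivially on $\Gamma$ (so all vertex stabilizers are trivial, hence finite), and for which the $G$-invariance $G_v = G_{gv}$ is vacuous; it then yields that $\Gamma\G \rtimes \{1\} = \Gamma\G$ is bi-exact. Note that this uses only $\girth(\Gamma) > 4$, uniform local finiteness of $\Gamma$, and $\sup_v |G_v| < \infty$; the hypothesis $\diam_\Gamma(\Gamma) > 2$ enters only to guarantee that $L(\Gamma\G)$ is a factor.

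For weak amenability, the plan is to write $\Gamma\G$ as the increasing union, over the finite induced subgraphs $\Gamma' \subseteq \Gamma$, of the graph products $\Gamma'\G'$ where $\G' = \{G_v\}_{v \in V(\Gamma')}$ (every element of $\Gamma\G$ has finite support, so this union is all of $\Gamma\G$). Each $\Gamma'\G'$ is a graph product of finite groups over a finite graph, hence weakly amenable with Cowling--Haagerup constant $1$ by the stability of this property under graph products over finite graphs; since weak amenability with a fixed constant passes to increasing unions (extension by zero of a finitely supported Fourier multiplier from a subgroup does not increase its completely bounded norm), $\Lambda_{\mathrm{cb}}(\Gamma\G) = 1$. (Alternatively, since $\girth(\Gamma) > 4$ forces every clique of $\Gamma$ to have at most two vertices, $\Gamma\G$ acts properly on a $2$-dimensional CAT(0) cube complex, which also gives $\Lambda_{\mathrm{cb}}(\Gamma\G)=1$.) Finally, for the i.c.c.\ property one observes that $\diam_\Gamma(\Gamma) > 2$ prevents $\Gamma$ from being a nontrivial join and from having a vertex adjacent to all others, and combined with $\girth(\Gamma) > 4$ and finiteness of the vertex groups, a direct analysis via the normal form theorem (Theorem~\ref{thm:normal form theorem}) and the structure of centralizers in $\Gamma\G$ shows that no nontrivial element of $\Gamma\G$ has a finite conjugacy class; thus $L(\Gamma\G)$ is a $\mathrm{II}_1$ factor.

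Assuming one treats the von Neumann-algebraic strong solidity criterion as a black box, the main point requiring care is the weak amenability input in the genuinely infinite case: one must either have a clean reference for weak amenability with constant $1$ of graph products of finite groups over finite graphs together with the (routine but worth stating) stability under increasing unions, or set up and exploit the CAT(0) cube complex action. The i.c.c.\ verification, while elementary, is the other place where the specific hypotheses $\diam_\Gamma(\Gamma) > 2$ and $\girth(\Gamma) > 4$ are used; the remaining ingredient, bi-exactness, is already in hand.
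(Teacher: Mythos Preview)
Your proposal is correct and follows essentially the same route as the paper's own proof: bi-exactness from Theorem~\ref{thm:graph product becomes bi-exact and strongly solid}~(1) with $G=\{1\}$, weak amenability with constant $1$ via finite subgraph approximations (the paper cites \cite{Rec17} for the finite graph case and \cite[Example~12.3.2]{BO08} for the passage to increasing unions), the i.c.c.\ property from $\diam_\Gamma(\Gamma)>2$, and then the strong solidity criterion (the paper invokes \cite[Corollary~0.2]{CSU13}).
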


\begin{proof}
    By considering the trivial action of the trivial group $\{1\}$ on $\Gamma$, $\Gamma\G\,(=\Gamma\G\rtimes \{1\})$ is bi-exact by Theorem \ref{thm:graph product becomes bi-exact and strongly solid} (1). It's not difficult to see that $\Gamma\G$ is i.c.c. by $\diam_\Gamma(\Gamma) > 2$. For any finite subset $F$ of $V(\Gamma)$, the subgroup $\la G_v \mid v \in F \ra$ of $\Gamma\G$ is weakly amenable with Cowling-Haagerup constant $1$ by \cite[Theorem 5.5]{Rec17}, since every finite group is weakly amenable with Cowling-Haagerup constant $1$. Hence, $\Gamma\G$ is weakly amenable with Cowling-Haagerup constant $1$ by \cite[Example 12.3.2]{BO08}. Since $\Gamma\G$ is bi-exact and weakly amenable, the group von Neumann algebra $L(\Gamma\G)$ is strongly solid by \cite[Corollary 0.2]{CSU13}.
\end{proof}

Finally, as a byproduct of Lemma \ref{lem:Cayley graph is hyperbolic} and Lemma \ref{lem:Cayley graph becomes fine}, we will improve \cite[Corollary 1.4]{Oya24b}. See Remark \ref{rem:graph wreath product} for the definition of graph-wreath product $\Gamma\G \rtimes G$.

\begin{thm}
    Suppose that $\Gamma$ is a locally finite hyperbolic graph with $\girth(\Gamma)>4$ and that a group $G$ acts on $\Gamma$ properly and cocompactly. Let $H$ be a finite group and define $\G=\{G_v\}_{v \in V(\Gamma)}$ by $G_v=H$ for any $v \in V(\Gamma)$. Then, the graph-wreath product $\Gamma\G \rtimes G$ is hyperbolic.
\end{thm}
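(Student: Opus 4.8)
The plan is to show that $\Gamma\G \rtimes G$ acts properly and cocompactly on a hyperbolic graph, hence is hyperbolic by the Milnor--Svarc lemma. The natural candidate is the Cayley graph $X$ of $\Gamma\G$ with respect to the generating set $\bigcup_{v \in V(\Gamma)}(G_v \setminus \{1\})$, equipped with the extended action $\Gamma\G \rtimes G \act X$ already described in the proof of Theorem \ref{thm:graph product becomes bi-exact and strongly solid}: $\Gamma\G$ acts by left multiplication and $G$ acts by conjugation in $\Gamma\G \rtimes G$ (which permutes vertex groups via $G_v = G_{gv}$ and so is a graph automorphism of $X$). First I would record that $X$ is hyperbolic: by Lemma \ref{lem:Cayley graph is hyperbolic} this follows from $\girth(\Gamma) > 4$ alone. (Note $X$ is a priori only a metric space hyperbolicity statement; since $G$ acts by isometries on $X$, hyperbolicity of $X$ as a metric space is all that is needed.)

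Next I would verify that the action $\Gamma\G \rtimes G \act X$ is cocompact and metrically proper. Cocompactness is immediate: $\Gamma\G$ already acts transitively on $V(X)$, so there is a single orbit of vertices, and since $X$ is locally finite (here we use that $\Gamma$ is locally finite and $H$ is finite, via the structure of $X$ as in Lemma \ref{lem:Cayley graph becomes fine}, or more simply that each vertex of $X$ has degree $\sum_{v \in V(\Gamma)}|G_v \setminus \{1\}|$ restricted to the relevant coordinates — in fact one just needs that balls are finite, which holds because $\Gamma\G$ is finitely generated when $\Gamma$ is locally finite and $G$ acts cocompactly, so $V(\Gamma)/G$ is finite) the quotient $X/(\Gamma\G \rtimes G)$ is compact. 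For properness I would argue that $X$ is locally finite and the vertex stabilizer of $1 \in \Gamma\G$ for the action of $\Gamma\G \rtimes G$ is exactly $G$, which is finitely generated (it acts properly cocompactly on the connected graph $\Gamma$) — but properness of the \emph{action on $X$} requires that the full stabilizer of a point is finite \emph{or} that one invokes Milnor--Svarc in the form allowing non-proper actions with finitely generated point stabilizers. The cleanest route: since $G$ acts properly on $\Gamma$, vertex stabilizers $\stab_G(v)$ are finite; combined with the fact that $X$ is locally finite, the action $\Gamma\G \rtimes G \act X$ has finite edge stabilizers and the vertex stabilizer $G$ acts properly cocompactly on $\Gamma$, so one can instead build a proper cocompact action of $\Gamma\G \rtimes G$ on an auxiliary graph that is quasi-isometric to $X$ and on which the action is genuinely proper — for instance by "blowing up" each vertex of $X$ to a copy of $\Gamma$.

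More concretely, I would consider the graph $Y$ with vertex set $V(X) \times V(\Gamma)$, joining $(x, u)$ to $(x, u')$ when $(u, u') \in E(\Gamma)$, and joining $(x, u)$ to $(y, u)$ when $(x,y) \in E(X)$; equip it with the diagonal $\Gamma\G \rtimes G$-action. This $Y$ is quasi-isometric to $X$ (the $\Gamma$-fibers have diameter $\le \diam_\Gamma(\Gamma) < \infty$, which is finite since $\Gamma$ is hyperbolic, locally finite, and admits a cocompact group action, hence is quasi-isometric to a finitely generated group and... — actually $\diam_\Gamma(\Gamma)$ need not be finite, so instead one uses that $Y$ with these connecting edges is quasi-isometric to $X$ because collapsing each fiber recovers $X$ and fibers are connected), hence $Y$ is hyperbolic. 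The action $\Gamma\G \rtimes G \act Y$ is cocompact (finitely many orbits of vertices, one for each $G$-orbit in $V(\Gamma)$, which is finite by cocompactness of $G \act \Gamma$) and now \emph{proper}: the stabilizer of $(1, v)$ is $\stab_G(v)$, which is finite since $G$ acts properly on $\Gamma$. By the Milnor--Svarc lemma, $\Gamma\G \rtimes G$ is quasi-isometric to $Y$, hence hyperbolic.

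The main obstacle I anticipate is the properness of the action: the obvious action on $X$ has infinite vertex stabilizers (each equal to a conjugate of $G$), so Milnor--Svarc does not apply directly, and one must either pass to the auxiliary graph $Y$ as above or carefully justify a version of Milnor--Svarc for actions that are "proper modulo finitely generated, hyperbolic stabilizers." The auxiliary-graph construction sidesteps this cleanly, but requires checking that $Y$ is connected, locally finite (which needs $\Gamma$ locally finite, $H$ finite, $V(\Gamma)/G$ finite, and $X$ locally finite — all available), quasi-isometric to $X$, and that the $\Gamma\G \rtimes G$-action on $Y$ is well-defined as graph automorphisms (the $G$-action conjugates, permuting $X$-coordinates in a label-scrambling way but respecting the $\Gamma$-fiber structure since $G$ acts on $\Gamma$). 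A secondary subtlety is confirming $X$ is locally finite: vertex $g \in \Gamma\G$ has neighbors $gh$ for $h \in G_v \setminus \{1\}$, $v \in \supp$-adjacent data, and since $V(\Gamma)/G$ is finite and each $G_v = H$ is finite, only finitely many vertex groups are relevant up to the action — but in $X$ every vertex $G_v$ contributes, so local finiteness of $X$ actually requires $V(\Gamma)$ itself to be... no: $\Gamma$ locally finite plus $G$ cocompact on $\Gamma$ forces $V(\Gamma)$ to be countable but possibly infinite, yet the Cayley graph $X$ of $\Gamma\G$ with the full generating set has each vertex adjacent to $g G_v \setminus \{g\}$ for \emph{all} $v$, which is infinite if $V(\Gamma)$ is infinite — so here I would instead take $\Gamma\G$ with a \emph{finite} generating set (possible since $\Gamma\G \rtimes G$ is finitely generated: $G$ is finitely generated, $V(\Gamma)/G$ finite, $H$ finite) and note that all the hyperbolicity/fineness inputs transfer via quasi-isometry; alternatively invoke that $X$ is fine (Lemma \ref{lem:Cayley graph becomes fine}) and $G$ cocompact, so the relevant combinatorial Milnor--Svarc for fine hyperbolic graphs applies. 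I would organize the write-up around the finite-generating-set Cayley graph to avoid the local-finiteness issue entirely.
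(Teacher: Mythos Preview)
Your approach has a genuine gap. The auxiliary graph $Y$ you construct is the Cartesian (box) product $X \Box \Gamma$: its path metric satisfies $d_Y((x,u),(y,w)) = d_X(x,y) + d_\Gamma(u,w)$, so $Y$ is isometric to $X \times \Gamma$ with the $\ell^1$-metric. When $\Gamma$ is infinite (the interesting case), both factors are unbounded and this product is \emph{not} hyperbolic. So although your action $\Gamma\G \rtimes G \act Y$ is indeed proper and cocompact, Milnor--\v{S}varc tells you only that $\Gamma\G \rtimes G$ is quasi-isometric to $Y$, which is not hyperbolic. You correctly spotted that collapsing fibers does not give a quasi-isometry when $\diam_\Gamma(\Gamma) = \infty$, but then did not repair the construction. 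The fallback to a finite generating set at the end is circular: showing that Cayley graph is hyperbolic is exactly the statement to be proved.

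The paper takes a different route that sidesteps all of this. Rather than seeking a proper action, it exploits the action $\Gamma\G \rtimes G \act X$ directly, accepting the infinite vertex stabilizers $G$. The key observation is that $X$ is \emph{fine} (Lemma \ref{lem:Cayley graph becomes fine}) as well as hyperbolic, that edge stabilizers are finite (they equal $\stab_G(v)$ for some $v$), and that $E(X)/(\Gamma\G \rtimes G)$ is finite (since $V(\Gamma)/G$ and $H$ are finite). These are precisely Bowditch's conditions for $\Gamma\G \rtimes G$ to be hyperbolic \emph{relative to} the vertex stabilizer $G$ (\cite[Theorem 7.10]{Bow12}, \cite[Theorem 6.1]{Dah03}). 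Since $G$ itself is hyperbolic (it acts properly cocompactly on the locally finite hyperbolic graph $\Gamma$), one then invokes the fact that a group hyperbolic relative to a hyperbolic subgroup is hyperbolic (\cite[Corollary 2.41]{Osi06}). This two-step ``relatively hyperbolic, then upgrade'' argument is what replaces your attempted direct Milnor--\v{S}varc; fineness of $X$ is the substitute for local finiteness.
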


\begin{proof}
   We'll show that $\Gamma\G \rtimes G$ is hyperbolic relative to $G$ by checking the conditions in \cite[Theorem 2.33]{Oya24b}, which is \cite[Theorem 7.10]{Bow12} and \cite[Theorem 6.1]{Dah03}. Note that $G$ is hyperbolic, hence finitely generated, since $G$ acts on the proper hyperbolic space $\Gamma$ properly and cocompactly. By this and $|H|<\infty$, $\Gamma\G \rtimes G$ is also finitely generated. Let $X$ be the Cayley graph of $\Gamma\G$ with respect to $\bigcup_{v \in V(\Gamma)}(G_v \setminus \{1\})$. By Lemma \ref{lem:Cayley graph is hyperbolic} and Lemma \ref{lem:Cayley graph becomes fine}, $X$ is hyperbolic and fine. The group $G$ acts on $X$ as graph automorphism by conjugation in $\Gamma\G \rtimes G$ (i.e. $G \times \Gamma\G \ni (g,x) \mapsto gxg^{-1} \in \Gamma\G$), which may not preserve labels of $X$. This and the action $\Gamma\G \act X$ by left multiplication extend to the action $\Gamma\G \rtimes G \act X$.
   
    Since we have $\stab_{\Gamma\G \rtimes G}(1) = G$ and the action $\Gamma\G \rtimes G \act V(X)$ is transitive, every vertex stabilizer of the action $\Gamma\G \rtimes G \act X$ is conjugate to $G$. Also, for any $v\in V(\Gamma)$ and $g \in G_v\setminus\{1\}$, we have $\stab_{\Gamma\G \rtimes G}(1) \cap \stab_{\Gamma\G \rtimes G}(g) = \stab_G(v)$. Note $|\stab_G(v)|<\infty$ since the action $G \act \Gamma$ is proper. Hence, every edge stabilizer of the action $\Gamma\G \rtimes G \act X$ is finite. Finally, since the action $G \act \Gamma$ is cocompact, the vertex orbit set $V(\Gamma)/G$ is finite. By this and $|H|<\infty$, the edge orbit set $E(X)/\Gamma\G \rtimes G$ is finite.
    
    Thus, $\Gamma\G \rtimes G$ is hyperbolic relative to $G$ by \cite[Theorem 2.33]{Oya24b}. Since $G$ is hyperbolic, $\Gamma\G \rtimes G$ is hyperbolic by \cite[Corollary 2.41]{Osi06}.
\end{proof}

Finiteness of vertex groups is essential in Theorem \ref{thm:action on extension graph is convergence action} and Theorem \ref{thm:graph product becomes bi-exact and strongly solid} (1) because of Proposition \ref{prop: infinite case doesn't admit non-elementary convergence action} and Remark \ref{rem:girth is essential in bi-exactness} respectively. However, in Corollary \ref{cor:properly proximal}, Theorem \ref{thm:graph product is relatively bi-exact}, and Theorem \ref{thm:graph product becomes bi-exact and strongly solid} (2), which are about proper proximality and relative bi-exactness, infiniteness of vertex groups by itself doesn't seem to become an obstruction of these properties. This suggests possible room for generalization of these results to broader classes (e.g. amenable or exact vertex groups) by a different approach.

%% bibliographystyle should be either amsplain or amsalpha

%%\bibliography{main.bib}

\begin{thebibliography}{CRKdlNG24}

\bibitem[AP]{AP}
Claire Anantharaman and Sorin Popa, \emph{An introduction to {$\rm II_{1}$} factors}, \url{https://www.math.ucla.edu/~popa/Books/IIunV15.pdf}.

\bibitem[BC24]{BC24}
Matthijs Borst and Martijn Caspers, \emph{Classification of right-angled {C}oxeter groups with a strongly solid von {N}eumann algebra}, J. Math. Pures Appl. (9) \textbf{189} (2024), 103591. \MR{4779391}

\bibitem[BCC24]{BCC24}
Matthijs Borst, Martijn Caspers, and Enli Chen, \emph{Rigid graph products}, 2024, Preprint.

\bibitem[BH99]{BH99}
Martin~R. Bridson and Andr\'e Haefliger, \emph{Metric spaces of non-positive curvature}, Grundlehren der mathematischen Wissenschaften [Fundamental Principles of Mathematical Sciences], vol. 319, Springer-Verlag, Berlin, 1999. \MR{1744486}

\bibitem[BHV18]{BCV18}
R\'emi Boutonnet, Cyril Houdayer, and Stefaan Vaes, \emph{Strong solidity of free {A}raki-{W}oods factors}, Amer. J. Math. \textbf{140} (2018), no.~5, 1231--1252. \MR{3862063}

\bibitem[BIP21]{BIP21}
R\'emi Boutonnet, Adrian Ioana, and Jesse Peterson, \emph{Properly proximal groups and their von {N}eumann algebras}, Ann. Sci. \'Ec. Norm. Sup\'er. (4) \textbf{54} (2021), no.~2, 445--482. \MR{4258166}

\bibitem[BO08]{BO08}
Nathanial~P. Brown and Narutaka Ozawa, \emph{{$C^*$}-algebras and finite-dimensional approximations}, Graduate Studies in Mathematics, vol.~88, American Mathematical Society, Providence, RI, 2008. \MR{2391387}

\bibitem[Bor24]{Bor24}
Matthijs Borst, \emph{The {CCAP} for graph products of operator algebras}, J. Funct. Anal. \textbf{286} (2024), no.~8, Paper No. 110350, 41. \MR{4705106}

\bibitem[Bou14]{Bou14}
R{\'e}mi Boutonnet, \emph{{Several rigidity features of von Neumann algebras}}, Theses, {Ecole normale sup{\'e}rieure de lyon - ENS LYON}, June 2014.

\bibitem[Bow98]{Bow98}
Brian~H. Bowditch, \emph{A topological characterisation of hyperbolic groups}, J. Amer. Math. Soc. \textbf{11} (1998), no.~3, 643--667. \MR{1602069}

\bibitem[Bow99a]{Bow99a}
B.~H. Bowditch, \emph{Convergence groups and configuration spaces}, Geometric group theory down under ({C}anberra, 1996), de Gruyter, Berlin, 1999, pp.~23--54. \MR{1714838}

\bibitem[Bow99b]{Bow99b}
\bysame, \emph{Treelike structures arising from continua and convergence groups}, Mem. Amer. Math. Soc. \textbf{139} (1999), no.~662, viii+86. \MR{1483830}

\bibitem[Bow02]{Bow02}
Brian~H. Bowditch, \emph{Groups acting on {C}antor sets and the end structure of graphs}, Pacific J. Math. \textbf{207} (2002), no.~1, 31--60. \MR{1973344}

\bibitem[Bow12]{Bow12}
B.~H. Bowditch, \emph{Relatively hyperbolic groups}, Internat. J. Algebra Comput. \textbf{22} (2012), no.~3, 1250016, 66. \MR{2922380}

\bibitem[Cas20]{Cas20}
Martijn Caspers, \emph{Absence of {C}artan subalgebras for right-angled {H}ecke von {N}eumann algebras}, Anal. PDE \textbf{13} (2020), no.~1, 1--28. \MR{4047640}

\bibitem[CDD23]{CDD24b}
Ionut Chifan, Michael Davis, and Daniel Drimbe, \emph{Rigidity for von neumann algebras of graph product groups ii. superrigidity results}, 2023, Preprint.

\bibitem[CDD24]{CDD24a}
\bysame, \emph{Rigidity for von neumann algebras of graph product groups. i. structure of automorphisms}, 2024, To appear in Analysis and PDE.

\bibitem[CdSH{\etalchar{+}}24a]{IRBDSB24a}
Ian Charlesworth, Rolando de~Santiago, Ben Hayes, David Jekel, Srivatsav~Kunnawalkam Elayavalli, and Brent Nelson, \emph{On the structure of graph product von neumann algebras}, 2024, Preprint.

\bibitem[CdSH{\etalchar{+}}24b]{IRBDSB24b}
\bysame, \emph{Strong 1-boundedness, $l^2$-betti numbers, algebraic soficity, and graph products}, 2024, Preprint.

\bibitem[CdSS18]{CRW18}
Ionut Chifan, Rolando de~Santiago, and Wanchalerm Sucpikarnon, \emph{Tensor product decompositions of {$\rm II_1$} factors arising from extensions of amalgamated free product groups}, Comm. Math. Phys. \textbf{364} (2018), no.~3, 1163--1194. \MR{3875825}

\bibitem[CF17]{CF17}
Martijn Caspers and Pierre Fima, \emph{Graph products of operator algebras}, J. Noncommut. Geom. \textbf{11} (2017), no.~1, 367--411. \MR{3626564}

\bibitem[CI18]{CI18}
Ionut Chifan and Adrian Ioana, \emph{Amalgamated free product rigidity for group von {N}eumann algebras}, Adv. Math. \textbf{329} (2018), 819--850. \MR{3783429}

\bibitem[CJ94]{CJ94}
Andrew Casson and Douglas Jungreis, \emph{Convergence groups and {S}eifert fibered {$3$}-manifolds}, Invent. Math. \textbf{118} (1994), no.~3, 441--456. \MR{1296353}

\bibitem[CKE24]{CK24}
Ionut Chifan and Srivatsav Kunnawalkam~Elayavalli, \emph{Cartan subalgebras in von {N}eumann algebras associated with graph product groups}, Groups Geom. Dyn. \textbf{18} (2024), no.~2, 749--759. \MR{4729825}

\bibitem[CN07]{CN07}
Indira Chatterji and Graham~A. Niblo, \emph{A characterization of hyperbolic spaces}, Groups Geom. Dyn. \textbf{1} (2007), no.~3, 281--299. \MR{2314046}

\bibitem[CRKdlNG24]{MIJ24}
Montserrat Casals-Ruiz, Ilya Kazachkov, and Javier de~la Nuez~Gonz\'alez, \emph{On the elementary theory of graph products of groups}, Dissertationes Math. \textbf{595} (2024), 92. \MR{4792087}

\bibitem[CS13]{CS13}
Ionut Chifan and Thomas Sinclair, \emph{On the structural theory of {${\rm II}_1$} factors of negatively curved groups}, Ann. Sci. \'Ec. Norm. Sup\'er. (4) \textbf{46} (2013), no.~1, 1--33. \MR{3087388}

\bibitem[CSU13]{CSU13}
Ionut Chifan, Thomas Sinclair, and Bogdan Udrea, \emph{On the structural theory of {$\rm{II}_1$} factors of negatively curved groups, {II}: {A}ctions by product groups}, Adv. Math. \textbf{245} (2013), 208--236. \MR{3084428}

\bibitem[Dah03a]{Dah03}
Fran\c~cois Dahmani, \emph{Classifying spaces and boundaries for relatively hyperbolic groups}, Proc. London Math. Soc. (3) \textbf{86} (2003), no.~3, 666--684. \MR{1974394}

\bibitem[Dah03b]{Dah03c}
\bysame, \emph{Combination of convergence groups}, Geom. Topol. \textbf{7} (2003), 933--963. \MR{2026551}

\bibitem[DKE24a]{DK24}
Changying Ding and Srivatsav Kunnawalkam~Elayavalli, \emph{Proper proximality among various families of groups}, Groups Geom. Dyn. \textbf{18} (2024), no.~3, 921--938. \MR{4760266}

\bibitem[DKE24b]{CKE24}
\bysame, \emph{Proper proximality among various families of groups}, Groups Geom. Dyn. \textbf{18} (2024), no.~3, 921--938. \MR{4760266}

\bibitem[DKE24c]{DK24b}
\bysame, \emph{Structure of relatively biexact group von {N}eumann algebras}, Comm. Math. Phys. \textbf{405} (2024), no.~4, Paper No. 104, 19. \MR{4733336}

\bibitem[EH24]{EH24}
Amandine Escalier and Camille Horbez, \emph{Graph products and measure equivalence: classification, rigidity, and quantitative aspects}, 2024.

\bibitem[Gab92]{Gab92}
David Gabai, \emph{Convergence groups are {F}uchsian groups}, Ann. of Math. (2) \textbf{136} (1992), no.~3, 447--510. \MR{1189862}

\bibitem[Gen17]{Gen17}
Anthony Genevois, \emph{Cubical-like geometry of quasi-median graphs and applications to geometric group theory}, 2017, Thèse de doctorat dirigée par Haïssinsky, Peter Mathématiques Aix-Marseille 2017.

\bibitem[Ger09]{Ger09}
Victor Gerasimov, \emph{Expansive convergence groups are relatively hyperbolic}, Geom. Funct. Anal. \textbf{19} (2009), no.~1, 137--169. \MR{2507221}

\bibitem[GM87]{GM87}
F.~W. Gehring and G.~J. Martin, \emph{Discrete quasiconformal groups. {I}}, Proc. London Math. Soc. (3) \textbf{55} (1987), no.~2, 331--358. \MR{896224}

\bibitem[Gre90]{Gre}
Elisabeth~Ruth Green, \emph{Graph products of groups}, 1990, Thesis, The University of Leeds.

\bibitem[Hou10]{Cyr10}
Cyril Houdayer, \emph{Strongly solid group factors which are not interpolated free group factors}, Math. Ann. \textbf{346} (2010), no.~4, 969--989. \MR{2587099}

\bibitem[Iso15]{Iso15}
Yusuke Isono, \emph{Examples of factors which have no {C}artan subalgebras}, Trans. Amer. Math. Soc. \textbf{367} (2015), no.~11, 7917--7937. \MR{3391904}

\bibitem[Kai04]{Kai04}
Vadim~A. Kaimanovich, \emph{Boundary amenability of hyperbolic spaces}, Discrete geometric analysis, Contemp. Math., vol. 347, Amer. Math. Soc., Providence, RI, 2004, pp.~83--111. \MR{2077032}

\bibitem[OP10]{OP10}
Narutaka Ozawa and Sorin Popa, \emph{On a class of {${\rm II}_1$} factors with at most one {C}artan subalgebra}, Ann. of Math. (2) \textbf{172} (2010), no.~1, 713--749. \MR{2680430}

\bibitem[Osi06]{Osi06}
Denis~V. Osin, \emph{Relatively hyperbolic groups: intrinsic geometry, algebraic properties, and algorithmic problems}, Mem. Amer. Math. Soc. \textbf{179} (2006), no.~843, vi+100. \MR{2182268}

\bibitem[Oya23a]{Oya23a}
Koichi Oyakawa, \emph{Bi-exactness of relatively hyperbolic groups}, J. Funct. Anal. \textbf{284} (2023), no.~9, Paper No. 109859, 28. \MR{4545157}

\bibitem[Oya23b]{Oya23b}
Koichi Oyakawa, \emph{Small cancellation groups are bi-exact}, 2023, Journal of Topology and Analysis, to appear.

\bibitem[Oya24a]{Oya24a}
Koichi Oyakawa, \emph{Hyperfiniteness of boundary actions of acylindrically hyperbolic groups}, Forum Math. Sigma \textbf{12} (2024), Paper No. e32, 31. \MR{4715159}

\bibitem[Oya24b]{Oya24b}
Koichi Oyakawa, \emph{Infinite graph product of groups {I}: Geometry of the extension graph}, 2024, Preprint.

\bibitem[Oza06a]{Oza06}
Narutaka Ozawa, \emph{Boundary amenability of relatively hyperbolic groups}, Topology Appl. \textbf{153} (2006), no.~14, 2624--2630. \MR{2243738}

\bibitem[Oza06b]{Oza06b}
\bysame, \emph{A {K}urosh-type theorem for type {$\rm II_1$} factors}, Int. Math. Res. Not. (2006), Art. ID 97560, 21. \MR{2211141}

\bibitem[Pap95]{Pap95}
P.~Papasoglu, \emph{Strongly geodesically automatic groups are hyperbolic}, Invent. Math. \textbf{121} (1995), no.~2, 323--334. \MR{1346209}

\bibitem[PV14]{PV14}
Sorin Popa and Stefaan Vaes, \emph{Unique {C}artan decomposition for {$\rm II_{1}$} factors arising from arbitrary actions of hyperbolic groups}, J. Reine Angew. Math. \textbf{694} (2014), 215--239. \MR{3259044}

\bibitem[Rec17]{Rec17}
Eric Reckwerdt, \emph{Weak amenability is stable under graph products}, J. Lond. Math. Soc. (2) \textbf{96} (2017), no.~1, 133--155. \MR{3687943}

\bibitem[Rud91]{Rud91}
Walter Rudin, \emph{Functional analysis}, second ed., International Series in Pure and Applied Mathematics, McGraw-Hill, Inc., New York, 1991. \MR{1157815}

\bibitem[Sak09]{Sak09}
Hiroki Sako, \emph{The class {$S$} as an {ME} invariant}, Int. Math. Res. Not. IMRN (2009), no.~15, 2749--2759. \MR{2525839}

\bibitem[Sun19]{Sun19}
Bin Sun, \emph{A dynamical characterization of acylindrically hyperbolic groups}, Algebr. Geom. Topol. \textbf{19} (2019), no.~4, 1711--1745. \MR{3995017}

\bibitem[Tuk88]{Tuk88}
Pekka Tukia, \emph{Homeomorphic conjugates of {F}uchsian groups}, J. Reine Angew. Math. \textbf{391} (1988), 1--54. \MR{961162}

\bibitem[Tuk94]{Tuk94}
\bysame, \emph{Convergence groups and {G}romov's metric hyperbolic spaces}, New Zealand J. Math. \textbf{23} (1994), no.~2, 157--187. \MR{1313451}

\bibitem[Tuk98]{Tuk98}
\bysame, \emph{Conical limit points and uniform convergence groups}, J. Reine Angew. Math. \textbf{501} (1998), 71--98. \MR{1637829}

\bibitem[Yam04]{Yam04}
Asli Yaman, \emph{A topological characterisation of relatively hyperbolic groups}, J. Reine Angew. Math. \textbf{566} (2004), 41--89. \MR{2039323}

\end{thebibliography}
%%\bibliographystyle{amsalpha}

\newcommand{\etalchar}[1]{$^{#1}$}
\providecommand{\bysame}{\leavevmode\hbox to3em{\hrulefill}\thinspace}
\providecommand{\MR}{\relax\ifhmode\unskip\space\fi MR }
% \MRhref is called by the amsart/book/proc definition of \MR.
\providecommand{\MRhref}[2]{%
  \href{http://www.ams.org/mathscinet-getitem?mr=#1}{#2}
}
\providecommand{\href}[2]{#2}

\vspace{5mm}

\noindent  Department of Mathematics and Statistics, McGill University, Burnside Hall,

\noindent 805 Sherbrooke Street West, Montreal, QC, H3A 0B9, Canada.

\noindent E-mail: \emph{koichi.oyakawa@mail.mcgill.ca}

\end{document}